\title{Differential Galois groups, specializations and Matzat's conjecture}
\author{Ruyong Feng\thanks{Supported by the NSFC grants
		11771433, 11688101, Beijing Natural Science Foundation under
		Grant Z190004 and National Key Research and Development Project 2020YFA0712300.} \ and Michael Wibmer\thanks{Supported by the NSF grants DMS-1760212, DMS-1760413, DMS-1760448 and the Lise Meitner grant M-2582-N32 of the Austrian Science Fund FWF.}}
\newtheorem{theo}{Theorem}[section]
\newtheorem{lemma}[theo]{Lemma}
\newtheorem{prop}[theo]{Proposition}
\newtheorem{cor}[theo]{Corollary}
\newtheorem{defi}[theo]{Definition}
\newtheorem{rem}[theo]{Remark}
\newtheorem*{conjmatzat}{Matzat's Conjecture}
\newtheorem*{theoremA}{Theorem A}
\newtheorem*{theoremB}{Theorem B}
\newtheorem*{corB1}{Corollary B1}
\newtheorem*{corB2}{Corollary B2}
\theoremstyle{definition}
\newtheorem{ex}[theo]{Example}
\newtheorem{notation}[theo]{Notation}
\def\spec{\operatorname{Spec}}
\def\s{\sigma}
\def\A{\mathbb{A}}
\def\bQ{\mathbb{Q}}
\def\bE{{\mathbb E}}
\def\bZ{\mathbb{Z}}
\def\Hom{\operatorname{Hom}}
\def\Gl{\operatorname{GL}}
\def\Sl{\operatorname{SL}}
\def\mpp{\operatorname{pp}}
\def\pp{\operatorname{PP}}
\def\Div{\operatorname{Div}}
\def\G{\mathcal{G}}
\def\H{\mathcal{H}}
\def\Aut{\operatorname{Aut}}
\def\m{\mathfrak{m}}
\def\km{\mathfrak{m}}
\def\cJ{{\mathcal J}}
\def\cE{{\mathcal E}}
\def\cP{{\mathcal P}}
\def\<{\langle}
\def\>{\rangle}
\def\de{\delta}
\def\ord{\operatorname{ord}}
\newcommand{\f}{\phi}
\newcommand{\p}{\mathfrak{p}}
\newcommand{\z}{\mathfrak{z}}
\newcommand{\Z}{\mathcal{Z}}
\newcommand{\gal}{\operatorname{Gal}}
\newcommand{\im}{\operatorname{Im}}
\newcommand{\ram}{\operatorname{ram}}
\newcommand{\res}{\operatorname{res}}
\newcommand{\Q}{\mathcal{Q}}
\newcommand{\C}{\mathbb{C}}
\newcommand{\I}{\mathbb{I}}
\newcommand{\Ga}{\mathbb{G}_a}
\newcommand{\X}{\mathcal{X}}
\newcommand{\Y}{\mathcal{Y}}
\newcommand{\id}{\operatorname{id}}
\newcommand{\VV}{\mathcal{V}}
\newcommand{\Autt}{\underline{\operatorname{Aut}}}
\newcommand{\B}{\mathcal{B}}
\newcommand{\SSS}{\mathcal{S}}
\newcommand{\T}{\mathcal{T}}
\newcommand{\R}{\mathcal{R}}
\newcommand{\U}{\mathcal{U}}
\newcommand{\nn}{\mathbb{N}}
\def\bfd{{\mathbf d}}
\def\bff{{\mathbf f}}
\def\bfm{{\mathbf m}}
\newcommand{\gen}{{\operatorname{gen}}}
\newcommand{\sing}{\operatorname{Sing}}
\renewcommand{\L}{\mathcal{L}}
\renewcommand{\labelenumi}{{\rm (\roman{enumi})}}
\newcommand{\cA}{\mathcal{A}}
\newcommand{\algrel}{\operatorname{AlgRel}}
\newcommand{\linrel}{\operatorname{LinRel}}
\begin{document}


\maketitle


\begin{abstract}
	We study families of linear differential equations parametrized by an algebraic variety $\X$ and show that the set of all points $x\in \X$, such that the differential Galois group at the generic fibre specializes to the differential Galois group at the fibre over $x$, is Zariski dense in $\X$. As an application, we prove Matzat's conjecture in full generality: The absolute differential Galois group of a one-variable function field over an algebraically closed field of characteristic zero is a free proalgebraic group.\let\thefootnote\relax\footnotetext{{\em Mathematics Subject Classification Codes:} 14L15, 34M50, 12H05.
		{\em Key words and phrases}:
		Differential Galois theory, Matzat's conjecture, specializations of linear differential equations, N\'{e}ron's specialization theorem		
	}
\end{abstract}	

\newpage


\setcounter{tocdepth}{2}
\tableofcontents

\section{Introduction}

The absolute differential Galois group of a differential field $F$ governs the algebraic properties of solutions of linear differential equations over $F$. Determining the structure of this group for interesting differential fields is a central problem in differential Galois theory (see e.g., \cite[Section 10]{SingerPut:differential}). Matzat's conjecture addresses the case of one-variable function fields.

\begin{conjmatzat}
		Let $k$ be an algebraically closed field of characteristic zero and let $F$ be a one-variable function field over $k$, equipped with a non-trivial $k$-derivation. Then the absolute differential Galois group of $F$ is the free proalgebraic group on a set of cardinality $|F|$.
\end{conjmatzat}	

The main goal of this article is to prove Matzat's conjecture (Theorem \ref{theo: Matzat's conjecture}) in the form stated above. Several special cases of Matzat's conjecture are already known:  The first case, when the differential field $F$ equals $(k(x),\frac{d}{dx})$, with $k$ countable and of infinite transcendence degree (over $\mathbb{Q}$) was established in \cite{BachmayrHarbaterHartmannWibmer:FreeDifferentialGaloisGroups}. The case when $F$ is $(k(x),\frac{d}{dx})$, with $k$ of infinite transcendence degree was treated in \cite{BachmayrHarbaterHartmannWibmer:TheDifferentialGaloisGroupOfRationalFunctionField}. The case when $k$ has infinite transcendence degree was established in \cite{Wibmer:SubgroupsOfFreeProalgebraicGroupsAndMatzatsConjecture}.
Moreover, in \cite{Wibmer:SubgroupsOfFreeProalgebraicGroupsAndMatzatsConjecture} it was shown that for an arbitrary algebraically closed field $k$ of characteristic zero, Matzat's conjecture for $(k(x), \frac{d}{dx})$ implies Matzat's conjecture for one-variable function fields over $k$. On the other hand, thanks to the work in \cite{Wibmer:FreeProalgebraicGroups} and \cite{BachmayrHarbaterHartmannWibmer:FreeDifferentialGaloisGroups}, it is known that Matzat's conjecture holds for $F$ countable if and only if every differential embedding problem of finite type over $F$ is solvable. Thus, to prove Matzat's conjecture in full generality, it suffices to prove the following:

\begin{theoremA}[Theorem \ref{theo: solve embedding problems}]
	Let $k$ be an algebraically closed field of characteristic zero. Then every differential embedding problem of finite type over $(k(x),\frac{d}{dx})$ is solvable.
\end{theoremA}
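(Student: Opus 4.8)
The plan is to reduce a differential embedding problem of finite type over $F_0=(k(x),\frac{d}{dx})$ to the already-known cases by exploiting the specialization result advertised in the abstract. Recall that a differential embedding problem of finite type over $F_0$ consists of a surjection $\Gal(E/F_0)\sar \Gal(L/F_0)$ of differential Galois groups (with $E\supseteq L\supseteq F_0$ Picard--Vessiot extensions) together with a surjection $G\sar \Gal(E/F_0)$ of linear algebraic groups over $k$, and one must produce a Picard--Vessiot extension $M\supseteq L$ realizing $G\sar \Gal(E/F_0)$ compatibly. The key point is that $E/F_0$, $L/F_0$, and the maps $G\sar \Gal(E/F_0)\sar \Gal(L/F_0)$ are all \emph{finitely presented} data: they involve only finitely many elements of $k$. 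Hence there is a finitely generated $\bQ$-subalgebra $R\subseteq k$ over which everything is defined, i.e.\ there is a family of linear differential equations over $R(x)$ whose generic-fibre differential Galois group is $\Gal(E/F_0)$, and a morphism of group schemes over $R$ specializing to $G\sar \Gal(E/F_0)$.

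First I would set up this spreading-out carefully: choose $R$ so that $\X=\spec(R)$ is a smooth irreducible affine $\bQ$-variety, the relevant linear differential equations, the torsors, the group schemes $\G\sar \mathcal{H}\sar\mathcal{L}$ over $\X$, and the embedding-problem data are all defined over $R$ and remain ``of the expected shape'' (reductive/unipotent parts, surjectivity, etc.) fibrewise, shrinking $\X$ as needed. Second, I would invoke the main specialization theorem: the set of $x\in\X$ at which the differential Galois group of the generic fibre specializes (without shrinking) to the differential Galois group of the fibre over $x$ is Zariski dense in $\X$; moreover the same should hold simultaneously for $E$ and $L$ and be compatible with the surjections, so one can pick such an $x$. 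Since $k$ is algebraically closed of infinite transcendence degree over a \emph{countable} subfield, and $\X(k)$ is Zariski dense, one can in fact choose a $k$-point $x\colon R\to k$ of $\X$ lying in this dense ``good'' locus. Specializing at $x$ then reproduces, now over $k$, an isomorphic copy of the original differential embedding problem over $(k(x),\frac{d}{dx})$, but one whose Picard--Vessiot extensions are defined over a differential subfield $(k_0(x),\frac{d}{dx})$ with $k_0$ of \emph{finite} transcendence degree over $\bQ$ inside $k$.

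Third, having landed in the finite-transcendence-degree world, I would apply the previously established cases of Matzat's conjecture — specifically the result of \cite{BachmayrHarbaterHartmannWibmer:TheDifferentialGaloisGroupOfRationalFunctionField} (or \cite{Wibmer:SubgroupsOfFreeProalgebraicGroupsAndMatzatsConjecture}), which solve differential embedding problems over $(k(x),\frac{d}{dx})$ when $k$ has \emph{infinite} transcendence degree — after first enlarging $k_0$ to an algebraically closed field $\widetilde{k}\subseteq k$ of infinite (even countable-infinite) transcendence degree over $\bQ$ that still contains all the data. This gives a solution $\widetilde{M}\supseteq\widetilde{L}$ of the specialized embedding problem over $(\widetilde k(x),\frac{d}{dx})$. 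Base-changing back up along $\widetilde k\hookrightarrow k$ is harmless for differential Galois groups over an algebraically closed field of constants, so we obtain a solution over $(k(x),\frac{d}{dx})$ of the embedding problem isomorphic to the specialized one. Finally, I would transport this solution back along the isomorphism between the specialized problem and the original one, yielding a solution of the original differential embedding problem of finite type over $(k(x),\frac{d}{dx})$, which is what Theorem A asserts.

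The main obstacle I anticipate is not the soft reduction steps but the specialization theorem itself and, more precisely, ensuring that specialization can be performed \emph{simultaneously and compatibly} for the tower $E\supseteq L\supseteq F_0$ together with the surjection $G\sar\Gal(E/F_0)$: one needs the fibre diagram $\G_x\sar\mathcal H_x\sar\mathcal L_x$ to remain a genuine embedding problem (all maps still surjective, $\mathcal H_x=\Gal(E_x/k(x))$, $\mathcal L_x=\Gal(L_x/k(x))$), and this requires the density statement to hold on the intersection of several ``good'' loci and to be robust under the constraints imposed by the embedding-problem structure. Controlling that the generic differential Galois group does not drop on a dense set, rather than merely on a nonempty open set — i.e.\ the analogue of Néron's specialization theorem in the differential setting — is the technical heart of the argument and is presumably what the bulk of the paper (the specialization theorem quoted in the abstract) is devoted to establishing.
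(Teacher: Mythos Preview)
Your argument has the direction of specialization reversed, and this causes it to fail precisely in the cases the theorem must cover. You spread out the \emph{embedding problem} over a finitely generated $\bQ$-subalgebra $R\subseteq k$ and then specialize at a $k$-point; but the embedding problem was already defined over $k$, so this maneuver gains nothing. Worse, the specialized problem at a $k$-point $x\colon R\to k$ other than the original inclusion is a \emph{different} embedding problem (same abstract group data $G\twoheadrightarrow H$, but a different Picard--Vessiot ring in place of $L$), so there is no ``isomorphism between the specialized problem and the original one'' along which to transport a solution. Your step ``enlarge $k_0$ to an algebraically closed field $\widetilde{k}\subseteq k$ of infinite transcendence degree'' is where the argument collapses concretely: for $k=\overline{\bQ}$ no such $\widetilde{k}$ exists, and this is exactly the case left open by the previously known results.

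The paper's proof runs the other way. One first base-changes the problem \emph{up} from $k(x)$ to $k'(x)$ for some algebraically closed $k'\supseteq k$ of infinite transcendence degree (which always exists), solves the induced embedding problem there using \cite{BachmayrHartmannHarbaterPop:Large}, and then spreads out the \emph{solution} $S'/k'(x)$ as a differential $G_\B$-torsor $\SSS/\B[x]_f$ over a finitely generated $k$-subalgebra $\B\subseteq k'$. Because the original Picard--Vessiot ring $R$ is defined over $k$, its spread-out $\R=R_0\otimes_{k[x]_h}\B[x]_f$ is \emph{constant} along $\X=\spec(\B)$: for every $c\in\X(k)$ one has $\R\otimes_{\B[x]_f}k(x)=R$. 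The equivariant morphism $\R\to\SSS$ then specializes to $R\to S^c$, and the specialization theorem (Theorem~\ref{theo: main specialization}) supplies a $c$ for which $S^c$ is Picard--Vessiot with Galois group $G$; the commuting coaction diagram shows $S^c$ solves the \emph{original} embedding problem. The key conceptual point you are missing is that one must specialize the solution, not the problem, and one must work over a $k$-algebra (not a $\bQ$-algebra) so that the given data stay fixed under every specialization.
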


Theorem A was proved in \cite{BachmayrHartmannHarbaterPop:Large} under the assumption that $k$ has infinite transcendence degree. To remove this unnecessary assumption, we establish a specialization result for Picard-Vessiot rings and differential Galois groups that we deem of independent interest.
The most conceptual and useful formulation of our specialization result is in terms of differential torsors (Theorem \ref{theo: main specialization}). For clarity, we state here a simpler, more concrete, version of our specialization result.

Let $k\subseteq k'$ be an inclusion of algebraically closed fields of characteristic zero. A Picard-Vessiot ring $R/k'(x)$ (the differential analog of the splitting field of a polynomial) for a linear differential equation $\de(y)=Ay$ with $A\in k'(x)^{n\times n}$, is as a quotient $R= k'(x)[X,\frac{1}{\det(X)}]/\m$, 
where $X$ is an $n\times n$ matrix of indeterminates, $\de(X)=AX$ and $\m$ is a maximal differential ideal of $k'(x)[X,\frac{1}{\det(X)}]$.

Assume the ideal $\m$ is generated by $p_1,\ldots,p_m$. Choose a finitely generated $k$-subalgebra $\B$ of $k'$ and a monic polynomial $f\in \B[x]$ such that $A\in \B[x]_f^{n\times n}$ and $p_1,\ldots,p_m\in \B[x]_f[X,\frac{1}{\det(X)}]$. For $\X=\spec(\B)$, a specialization $c\in \X(k)=\Hom_k(\B,k)$ extends to a map $c\colon \B[x]_f\to k(x)$ by $c(x)=x$. In particular, applying $c$ to the coefficients of $A$ yields a matrix $A^c\in k(x)^{n\times n}$ and applying $c$ to the coefficients of a $p_i$ yields a $p_i^c\in k(x)[X,\frac{1}{\det(X)}]$.

\begin{theoremB}[Corollary \ref{cor: algebraic relations preserved}] \label{theo: B}
The set of all $c\in \X(k)$ such that $\m^c=(p_1^c,\ldots,p_m^c)$ is a maximal differential ideal of $k(x)[X,\frac{1}{\det(X)}]$, i.e., $R^c=k(x)[X,\frac{1}{\det(X)}]/\m^c$ is a Picard-Vessiot ring for $\de(y)=A^cy$, is Zariski dense in $\X(k)$.
\end{theoremB}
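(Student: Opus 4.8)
The plan is to deduce Theorem B from the (yet unseen) torsor-theoretic specialization result, but let me sketch a self-contained approach that I would pursue if starting from scratch, since the statement is essentially a constructibility-plus-density argument.

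\medskip

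\textbf{Step 1: Reduce to showing the ``good'' locus is constructible and contains a dense set.} First I would observe that it suffices to show the set $U\subseteq\X(k)$ of ``good'' specializations $c$ is Zariski dense, and that a natural strategy is to show $U$ contains the $k$-points of a non-empty open subscheme, or at least that its complement is contained in a proper closed subset. The maximality of $\m^c$ as a differential ideal is equivalent to: (a) $R^c=k(x)[X,\tfrac1{\det(X)}]/\m^c$ is non-zero and has no non-trivial differential ideals, and (b) the constants of $\Frac(R^c)$ equal $k$. The key dimension-theoretic handle is that a differential ideal $\m$ is maximal (PV) if and only if $\trdeg(\Frac(R)/K(x)) = \dim \gal$, where $\gal$ is the differential Galois group of the generic equation, and moreover $R$ is a domain; the ``generic'' version of this over $\B[x]_f$ will specialize well on a dense (in fact open) set.

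\medskip

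\textbf{Step 2: Spreading out the generic Picard-Vessiot ring.} Enlarging $\B$ if necessary (a harmless operation since $\spec$ of a larger algebra maps dominantly, so density downstairs follows from density upstairs), I would arrange that $R$ itself, viewed over $\B[x]_f$, is a finitely presented $\B[x]_f$-differential algebra that is \emph{flat} over $\B$, that $\B[x]_f[X,\tfrac1{\det(X)}]/(P_1,\dots,P_m)$ is $\B$-flat with geometrically integral fibres of constant relative dimension $d=\dim\gal$ over $\spec(\B)$, and that the ring of constants of its fraction field is ``constant'' along $\X$ in the sense that no new constants appear after any specialization. Generic flatness (Grothendieck) and the constructibility of the loci where fibres are geometrically integral / of fixed dimension (EGA IV) give that all these conditions hold on a non-empty open $\X'\subseteq\X$. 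The one genuinely differential-algebraic input is controlling the constants: one uses that the constants of $R$ over $K(x)$ are just $K$ (since $R$ is PV), that $R$ is generated over $K(x)$ by finitely many elements satisfying explicit differential-polynomial equations with coefficients in $\B[x]_f$, and that the statement ``$\sum b_i r_i \in K$ forces the $b_i$-relation'' spreads out.

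\medskip

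\textbf{Step 3: Fibrewise verification and conclusion.} For $c\in\X'(k)$ I would then check directly that $R^c$ is a domain (geometrically integral fibre), that $\trdeg(\Frac(R^c)/k(x))=d=\dim\gal^c$ where $\gal^c\le\gal_{k}$ is the differential Galois group of $\de(y)=A^cy$ — here one needs $\dim\gal^c\le d$, which follows because $R^c$ (being a non-zero differential quotient of $k(x)[X,\tfrac1{\det(X)}]$) surjects onto a PV ring for $\de(y)=A^cy$, and $\dim\gal^c\ge d$ from the $\trdeg$ computation — and finally that the constants of $\Frac(R^c)$ are $k$, using the spread-out constant-control from Step 2. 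These three facts together force $\m^c$ to be a maximal differential ideal: a domain differential quotient with the correct transcendence degree and no new constants is the coordinate ring of a torsor under $\gal^c$, hence a PV ring, hence $\m^c$ is maximal. Since $\X'(k)$ is Zariski dense in $\X(k)$ (as $k=\bar k$ and $\X'$ is a non-empty open subscheme), this proves the theorem.

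\medskip

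\textbf{Main obstacle.} The delicate point is Step 2's control of constants: a priori, specialization could create new constants in $\Frac(R^c)$ — this is exactly the phenomenon that makes $\m^c$ fail to be maximal even when $R^c$ is a domain of the right dimension. Handling it cleanly is precisely why the paper passes to the language of \emph{differential torsors} and proves the torsor-theoretic statement (Theorem \ref{theo: main specialization}): there, ``being a torsor under the generic group'' is a geometric condition that spreads out by standard constructibility arguments, and the no-new-constants condition is absorbed into the torsor structure. So in the actual writeup I would simply invoke Theorem \ref{theo: main specialization} (equivalently, its Corollary \ref{cor: algebraic relations preserved}) and translate: the $P_i^c$ generating a maximal differential ideal is precisely the statement that the specialized differential torsor remains a torsor under the specialized Galois group, which holds on a dense set.
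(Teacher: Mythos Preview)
Your final paragraph is right: the paper deduces Theorem~B from the main specialization theorem (Theorem~\ref{theo: main specialization}), and the translation you sketch there is essentially the content of Corollary~\ref{cor: algebraic relations preserved}. The issue is that your Steps~1--3 are not merely a less clean alternative---they outline an approach that cannot succeed.

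The fatal point is your claim in Step~2 that ``no new constants appear after any specialization'' on a non-empty Zariski open $\X'\subseteq\X$. This is false: the set of good specializations need not contain any non-empty Zariski open, and is in general not even constructible. The paper gives the minimal counterexample in the introduction: for $\de(y)=\tfrac{\alpha}{x}y$ with $\B=k[\alpha]$, $\X=\mathbb{A}^1_k$, the generic PV ring is $K(x)[Y,Y^{-1}]$ with $\m=0$; for every $c\in\X(k)$ the fibre $R^c=k(x)[Y,Y^{-1}]$ is a domain of the correct transcendence degree, yet for $c\in\mathbb{Q}$ it acquires the new constant $Y^q/x^p$ (writing $c=p/q$) and fails to be $\de$-simple. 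The good locus $k\smallsetminus\mathbb{Q}$ is Zariski dense but contains no Zariski open. So your spreading-out argument for the constants cannot work, and no amount of generic flatness or EGA~IV constructibility will repair it: $\de$-simplicity is genuinely not a constructible condition on the base.

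This is exactly why the paper introduces the finer ad$\times$Jac ``topology'' and proves (via Hrushovski's proto-Galois bound and the logarithmic-independence criterion, Appendices~A and~B) that the good locus contains an ad$\times$Jac-open subset; Zariski density then comes from the separate and nontrivial Theorem~\ref{theo: ad meets ab is dense}, which rests on a variant of N\'eron's specialization theorem. Your instinct that the torsor language ``absorbs'' the constants problem is misleading: the torsor structure specializes on a Zariski open (Lemma~\ref{lemma: R^c}), but the $\de$-simplicity of the fibre does not---that is the whole content of Theorem~\ref{theo: main specialization}, and it is not a standard constructibility argument.
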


So, roughly speaking, Theorem B asserts that, for many specializations, the algebraic relations among the solutions of the specialized equation $\de(y)=A^cy$ are exactly the specializations of the algebraic relations among the solutions of the generic equation $\de(y)=Ay$. We note that already the existence of a single ``good'' specialization in Theorem B is a nontrivial problem.

The behavior of differential Galois groups and Picard-Vessiot rings under specialization has been studied by various authors in various different settings (\cite{Goldman:SpecializationAndPicardVessiotTheory}, \cite[Section~2.4]{Katz:ExponentialSumsAndDifferentialEquations}, \cite{Singer:ModuliOfLinearDifferentialEquations}, \cite[Section V]{Hrushovski:ComputingTheGaloisGroupOfaLinearDifferentialEequation}, \cite{Andre:SurLaConjectureDesPCourburesDeGrothendieckKatzEtUnProblemeDeDwork} \cite{BerkenboschVanDerPut:FamiliesOfLinearDifferentialEquationsOnTheProjectiveLine}, \cite{DosSantos:TheBehaviourOfTheDifferentialGaloisGroupOnTheGenericAndSpecialFibres}, 
\cite[Section 1.6]{Davy:Specialization}, \cite{Seiss:RootParametrizedDifferentialEquationsForFheClassicalGroups}, \cite{RobertzSeiss:NormalFormsInDifferentialGaloisTheoryForTheClassicalGroups}). Unfortunately, these results are not general or precise enough to be applicable to a proof of Matzat's conjecture.

It is well-known that the set of good specializations in Theorem B may not contain a Zariski open subset, in particular it is in general not Zariski constructible. The easiest example illustrating this point is the differential equation $\de(y)=\frac{\alpha}{x}y$ with solution $x^\alpha$, which is transcendental for $\alpha\in k\smallsetminus\mathbb{Q}$ but algebraic for $\alpha\in\mathbb{Q}$.

As in this example, despite the lack of generic behavior with respect to the Zariski topology, the set of good specializations is expected to be ``large'' and not merely Zariski-dense. To confirm this expectation, we use, following \cite{Hrushovski:ComputingTheGaloisGroupOfaLinearDifferentialEequation}, commutative group schemes of finite type over $\B$ to define ``open'' subsets of $\X(k)$.
An ad-open subset (the ad refers to the additive group $\Ga$) of $\X(k)$ is of the form $\{c\in \X(k)|\ c \text{ is injective on } \Gamma\}$, where $\Gamma$ is a finitely generated subgroup of $\Ga(\B)=(\B,+)$ and $\X(k)$ is identified with $\Hom_k(\B,k)$. For example, $k$ minus a finitely generated $\mathbb{Q}$\=/subspace of $k$, is an ad-open subset of $k=\A^1(k)$. Note that in case $\Gamma$ is generated by a single element $h$, the corresponding ad-open subset is the Zariski open subset of $\X(k)$ where $h$ does not vanish. We can thus think of ad-open subsets as a generalization of Zariski open subsets. Ad$\times$Jac-open subsets of $\X(k)$ are defined roughly similar but with the additive group replaced by a product of the additive group with a Jacobian variety. (For a precise formulation see Definition~\ref{defi: ab-open subset} below.)
For example, 
\begin{equation}
\label{eq: adxjac}
\{\alpha\in k|\ 256\alpha^3-27\neq 0 \text{ and } (0,1) \text{ is not torsion on the elliptic curve } y^2=x^3-4\alpha x+1\}
\end{equation}
is an ad$\times$Jac-open subset of $k=\mathbb{A}^1(k)$.

A more detailed version of Theorem B, confirming the expectation that the set of good specializations is large, states that the set of good specializations contains an ad$\times$Jac-open subset of $\X(k)$ (Theorem \ref{theo: main specialization}). Moreover, if the differential Galois group of $
\de(y)=A y$ (over $k'(x)$) is connected, then the Jacobian variety is not needed, i.e., the set of good specializations contains an ad-open subset (Corollary \ref{cor: no Jac if connected}).

We note that a result similar to Theorem B but for finite difference equations, i.e., for the operator $k(x)\to k(x),\ h(x)\mapsto h(x+1)$ instead of the derivation $\frac{d}{dx}$, was established in \cite{Feng:DifferenceGaloisGrousUnderSpecialization}. There Jacobian varieties are not needed, essentially because $k(x)$ does not have nontrivial finite difference field extensions.
 (In our context, the Jacobian varieties arise via Jacobians of curves associated to finite field extensions of $k(x)$, which are of course differential fields.)  However, in \cite{Feng:DifferenceGaloisGrousUnderSpecialization} one also requires subsets that are ``open'' with respect to the multiplicative group. These occur in the step when one has to determine the multiplicative independence of rational functions. In our context, this step is replaced by determining the logarithmic independence of algebraic functions which leads to ad$\times$Jac\=/open subsets.

 Our proof of Theorem B owes a lot to \cite{Hrushovski:ComputingTheGaloisGroupOfaLinearDifferentialEequation}. In particular, it relies on the ideas underlying Hrushovski's algorithm for computing the differential Galois group of a linear differential equation. 
 Compared to the specialization results in \cite[Section V]{Hrushovski:ComputingTheGaloisGroupOfaLinearDifferentialEequation}, our result is more general and more precise: In \cite{Hrushovski:ComputingTheGaloisGroupOfaLinearDifferentialEequation}  open subsets with respect to arbitrary commutative algebraic groups are required, whereas for us, the algebraic group can be chosen to be a direct product of the additive group with a Jacobian variety. If the generic differential Galois group is connected, even the additive group alone is sufficient for us. Moreover, many of the statements in \cite[Section V.A]{Hrushovski:ComputingTheGaloisGroupOfaLinearDifferentialEequation} are restricted to the case that $\B$ has Krull dimension one and $k=\overline{\mathbb{Q}}$.

 \medskip

 Historically, a prime force for the development of differential algebra and differential Galois theory, was the desire to understand when an indefinite integral or differential equation can be solved ``by quadratures''. Of course, making the notion of ``solving'' rigorous was part of the problem.
As explained in the beautiful survey \cite{Zannier:ICM2014}, when studied in families, this somewhat old-fashioned topic of solving, is closely connected to some modern problems in the realm of unlikely intersections.  

Given an indefinite integral of an algebraic function depending on a parameter, or a linear differential equation depending on a parameter, an important question (asked in \cite[p. 550]{Zannier:ICM2014}) is the following:

\medskip

\begin{centering}
\emph{
For which values of the parameter can the solutions be expressed within some
prescribed class, assuming this can't be done for the generic solution?
}
\end{centering}

\medskip

The expectation is that this set of ``exceptional'' parameter values is small (albeit not necessarily finite). For example, in \cite{MasserZannier:TorsionPointsPellEquationAndIntegrationInElementaryTerms}, it was shown that for a parametric family of differentials on an algebraic curve, which cannot be integrated in elementary terms at the generic fibre, the set of specializations such that the specialized differential can be integrated in elementary terms is small, in fact, finite in many circumstances.

Based on our specialization result (Theorem \ref{theo: main specialization}) we can answer the above question for classes of functions that are amenable to differential Galois theory in the sense that solvability of a linear differential equation in the class of functions can be characterized through a property of the differential Galois group. For example, for the widely used class of Liouvillian functions we obtain:   

\begin{corB1}[Corollary \ref{cor: Liouvillian solutions preserved}] \label{cor: B1}
	Assume that the differential equation $\de(y)=A y$ (over $k'(x)$) does not have a basis of solutions consisting of Liouvillian functions, then the set of specializations $c\in\X(k)$ such that the differential equation $\de(y)=A^cy$ (over $k(x)$) has a basis of solutions consisting of Liouvillian functions, is contained in an ad$\times$Jac-closed subset of $\X(k)$. 
\end{corB1}
Moreover, if the differential Galois group of $\de(y)=Ay$ (over $k'(x)$) is connected, then already an ad-closed subset suffices. Here a subset of $\X(k)$ is called ad$\times$Jac-closed (or ad-closed) if its complement is ad$\times$Jac-open (or ad-open). For example, a finitely generated $\mathbb{Q}$-subspace of $k$ is an ad-closed subset of $k=\mathbb{A}^1(k)$.

As an illustration of Corollary B1, consider Bessel's differential equation
$$\de^2(y)+\tfrac{1}{x}\de(y)+(1-(\tfrac{\alpha}{x})^2)y=0.$$
It has a basis of solutions consisting of Liouvillian functions if and only if $\alpha\notin \frac{1}{2}+\mathbb{Z}$. The exceptional set $\frac{1}{2}+\mathbb{Z}$ is contained in $\mathbb{Q}$, an ad-closed subset of $k$. 
As predicted by the general theory, in this case, the Jacobian variety is not needed because the differential Galois group at the generic fibre (i.e., the differential Galois group of Bessel's equation over $k'(x)$, where $k'$ is the algebraic closure of the rational function field $k(\alpha)$) is $\Sl_2$, which is connected.

If we choose as class of functions, the class of all algebraic functions, we obtain an equicharacteristic zero version of the Grothendieck-Katz conjecture (cf. \cite[Cor. to Prop. 5.1]{Hrushovski:ComputingTheGaloisGroupOfaLinearDifferentialEequation} and \cite[Prop. 7.1.1]{Andre:SurLaConjectureDesPCourburesDeGrothendieckKatzEtUnProblemeDeDwork}): 

\begin{corB2}[Corollary \ref{cor: algebraic solutions preserved}]
	Assume that the differential equation $\de(y)=A y$ (over $k'(x)$) does not have a basis of solutions consisting of algebraic functions, then the set of all specializations $c\in\X(k)$ such that the differential equation $\de(y)=A^cy$ (over $k(x)$) has a basis of solutions consisting of algebraic functions, is contained in an ad$\times$Jac-closed subset of $\X(k)$.
\end{corB2}
Again, if the differential Galois group of $\de(y)=Ay$ (over $k'(x)$) is connected, then already an ad-closed subset suffices.
As the intersection of two ad$\times$Jac-open subsets of $\X(k)$ is nonempty (Remark \ref{rem: finite intersection of ad and ab is dense}), Corollary B2 can also be reformulated in the maybe more familiar spirit of a local to global principle: If the set of all $c\in \X(k)$ such that $\de(y)=A^cy$ has a basis of solutions consisting of algebraic functions, contains an ad$\times$Jac-open subset (e.g., a nonempty Zariski open subset), then  $\de(y)=A y$ has a basis of solutions consisting of algebraic functions.

\medskip

Theorem B (and its more detailed variant Theorem \ref{theo: main specialization}) can be used to transfer results from a single algebraically closed field of constants (e.g., the field $\C$ of complex numbers) to an arbitrary algebraically closed field of constants. Our prime illustration of this fact is of course our proof of Theorem A. However, we also show how one can deduce a new and very short proof of the \emph{solution of the inverse problem} in differential Galois theory from Theorem \ref{theo: main specialization}. Recall that the solution of the inverse problem states that every linear algebraic group over $k$ is a differential Galois group over $k(x)$. Using analytic methods, namely Plemelj's (weak) solution of the Riemann-Hilbert problem, this was proved for $k=\mathbb{C}$ already in 1979 (\cite{TretkoffTretkoff:SolutionOfTheInverseProblem}). However, it took more than 25 years and contributions of many authors to finally solve the inverse problem for an arbitrary algebraically closed field $k$ of characteristic zero (\cite{Hartmann:OnTheInverseProblemInDifferentialGaloisTheory}).
Using Theorem \ref{theo: main specialization}, we can deduce the solution of the inverse problem from the solution over $\mathbb{C}$ rather directly. To the best of our knowledge, so far only two types of proofs of the solution of the inverse problem were known. Firstly, the proof from \cite{Hartmann:OnTheInverseProblemInDifferentialGaloisTheory}, secondly proofs that rely on patching. Of course, also Matzat's conjecture and Theorem A imply the solution of the inverse problem. However, these results ultimately rely on patching.

\medskip

We conclude the introduction with a more detailed outline of the paper. 
The larger part of the article (Sections~\ref{sec: Some topics in differential Galois theory} and \ref{sec: Specialization of differential torsors}) is concerned with the proof of the specialization theorem,  which states that the set of ``good'' specializations contains an ad$\times$Jac-open subset. This result would be vacuous if we do not know that ad$\times$Jac-open subsets are nonempty. While the Zariski denseness of ad-open subsets is readily available in the literature, the Zariski denseness of ad$\times$Jac-open subsets does not seem to be available in the generality required for the proof of Matzat's conjecture. In Section \ref{sec: adopen and abopen} we therefore provide a proof of the Zariski denseness of ad$\times$Jac-open subsets based on a variant of N\'{e}ron's specialization theorem and a geometric version of abstract Hilbert sets.

In Section \ref{sec: Some topics in differential Galois theory} we collect various constructions and results from differential Galois theory that are required for the proof of the specialization theorem, including the concept of differential torsors. Differential torsors provide a convenient framework to simultaneously study Picard-Vessiot rings and differential Galois groups under specialization in a compatible fashion. This compatibility is vital for the proof of Theorem A, where, in effect, we consider differential embedding problems under specialization. Roughly, a differential torsor is obtained by spreading out a Picard-Vessiot ring $R$ for $\de(y)=Ay$ (over $k'(x)$) with the action of the differential Galois group $G$, into a nice family $\R$ with an action of an affine group scheme $\G$. Our main specialization result, proved at the end of Section~\ref{sec: Specialization of differential torsors}, then states that there exists an ad$\times$Jac-open subset $\U$ of the parameter space $\X(k)$ such that $\R_c$ is a Picard-Vessiot ring with differential Galois group $\G_c$ for every $c\in \U$.

In the earlier parts of Section \ref{sec: Specialization of differential torsors} the various constructions and criteria from Section~\ref{sec: Some topics in differential Galois theory} are shown to be well-behaved under specialization. Roughly, the proof of the main specialization theorem relies on two intermediate specialization results corresponding to the two main steps of Hrushovski's algorithm. In the first main step of Hrushovski's algorithm one computes the $k'(x)$-algebraic relations among the entries of a fundamental solution matrix for $\de(y)=Ay$, $A\in k'(x)^{n\times n}$ up to a fixed predetermined degree $d=d(n)$. The corresponding specialization result (Theorem \ref{theo:basisunderspecialization}) is that, if $p_1,\ldots,p_m$ is a basis of the $k'(x)$-vector space of all algebraic relations of degree at most $d$ among the entries of a generic solution matrix, then there exists an ad-open subset $\U$ of $\X(k)$ such that $p_1^c,\ldots,p_m^c$ is a $k(x)$-basis of the vector space of all $k(x)$-algebraic relations of degree at most $d$ among the entries of a fundamental solution matrix for the specialized equation $\de(y)=A^cy$ for all $c\in \U$.

Roughly, the first step in Hrushovski's algorithm allows one to compute the differential Galois group up to a torus (and a finite group, which is however irrelevant for our purpose). In the second main step of Hrushovski's algorithm (``the toric part'') the central question is to decide the logarithmic independence of algebraic functions. Here, elements $f_1,\ldots,f_m\in F$ belonging to a finite field extension $F$ of $k'(x)$ are called \emph{logarithmically independent} over $F$ if a relation of the form
$$d_1f_1+\ldots+d_mf_m=\tfrac{\de(f)}{f}\quad  \text{ with } d_1,\ldots,d_m\in\mathbb{Z} \text{ and } f\in F^\times$$
implies $d_1,\ldots,d_m=0$.

 For $F=k'(x)$ this question can be settled as follows. An element of $k'(x)$ is a logarithmic derivative of an element of $k'(x)$ if and only if all its poles are simple with integer residues. Thus an inspection of the poles and residues of $f_1,\ldots,f_m$ will yield all possible values of $d_1,\ldots,d_m$. For $F/k'(x)$ finite, there exists a similar criterion, however, in this case, it involves the degree zero divisor class group of $F/k'$. This is exactly how the Jacobian varieties enter into the picture. The degree zero divisor class group of $F/k'$ can be identified with the group $J(k')$, where $J$ is the Jacobian of a smooth projective curve with function field $F/k'$.
The criterion for the logarithmic independence, thus relates the computation of the differential Galois group to the question if certain elements $\gamma_1,\ldots,\gamma_\ell\in J(K)$ are $\mathbb{Z}$-linearly independent. For example, if $a=\frac{x^2}{x^4+x+\alpha}$, the differential Galois group $G_\alpha$ of
$$\de^2(y)-\big(2a+\tfrac{\de(a)}{2a}\big)\de(y)+\big(a^2-a-\tfrac{\de(a)}{2}\big)y=0$$
is a subgroup of the group $G$ of $2\times 2$ monomial matrices. A sufficient criterion for $G_\alpha=G$, derived from the criterion for logarithmic independence, is that
the point $(0,1)$ is not a torsion point on the elliptic curve $y^2=x^3-4\alpha x+1$. This leads to the ad$\times$Jac-open subset in (\ref{eq: adxjac}).
The specialization result corresponding to the second step of Hrushovski's algorithm (Theorem \ref{theo: main Appendix B}) states that logarithmic independence of algebraic functions is preserved on an ad$\times$Jac-open subset.

Finally, in Section \ref{sec: Applications of the specialization theorem}, with the specialization theorem in hand, we prove all the results outlined in the introduction above, including Matzat's conjecture.


%

\medskip

 We are grateful to Sebastian Petersen for helpful comments on N\'{e}ron's specialization theorem. We are also thankful to the anonymous referee for helpful suggestions.

 \medskip
 
 \medskip
 
 \noindent {\bf \large Notation and conventions:} All rings are assumed to be commutative and unital. All fields are assumed to be of characteristic zero.
 The group of multiplicative units of a ring $\B$ is denoted by $\B^\times$.
 
 If $\X$ is a scheme over $\B$ and $\B\to \B'$ is a morphism of rings, then $\X_{\B'}$ denotes the scheme over $\B'$ obtained by base change via $\B\to \B'$. A similar notation applies to morphisms of schemes.
 For an affine scheme $\X$ over $\B$ we write $\B[\X]$ for the ring of global sections of $\X$ so that $\X=\spec(\B[\X])$. We sometimes abbreviate $\B'[\X_{\B'}]$ to $\B'[\X]$.
 
 For a ring $\B$, the polynomial ring $\B[x]$ over $\B$ in the variable $x$ is always considered as a differential ring with respect to the derivation $\frac{d}{dx}$. Similarly, the field of rational functions in $x$ (over some field of constants) is always considered as a differential field  with respect to  $\frac{d}{dx}$.
 We use $R^\de=\{r\in R|\ \de(r)=0\}$ to denote the constants of a differential ring $(R,\de)$.
  With $X=(X_{ij})_{1\leq i,j\leq n}$ we always denote an $n\times n$ matrix of indeterminates (over practically any ring that is around). It is the matrix of coordinate functions on $\Gl_n$.
 
 For an algebra $\B$ over a field $k$ and a prime ideal $\p$ of $\B$ we denote the residue field at $\p$ with $k(\p)$. If $\B$ is an integral domain, we write $k(\B)$ for the field of fractions of $\B$. If $\X$ is an integral scheme over $k$, we write $k(\X)$ for the function field of $\X$. With $\overline{K}$ we denote the algebraic closure of a field $K$.
 As a general rule, we use calligraphic letters (like $\cA, \mathcal{B},\mathcal{E},\mathcal{X}$,$\ldots)$ when thinking about objects that vary in a family.

 {\bf Throughout this article $k$ is an algebraically closed field of characteristic zero.} By ``algebraic group over $k$'' we mean a (not necessarily linear) group scheme of finite type over $k$.
 Algebraic groups, or more generally group schemes, are often identified with their functor of points.
 By a ``closed subgroup'' of a group scheme we mean a closed subgroup scheme. A ``variety'' is a geometrically integral separated scheme of finite type over a field.

\section{Ad$\times$Jac-open sets} \label{sec: adopen and abopen}

\medskip

As explained in the introduction, our main goal is to show that, for a linear differential equation depending on parameters, the set of parameter values under which the algebraic properties of the solutions are preserved is ``large''. In this section, we make precise the meaning of ``large'' by introducing ad$\times$Jac-open sets following \cite{Hrushovski:ComputingTheGaloisGroupOfaLinearDifferentialEequation} and \cite{Feng:DifferenceGaloisGrousUnderSpecialization}.
Our main specialization theorem (Theorem \ref{theo: main specialization}) states that the set of good specializations contains an ad$\times$Jac-open subset. This result would of course be useless if it is not known that an ad$\times$Jac-open subset is nonempty.

The main result of this section is that an ad$\times$Jac-open subset is not only nonempty but in fact Zariski dense (Theorem \ref{theo: ad meets ab is dense}). It was already shown in \cite{Feng:DifferenceGaloisGrousUnderSpecialization} that the intersection of an ad-open set with an open set with respect to the multiplicative group is Zariski dense. 
So most of our work here is focused on the case of abelian varieties. For related but weaker results see \cite[Section V.A]{Hrushovski:ComputingTheGaloisGroupOfaLinearDifferentialEequation}.

\medskip

Throughout Section \ref{sec: adopen and abopen} we assume that
\begin{itemize}
	\item $k$ is an algebraically closed field of characteristic zero;
	\item $\B$ is a finitely generated $k$\=/algebra that is an integral domain and
	\item $\X=\spec(\B)$.
\end{itemize}

\subsection{Open sets defined by commutative group schemes}
\label{subsec: Open subsets defined by commutative group schemes}

Let $\cE$ be a commutative group scheme of finite type over $\B$. For every $c\in \X(k)$ we have a specialization morphism 
$$\s_c\colon\cE(\B)\to \cE(k),$$
given by applying $\cE$, considered as a functor on the category of $\B$-algebras, to the morphism $c\colon\B\to k$. For a finitely generated subgroup $\Gamma$ of $\cE(\B)$ we set
$$W_\X(\cE,\Gamma)=\{c\in\X(k)|\ \s_c \text{ is injective on } \Gamma\}.$$  
The subsets of $\X(k)$ of the form $W_\X(\cE,\Gamma)$ satisfy the axioms for a basis of a topology: $\X(k)=W_\X(\cE,1)$ and 
\begin{equation} \label{eq: intersection of ads}
	W_\X(\cE_1,\Gamma_1)\cap W_\X(\cE_2,\Gamma_2)=W_\X(\cE_1\times \cE_2, \Gamma_1\times\Gamma_2).
\end{equation}
This statement remains true if we restrict the $\cE$'s to belong to a class of group schemes that is closed under taking products, such as the class of all abelian schemes.

We can thus consider the topology of $\X(k)$ generated by such a basis. The point of this construction is that, while families of linear differential equations parametrized by $\X$ do not exhibit generic behaviour with respect to the Zariski topology, they may exhibit generic behaviour with respect to such a finer topology. If we allow more $\cE$'s in our class, the corresponding topology gets finer and so the genericity statement we hope to prove gets weaker. It is therefore vital to restrict the shape of the allowed $\cE$'s as much as possible. In this context, it is then irrelevant whether or not the sets of the form $W_\X(\cE,\Gamma)$ with $\cE$ an allowed group scheme still form a basis of a topology. We note however, that already a genericity result with respect to the discrete topology is not easily obtained, i.e., in the context of Theorem~B from the introduction, it is nontrivial to prove the existence of a single good specialization. 
In fact, we are not aware of a proof of the existence of a single good specialization that does not go through Theorem \ref{theo: ad meets ab is dense}.

We next discuss the shape that we will allow for the possible $\cE$'s.

\begin{defi}
	A subset $\U$ of $\X(k)$ is \emph{ad-open} if it is of the form $\U=W_\X(\Ga,\Gamma)$, for some finitely generated subgroup $\Gamma$ of $\Ga(\B)=(\B,+)$. 
\end{defi}

In other words, a subset $\U$ of $\X(k)$ is ad-open if it is open with respect to the additive group scheme $\Ga$ over $\B$.
As we are in characteristic zero, the group $\Gamma\leq (\B,+)$ is torsion free and thus a finite free $\mathbb{Z}$-module. If follows that the ad-open subsets of $\X(k)$ consist of exactly those specializations that preserve the $\mathbb{Z}$-linear independence of a finite family of elements of $\B$.

Note that if $\Gamma$ is generated by a single nonzero element $b\in \B$, then $$W_\X(\Ga,\Gamma)=\{c\in \X(k)|\ c(b)\neq 0\}$$ is the basic Zariski open subset of $\X(k)$ where $b$ does not vanish. In this sense, the ``topology'' generated by the ad-open sets is finer than the Zariski topology.

\begin{ex}
	Let $\B=k[\alpha]$ be a univariate polynomial ring over $k$ so that $\X=\A^1_k$ and $\X(k)=k$. For $\mathbb{Q}$-linearly independent elements $\lambda_1,\ldots,\lambda_n\in k$, let $\Gamma$ be the subgroup of $(\B,+)$ generated by $\lambda_1,\ldots,\lambda_n$ and $\alpha$. Then $W_\X(\Ga,\Gamma)=k\smallsetminus V$ consists of all elements of $k$ that are not contained in the $\mathbb{Q}$-vector space $V$ generated by $\lambda_1,\ldots,\lambda_n$.
\end{ex}

\begin{rem} \label{rem: intersection of ad-opens}
	If $\Gamma_1,\ldots,\Gamma_n$ are finitely generated subgroups of $(\B,+)$ and $\Gamma$ is the subgroup generated by $\Gamma_1,\ldots,\Gamma_n$, then $W_\X(\Ga,\Gamma)\subseteq W_\X(\Ga,\Gamma_1)\cap\ldots\cap W_\X(\Ga,\Gamma_n)$. Thus a finite intersection of ad-open subsets of $\X(k)$ contains an ad-open subset.
\end{rem}

For the specialization arguments to be carried out in Section \ref{sec: Specialization of differential torsors}, it is often important to be able to enlarge the $k$\=/algebra $\B$ by adjoining finitely many elements from $\overline{k(\B)}$, because certain properties of the differential equation at the generic fibre may only manifest themselves over the algebraic closure $\overline{k(\B)}$ of the field of fractions of $\B$. We therefore also need to consider subsets of $\X(k)$ that are the form $\f(W_{\X'}(\cE,\Gamma))$, where $\X'=\spec(\B')$ with $\B\subseteq\B'\subseteq \overline{k(\B)}$ and $\B'$ is finitely generated over $\B$, $\cE$ is a commutative group scheme of finite type over $\B'$, $\Gamma$ is a finitely generated subgroup of $\cE(\B')$ and $\f\colon \X'(k)\to \X(k)$ is the morphism induced by the inclusion $\B\subseteq \B'$.

We call an inclusion of integral domains $\B\subseteq\B'$ \emph{algebraic} if the field extension $k(\B')/k(\B)$ is algebraic.

The following lemma is a slight variation of
\cite[Lemma 5A.1]{Hrushovski:ComputingTheGaloisGroupOfaLinearDifferentialEequation}. It shows that for ad-open subsets it is not necessary to consider these more general subsets.

\begin{lemma}
	\label{LM:extensionsofbasicopensubsets}
	Let $\B\subseteq\B'$ be an inclusion of integral domains such that $\B'$ is finitely generated and algebraic over $\B$. Set $\X'=\spec(\B')$ and let $\f\colon  \X'(k)\to\X(k)$ be the morphism induced by the inclusion $\B\subseteq\B'$. If $\U'$ is an ad-open subset $\X'(k)$, then there exists an ad-open subset $\U$ of $\X(k)$ such that $\U\subseteq \f(\U')$ and $\f^{-1}(\U)\subseteq \U'$. 
\end{lemma}
\begin{proof}
	Let $\U'=W_{\X'}(\Ga, \Gamma')$, where $\Gamma'\leq (\B',+)$ is generated by $\gamma'_1,\dots,\gamma'_n$. Let $M$ be the Galois closure of $k(\B')/k(\B)$ and let $T, y_1,\dots,y_n$ be indeterminates. Set
	$$
	p(T,y_1,\dots,y_n)=\prod_{g\in \gal(M/k(\B))}\left(T-\sum_{i=1}^n g(\gamma'_i)y_i\right)=\sum_{i=0}^{|\gal(M/k(\B))|}\left(\sum_{j=1}^{\ell_i} b_{i,j} \bfm_{i,j}\right)T^i,$$
	where $\bfm_{i,1},\dots,\bfm_{i,\ell_i}$ are all monomials in $y_1,\dots,y_n$ of degree $|\gal(M/k(\B))|-i$.
	Since $g(p)=p$ for every $g\in \gal(M/k(\B))$, all $b_{i,j}$ are in $k(\B)$. Write $b_{i,j}=\gamma_{i,j}/a$ where $\gamma_{i,j},a\in \B$ and $a\neq 0$. Let $b$ be a nonzero element of $\B$ such that any $c\in \X(k)$ with $c(b)\neq 0$ can be lifted to a $c'\in \X'(k)$. (Such a $b$ exists by Chevalley's theorem.) Set $\U=W_{\X}(\Ga, \Gamma)$, where $\Gamma$ is the subgroup of $\B$ generated by $b$ and all $\gamma_{i,j}$'s.
	
	Assume that $c\in \U$, $c'\in \X'(k)$ and $\f(c')=c$. We claim that $c'\in \U'$. For a contradiction, suppose $c'\notin \U'$. Then there exists a $\gamma'\in \Gamma'\setminus\{0\}$ such that $c'(\gamma')=0$. Write $\gamma'=\sum_{i=1}^n d_i\gamma'_i$ with $d_1,\dots,d_n\in \bZ$. Since $\gamma'\neq 0$, 
	$$
	a p(0,d_1,\dots,d_n)=a(-1)^{|\gal(M/k(\B))|}\prod_{g\in \gal(M/k(\B))} g(\gamma')\neq 0.
	$$
	Furthermore, $a p(\gamma',d_1,\dots,d_n)=0$ and
	$$
	a p(0,d_1,\dots,d_n)=a\sum_{j=1}^{\ell_0} b_{0,j}\bfm_{0,j}(d_1,\dots,d_n)=\sum_{j=1}^{\ell_0} \gamma_{0,j}\bfm_{0,j}(d_1,\dots,d_n)\in \Gamma.
	$$
	On the other hand, since $c'(\gamma')=0$, one has that
	\begin{align*}
		0&=c'\left( ap(\gamma',d_1,\dots,d_n)\right)=\sum_{i=0}^{|\gal(M/k(\B))|} c'\left(\sum_{j=1}^{\ell_i} \gamma_{i,j}\bfm_{i,j}(d_1,\dots,d_n)\right) c'(\gamma')^i \\
		&=c'\left(\sum_{j=1}^{\ell_0} \gamma_{0,j}\bfm_{0,j}(d_1,\dots,d_n)\right)=c'(ap(0,d_1,\dots,d_n))=c(ap(0,d_1,\dots,d_n)).
	\end{align*}
	This implies that $c$ is not injective on $\Gamma$, because $0\neq ap(0,d_1,\dots,d_n)\in \Gamma$, a contradiction. Therefore $\f^{-1}(\U)\subseteq  \U'$. 
	
	To see that $\U\subseteq \f(\U')$, it suffices to note that every $c\in \U$ lifts to a $c'\in \X'(k)$ because $c(b)\neq 0$ as $b\in\Gamma$.
\end{proof}

For group schemes, other than the additive group $\Ga$, we do not have a statement analogous to Lemma \ref{LM:extensionsofbasicopensubsets} at hand. Therefore, in this case, we need to consider algebraic extensions $\B'\supseteq\B$.

For simplicity, we call a group scheme $\cE$ over some integral domain $\B_0$ of \emph{abelian type} (or of \emph{Jacobian type}) if it is commutative, separated and of finite type over $\B_0$ such that the generic fibre $\cE\times_{\spec(\B_0)}\spec(K_0)$ is an abelian variety (or a Jacobian variety) over $K_0$, where $K_0$ is the field of fractions of $\B_0$.

\begin{lemma} \label{lemma: equivalence of topologies}
	Consider the following three kinds of subsets of $\X(k)$:
	\begin{enumerate}
		\item subsets of the form $\f(W_{\X'}(\Ga\times \cJ,\Gamma))$ with $\cJ$ of Jacobian type;
		\item subsets of the form $\f(W_{\X'}(\Ga,\Gamma_1)\cap W_{\X'}(\cJ,\Gamma_2))$ with $\cJ$ of Jacobian type;
		\item subsets of the form $W_\X(\Ga,\Gamma_1)\cap \f(W_{\X'}(\cJ,\Gamma_2))$ with $\cJ$ of Jacobian type.
	\end{enumerate}
	Then a subset of one kind contains a subset of any other kind. Here, in all of the above cases, $\X'=\spec(\B')$, with $\B'$ finitely generated and algebraic over $\B$ and $\f\colon \X'(k)\to \X(k)$ is induced by the inclusion $\B\subseteq\B'$.  
\end{lemma}
\begin{proof}
	As $W_{\X'}(\Ga,\Gamma_1)\cap W_{\X'}(\cJ,\Gamma_2)=W_{\X'}(\Ga\times\cJ,\Gamma_1\times\Gamma_2)$, a set of the second kind is in fact a set of the first kind. Conversely, given $\Gamma\leq \Ga(\B')\times \cJ(\B')$ let $\Gamma_1\leq\Ga(\B')$ denote the image of $\Gamma$ under $\Ga(\B')\times \cJ(\B')\to \Ga(\B')$ and $\Gamma_2\leq\cJ(\B')$ the image of $\Gamma$ under $\Ga(\B')\times \cJ(\B')\to \cJ(\B')$. Then $\Gamma\leq \Gamma_1\times \Gamma_2$ and so 
	$$W_{\X'}(\Ga,\Gamma_1)\cap W_{\X'}(\cJ,\Gamma_2)=W_{\X'}(\Ga\times\cJ,\Gamma_1\times\Gamma_2)\subseteq W_{\X'}(\Ga\times \cJ,\Gamma). $$
	Thus a subset of the first kind contains a subset of the second kind.
	
	If $W_\X(\Ga,\Gamma_1)\cap \f(W_{\X'}(\cJ,\Gamma_2))$ is a subset of the third kind, we may consider $\Gamma_1$ as a subgroup of $(\B',+)$ and then  $W_\X(\Ga,\Gamma_1)\cap \f(W_{\X'}(\cJ,\Gamma_2))= \f(W_{\X'}(\Ga,\Gamma_1)\cap W_{\X'}(\cJ,\Gamma_2))$. Thus a subset of the third kind is in fact a subset of the second kind.
	
	Finally, given a subset $\f(W_{\X'}(\Ga,\Gamma_1)\cap W_{\X'}(\cJ,\Gamma_2))$ of the second kind, there exists, by Lemma \ref{LM:extensionsofbasicopensubsets}, a finitely generated subgroup $\Gamma_0$ of $(\B,+)$ such that $W_\X(\Ga,\Gamma_0)\subseteq \f(W_{\X'}(\Ga,\Gamma_1))$ and $\f^{-1}(W_\X(\Ga,\Gamma_0))\subseteq (W_{\X'}(\Ga,\Gamma_1)$. We then have $$W_\X(\Ga,\Gamma_0)\cap \f(W_{\X'}(\cJ,\Gamma_2))\subseteq \f(W_{\X'}(\Ga,\Gamma_1)\cap W_{\X'}(\cJ,\Gamma_2)).$$	
	So a subset of the second kind contains a subset of the third kind.
\end{proof}

We now define the ``open'' subsets of $\X(k)$ that are the most relevant for us.

\begin{defi} \label{defi: ab-open subset}
	A subset $\U$ of $\X(k)$ is \emph{ab-open} (or \emph{Jac-open}) if
	it is of the form $\U=\f(W_{\X'}(\cE,\Gamma))$, where
	$\X'=\spec(\B')$ with $\B'$ containing $\B$ an integral domain finitely generated and algebraic over $\B$, $\cE$ is a group scheme over $\B'$ of abelian (or of Jacobian) type, $\Gamma$ is a finitely generated subgroup of $\cE(\B')$ and $\f\colon \X'(k)\to \X(k)$ is the morphism induced by the inclusion $\B\subseteq \B'$.
	
	A subset of $\X(k)$ is \emph{ad$\times$Jac-open} if it is the intersection of an ad-open subset with a Jac-open subset of $\X(k)$.
	
	A subset of $\X(k)$ is \emph{ad$\times$Jac-closed} (or \emph{ad-closed}) if its complement is ad$\times$Jac-open (or \emph{ad-open}). 
\end{defi}
In particular, a Jac-open subset is ab-open. We have used the third kind of subsets from Lemma \ref{lemma: equivalence of topologies} to define ad$\times$Jac-open subset. The lemma shows that we could have, more or less equivalently, also used the first or second kind of subsets in the definition.

\begin{lemma} \label{lemma: lift opens}
	Let $\B'$ be an integral domain containing $\B$ such that $\B'$ is finitely generated and algebraic over $\B$. Let $\f\colon \X'(k)\to \X(k)$ denote the morphism induced by the inclusion $\B\subseteq \B'$ and let $\U'$ be a subset of $\X'(k)$ that is either ad-open, ab-open, Jac-open or ad$\times$Jac\=/open, then $\f(\U')$ contains a subset $\U$ of $\X(k)$ that is of the same type as $\U'$.
\end{lemma}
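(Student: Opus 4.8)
The plan is to dispose of the four types of subsets one at a time, the first three by unwinding the definitions and the last by bootstrapping from the ad-open case.

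For an ad-open $\U'$ there is nothing to add: applying Lemma~\ref{LM:extensionsofbasicopensubsets} to the inclusion $\B\subseteq\B'$ (which is finitely generated and algebraic by hypothesis) directly yields an ad-open subset $\U$ of $\X(k)$ with $\U\subseteq\f(\U')$. For an ab-open or Jac-open $\U'$ I would unwind Definition~\ref{defi: ab-open subset}: write $\U'=\psi(W_{\X''}(\cA,\Gamma))$, where $\X''=\spec(\B'')$ with $\B''$ an integral domain containing $\B'$, finitely generated and algebraic over $\B'$, $\cA$ is a group scheme of abelian (resp.\ Jacobian) type over $\B''$, $\Gamma\leq\cA(\B'')$ is finitely generated, and $\psi\colon\X''(k)\to\X'(k)$ is induced by $\B'\subseteq\B''$. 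The only observations needed are that ``finitely generated and algebraic'' is transitive, so $\B''$ is finitely generated and algebraic over $\B$, and that $\f\circ\psi\colon\X''(k)\to\X(k)$ is precisely the morphism induced by $\B\subseteq\B''$; then
$$\f(\U')=(\f\circ\psi)\big(W_{\X''}(\cA,\Gamma)\big)$$
is itself an ab-open (resp.\ Jac-open) subset of $\X(k)$, and one takes $\U=\f(\U')$.

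The ad$\times$Jac-open case is the only one calling for an actual argument, and it is where I expect the (mild) main obstacle to be, since the image of an intersection need not equal the intersection of the images. Write $\U'=\U_1'\cap\U_2'$ with $\U_1'$ ad-open and $\U_2'$ Jac-open in $\X'(k)$. The key point is to exploit the \emph{two-sided} conclusion of Lemma~\ref{LM:extensionsofbasicopensubsets}: it gives an ad-open $\U_1\subseteq\X(k)$ with both $\U_1\subseteq\f(\U_1')$ and $\f^{-1}(\U_1)\subseteq\U_1'$. By the ab-open/Jac-open case just treated, $\U_2:=\f(\U_2')$ is Jac-open in $\X(k)$. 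Then $\U:=\U_1\cap\U_2$ is ad$\times$Jac-open, and I claim $\U\subseteq\f(\U')$: for $c\in\U_1\cap\U_2$, lift $c$ through $\U_2=\f(\U_2')$ to some $c'\in\U_2'$; since $\f(c')=c\in\U_1$, the preimage condition forces $c'\in\f^{-1}(\U_1)\subseteq\U_1'$, hence $c'\in\U_1'\cap\U_2'=\U'$ and $c=\f(c')\in\f(\U')$. The remaining points --- transitivity of ``finitely generated and algebraic'' and the functoriality of the induced-morphism construction --- are routine.
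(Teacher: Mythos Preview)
Your proof is correct and follows essentially the same approach as the paper. The only difference is cosmetic: for the ad$\times$Jac-open case the paper invokes Lemma~\ref{lemma: equivalence of topologies}, whereas you carry out directly the relevant step of that lemma's proof (namely, using the two-sided conclusion of Lemma~\ref{LM:extensionsofbasicopensubsets} to pass from $\f(\U_1'\cap\U_2')$ to $\U_1\cap\f(\U_2')$); the underlying argument is identical.
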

\begin{proof}
	The case of ad-open subsets follows from Lemma \ref{LM:extensionsofbasicopensubsets}. For the case of ab-open and Jac-open subsets it suffices to note that if $\B\subseteq \B'\subseteq \B''$ are such that $\B'$ is finitely generated and algebraic over $\B$ and $\B''$ is finitely generated and algebraic over $\B'$, then $\B''$ is finitely generated and algebraic over $\B$. The case of ad$\times$Jac-open subsets is similar but also uses Lemma \ref{lemma: equivalence of topologies}.
\end{proof}

\subsection{Hilbert sets} \label{subsec: Hilbert sets}

To show that ad$\times$Jac-open subsets are Zarisiki dense we will use Hilbert sets. We therefore, in this section, recall their definition and discuss the properties that are relevant for our proof.

\medskip

Throughout Section \ref{subsec: Hilbert sets} let $k_0$ be a field of characteristic zero and let $\B_0$ be a finitely generated $k_0$-algebra that is an integral domain.

\medskip

We begin by recalling the abstract definition of \emph{Hilbert sets} from \cite[Chapter 9, Section~5]{Lang:FundamentalsofDiophantineGeometry}. This is closely related to the notion of \emph{thin sets} (see e.g., \cite[Section 13.5]{FriedJarden:FieldArithmetic}) but restricted to the affine setting.

For an inclusion $\B_0\subseteq \B_0'$ of integral domains, an element $b'\in \B_0'$ is called \emph{algebraic} over $\B_0$ if $b'$ (considered as an element of the field of fractions of $\B_0'$) is algebraic over the field of fractions of $\B_0$, i.e., $b'$ satisfies a nonzero (not necessarily monic) univariate polynomial over $\B_0$. Moreover, $\B_0'$ is \emph{algebraic} over $\B_0$ if every element of $\B'$ is algebraic over $\B$, i.e., the field extension $k_0(\B_0')/k_0(\B_0)$ of the corresponding fields of fractions is algebraic.

Let $\B_0'$ be an integral domain containing $\B_0$ such that $\B_0'$ is algebraic over $\B_0$ and a finitely generated $\B_0$-algebra. The corresponding fields of fractions then form a finite field extension $k_0(\B_0')/k_0(\B_0)$.

Let $H(\B_0'/\B_0)$ denote the subset of $\spec(\B_0)$ consisting of all $\p\in\spec(\B_0)$ such that there exists a unique $\p'\in\spec(\B_0')$ lying above $\p$ and for this $\p'$ one has $[k_0(\p'):k_0(\p)]=[k_0(\B_0'):k_0(\B_0)]$.
A subset of $\spec(\B_0)$ of the form $H(\B'_0/\B_0)$ is called a \emph{basic Hilbert set}. 

Recall that a basic Zariski open subset of $\spec(\B_0)$ has the form $D(b)=\{\p\in\spec(\B_0)|\ b\notin  \p\}$ for some nonzero $b\in\B_0$. 
A \emph{Hilbert subset} of $\spec(\B_0)$ is a finite intersection of basic Hilbert sets with a basic Zariski open set.

\begin{ex}
	Let $p\in \B_0[y]$ be a univariate polynomial of positive degree such that $(p)\subseteq \B_0[y]$ is a prime ideal. Set $\B_0'=\B_0[y]/(p)$.
	Then $H(\B_0'/\B_0)$ consists of all $\p\in\spec(\B_0)$ such that the leading coefficient of $p$ does not lie in $\p$ and the image $\overline{p}$ of $p$ in $k_0(\p)[y]$ is an irreducible polynomial. To see this, note that the set of primes of $\B_0'$ lying above $\p$ can be identified with the spectrum of $\B_0'\otimes_{\B_0}k_0(\p)=k_0(\p)[y]/(\overline{p})$.
\end{ex}

For $k_0$ algebraically closed, the above notion of Hilbert subset is not really useful: If $\p$ is a closed point of $\spec(\B_0)$ (i.e., a maximal ideal) then $k_0(\p)=k_0$ has no finite extensions. Thus, if $\p$ belongs to some basic Hilbert set $H(\B_0'/\B_0)$, we must have $[k_0(\B_0'):k_0(\B_0)]=1$. So $k_0(\B_0')=k_0(\B_0)$ and  $\B_0'$ is contained in a localization of $\B_0$.

The following definition introduces a notion of Hilbert set that is useful when working over an algebraically closed field. Recall that (throughout Section \ref{sec: adopen and abopen}) $k$ is an algebraically closed field of characteristic zero, $\B$ is a finitely generated $k$-algebra that is an integral domain and $\X=\spec(\B)$.

\begin{defi}
	 A subset $\H$ of $\X(k)$ is a \emph{geometric Hilbert set} if there exists a subfield $k_0$ of $k$ finitely generated over $\mathbb{Q}$, a finitely generated $k_0$-subalgebra $\B_0$ of $\B$ such that the canonical map $\B_0\otimes_{k_0}k\to \B$ is an isomorphism and a Hilbert subset $H$ of $\X_0=\spec(\B_0)$ such that $\H$ is the inverse image of $H$ under $\X(k)\to \X\to \X_0$. 
	In case we need to be more specific, we call such an $\H$ a geometric \emph{$\B_0/k_0$-Hilbert set}.
\end{defi}

We will see below that geometric Hilbert sets are always Zariski dense in $\X(k)$. The following two lemmas will be needed to establish further good behavior of (geometric) Hilbert sets. 

\begin{lemma} \label{lemma: good generic behaviour}
	Let $\B_0'$ be an integral domain containing $\B_0$ such that $\B_0'$ is algebraic over $\B_0$ and a finitely generated $\B_0$-algebra. Then there exists a nonzero $b\in\B_0$ such that for every $\p\in D(b)$ 
	\begin{itemize}
		\item there exists a $\p'\in\spec(\B_0')$ lying above $\p$;
		\item the fibre $\B'_0\otimes_{\B_0}k_0(\p)$ is a $k_0(\p)$-vector space of dimension $[k_0(\B_0'):k_0(\B_0)]$;
		\item for every prime $\p'\in\spec(\B_0')$ lying above $\p$ one has $[k_0(\p'):k_0(\p)]\leq [k_0(\B_0'):k_0(\B_0)]$.
	\end{itemize}
\end{lemma}
\begin{proof}
	As $\B_0'$ is finitely generated algebraic over $\B_0$, there exists a nonzero $b_1\in \B_0$ such that $(\B'_0)_{b_1}$ is integral over $(\B_0)_{b_1}$. Since $(\B'_0)_{b_1}$ is finitely generated over $(\B_0)_{b_1}$, in fact  $(\B'_0)_{b_1}$ is finite over $(\B_0)_{b_1}$. By generic freeness (see e.g., \cite[\href{https://stacks.math.columbia.edu/tag/051S}{Tag 051S}]{stacks-project}) there exists a nonzero $b_2\in \B_0$ such that $(\B_0')_{b_1b_2}$ is not only finite over $(\B_0)_{b_1b_2}$ but even free. The rank $d$ of $(\B_0')_{b_1b_2}$ as a $(\B_0)_{b_1b_2}$-module is then necessarily equal to $[k_0(\B_0'):k_0(\B_0)]$.
	
	We claim that $b=b_1b_2$ has the required property.
	First note that because $(\B_0')_b$ is integral over $(\B_0)_b$, for every $\p\in D(b)$ there exists a $\p'\in\spec(\B_0')$ lying above $\p$.
	
	Regarding the second point, note that for $\p\in D(b)$ the fibre $$\B_0'\otimes_{\B_0}k_0(\p)=(\B_0')_b\otimes_{(\B_0)_b}k_0(\p)$$ is a $k_0(\p)$-vector space of dimension $d$. Thirdly, a prime $\p'\in\spec(\B_0')$ lying above $\p$ corresponds to a prime $\tilde{\p}'$ in $\spec(\B_0'\otimes_{\B_0}k_0(\p))$. The residue field of $\p'$ can be identified with the residue field of $\tilde{\p}'$, which is an extension of degree at most $d$ of $k_0(\p)$. 
\end{proof}

\begin{lemma} \label{lemma: Hilbert set under projection}
	Let $\B_0'$ be an integral domain containing $\B_0$ such that $\B_0'$ is algebraic and finitely generated over $\B_0$. Let $\pi\colon \spec(\B_0')\to \spec(\B_0)$ denote the morphism corresponding to the inclusion $\B_0\subseteq \B_0'$ and let $H'\subseteq \spec(\B_0')$ be a Hilbert set. Then there exists a Hilbert set $H$ in $\spec(\B_0)$ satisfying the following two conditions:
	\begin{itemize}
		\item $H\subseteq \pi(H')$ and 
		\item for every $\p\in H$ there exists a unique $\p'\in \spec(\B_0')$ lying above $\p$. (Of course then $\p'\in H'$.)
	\end{itemize}
\end{lemma}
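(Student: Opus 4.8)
We plan to prove the lemma by enlarging $\B_0'$ to a single overring $\B_M$ that ``dominates'' every basic Hilbert set occurring in $H'$, and then attaching to $\B_M$ a Hilbert subset of $\spec(\B_0)$. Write $K_0=k_0(\B_0)$, $L_0=k_0(\B_0')$, and $H'=\bigcap_{i=1}^r H(\B_i'/\B_0')\cap D(g')$, where each $\B_i'$ is finitely generated and algebraic over $\B_0'$, $L_i:=k_0(\B_i')$, and $0\neq g'\in\B_0'$. Fix an algebraic closure $\overline{K_0}$ with compatible embeddings of $L_0,\dots,L_r$, put $M=L_0(L_1,\dots,L_r)\subseteq\overline{K_0}$, and choose an integral domain $\B_M$ with $\B_0'\subseteq\B_M\subseteq M$ that is finitely generated over $\B_0'$ (hence algebraic and finitely generated over $\B_0$) and contains every $\B_i'$; note $\Frac(\B_M)=M$. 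The claim will be that $H:=H(\B_M/\B_0)\cap D(b)$ works for a suitable nonzero $b\in\B_0$, which is indeed a Hilbert subset of $\spec(\B_0)$.

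The construction of $b$ is routine bookkeeping. By generic freeness and transitivity of finiteness along $\B_0\subseteq\B_0'\subseteq\B_i'\subseteq\B_M$, together with Lemma~\ref{lemma: good generic behaviour} to control residue degrees, there is a nonzero $b_1\in\B_0$ such that over $D(b_1)$ each of $\B_0',\B_i',\B_M$ is finite free over $\B_0$ of the expected rank ($d:=[L_0:K_0]$, $[L_i:K_0]$, $[M:K_0]$) and the $\B_0$-modules $\B_M/\B_0'$, $\B_M/\B_i'$ are free. Over $\spec(\B_0')$ there is likewise a dense open on which $\B_i'$, $\B_M$, $\B_M/\B_0'$, $\B_M/\B_i'$ are finite free over $\B_0'$ of the expected ranks; let $c_1,\dots,c_s\in\B_0'$ be such that this open contains $\bigcap_j D(c_j)$. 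By Chevalley's theorem, for each $c\in\{c_1,\dots,c_s,g'\}$ the image of $\spec(\B_0'[1/c])\to\spec(\B_0)$ contains a basic open $D(b_c)$; set $b=b_1\prod_c b_c$.

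For the verification, fix $\p\in H$ and let $\q$ be the unique prime of $\B_M$ over $\p$, so $[k_0(\q):k_0(\p)]=[M:K_0]=\dim_{k_0(\p)}(\B_M\otimes_{\B_0}k_0(\p))$; the fibre $\B_M\otimes_{\B_0}k_0(\p)$ is a local Artinian algebra with residue field $k_0(\q)$, and since its dimension equals $[k_0(\q):k_0(\p)]$ its maximal ideal vanishes, i.e.\ it equals $k_0(\q)$. Put $\p'=\q\cap\B_0'$. As $\B_M/\B_0'$ is $\B_0$-free on $D(b)$, the inclusion $\B_0'\hookrightarrow\B_M$ stays injective after $-\otimes_{\B_0}k_0(\p)$, so $\B_0'\otimes_{\B_0}k_0(\p)$ is a subring of the field $k_0(\q)$, finite over $k_0(\p)$, hence itself a field; therefore $\p'$ is the \emph{unique} prime of $\B_0'$ over $\p$ (the second assertion of the lemma) and $[k_0(\p'):k_0(\p)]=d$. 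Because $\p'$ is this unique prime and $\p\in D(b_c)$ for all relevant $c$, the point $\p'$ lies in each $D(c_j)$ and in $D(g')$ (for $D(g')$: some prime of $\B_0'$ over $\p$ avoids $g'$, and it must be $\p'$). Running the same ``fibre is a field'' argument over $\B_0'$ — $\q$ is the unique prime of $\B_M$ over $\p'$, $\B_M\otimes_{\B_0'}k_0(\p')=k_0(\q)$, and $\B_i'\otimes_{\B_0'}k_0(\p')\hookrightarrow k_0(\q)$ because $\B_M/\B_i'$ is $\B_0'$-free at $\p'$ — shows that $\q\cap\B_i'$ is the unique prime of $\B_i'$ over $\p'$ with $[k_0(\q\cap\B_i'):k_0(\p')]=[L_i:L_0]$. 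Hence $\p'\in\bigcap_i H(\B_i'/\B_0')\cap D(g')=H'$ and $\pi(\p')=\p$, so $\p\in\pi(H')$; together with the uniqueness of $\p'$ this yields both bullets.

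The one genuine difficulty is the second paragraph: arranging a single $b\in\B_0$ that simultaneously forces all the generic-freeness identities for the tower and, via Chevalley, transports the finitely many ``good loci'' that a priori live only in $\spec(\B_0')$ (and the $\spec(\B_i')$) down to $\spec(\B_0)$, so that the automatically unique prime $\p'$ above any $\p\in H$ is forced into them. Granting that, the remaining content is the single observation — applied twice, once over $\B_0$ and once over $\B_0'$ — that where a finite flat algebra has exactly one prime of full residue degree, the fibre is already a field.
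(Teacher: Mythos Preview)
Your proof is correct and takes a genuinely different route from the paper's. The paper writes $H'=\bigcap_i H(\B_i''/\B_0')\cap D(b')$ and sets $H=\bigcap_i H(\B_i''/\B_0)\cap D(b_1b_2)$, keeping the individual extensions $\B_i''$ and viewing each directly over $\B_0$; the argument is then a degree-telescoping computation: from $\p\in H(\B_i''/\B_0)$ one has $[k_0(\p_i''):k_0(\p)]=[k_0(\B_i''):k_0(\B_0)]$, while Lemma~\ref{lemma: good generic behaviour} gives the inequalities $[k_0(\p_i''):k_0(\p')]\le[k_0(\B_i''):k_0(\B_0')]$ and $[k_0(\p'):k_0(\p)]\le[k_0(\B_0'):k_0(\B_0)]$, and multiplicativity forces all of these to be equalities. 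You instead pass to a single compositum ring $\B_M$ dominating every $\B_i'$, take $H=H(\B_M/\B_0)\cap D(b)$, and exploit the clean observation that when the fibre $\B_M\otimes_{\B_0}k_0(\p)$ has a unique prime of full residue degree it must already be the field $k_0(\q)$; freeness of the relevant quotients then makes each intermediate fibre a subring of this field, hence a field, yielding uniqueness and the correct residue degrees without any inequality-chasing. Your approach is arguably more conceptual (one ring, one structural observation applied twice), at the cost of constructing the compositum and tracking slightly more generic-freeness data; the paper's approach avoids the compositum but trades it for the telescoping bookkeeping. Both reach the same uniqueness statement for $\p'$ via the ``fibre of dimension equal to residue degree is a field'' principle, which the paper invokes only at the very end for $\B_0'/\B_0$, whereas you make it the engine of the whole argument.
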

\begin{proof}
	Write $H'=H(\B''_1/\B_0')\cap\ldots\cap H(\B''_n/\B_0')\cap D(b')$. 
	By Lemma \ref{lemma: good generic behaviour} there exists a nonzero $b'_1\in \B_0'$ such that, for $i=1,\ldots,n$, every prime in $D(b'_1)$ lifts to a prime in $\B_i''$ and for every prime $\p_i''\in\spec(\B_i'')$ lying above $\p'\in D(b_1')$ one has $[k_0(\p_i''):k_0(\p')]\leq[k_0(\B_i''):k_0(\B_0')]$. By Chevalley's theorem there exists a nonzero $b_1\in \B_0$ such that $D(b_1)\subseteq \pi(D(b'_1b'))$.
	
	By Lemma \ref{lemma: good generic behaviour} again, there exists a nonzero $b_2\in\B_0$ such 
	that 
	for every $\p\in D(b_2)$ the $k_0(\p)$-vector space $\B_0'\otimes_{\B_0}k_0(\p)$ has dimension $[k_0(\B_0'):k_0(\B_0)]$ and for every prime $\p'$ of $\B_0'$ lying above $\p$ one has $[k_0(\p'):k_0(\p)]\leq[k_0(\B_0'):k_0(\B_0)]$.
	
	We claim that $H=H(\B''_1/\B_0)\cap\ldots\cap H(\B''_n/\B_0)\cap D(b_1b_2)$ has the required property. 	
	Let $\p\in H$. As $\p\in D(b_1)\subseteq \pi(D(b_1'b'))$, there exists a prime $\p'\in D(b_1'b')$ lying above $\p$. Since $\p\in D(b_2)$ we have  $[k_0(\p'):k_0(\p)]\leq[k_0(\B_0'):k_0(\B_0)]$. 
	
	Because $\p'\in D(b_1')$ there exists, for every $i=1,\ldots,n$, a prime $\p''_i\in\spec(\B_i'')$ lying above $\p'$ and satisfying $[k_0(\p_i''):k_0(\p')]\leq[k_0(\B_i''):k_0(\B_0')]$. As $\p_i''\in\spec(B_i'')$ is a prime above $\p$ and $\p\in H(\B_i''/\B_0)$, we see that $\p_i''$ is the unique prime of $\B_i''$ above $\p$ and that $[k_0(\p_i''):k_0(\p)]=[k_0(\B_i''):k_0(\B_0)]$. Therefore
	
	\begin{align*}
		[k_0(\B_i''):k_0(\B_0)] & =[k_0(\B_i''):k_0(\B'_0)]\cdot [k_0(\B'):k_0(\B_0)]\geq [k_0(\p_i''):k_0(\p')]\cdot [k_0(\p'):k_0(\p)]= \\
		& = [k_0(\p_i''):k_0(\p)]=[k_0(\B_i''):k_0(\B_0)]
	\end{align*}
	and it follows that 
	\begin{equation} \label{eq: degree equality}
		[k_0(\p'):k_0(\p)]=[k_0(\B'):k_0(\B_0)]\quad  \text{ and  } \quad [k_0(\p_i''):k_0(\p')]=[k_0(\B_i''):k_0(\B'_0)].
	\end{equation}

	We next show that $\p'\in H'$. As $\p'\in D(b')$ we only have to show that $\p'\in H(\B_i''/\B')$ for $i=1,\ldots,n$. If there was a prime of $\B_i''$ other than $\p_i''$ lying above $\p'$, this would also be a prime other than $\p_i''$ lying above $\p$ which is impossible. Thus $\p_i''$ is the unique prime of $\B_i''$ lying above $\p'$. By (\ref{eq: degree equality}) we see that $\p'\in H(\B_i''/\B')$. So $\p'\in H'$ and $\p\in\pi(H')$, i.e., $H\subseteq \pi(H')$. 
	
	Regarding the second item, note that by the choice of $b_2$, the fibre $\B_0'\otimes_{\B_0}k_0(\p)$ is a $k_0(\p)$-vector space of dimension $[k_0(\B'):k_0(\B_0)]$. On the other hand, there exists a prime in $\B_0'\otimes_{\B_0}k_0(\p)$ whose residue field is an extension of $k_0(\p)$ of degree $[k_0(\B'):k_0(\B_0)]$. Thus $\B_0'\otimes_{\B_0}k_0(\p)$ is a field and there is only one prime in $\spec(\B_0')$ lying above $\p$.
\end{proof}

We next relate the abstract notion of Hilbert set with the more classical notion concerned with polynomials that remain irreducible under specialization (see \cite[Chapter~12]{FriedJarden:FieldArithmetic}).

Let $p_1,\ldots,p_n\in \B_0[y]$ be monic univariate polynomials that are irreducible as elements of $k_0(\B_0)[y]$. For a prime ideal $\p$ of $\B_0$ let $\overline{p_i}$ denote the image of $p_i$ in $k_0(\p)[y]$, i.e., the coefficients of $p_i$ are reduced modulo $\p$. 
We set
$$H_{\B_0}(p_1,\ldots,p_n)=\{\p\in\spec(\B_0)|\ \overline{p_1},\ldots,\overline{p_n}\in k_0(\p)[y] \text{ are irreducible }\}. $$

\begin{lemma} \label{lemma: relate Hilbert sets}
	Let $\B_0'$ be an integral domain containing $\B_0$, finitely generated and algebraic over $\B_0$. Then there exists a monic polynomial $p\in \B_0[y]$ such that $p$ is irreducible in $k_0(\B_0)[y]$ and a nonzero $b\in\B_0$ such that $$H_{\B_0}(p)\cap D(b)\subseteq H(\B_0'/\B_0).$$
\end{lemma}
\begin{proof}
	As $k_0(\B_0')/k_0(\B_0)$ is a finite field extension, there exists, by the primitive element theorem, a $b'\in \B_0'$ such that $k_0(\B')=k_0(\B_0)[b']$. Replacing $b'$ with a $\B_0$-multiple of $b'$ if necessary, we can assume that the minimal polynomial $p$ of $b'$ over $k_0(\B_0)$ has coefficients in $\B_0$. Write $\B_0'=\B_0[b'_1,\ldots,b'_n]$ and $b'_i=p_i(b')$  with $p_i\in k_0(\B_0)[y]$  $(1\leq i\leq n)$. Let $b\in \B_0\smallsetminus\{0\}$ be such that $bp_i$ has coefficients in $\B_0$ for $i=1,\ldots,n$. Then $b'_i\in (\B_0)_b[b']$ and therefore $(\B_0')_b=(\B_0)_b[b']=(\B_0)_b[y]/(p)$.
	
	Let $\p\in H_{\B_0}(p)\cap D(b)$. Then $\p_b$ is a prime ideal of $(\B_0)_b$ and for the fibre of $\B_0'/\B_0$ over $\p$ we have
	$$\B_0'\otimes_{\B_0}k_0(\p)=(\B_0')_b\otimes_{(\B_0)_b}k_0(\p_b)=(\B_0)_b[y]/(p)\otimes_{(\B_0)_b}k_0(\p_b)=k_0(\p)[y]/(\overline{p}).$$
	As $\p\in H_{\B_0}(p)$, we see that $\B_0'\otimes_{\B_0}k_0(\p)$ is a field extension of $k_0(\p)$ of degree $\deg(p)=[k_0(\B_0'):k_0(\B_0)]$. Thus $\p\in H(\B_0'/\B_0)$ as desired.
\end{proof}
As an immediate corollary to Lemma \ref{lemma: relate Hilbert sets} we obtain:

\begin{cor} \label{cor: relate Hilbert sets}
	Let $H$ be a Hilbert subset of $\spec(\B_0)$. Then there exist $p_1\ldots,p_n\in \B_0[y]$ monic and irreducible in $k_0(\B_0)[y]$ and $b\in \B_0\smallsetminus\{0\}$ such that
	$$H_{\B_0}(p_1,\ldots,p_n)\cap D(b)\subseteq H.$$ \qed
\end{cor}

A different, less intrinsic, notion of geometric Hilbert set was used in \cite{Feng:DifferenceGaloisGrousUnderSpecialization}. In order to be able to use the results from \cite{Feng:DifferenceGaloisGrousUnderSpecialization}, we have to relate the two notions. Let us first recall the definition from \cite[Notation 2.6]{Feng:DifferenceGaloisGrousUnderSpecialization}.

Recall that $\B$ is a finitely generated $k$-algebra that is an integral domain. By Noether's normalization lemma, we can find $\eta_1,\ldots,\eta_\ell,\ldots,\eta_m\in\B$ such that $\B=k[\eta_1,\ldots,\eta_m]$, $\eta_1,\ldots,\eta_\ell$ are algebraically independent over $k$ and $\eta_{\ell+1},\ldots,\ldots,\eta_m$ are integral over $k[\eta_1,\ldots,\eta_\ell]$. We abbreviate $\boldsymbol{\eta}=(\eta_1,\ldots,\eta_m)$ and $\boldsymbol{\eta}_\ell=(\eta_1,\ldots,\eta_l)$.

Note that for $i=1,\ldots,m-\ell$, the minimal polynomial of $\eta_{\ell+i}$ over $k(\boldsymbol{\eta}_l)$ has coefficients in $k[\boldsymbol{\eta}_\ell]$ since $k[\boldsymbol{\eta}_\ell]$ is integrally closed (\cite[Prop. 5.15]{AtiyahMacdonald:Introductiontocommutativealgebra}). We can thus find a subfield $k_0$ of $k$ finitely generated over $\mathbb{Q}$ such that, for $i=1,\ldots,m-\ell$, the minimal polynomial of $\eta_{\ell+i}$ over $k(\boldsymbol{\eta}_l)$ has coefficients in $k_0[\boldsymbol{\eta}_\ell]$.

Let $\boldsymbol{p}$ be a finite tuple of monic elements of $k_0[\boldsymbol{\eta}][y]$ that are irreducible in $k_0(\boldsymbol{\eta})[y]$. Furthermore, let $b\in k_0[\boldsymbol{\eta}]\smallsetminus\{0\}$ and let $\boldsymbol{d}=(d_1,\ldots,d_\ell)\in\mathbb{Z}^\ell$ be an $\ell$-tuple of positive integers. Identifying $\X(k)$ with $\Hom_k(\B,k)$ we define 
$$\H_{k_0,\X(k)}^{\boldsymbol{\eta}}(\boldsymbol{d},\boldsymbol{p},b)$$
as the set of all $c\in \X(k)$ such that 
\begin{itemize}
	\item $[k_0(c(\eta_1),\ldots, c(\eta_i)): k_0(c(\eta_1),\ldots,c(\eta_{i-1}))]\geq d_i$ for $i=1,\ldots,\ell$;
	\item for every entry $p$ of $\boldsymbol{p}$ the polynomial $p^c$ obtained from $p$ by applying $c$ to the coefficients is irreducible in $k_0(c(\boldsymbol{\eta}))[y]$;
	\item $c(b)\neq 0$.
\end{itemize}

The following lemma shows that every geometric Hilbert set contains a set of the form $\H_{k_0,\X(k)}^{\boldsymbol{\eta}}(\boldsymbol{d},\boldsymbol{p},b)$.

\begin{lemma} \label{lemma: containmemt for Hilber sets}
	Let $k_0\subseteq k$ be a finitely generated field extension of $\mathbb{Q}$ and $\B_0\subseteq \B$ a finitely generated $k_0$-algebra such that $\B_0\otimes_{k_0}k\to \B$ is an isomorphism. Furthermore, let $H$ be a Hilbert subset of $\X_0=\spec(\B_0)$ and let $\H\subseteq \X(k)$ be the corresponding geometric Hilbert set.
	
	Then there exists $\boldsymbol{\eta}\in \B_0^m$ with $\B_0=k_0[\boldsymbol{\eta}]$ and appropriate $\boldsymbol{p}$, $b$ such that
	$$\H_{k_0,\X(k)}^{\boldsymbol{\eta}}((1,\ldots,1),\boldsymbol{p},b)\subseteq \H.$$
\end{lemma}
\begin{proof}
	Applying Noether's normalization lemma to $\B_0$, we find $\boldsymbol{\eta}=(\eta_1,\ldots,\eta_\ell,\ldots,\eta_m)\in \B_0^m$ with $\B_0=k_0[\boldsymbol{\eta}]$ such that $\eta_1,\ldots,\eta_\ell$ are algebraically independent over $k_0$ and $\eta_{\ell+1},\ldots,\eta_m$ are integral over $k_0[\eta_1,\ldots,\eta_\ell]$. Note that $\eta_1,\ldots,\eta_\ell$ are also algebraically independent over $k$ and $\eta_{\ell+1},\ldots,\eta_m$ are integral over $k[\eta_1,\ldots,\eta_\ell]$. Moreover, for $i=1,\ldots,m-\ell$, the minimal polynomial of $\eta_{\ell+i}$ over $k(\eta_1,\ldots,\eta_\ell)$ has coefficients in $k_0[\eta_1,\ldots,\eta_\ell]$.
	
	By Corollary \ref{cor: relate Hilbert sets}, there exist monic polynomials $p_1,\ldots,p_n\in \B_0[y]$ irreducible in $k_0(\B_0)[y]$ and a $b\in\B_0\smallsetminus\{0\}$ such that 
	$H_{\B_0}(p_1,\ldots,p_n)\cap D(b)\subseteq H$. With $\boldsymbol{p}=(p_1,\ldots,p_n)$ we thus have $\H_{k_0,\X(k)}^{\boldsymbol{\eta}}((1,\ldots,1),\boldsymbol{p},b)\subseteq \H.$
\end{proof}

The following proposition, going back to \cite{Hrushovski:ComputingTheGaloisGroupOfaLinearDifferentialEequation}, explains why the sets of the form   $\H_{k_0,\X(k)}^{\boldsymbol{\eta}}(\boldsymbol{d},\boldsymbol{p},b)$ are useful for us. We assume that $k_0$ and $\boldsymbol{\eta}$ are as described after Corollary~\ref{cor: relate Hilbert sets}. 

\begin{prop} \label{prop: Hilbert set in ad-open}
	Let $\Gamma$ be a finitely generated subgroup of $(k_0[\boldsymbol{\eta}],+)$. Then there exists a $\bf{d}$ such that $\H_{k_0,\X(k)}^{\boldsymbol{\eta}}(\boldsymbol{d},\emptyset,1)\subseteq W_\X(\Ga,\Gamma)$. 
\end{prop}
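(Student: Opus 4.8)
The plan is to reduce the statement to the special case where $\Gamma$ is generated by a single element, and then handle that case by a direct degree/specialization estimate. First I would observe that, by Remark \ref{rem: intersection of ad-opens}, it suffices to find, for each fixed nonzero $\gamma\in\Gamma$ from a finite generating set, a tuple $\boldsymbol{d}_\gamma$ such that $\H_{k_0,\X(k)}^{\boldsymbol{\eta}}(\boldsymbol{d}_\gamma,\emptyset,1)\subseteq \{c\in\X(k)\mid c(\gamma)\neq 0\}$; one then takes $\boldsymbol{d}$ to be the coordinatewise maximum of the finitely many $\boldsymbol{d}_\gamma$'s, noting that $\H_{k_0,\X(k)}^{\boldsymbol{\eta}}(\boldsymbol{d},\emptyset,1)\subseteq \H_{k_0,\X(k)}^{\boldsymbol{\eta}}(\boldsymbol{d}_\gamma,\emptyset,1)$ for each $\gamma$, because enlarging the entries of $\boldsymbol{d}$ shrinks the set. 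Since an element of $(k_0[\boldsymbol{\eta}],+)$ is zero after a specialization $c$ only if $c(\gamma)=0$, an element of $\Gamma\setminus\{0\}$ whose image vanishes under $c$ contradicts injectivity of $\sigma_c$ on $\Gamma$, so controlling each generator individually controls the whole group.

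Next I would fix $0\neq\gamma\in k_0[\boldsymbol{\eta}]$ and write $\gamma$ as a polynomial in $\eta_{\ell+1},\dots,\eta_m$ over $k_0[\boldsymbol{\eta}_\ell]$, then, using the fact that each $\eta_{\ell+i}$ is integral over $k_0[\boldsymbol{\eta}_\ell]$ with minimal polynomial having coefficients in $k_0[\boldsymbol{\eta}_\ell]$, rewrite $\gamma$ in a normal form: $\gamma\in k_0[\boldsymbol{\eta}_\ell][\eta_{\ell+1},\dots,\eta_m]$ with the degree in each $\eta_{\ell+i}$ bounded by $[k_0(\boldsymbol{\eta}):k_0(\boldsymbol{\eta}_\ell)]$. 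The key point is that $\gamma$, viewed inside the field $k_0(\boldsymbol{\eta})$, which is finite over the purely transcendental field $k_0(\boldsymbol{\eta}_\ell)$, has a ``norm'' $N=N_{k_0(\boldsymbol{\eta})/k_0(\boldsymbol{\eta}_\ell)}(\gamma)$, which is a nonzero element of $k_0(\boldsymbol{\eta}_\ell)$; clearing denominators, there is a nonzero polynomial $p\in k_0[\boldsymbol{\eta}_\ell]=k_0[\eta_1,\dots,\eta_\ell]$ such that $c(\gamma)=0$ forces $c(p)=0$ whenever the specialization is ``non-degenerate'' on the integral extension (i.e. the reductions of the minimal polynomials stay of the right degree). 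One then chooses $\boldsymbol{d}=(d_1,\dots,d_\ell)$ with each $d_i$ strictly larger than $\deg_{\eta_i}(p)$ together with the degrees of the finitely many leading coefficients and discriminant-type factors coming from the integral extension: if $c\in\H_{k_0,\X(k)}^{\boldsymbol{\eta}}(\boldsymbol{d},\emptyset,1)$, then $[k_0(c(\eta_1),\dots,c(\eta_i)):k_0(c(\eta_1),\dots,c(\eta_{i-1}))]\geq d_i$ guarantees that $c(\eta_1),\dots,c(\eta_\ell)$ do not satisfy any nonzero polynomial over $k_0$ of degree $<d_i$ in the $i$-th variable, hence $c(p)\neq 0$, and then $c(\gamma)\neq 0$.

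I expect the main obstacle to be the bookkeeping in the second step: making precise how a vanishing of $c(\gamma)$ propagates down to a vanishing of an honest polynomial $p$ in the free variables $\eta_1,\dots,\eta_\ell$ with a degree bound that does not depend on $c$. This requires being careful that the specialization $c$ may collapse the integral extension $k_0[\boldsymbol{\eta}]/k_0[\boldsymbol{\eta}_\ell]$, so one must either throw the relevant leading coefficients and resultants into the bound defining $\boldsymbol{d}$ (so that along the allowed specializations the extension degree is preserved), or argue directly with the norm form. Concretely, I would take $p$ to be (a $k_0[\boldsymbol{\eta}_\ell]$-multiple of) the norm of $\gamma$, record the finitely many polynomials in $k_0[\boldsymbol{\eta}_\ell]$ measuring degeneracy of the tower $k_0[\boldsymbol{\eta}_\ell]\subseteq k_0[\eta_1,\dots,\eta_{\ell+1}]\subseteq\cdots\subseteq k_0[\boldsymbol{\eta}]$, let $D$ be the maximum of all their total degrees and of $\deg p$, and set every $d_i=D+1$. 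Everything else — the fact that a specialization preserving enough algebraic-independence-type degrees is non-degenerate, and that a nonzero polynomial of degree $<d_i$ in the $i$-th variable cannot vanish at $c(\eta_i)$ over $k_0(c(\eta_1),\dots,c(\eta_{i-1}))$ — is then a routine unwinding of the definition of $\H_{k_0,\X(k)}^{\boldsymbol{\eta}}(\boldsymbol{d},\emptyset,1)$, and I would not grind through it here.
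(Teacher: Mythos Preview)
Your reduction in the first step is incorrect. You propose to check, for each generator $\gamma_i$ of $\Gamma$, that $c(\gamma_i)\neq 0$, and then take the coordinatewise maximum of the $\boldsymbol{d}_{\gamma_i}$'s. But $c$ being injective on $\Gamma$ is strictly stronger than $c(\gamma_i)\neq 0$ for each generator: it requires $c(\gamma)\neq 0$ for \emph{every} nonzero $\gamma\in\Gamma$, equivalently that the $c(\gamma_i)$'s are $\mathbb{Z}$-linearly independent in $k$. Concretely, take $\ell=1$, $\boldsymbol{\eta}=(\eta_1)$, and $\Gamma=\mathbb{Z}\cdot 1+\mathbb{Z}\cdot\eta_1$. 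Your scheme would only ensure $c(1)\neq 0$ and $c(\eta_1)\neq 0$, whereas $W_\X(\Ga,\Gamma)$ demands $c(\eta_1)\notin\mathbb{Q}$. Remark~\ref{rem: intersection of ad-opens} gives only $W_\X(\Ga,\Gamma)\subseteq\bigcap_i W_\X(\Ga,\langle\gamma_i\rangle)$, the opposite inclusion to the one you need.

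The repair is close at hand, and your later steps already contain it implicitly. The crucial point is that the degree bound you derive for a single $\gamma$ is in fact \emph{uniform} over all of $\Gamma$: if $\gamma_1,\dots,\gamma_r$ generate $\Gamma$, then any $\gamma=\sum n_i\gamma_i$ satisfies $\deg_{\eta_j}\gamma\leq\max_i\deg_{\eta_j}\gamma_i$ for every $j$, independently of the integers $n_i$. Likewise, in the norm step, each Galois conjugate $\sigma(\gamma)=\sum n_i\sigma(\gamma_i)$ has degree bounded by $\max_i\deg\sigma(\gamma_i)$, and the number of conjugates is fixed, so $\deg N_{k_0(\boldsymbol{\eta})/k_0(\boldsymbol{\eta}_\ell)}(\gamma)$ is bounded uniformly in $\gamma\in\Gamma$. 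Hence you should skip the reduction to generators entirely, choose one $\boldsymbol{d}$ exceeding this uniform bound, and then run your second-paragraph argument for an arbitrary nonzero $\gamma\in\Gamma$; the inductive degree count you sketch then rules out $c(\gamma)=0$ simultaneously for all such $\gamma$. (The paper itself does not give a proof but refers to \cite[Prop.~2.10]{Feng:DifferenceGaloisGrousUnderSpecialization}, where essentially this uniform-degree argument is carried out.)
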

\begin{proof}
	This is really just a restatement of \cite[Prop. 2.10]{Feng:DifferenceGaloisGrousUnderSpecialization}.
\end{proof}

\subsection{N\'{e}ron's specialization theorem} \label{subsection Nerons specialization theorem}

N\'{e}ron's specialization theorem is a crucial ingredient in our proof that ad$\times$Jac-open sets are Zariski dense (Theorem \ref{theo: ad meets ab is dense}). Roughly speaking, N\'{e}ron's specialization theorem states that in an algebraic family of abelian varieties, the specialization map defined by a point of the parameter space is injective for all points in a Hilbert subset of the parameter space. This theorem can be used to construct abelian varieties with groups of rational points of large rank. Several versions of N\'{e}ron's specialization theorem are available in the literature. The original reference is \cite{Neron:ProblemesArithmetiquesEtGeometriques}. Other presentations are in \cite[Section~11.1]{Serre:LecturesOnTheMordellWeilTheorem}, \cite[Chapter 9, Section 6]{Lang:FundamentalsofDiophantineGeometry} and \cite[Section~3]{Petersen:OnAQuestionOfFreyAndJardenAboutTheRankOfAbelianVarieties}. Unfortunately, none of these versions seems to directly yield the result we need. We therefore include a self-contained proof of our variant of N\'{e}ron's specialization theorem, mostly following \cite{Serre:LecturesOnTheMordellWeilTheorem} in the argument (cf. \cite[Section 2]{Feng:DifferenceGaloisGrousUnderSpecialization}).

\medskip

Throughout Section \ref{subsection Nerons specialization theorem} we make the following assumptions:

\begin{itemize}
	\item $k_0$ is a field of characteristic zero;
	\item $\X_0=\spec(\B_0)$ is an affine variety over $k_0$ (i.e., a geometrically integral affine scheme of finite type over $k_0$);
	\item $K_0=k_0(\B_0)$ is the function field of $\X_0$, i.e., the field of fractions of $\B_0$;
	\item $\B_0'\subseteq \overline{K_0}$ is a finitely generated $\B_0$-algebra;
	
	\item $\X'_0=\spec(\B_0')$;
	
	\item $\cE_0$ is a commutative separated group scheme of finite type over $\B_0$ such that $\cE_0\times_{\X_0}\spec(K_0)$ is connected.
\end{itemize}

We also make the following assumption on $\cE_0$.

\begin{enumerate}
	\item [(A)] For every finitely generated subgroup $\Gamma$ of $\cE_0(K_0)$, the group 
	$$
	\{\varepsilon\in \cE_0(K_0) \mid \varepsilon^n\in \Gamma \mbox{ for some $n\geq 1$} \}
	$$
	is also finitely generated.  
\end{enumerate}

We will need a few lemmas of a preparatory nature.

\begin{lemma} \label{lemma: multiplication by m}
	Let $G$ be a connected commutative algebraic group over $k_0$ and let $n\geq 1$ be an integer. Then the morphism $[n]\colon G\to G,\ g\mapsto g^n$ has finite kernel and $[n]\colon G(\overline{k_0})\to G(\overline{k_0})$ is surjective.
\end{lemma}
\begin{proof}
	Note that $[n]$ is a morphism of algebraic groups because $G$ is commutative. The induced morphism
	$\operatorname{Lie}([n])\colon \operatorname{Lie}(G)\to \operatorname{Lie}(G)$ on the Lie algebra of $G$ is multiplication by $n$. As we are in characteristic zero, this implies that $\operatorname{Lie}([n])$ is an isomorphism. In particular, $\operatorname{Lie}(\ker([n]))=\ker(\operatorname{Lie}([n]))=0$. Thus $\ker([n])$ is finite.
	Then, for dimension reasons, the image of $[n]\colon G(\overline{k_0})\to G(\overline{k_0})$ must have the same dimension as $G$. As the image is closed and $G$ is connected, it follows that $[n]\colon G(\overline{k_0})\to G(\overline{k_0})$ is surjective.
\end{proof}
For every $x'\in \X'_0$ we have a specialization morphism 
$$\sigma_{x'}\colon \cE_0(\B'_0)\to \cE_0(k_0(x')),$$
obtained by applying $\cE_0$, considered as a functor on the category of $\B_0$-algebras, to the morphism $\B'_0\to k_0(x')$.

\begin{lemma} \label{lemma: assumption A1}
	Let 
	$\gamma\in \cE_0(\B'_0)$ and $n\geq 1$.  Then the set $\{\varepsilon\in \cE_0(\overline{K_0})\mid \varepsilon^n=\gamma\}$ is finite and nonempty and there exist a nonempty Zariski open subset $\U'$ of $\X'_0$ such that
	$$
	|\{ \varepsilon\in \cE_0(\overline{k_0(x')})\mid \varepsilon^n=\sigma_{x'}(\gamma)\}|=|\{\varepsilon\in \cE_0(\overline{K_0})\mid \varepsilon^n=\gamma\}|
	$$
	for every $x'\in \U'$.
\end{lemma}
\begin{proof}
	The set $\{\varepsilon\in \cE_0(\overline{K_0})\mid \varepsilon^n=\gamma\}$ is finite and nonempty by Lemma \ref{lemma: multiplication by m}. Set $\cE'_0=\cE_{0,\B'_0}$ and consider the morphism $[n]\colon \cE'_0\to \cE'_0, \ g\mapsto g^n$ and its kernel $\ker([n])\leq \cE'_0$. Then $\ker([n])_{x'}$ is the kernel of the $n$-th power map on $\cE'_{0,x'}$ for every $x'\in \X'_0$. 
	
	It follows from Lemma \ref{lemma: multiplication by m} that $\ker([n])_{\xi'}$ is finite, where $\xi'$ is the generic point of $\X'_0$. Being finite spreads out from the generic fibre and also the number of geometric points is constant over a nonempty Zariski open subset (\cite[Cor. 9.7.9]{Grothendieck:EGAIV3}). Thus, there exists a nonempty Zariski open subset $\U'$ of $\X'_0$ such that $$|\{\varepsilon\in \cE'_{0,x'}(\overline{k_0(x')})|\ \varepsilon^n=1\}|=|\{\varepsilon\in \cE'_{0,\xi'}(\overline{K_0})|\ \varepsilon^n=1\}|$$
	for all $x'\in \U'$. Shrinking $\U'$ if necessary, we can also assume that $\cE'_{0,x'}$ is connected for all $x'\in\U'$. (Recall that an algebraic group is connected if and only if it is geometrically connected). Using Lemma \ref{lemma: multiplication by m} we then find
	\begin{align*}
		|\{ \varepsilon\in \cE_0(\overline{k_0(x')})\mid\ & \varepsilon^n=\sigma_{x'}(\gamma)\}|=|\{ \varepsilon\in \cE'_{0,x'}(\overline{k_0(x')})\mid \varepsilon^n=\sigma_{x'}(\gamma)\}|= \\
		& =|\{\varepsilon\in \cE'_{0,x'}(\overline{k_0(x')})|\ \varepsilon^n=1\}|=|\{\varepsilon\in \cE'_{0,\xi'}(\overline{K_0})|\ \varepsilon^n=1\}| = \\
		&=|\{\varepsilon\in \cE_0(\overline{K_0})\mid \varepsilon^n=\gamma\}|
	\end{align*}
	for all $x'\in\U'$.	
\end{proof}

\begin{lemma}
	\label{LM:residucefields}
	Let $\SSS$ be a scheme of finite type over $\B_0$ and let $s\in\SSS(\B_0')$ be such that $k_0(s(\xi'))\to k_0(\xi')=k_0(\B'_0)$ is an isomorphism, where $\xi'$ is the generic point of $\X_0'$ and we think of $s$ as a morphism $s\colon \X'_0\to \SSS$. Then there exists a nonempty Zariski open subset $\U'$ of $\X'_0$ such that the morphism $k_0(s(x'))\to k_0(x')$ induced by $s$ is an isomorphism for every $x'\in \U'$.
\end{lemma}
\begin{proof}
	Let $\U$ be an affine open subset of $\SSS$ containing $s(\xi')$ and let $D(b')$ be a basic Zariski open subset of $\X'_0$ contained in $s^{-1}(\U)$. Then the restriction $\tilde{s}\colon D(b')\to \U$ of $s\colon \X'_0\to \SSS$ satisfies the assumption of the lemma, i.e., the induced map $k_0(\tilde{s}(\xi'))\to k_0(\xi')$ is an isomorphism. We can thus assume without loss of generality that $\SSS$ is affine. 
	
	So we can identify $\SSS$ with the spectrum of a finitely generated $\B_0$-algebra $\R$. The point $s\in\SSS(\B'_0)$ corresponds to a morphism $\psi\colon\R\to \B'_0$ of $\B_0$-algebras. The assumption that $k_0(s(\xi'))\to k_0(\xi')$ is an isomorphism, means that the field of fractions of $\psi(\R)$ (formed inside $k_0(\B'_0)$) equals all of $k_0(\B'_0)$.	
	
	Fix $b'_1,\ldots,b'_n\in\B'_0$ such that $\B'_0=\B_0[b'_1,\ldots,b'_n]$. For $i=1,\ldots,n$ we can then write $b'_i=\frac{\psi(r_i)}{\psi(\widetilde{r_i})}$ with $r_1,\ldots,r_n,\widetilde{r_1},\ldots,\widetilde{r_n}\in \R$ and $\psi(\widetilde{r_1}),\ldots,\psi(\widetilde{r_m})\in \B_0'$ nonzero. We claim that $\U'=D(\psi(\widetilde{r_1})\ldots\psi(\widetilde{r_n}))\subseteq \X'_0$ has the desired property.
	
	Let $x'\in \U'$ correspond to the prime ideal $\p'$ of $\B'_0$. For $b'\in \B'_0$ let $\overline{b'}\in \B'_0/\p'\subseteq k_0(\B'_0/\p')=k_0(x')$ denote the image of $b'$ in $\B'_0/\p'$. As $\psi(\widetilde{r_i})b'_i=\psi(r_i)$ in $\B_0'$, we have $\overline{\psi(\widetilde{r_i})}\overline{b'_i}=\overline{\psi(r_i)}$ for $i=1,\ldots,n$. As $x'\in \U'$, we can divide by $\overline{\psi(\widetilde{r_i})}$ in $k_0(x')$ to obtain $\overline{b'_i}=\frac{\overline{\psi(r_i)}}{\overline{\psi(\widetilde{r_i})}}$ in $k_0(x')$. 
	Note that the morphism $k_0(s(x'))\to k_0(x')$ is obtained from the injection $\R/\psi^{-1}(\p')\to \B'_0/\p'$ by passing to the field of fractions. In particular, $k_0(s(x'))\to k_0(x')$ is an isomorphism if and only if the field of fractions of $\overline{\psi(\R)}\subseteq \B'_0/\p'\subseteq k_0(\B'_0/\p')$ equals all of $k_0(\B'_0/\p')$. As $k_0(\B'_0/\p')=k_0(\B_0)(\overline{b'_1},\ldots,\overline{b'_n})$, the latter follows from $\overline{b_i'}=\frac{\overline{\psi(r_i)}}{\overline{\psi(\widetilde{r_i})}}$.
\end{proof}

\begin{lemma}
	\label{LM:distinctpointsunderspecialiation}
	Let $S$ be a finite subset of $\cE_0(\B'_0)$. Then there exists a nonempty Zariski open subset $\U'$ of $\X'_0$ such that $\sigma_{x'}$ is injective on $S$ for every $x'\in \U'$.
\end{lemma}
\begin{proof}
	It suffices to treat the case when $S$ consists of two distinct elements $s_1,s_2$.
	As $\cE_0\to \X_0$ is separated, the equalizer $\Z\to \X'_0$ of $s_1, s_2\colon \X'_0\to \cE_0$ (in the category of schemes over $\X_0$) is a closed subscheme of $\X'_0$ (\cite[\href{https://stacks.math.columbia.edu/tag/01KM}{Tag 01KM}]{stacks-project}). In particular, for a morphism $\f\colon \widetilde{\Z}\to \X'_0$ of $\X_0$-schemes one has $s_1\f=s_2\f$ if and only if $\f$ factors through the closed immersion $\Z\hookrightarrow \X'_0$.
	
	We will show that $\Z$ defines a proper closed subset of $\X'_0$. Suppose, for a contradiction, that the underlying set of $\Z$ is all of $\X'_0$. Since $\X'_0$ is reduced, the only closed subscheme supported on all of $\X'_0$ is $\X'_0$ itself. So $\Z=\X'_0$ (as schemes). But then $s_1=s_2$; a contradiction.
	
	Thus $\Z$ defines a proper closed subset of $\X'_0$ and its complement $\U'\subseteq \X'_0$ is a nonempty Zariski open subset. For $x'\in \U'$, the morphism $\f_{x'}\colon \spec(k_0(x'))\to \X'_0$ does not factor through $\Z\to \X'_0$ and so $s\f_{x'}\neq s'\f_{x'}$, i.e., $\s_{x'}(s_1)\neq \s_{x'}(s_2)$.	
\end{proof}

The following lemma explains how Hilbert subsets enter into N\'{e}ron's specialization theorem.

\begin{lemma}
	\label{LM:solutionsunderspecialization}
	Let $n\geq 1$, $\tilde{\B}\subseteq K_0$ a finitely generated $\B_0$-algebra and $\gamma\in \cE_0(\tilde{\B})$ such that $\{\varepsilon\in \cE_0(K_0)\mid \varepsilon^n=\gamma\}\subseteq \cE_0(\tilde{\B})$. Then there exists a Hilbert subset $\tilde{H}$ of $\tilde{\X}=\spec(\tilde{\B})$ such that
	\begin{equation} \label{eq: torsion identity}
		\{\varepsilon\in \cE_0(k_0(\tilde{x})) \mid \varepsilon^n=\sigma_{\tilde{x}}(\gamma)\}=\sigma_{\tilde{x}}(\{\varepsilon\in \cE_0(K_0)\mid \varepsilon^n=\gamma\}).
	\end{equation}
	for all $\tilde{x}\in\tilde{H}$.
\end{lemma}
\begin{proof}
	Note that the inclusion ``$\supseteq$'' in (\ref{eq: torsion identity}) is trivial.
	Set $S=\{\varepsilon\in \cE_0(\overline{K_0})\mid \varepsilon^n=\gamma\}$.  Let $\B'_0\subseteq \overline{K_0}$ be a finitely generated $\tilde{\B}$-algebra such that $S\subseteq \cE_0(\B'_0)$. By Lemma~\ref{LM:distinctpointsunderspecialiation} there exists a nonempty Zariski open subset $\U'$ of $\X'_0=\spec(\B'_0)$ such that $\sigma_{x'}$ is injective on $S$ for all $x'\in \U'$. By Lemma \ref{lemma: assumption A1}, there exists a nonempty Zariski open subset $\mathcal{V}'$ of $\X'_0$ such that 
	\begin{equation} \label{eq: cardinal equality}
		|\{ \varepsilon\in \cE_0(\overline{k_0(x')})\mid \varepsilon^n=\sigma_{x'}(\gamma)\}|=|S|
	\end{equation}
	for all $x'\in \mathcal{V}'$. 
	Let $\f\colon \X'_0\to \tilde{\X}$ be the morphism corresponding to the inclusion $\tilde{\B}\subseteq \B'_0$. Since $\B'_0$ is algebraic over $\tilde{\B}$, there exists a nonempty Zariski open subset $\mathcal{W}'$ of $\X'_0$ 
	such that $k_0(x')$ is algebraic over $k_0(\f(x'))$ for all $x'\in \mathcal{W}'$. (This follows, for example, from Lemma \ref{lemma: good generic behaviour}.) By Chevalley's theorem, there exists a nonempty basic Zariski open subset $\tilde{\U}$ of $\tilde{\X}$ contained in $\f(\U'\cap \mathcal{V}'\cap \mathcal{W}')$.
	
	Let $\tilde{x}\in \tilde{\U}$. Then there exists an $x'\in \U'\cap \mathcal{V}'\cap \mathcal{W}'$ with $\f(x')=\tilde{x}$. We claim that
	\begin{equation} \label{eq: surjective on algebraic closure}
		\{\varepsilon\in \cE_0(\overline{k_0(\tilde{x})}) \mid \varepsilon^n=\sigma_{\tilde{x}}(\gamma)\}=\sigma_{x'}(S).
	\end{equation}
	As $\s_{x'}$ is injective on $S$, we find 
	$$
	|S|=|\sigma_{x'}(S)|\leq |\{\varepsilon\in \cE_0(k_0(x')) \mid \varepsilon^n=\sigma_{x'}(\gamma)\}| \leq |\{\varepsilon\in \cE_0(\overline{k_0(x')}) \mid \varepsilon^n=\sigma_{x'}(\gamma)\}|.
	$$
	From (\ref{eq: cardinal equality}), we thus deduce that $\sigma_{x'}(S)=\{\varepsilon\in \cE_0(\overline{k_0(x')}) \mid \varepsilon^n=\sigma_{x'}(\gamma)\}$. This implies (\ref{eq: surjective on algebraic closure}) because $\overline{k_0(x')}=\overline{k_0(\tilde{x})}$ since $k_0(x')$ is algebraic over $k_0(\tilde{x})=k_0(\f(x'))$.

	\medskip
	
	We first assume that $S\subseteq \cE_0(K_0)$. Then $S=\{\varepsilon\in\cE_0(K_0)|\ \varepsilon^n=\gamma\}\subseteq\cE_0(\tilde{\B})$ by assumption. 
	The commutativity of	
	$$
	\xymatrix{
		\cE_0(\tilde{\B}) \ar@{^{(}->}[r] \ar_-{\s_{\tilde{x}}}[d] & \cE_0(\B'_0) \ar^-{\s_{x'}}[d] \\
		\cE_0(k_0(\tilde{x})) \ar@{^{(}->}[r] & \cE_0(k_0(x'))
	}
	$$
	paired with (\ref{eq: surjective on algebraic closure}) then yields $\s_{\tilde{x}}(S)=\{\varepsilon\in \cE_0(\overline{k_0(\tilde{x})}) \mid \varepsilon^n=\sigma_{\tilde{x}}(\gamma)\}$. In particular, $\{\varepsilon\in \cE_0(k_0(\tilde{x})) \mid \varepsilon^n=\sigma_{\tilde{x}}(\gamma)\}=\s_{\tilde{x}}(S)$ as desired. 
	So, in case $S\subseteq \cE_0(K_0)$, we can choose as $\tilde{H}=\tilde{\U}$. 
	
	\medskip
	
	Now assume that $S\setminus \cE_0(K_0)\neq \emptyset$ and let $\varepsilon\in S\setminus \cE_0(K_0)$.
	Then $k_0(\varepsilon(\overline{\xi}))\hookrightarrow \overline{K_0}$ is a finite extension of $K_0$ with $[k_0(\varepsilon(\overline{\xi})):K_0]>1$, where we think of $\varepsilon$ as a morphism $\varepsilon\colon \spec(\overline{K_0})\to \cE_0$ and $\overline{\xi}$ is the unique point of $\spec(\overline{K_0})$. Let $\B_\varepsilon\subseteq k_0(\varepsilon(\overline{\xi}))$ be a finitely generated $\tilde{\B}$\=/algebra with $k_0(\varepsilon(\overline{\xi}))$ as field of fractions and such that $\varepsilon\in \cE_0(\B_\varepsilon)$. Furthermore, we may assume $\B_\varepsilon\subseteq \B'_0$. 
	
	Then Lemma~\ref{LM:residucefields}, applied with $\SSS=\cE_0$, $\B'_0=\B_\varepsilon$ and $s=\varepsilon\in\cE_0(\B_\varepsilon)$, yields a nonempty Zariski open subset $\mathcal{V}_\varepsilon$ of $\spec(\B_\varepsilon)$ such that
	$k_0(\varepsilon(x))\to k_0(x)$ is an isomorphism for every $x\in \mathcal{V}_\varepsilon$.
	By Chevalley's theorem, there exists a nonempty basic Zariski open subset $\mathcal{U}_\varepsilon$ of $\tilde{\X}=\spec(\tilde{\B})$ such that $\U_\varepsilon$ lies in the image of $\mathcal{V}_\varepsilon$ under $\spec(\B_\varepsilon)\to \spec(\tilde\B)$.
		
	Set $\tilde{H}_\varepsilon=\U_\varepsilon\cap H(\B_\varepsilon/\tilde{\B})$ and let $\tilde{x}\in \tilde{H}_\varepsilon$. Then $[k_0(x_\varepsilon):k_0(\tilde{x})]=[k_0(\varepsilon(\overline{\xi})):K_0]>1$, where $x_\varepsilon$ is the unique element of $\spec(\B_\varepsilon)$ lying above $\tilde{x}$. As $x_\varepsilon\in\mathcal{V}_\varepsilon$, the field extensions $k_0(\varepsilon(x_\varepsilon))$ and $k_0(x_\varepsilon)$ of $k_0(\tilde{x})$ are isomorphic. Therefore, $k_0(\varepsilon(x_\varepsilon))$ is a non-trivial field extension of $k_0(\tilde{x})$.
	
	We will show that $\tilde{H}=\tilde{\U}\cap (\cap_{\varepsilon\in S\setminus \cE_0(K_0)} \tilde{H}_\varepsilon)$ has the desired property. Let $\tilde{x}\in\tilde{H}$. 
	Then there exist an $x'\in \U'\cap\mathcal{V}'\cap\mathcal{W}'\subseteq \X'_0$ with $\f(x')=\tilde{x}$.
	Given (\ref{eq: surjective on algebraic closure}), to verify (\ref{eq: torsion identity}), it suffices to show that $\sigma_{x'}(\varepsilon)\notin \cE_0(k_0(\tilde{x}))$ for every $\varepsilon\in S\smallsetminus\cE_0(K_0)$.
	Note that $\sigma_{x'}(\varepsilon)\notin \cE_0(k_0(\tilde{x}))$ if and only if $k_0(\varepsilon(x'))$ is a non-trivial extension of $k_0(\tilde{x})$.
	
	For $\varepsilon\in S\setminus \cE_0(K_0)$, since $x_\varepsilon$ is the unique element of $\spec(\B_\varepsilon)$ lying above $\tilde{x}$ and $\B_\varepsilon\subseteq \B'_0$, we see that $x_\varepsilon$ is the image of $x'$ under $\spec(\B'_0)\to \spec(\B_\varepsilon)$. So $\varepsilon(x')=\varepsilon(x_\varepsilon)$ and $k_0(\varepsilon(x'))=k_0(\varepsilon(x_\varepsilon))$ is a non-trivial extension of $k_0(\tilde{x})$ as desired.	
\end{proof}
We are now prepared to prove our variant of N\'{e}ron's specialization theorem.

\begin{theo}
	\label{thm:specializationtheorem}
	Let $\Gamma$ be a finitely generated subgroup of $\cE_0(\B_0)$. Then there exists a Hilbert subset $H_0$ of $\X_0$ such that $\sigma_{x_0}\colon \cE_0(\B_0)\to \cE_0(k_0(x_0))$ is injective on $\Gamma$ for every $x_0\in H_0$.
\end{theo}
\begin{proof}
	Set $\tilde{\Gamma}=\{\varepsilon\in \cE_0(K_0) \mid \varepsilon^n\in \Gamma \mbox{ for some $n\geq 1$} \}$. Then $\tilde{\Gamma}$ is a subgroup of $\cE_0(K_0)$ and by assumption (A) it is finitely generated. Let $\tilde{\B}\subseteq K_0$ be a finitely generated $\B_0$-algebra such that $\tilde{\Gamma}\subseteq \cE_0(\tilde{\B})$. We shall show that there exists a Hilbert subset $\tilde{H}$ of $\tilde{\X}=\spec(\tilde{\B})$ such that the specialization map $\sigma_{\tilde{x}}\colon \cE_0(\tilde{\B})\to\cE(k_0(\tilde{x}))$ is injective on $\tilde{\Gamma}$ for every $\tilde{x}\in \tilde{H}$. Note that this implies the theorem since, by Lemma \ref{lemma: Hilbert set under projection}, there exists a Hilbert subset $H_0$ of $\X_0$ contained in the image of $\tilde{H}$ under $\tilde{\X}\to\X_0$ and if $\tilde{x}\in\tilde{H}$ maps to $x_0\in H_0$, then the diagram 
	$$
	\xymatrix{
		\cE_0(\B) \ar@{^{(}->}[r] \ar_-{\s_{x_0}}[d] & \cE_0(\tilde{\B}) \ar^-{\s_{\tilde{x}}}[d] \\
		\cE_0(k_0(x_0)) \ar@{^{(}->}[r] & \cE_0(k_0(\tilde{x}))
	}
	$$
	commutes.
	
	Let $n>1$ be an integer such that the order of the torsion subgroup of $\tilde{\Gamma}$ divides $n$. 
	Let $\{\gamma_1,\ldots,\gamma_m\}$ be a set of representatives of $\tilde{\Gamma}/\tilde{\Gamma}^n$. Without loss of generality we may assume that $\gamma_1=1$. For $i=1,\dots,m$ set $\Delta_i=\{ \varepsilon\in \tilde{\Gamma}\mid \varepsilon^n=\gamma_i\}$. Note that $\Delta_i=\{ \varepsilon\in \cE_0(K_0)\mid \varepsilon^n=\gamma_i\}$ by construction of $\tilde{\Gamma}$. 
	
	Lemma~\ref{LM:solutionsunderspecialization} applied with $\gamma=\gamma_i$, yields a Hilbert subset $\tilde{H}_i$ of $\tilde{\X}$ such that 
	$$
	\{\varepsilon\in \cE_0(k_0(\tilde{x})) \mid \varepsilon^n=\sigma_{\tilde{x}}(\gamma_i)\}=	\sigma_{\tilde{x}}(\Delta_i)
	$$
	for every $\tilde{x}\in \tilde{H}_i$.
	Set $\tilde{H}=\cap_{i=1}^m \tilde{H}_i$ and let $\tilde{x}\in \tilde{H}$.
	We will show that $\sigma_{\tilde{x}}$ is injective on $\tilde{\Gamma}$.
	It suffices to show that $\Gamma_0=\ker(\sigma_{\tilde{x}})\cap \tilde{\Gamma}$ is trivial. Note that $\Gamma_0$ is a finitely generated group because it is a subgroup of the finitely generated group $\tilde{\Gamma}$. 
	
	We claim that $\Gamma_0=\Gamma_0^n$. Let $\gamma\in \Gamma_0$ and write $\gamma=\gamma_i \tilde{\gamma}^n$ for some $i$ and $\tilde{\gamma}\in \tilde{\Gamma}$. Then $1=\sigma_{\tilde{x}}(\gamma)=\sigma_{\tilde{x}}(\gamma_i)\sigma_{\tilde{x}}(\tilde{\gamma})^n$ and so $\sigma_{\tilde{x}}(\tilde{\gamma}^{-1})^n=\sigma_{\tilde{x}}(\gamma_i)$. Because $\tilde{x}\in\tilde{H}_i$, we have $\s_{\tilde{x}}(\tilde{\gamma}^{-1})=\sigma_{\tilde{x}}(\varepsilon)$ for some $\varepsilon\in\Delta_i$, i.e., $\varepsilon^n=\gamma_i$. This implies that $\gamma_i=1$, i.e., $i=1$.
		
	Hence $\sigma_{\tilde{x}}(\tilde{\gamma}^{-1})=\sigma_{\tilde{x}}(\varepsilon)$ for $\varepsilon\in\tilde{\Gamma}$ and $\varepsilon^n=1$. So $\s_{\tilde{x}}(\varepsilon\tilde{\gamma})=1$ and $\varepsilon\tilde{\gamma}\in \Gamma_0$. Moreover, $$\gamma=\gamma_i\tilde{\gamma}^n=\tilde{\gamma}^n=\varepsilon^n\tilde{\gamma}^n=(\varepsilon\tilde{\gamma})^n\in\Gamma_0^n.$$
	Thus $\Gamma_0=\Gamma_0^n$ as claimed.
	As the order of the torsion subgroup of $\tilde{\Gamma}$ divides $n$, the group $\tilde{\Gamma}^n$ is torsion free. Therefore, also $\Gamma_0=\Gamma_0^n\subseteq\tilde{\Gamma}^n$ is torsion free.
	But a finitely generated torsion-free group is a free $\mathbb{Z}$-module and so can satisfy $\Gamma_0^n=\Gamma_0$ only if $\Gamma_0=1$ is trivial. Thus $\Gamma_0=1$ and $\sigma_{\tilde{x}}$ is injective on $\tilde{\Gamma}$ for every $\tilde{x}\in\tilde{\Gamma}$ as desired.
\end{proof}

The following corollary contains the special case of Theorem \ref{thm:specializationtheorem} relevant for our purpose.

\begin{cor} \label{cor: Nerons specialization theorem}
	Let $k_0$ be a finitely generated field extension of $\mathbb{Q}$, $\X_0=\spec(\B_0)$ an affine variety over $k_0$ and $\cE_0$ a group scheme over $\B_0$ of abelian type. Then there exists a Hilbert subset $H_0$ of $\X_0$ such that the specialization morphism $\s_{x_0}\colon \cE_0(\B_0)\to \cE_0(k_0(x_0))$ is injective for every $x_0\in H_0$.
\end{cor}
\begin{proof}
	Since $K_0$ is finitely generated over $\mathbb{Q}$, by the generalized Mordell-Weil theorem (\cite[Chapter 6, Thm. 1]{Lang:FundamentalsofDiophantineGeometry}) the group $\cE_0(K_0)$ is finitely generated. Therefore, assumption (A) is satisfied and also $\Gamma=\cE_0(\B_0)\leq \cE_0(K_0)$ is finitely generated. Thus Theorem~\ref{thm:specializationtheorem} applies.
\end{proof}

In addition to the case of group schemes of abelian type explained in the above corollary, Theorem~\ref{thm:specializationtheorem} also applies in the case where $k_0$ is a finitely generated field extension of $\mathbb{Q}$ and $\cE_0=\mathbb{G}_{m,\B_0}$ is the multiplicative group scheme over $\B_0$. In this case assumption 
assumption (A) is verified by \cite[Prop. 2.12]{Feng:DifferenceGaloisGrousUnderSpecialization}. 

Theorem \ref{thm:specializationtheorem} does not apply to the additive group scheme $\mathbb{G}_{a,\B_0}$ over $\B_0$ because assumption (A) is violated. For example, for $\Gamma=(\mathbb{Z},+)$, the group
in (A) equals $(\mathbb{Q},+)$.

\subsection{Denseness of ad$\times$Jac-open sets}
\label{subsec: Denseness of ad-open and ab-open sets}

In this section we combine the results of the previous sections to show that ad$\times$Jac-open subsets are Zariski dense.
%
 The following proposition is an important step in this direction. It can be seen as a ``geometric'' version of Corollary \ref{cor: Nerons specialization theorem}.

\begin{prop} \label{prop: ab contains Hilbert}
	Let $\U$ be an ab-open subset of $\X(k)$ and $S\subseteq \B$ a finite subset. Then there exists a geometric $\B_0/k_0$-Hilbert set $\H\subseteq \X(k)$ such that $S\subseteq \B_0$ and $\H\subseteq \U$. In particular,  
	every ab-open subset of $\X(k)$ contains a geometric Hilbert set.
\end{prop}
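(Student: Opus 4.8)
The plan is to descend all the relevant data to a finitely generated subfield $k_0$ of $k$, apply our variant of N\'eron's specialization theorem (Corollary~\ref{cor: Nerons specialization theorem}) over $k_0$, and then transport the resulting Hilbert set back up to $\X(k)$. Write $\U=\f(W_{\X'}(\cA,\Gamma))$ as in Definition~\ref{defi: ab-open subset} and fix generators $\gamma_1,\dots,\gamma_r$ of the finitely generated group $\Gamma\leq\cA(\B')$. By spreading out (cf.\ \cite{Grothendieck:EGAIV3} and \cite{Poonen:RationalPointsOnVarieties}) there exist: a subfield $k_0\subseteq k$ finitely generated over $\mathbb{Q}$; a finitely generated $k_0$-subalgebra $\B_0\subseteq\B$ with $S\subseteq\B_0$ and $\B_0\otimes_{k_0}k\xrightarrow{\ \sim\ }\B$; a finitely generated $k_0$-subalgebra $\B_0'\subseteq\B'$ with $\B_0\subseteq\B_0'$, with $\B_0'$ algebraic over $\B_0$, and with $\B_0'\otimes_{k_0}k\xrightarrow{\ \sim\ }\B'$; a group scheme $\cA_0$ over $\B_0'$ together with an isomorphism $\cA_0\times_{\spec(\B_0')}\spec(\B')\cong\cA$ of group schemes over $\B'$; and elements $\gamma_1^{(0)},\dots,\gamma_r^{(0)}\in\cA_0(\B_0')$ mapping to $\gamma_1,\dots,\gamma_r$ under the base-change map $\cA_0(\B_0')\to\cA(\B')$. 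Being subrings of the domains $\B,\B'$, the rings $\B_0,\B_0'$ are domains; and since $\B_0'\otimes_{k_0}\overline{k_0}$ embeds into $\B_0'\otimes_{k_0}k=\B'$, the scheme $\X_0':=\spec(\B_0')$ is geometrically integral over $k_0$, hence an affine variety. Moreover the generic fibre of $\cA_0$ over $k_0(\B_0')$ becomes, after the field extension $k_0(\B_0')\hookrightarrow k(\B')$, the generic fibre of $\cA$, which is an abelian variety; by faithfully flat descent (of properness, smoothness and geometric connectedness of the fibre) $\cA_0$ is therefore of abelian type. Put $\Gamma_0=\langle\gamma_1^{(0)},\dots,\gamma_r^{(0)}\rangle\leq\cA_0(\B_0')$.

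Now apply Corollary~\ref{cor: Nerons specialization theorem} to $k_0$, $\X_0'=\spec(\B_0')$ and $\cA_0$: there is a Hilbert subset $H_0'$ of $\X_0'$ such that $\s_{x_0'}\colon\cA_0(\B_0')\to\cA_0(k_0(x_0'))$ is injective, and in particular injective on $\Gamma_0$, for every $x_0'\in H_0'$. Let $\pi_0\colon\X_0'\to\X_0:=\spec(\B_0)$ be induced by $\B_0\subseteq\B_0'$. By Lemma~\ref{lemma: Hilbert set under projection} there is a Hilbert subset $H_0$ of $\X_0$ with $H_0\subseteq\pi_0(H_0')$ such that every $\p\in H_0$ has a unique prime $\p'\in\spec(\B_0')$ lying above it (so necessarily $\p'\in H_0'$). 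Let $\H\subseteq\X(k)$ be the geometric $\B_0/k_0$-Hilbert set which is the preimage of $H_0$ under $\X(k)\to\X\to\X_0$; then $S\subseteq\B_0$ by construction, and it remains only to show $\H\subseteq\U$.

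Let $c\in\H$, with image $\p\in H_0$ in $\X_0$, and let $\p'\in H_0'$ be the unique prime of $\B_0'$ over $\p$. Since $\B_0'\otimes_{\B_0}k_0(\p)$ is a field that is finitely generated as a $k_0(\p)$-algebra, the extension $k_0(\p')/k_0(\p)$ is finite, so the embedding $\B_0/\p\hookrightarrow k$ induced by $c$ extends to an embedding $k_0(\p')\hookrightarrow k$; composing $\B_0'\twoheadrightarrow\B_0'/\p'\hookrightarrow k_0(\p')\hookrightarrow k$ yields a ring map $c_0'\colon\B_0'\to k$ whose restriction to $\B_0$ is $c|_{\B_0}$. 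Define $c'\colon\B'=\B_0'\otimes_{k_0}k\to k$ by $c'(b\otimes\lambda)=c_0'(b)\lambda$. Then $c'|_{\B}=c$, i.e.\ $\f(c')=c$, while $c'|_{\B_0'}=c_0'$ factors through $k_0(\p')$. Hence, for any $\gamma\in\Gamma$ lifted to $\gamma_0\in\Gamma_0$, functoriality of the specialization maps identifies $\s_{c'}(\gamma)$ with the image of $\s_{\p'}(\gamma_0)$ under the injective map $\cA_0(k_0(\p'))\to\cA_0(k)$ induced by $k_0(\p')\hookrightarrow k$. So if $\s_{c'}(\gamma)$ is trivial, then $\s_{\p'}(\gamma_0)$ is trivial, so $\gamma_0$ is trivial (as $\p'\in H_0'$), so $\gamma$ is trivial. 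Thus $\s_{c'}$ is injective on $\Gamma$, i.e.\ $c'\in W_{\X'}(\cA,\Gamma)$, and therefore $c=\f(c')\in\U$. The final assertion of the proposition is the case $S=\emptyset$.

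The main obstacle is the first step: one must verify that \emph{all} the data — the two algebras and the isomorphism, but above all the \emph{abelian-type} group scheme $\cA$ and the chosen points of $\Gamma$ — can be simultaneously defined over a single finitely generated subfield of $k$, with ``abelian type'' preserved under the descent. Once this spreading-out is in place, the remainder is a bookkeeping of the specialization maps, with Corollary~\ref{cor: Nerons specialization theorem} and Lemma~\ref{lemma: Hilbert set under projection} doing the essential work.
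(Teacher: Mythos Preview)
Your proof is correct and follows essentially the same strategy as the paper's: descend $\B,\B',\cA,\Gamma$ to a finitely generated subfield $k_0$, apply Corollary~\ref{cor: Nerons specialization theorem} on $\X_0'$, push the resulting Hilbert set down to $\X_0$ via Lemma~\ref{lemma: Hilbert set under projection}, and verify that the associated geometric Hilbert set lies in $\U$. One small point: you assert that $\B_0'\otimes_{\B_0}k_0(\p)$ is a field, but the \emph{statement} of Lemma~\ref{lemma: Hilbert set under projection} only guarantees a unique prime above $\p$, not that the fibre is reduced; fortunately you do not actually need this, since $k_0(\p')/k_0(\p)$ is algebraic (hence finite) simply because $\B_0'$ is algebraic over $\B_0$, which is all that is required to extend the embedding into $k$. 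The paper handles the lifting of $c$ to $c'\in\X'(k)$ slightly differently, by first using Chevalley's theorem to find a $b\in\B$ over which primes of $\B$ lift to $\B'$ and then intersecting with $D(b)$; your direct construction of $c'$ via $\B'\cong\B_0'\otimes_{k_0}k$ and the unique $\p'$ is a clean alternative.
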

\begin{proof}
	The ab-open set $\U$ is of the form $\U=\f(W_{\X'}(\cE,\Gamma))$,
	where $\X'=\spec(\B')$ with $\B'$ an integral domain containing $\B$, finitely generated and algebraic over $\B$, $\cE$ is a group scheme of abelian type over $\B'$, $\Gamma$ is a finitely generated subgroup of $\cE(\B')$ and $\f\colon\X'(k)\to\X(k)$ is the morphism induced by the inclusion $\B\subseteq \B'$.
	The idea of the proof is to descend the whole situation from $k$ to a finitely generated field extension of $\mathbb{Q}$ so that we can apply N\'{e}ron's specialization theorem.	
	We claim that we can find
	\begin{itemize}
		\item a nonzero $b\in \B$ such that every $\p\in D(b)$ lifts to a prime ideal of $\B'$;
		\item a subfield $k_0$ of $k$ that is finitely generated over $\mathbb{Q}$;
		\item a finitely generated $k_0$-subalgebra $\B_0$ of $\B$ containing $b$ and $S$ such that the canonical map $\B_0\otimes_{k_0}k\to \B$ is an isomorphism;
		\item a finitely generated $k_0$-subalgebra $\B'_0$ of $\B'$ containing $\B_0$ such that the canonical map $\B_0'\otimes_{k_0}k\to \B'$ is an isomorphism and such that $\B_0'$ is algebraic over $\B_0$;
		\item a group scheme $\cE_0$ over $\B'_0$ of abelian type such that the base change from $k_0$ to $k$ of $\cE_0/\B'_0$ equals $\cE/\B'$ and $\Gamma\subseteq \cE_0(\B'_0)$ (where we identify $\cE_0(\B_0')$ with a subset of $\cE(\B')$).
	\end{itemize}
	To see that we can find the above items we argue as follows. First, by Chevalley's theorem, we can find an appropriate $b\in \B$. 
	Let $\gamma_1,\ldots,\gamma_n$ be a finite generating set of $\Gamma$.
	Writing $\B'$ as the directed union of all of its finitely generated $\mathbb{Z}$-algebras and considering the morphisms $\gamma_1\colon \spec(\B')\to \cE,\ldots,\gamma_n\colon\spec(\B')\to \cE$ and the separated commutative group scheme $\cE$ over $\B'$, we can combine Theorem 8.8.2 and Theorem 8.10.5 (v) of \cite{Grothendieck:EGAIV3}, to obtain a finitely generated $\mathbb{Z}$-subalgebra $\tilde{\B}$ of $\B'$, a commutative separated group scheme $\tilde{\cE}$ of finite type over $\tilde{\B}$ and morphisms $\tilde{\gamma_i}\colon \spec(\tilde{\B})\to \tilde{\cE}$ $(i=1,\ldots,n)$ such that the base change of $\tilde{\cE}$ from $\tilde{\B}$ to $\B'$ is $\cE$ and the base change of $\tilde{\gamma_i}$ from $\tilde{\B}$ to $\B'$ is $\gamma_i$ for $i=1,\ldots,n$. So, if we identify $\tilde{\cE}(\tilde{\B})$ with a subset of $\cE(\B')$, then $\Gamma\subseteq  \tilde{\cE}(\tilde{\B})$.
	
	Next, choose a finite generating set $S'$ of $\B'$ as a $k$-algebra such that $S'$ contains a finite generating set of $\tilde{\B}$ as a $\mathbb{Z}$-algebra. Then choose a finite generating set $T$ of $\B$ as a $k$-algebra such that $b\in T$, $S\subseteq T$ and all elements of $S'$ satisfy a nonzero univariate (not necessarily monic) polynomial over $\mathbb{Z}[T]$. Set $T'=S'\cup T$. Then $T\subseteq T'$, $T$ generates $\B$ as a $k$-algebra, $T'$ generates $\B'$ as a $k$-algebra and every element of $T'$ is algebraic over $\mathbb{Z}[T]$. Next, let $k_0\subseteq k$ be a finitely generated field extension of $\mathbb{Q}$ such that the ideal of all $k$-algebraic relations among the elements of $T$ as well as the ideal of all $k$-algebraic relations among the elements of $T'$ is generated by polynomials with coefficients in $k_0$. Set $\B_0=k_0[T]$ and $\B_0'=k_0[T']$. Then $\B_0\subseteq \B'_0$ and $\B'_0$ is finitely generated and algebraic over $\B_0$. Moreover, $b\in \B_0$, $S\subseteq\B_0$, $\tilde{\B}\subseteq \B_0'$ and the morphisms $\B_0\otimes_{k_0}k\to \B$ and $\B_0'\otimes_{k_0}k\to \B'$ are isomorphism. 
	
	Let $\cE_0$ be the base change of $\tilde{\cE}$ from $\tilde{\B}$ to $\B_0'$.
	Then $\cE_0$ is a commutative separated group scheme of finite type over $\B_0'$ with $\Gamma\subseteq\cE_0(\B'_0)$ and the base change of $\cE_0$ from $\B_0'$ to $\B'$ is $\cE$. It follows that the base change of $\cE_0\times_{\spec(\B'_0)}\spec(k_0(\B'_0))$ from $k_0(\B'_0))$ to $k(\B')$ is $\cE\times_{\spec(\B')}k(\B')$. Since the latter is an abelian variety, also $\cE_0\times_{\spec(\B'_0)}\spec(k_0(\B'_0))$ must be an abelian variety. So $\cE_0$ is of abelian type. In summary, we have successfully descended everything to $k_0$: The base change of $\B'_0/\B_0$ from $k_0$ to $k$ is $\B'/\B$ and the base change of $\cE_0/\B'_0$ from $k_0$ to $k$ is $\cE/\B'$.
	
	Set $\X_0=\spec(\B_0)$ and $\X'_0=\spec(\B'_0)$ so that $\cE\to\X'\to\X$ is the base change from $k_0$ to $k$ of $\cE_0\to\X'_0\to\X_0$.
	
	By Neron's specialization theorem (Corollary \ref{cor: Nerons specialization theorem}), there exists a Hilbert subset $H_0'$ of $\X'_0$ such that the specialization map $\sigma_{x_0'}\colon \cE_0(\B_0')\to \cE_0(k_0(x_0'))$ is injective for every $x'_0\in H'_0$.
	By Lemma \ref{lemma: Hilbert set under projection} there exists a Hilbert subset $H_0$ of $\X_0$ such that the image of $H_0'$ in $\X_0$ contains $H_0$ and every element of $H_0$ has a unique lift to an element of $\X_0'$. Let $\H$ be the inverse image of $H_0\cap D(b)$ under $\X(k)\to \X\to \X_0$. Then $\H$ is a geometric $\B_0/k_0$-Hilbert set.
	
	We claim that the geometric Hilbert set $\H$ is contained in $\U$. Let $c\in \H$ and let $x,x_0$ denote the image of $c$ in $\X,\X_0$ respectively. By construction of $H_0$, there exists an $x_0'\in H_0'$ mapping to $x_0$.
	On the other hand, if $\p$ is the prime ideal of $\B$ corresponding to $x$, then $b\notin\p$ and so $x$ lifts to a point $x'$ of $\X'$ by the choice of $b$. The commutative diagram
	$$
	\xymatrix{
		\X' \ar[r] \ar[d] & \X \ar[d] \\
		\X_0' \ar[r] & \X_0
	}
	$$
	then shows that the image of $x'$ in $\X_0'$ equals $x_0'$ because $x_0'$ is the unique lift of $x_0$ to an element of $\X_0'$.
	We then have a commutative diagram
	$$
	\xymatrix{
		\cE_0(\B_0') \ar[r] \ar_-{\sigma_{x_0'}}[d] & \cE(\B') \ar^-{\sigma_{x'}}[d] \\
		\cE_0(k_0(x_0')) \ar[r] & \cE(k(x'))	
	}
	$$
	where the horizontal maps are injections. As $\sigma_{x_0'}$ is injective, it is in particular injective on $\Gamma$. Thus also $\sigma_{x'}=\sigma_c$ is injective on $\Gamma$. So $c\in W_{\X'}(\cE,\Gamma)=\U$ as desired. 
\end{proof}

We are now prepared to prove the main theorem of Section \ref{sec: adopen and abopen}.

\begin{theo} \label{theo: ad meets ab is dense}
	The intersection of an ad-open and an ab-open subset of $\X(k)$ is Zariski dense in $\X(k)$. In particular, an ad$\times$Jac-open subset of $\X(k)$ is Zariski dense in $\X(k)$.
\end{theo}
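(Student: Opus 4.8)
The plan is to produce, inside the given intersection, a Zariski dense subset of the form $\H_{k_0,\X(k)}^{\boldsymbol{\eta}}(\boldsymbol{d},\boldsymbol{f},g)$. The point is that the three ingredients already available — Proposition~\ref{prop: ab contains Hilbert} (an ab-open set contains a geometric Hilbert set), Lemma~\ref{lemma: containmemt for Hilber sets} (a geometric Hilbert set contains a set $\H_{k_0,\X(k)}^{\boldsymbol{\eta}}((1,\dots,1),\boldsymbol{f},g)$), and Proposition~\ref{prop: Hilbert set in ad-open} (a set $\H_{k_0,\X(k)}^{\boldsymbol{\eta}}(\boldsymbol{d},\emptyset,1)$ is contained in a given ad-open set) — all live within the common framework of the sets $\H_{k_0,\X(k)}^{\boldsymbol{\eta}}(\cdot,\cdot,\cdot)$, which is stable under intersection; and sets of this form are Zariski dense in $\X(k)$ by a geometric form of the Hilbert irreducibility theorem (see \cite{Feng:DifferenceGaloisGrousUnderSpecialization}). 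So let $\U_1=W_\X(\Ga,\Gamma)$ be an ad-open subset, where $\Gamma\leq(\B,+)$ is generated by $b_1,\dots,b_r$, and let $\U_2$ be an ab-open subset. First I would apply Proposition~\ref{prop: ab contains Hilbert} with $S=\{b_1,\dots,b_r\}$: this yields a subfield $k_0$ of $k$ finitely generated over $\mathbb{Q}$, a finitely generated $k_0$-subalgebra $\B_0$ of $\B$ with $b_1,\dots,b_r\in\B_0$ and $\B_0\otimes_{k_0}k\cong\B$, and a geometric $\B_0/k_0$-Hilbert set $\H\subseteq\U_2$. In particular $\Gamma$ is now a finitely generated subgroup of $(\B_0,+)$.

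Next I would feed $\H$ into Lemma~\ref{lemma: containmemt for Hilber sets} to obtain $\boldsymbol{\eta}$ with $\B_0=k_0[\boldsymbol{\eta}]$ and suitable $\boldsymbol{f},g$ with $\H_{k_0,\X(k)}^{\boldsymbol{\eta}}((1,\dots,1),\boldsymbol{f},g)\subseteq\H$; as recorded in the proof of that lemma, this $\boldsymbol{\eta}$ together with $k_0$ is of exactly the shape required for Proposition~\ref{prop: Hilbert set in ad-open} (Noether normalization of $\B_0$, with the transcendental coordinates remaining algebraically independent over $k$ and the relevant minimal polynomials defined over $k_0$). Since $\Gamma\leq(k_0[\boldsymbol{\eta}],+)$, that proposition produces a tuple $\boldsymbol{d}$ of positive integers — enlarging each entry to be at least $1$ if needed — with $\H_{k_0,\X(k)}^{\boldsymbol{\eta}}(\boldsymbol{d},\emptyset,1)\subseteq W_\X(\Ga,\Gamma)=\U_1$. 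Since imposing the irreducibility conditions coming from $\boldsymbol{f}$ and the condition $g\neq 0$ only shrinks a set, and since strengthening the degree bounds from $(1,\dots,1)$ to $\boldsymbol{d}$ does likewise, one obtains
\begin{align*}
\H_{k_0,\X(k)}^{\boldsymbol{\eta}}(\boldsymbol{d},\boldsymbol{f},g)&\subseteq\H_{k_0,\X(k)}^{\boldsymbol{\eta}}(\boldsymbol{d},\emptyset,1)\subseteq\U_1,\\
\H_{k_0,\X(k)}^{\boldsymbol{\eta}}(\boldsymbol{d},\boldsymbol{f},g)&\subseteq\H_{k_0,\X(k)}^{\boldsymbol{\eta}}((1,\dots,1),\boldsymbol{f},g)\subseteq\H\subseteq\U_2,
\end{align*}
hence $\H_{k_0,\X(k)}^{\boldsymbol{\eta}}(\boldsymbol{d},\boldsymbol{f},g)\subseteq\U_1\cap\U_2$. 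As the left-hand side is Zariski dense in $\X(k)$, so is $\U_1\cap\U_2$. For the final assertion: by Definition~\ref{defi: ab-open subset} an ad$\times$Jac-open subset is the intersection of an ad-open subset with a Jac-open subset, and a Jac-open subset is ab-open, so the first part applies verbatim.

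The genuinely hard work has already been done: Proposition~\ref{prop: ab contains Hilbert} rests on the variant of N\'{e}ron's specialization theorem proved above (Theorem~\ref{thm:specializationtheorem}), and the Zariski density of the sets $\H_{k_0,\X(k)}^{\boldsymbol{\eta}}(\boldsymbol{d},\boldsymbol{f},g)$ is (geometric) Hilbert irreducibility. The only delicate point in the present assembly is the bookkeeping of fields of definition: one must arrange that the data defining the ad-open set $\U_1$ — here the generators of $\Gamma$ — descend to the \emph{same} finitely generated $k_0$-subalgebra $\B_0\subseteq\B$ over which the geometric Hilbert set inside $\U_2$ is defined, which is precisely why Proposition~\ref{prop: ab contains Hilbert} is formulated with the freedom to prescribe a finite subset $S\subseteq\B_0$, and why $\boldsymbol{\eta}$ is extracted from $\B_0$ rather than from $\B$.
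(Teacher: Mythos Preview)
Your proof is correct and follows essentially the same approach as the paper: apply Proposition~\ref{prop: ab contains Hilbert} with $S$ a generating set of $\Gamma$, then Lemma~\ref{lemma: containmemt for Hilber sets} and Proposition~\ref{prop: Hilbert set in ad-open}, and intersect the resulting $\H$-sets. The only minor difference is that you invoke Zariski density of $\H_{k_0,\X(k)}^{\boldsymbol{\eta}}(\boldsymbol{d},\boldsymbol{f},g)$ directly, whereas the paper cites \cite[Prop.~2.8]{Feng:DifferenceGaloisGrousUnderSpecialization} only for \emph{nonemptiness} and then deduces density by a short extra step (absorb an arbitrary nonempty Zariski open $\U'$ into the ad-open side via Remark~\ref{rem: intersection of ad-opens}, and reapply nonemptiness).
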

\begin{proof}
	Let $\U=W_\X(\Ga,\Gamma)$ be an ad-open subset of $\X(k)$ and let $\mathcal{V}$ be an ab-open subset of $\X(k)$. We first show that $\U\cap\mathcal{V}$ is nonempty.
		
	Choose a finite generating set $S\subseteq \B$ of the finitely generated subgroup $\Gamma$ of $(\B,+)$. By Proposition \ref{prop: ab contains Hilbert} there exists a geometric $\B_0/k_0$-Hilbert set $\H$ of $\X(k)$ with $S\subseteq \B_0$ and $\H\subseteq \mathcal{V}$. Now Lemma \ref{lemma: containmemt for Hilber sets} yields an $\boldsymbol{\eta}\in \B_0^m$ with $\B_0=k_0[\boldsymbol{\eta}]$ and appropriate $\boldsymbol{p}$, $b$ such that
	$\H_{k_0,\X(k)}^{\boldsymbol{\eta}}((1,\ldots,1),\boldsymbol{p},b)\subseteq \H.$ In particular, $\H_{k_0,\X(k)}^{\boldsymbol{\eta}}((1,\ldots,1),\boldsymbol{p},b)\subseteq \mathcal{V}.$
	
	On the other hand, $\Gamma\subseteq \B_0$ as $S\subseteq \B_0$ and so by Proposition \ref{prop: Hilbert set in ad-open}, there exists a $\boldsymbol{d}$ such that $\H^{\boldsymbol{\eta}}_{k_0,\X(k)}(\boldsymbol{d},\emptyset,1)\subseteq \U$. 
	Thus 
	$$\H_{k_0,\X(k)}^{\boldsymbol{\eta}}(\boldsymbol{d},\boldsymbol{p},b)=\H^{\boldsymbol{\eta}}_{k_0,\X(k)}(\boldsymbol{d},\emptyset,1)\cap \H_{k_0,\X(k)}^{\boldsymbol{\eta}}((1,\ldots,1),\boldsymbol{p},b)\subseteq \U\cap\mathcal{V}. $$
	Since $\H_{k_0,\X(k)}^{\boldsymbol{\eta}}(\boldsymbol{d},\boldsymbol{p},b)\neq\emptyset$ by \cite[Prop. 2.8]{Feng:DifferenceGaloisGrousUnderSpecialization}, we can conclude that $\U\cap\mathcal{V}\neq\emptyset$.
	
	\medskip
	
	We next show that $\U\cap \mathcal{V}$ is Zariski dense in $\X(k)$. Suppose, for a contradiction, that $\U\cap \mathcal{V}$ is not Zariski dense. Then there exists a nonempty Zariski open subset $\U'$ of $\X(k)$ such that $\U'\cap\U\cap \mathcal{V}=\emptyset$. So, if $\U''$ is any ad-open subset of $\X(k)$ contained in $\U'\cap\U$, then $\U''\cap \mathcal{V}=\emptyset$. This contradicts the first part of the proof.
\end{proof}

We can do slightly better:
\begin{rem} \label{rem: finite intersection of ad and ab is dense}
	The intersection of finitely many ad-open and finitely many ab-open subsets of $\X(k)$ is Zariski dense in $\X(k)$.
\end{rem}
\begin{proof}
	The intersection of finitely many ad-open subsets contains an ad-open subset (Remark \ref{rem: intersection of ad-opens}) and the intersection of finitely many ab-open subsets is an ab-open subset (argue as in (\ref{eq: intersection of ads})). Thus the remark follows from Theorem \ref{theo: ad meets ab is dense}.
\end{proof}

\section{Some topics in differential Galois theory}
\label{sec: Some topics in differential Galois theory} 

This section is a collection of definitions, results and constructions from differential Galois theory, mostly related to the computation of the differential Galois group via Hrushovski's algorithm. In Section \ref{sec: Specialization of differential torsors}, in the course of the proof of our main specialization theorem (Theorem \ref{theo: main specialization}), we will then show that many of the properties and constructions discussed here are preserved on an ad$\times$Jac-open subset of the parameter space.

Throughout Section \ref{sec: Some topics in differential Galois theory} we assume that
\begin{itemize}
\item $(F,\delta)$ is a differential field with field of constants 
\item $k=F^\de$ algebraically closed and of characteristic zero.
\end{itemize}

In the later subsections we will specialize to the case $(F,\de)=(k(x),\frac{d}{dx})$ or $F$ a finite field extension of $k(x)$. The reader is invited to recall our notational conventions from the end of the introduction.

 \subsection{Basics of differential Galois theory} \label{subsec:differential Galois theory}
 
 We begin by recalling the basic definitions and results from the Galois theory of linear differential equations. Proofs and more background can be found in any of the introductory textbooks \cite{Magid:LecturesOnDifferentialGaloisTheory}, \cite{SingerPut:differential}, \cite{CrespoHajto:AlgebraicGroupsAndDifferentialGaloisTheory}, \cite{Sauloy:DifferentialGaloisTheoryThroughRiemannHilbertCorrespondence}.
 
%
 Recall that a differential ring $R$ is \emph{$\de$-simple} if it is not the zero ring and the only $\de$-ideals of $R$ are $R$ and the zero ideal.
 
 \begin{defi} \label{defi: PV ring}
 	A \emph{Picard-Vessiot ring} for $\de(y)=Ay$, ($A\in F^{n\times n}$) is a $\de$-simple $F$\=/$\de$\=/algebra $R$ such that there exists a $Y\in \Gl_n(R)$ with $\de(Y)=AY$ and $R=F[Y,\frac{1}{\det(Y)}]$.
 \end{defi}
 
 For any given $A\in F^{n\times n}$, there exist a Picard-Vessiot ring $R/F$ for $\de(y)=Ay$ and it is unique up to an $F$-$\de$-isomorphism. Moreover, $R^\de=k$. The \emph{differential Galois group} $G=G(R/F)$ of $R/F$ (or of $\de(y)=Ay$) is the functor from the category of $k$-algebras to the category of groups given by
 $G(T)=\Aut(R\otimes_k T/F\otimes_k T)$, where $T$ is considered as a constant differential ring. It is a linear algebraic group (over $k$). 
 Its coordinate ring $k[G]$ can explicitly be described as $k[G]=(R\otimes_F R)^\de=k[Z,\frac{1}{\det(Z)}]$, with $Z=Y^{-1}\otimes Y\in \Gl_n((R\otimes_K R)^\de)$ and $Y$ as in Definition \ref{defi: PV ring}. The canonical map
 $$R\otimes_k k[G]\to R\otimes_F R$$
 is an isomorphism and so we may make the identification $R\otimes_F R=R\otimes_k k[G]$. For a $k$\=/algebra $T$, any $g\in G(T)$, i.e., any $K\otimes_k T$-$\de$-isomorphism $g\colon R\otimes_k T\to R\otimes_k T$, is determined by $g(Y\otimes 1)$. Moreover, $g(Y\otimes 1)=Y\otimes [g]$ for a unique $[g]\in \Gl_n(T)$. The assignment $g\mapsto [g]$ determines a morphism $G\to \Gl_{n,k}$ of algebraic groups over $k$ which is a closed embedding. The dual of this morphism is the map $k[\Gl_n]=k[X,\frac{1}{\det(X)}]\to k[G],\ X\mapsto Z$.
  
 The (functorial) action of $G$ on $R/F$ is determined by the map $\rho\colon R\to R\otimes_k k[G]$, which, under the identification $R\otimes_F R=R\otimes_k k[G]$ corresponds to the inclusion into the second factor. Explicitly, we have $\rho(Y)=Y\otimes Z$. For $g\in G(T)=\Hom(k[G],T)$, the corresponding automorphism $g\colon R\otimes_k T\to R\otimes_k T$ is the $T$-linear extension of $R\xrightarrow{\rho} R\otimes_k k[G] \to R\otimes_k T$.

 The following lemma shows that Picard-Vessiot rings and differential Galois group are well-behaved under extension of the constants. For a proof see, e.g. \cite[Lemma~5.2]{BachmayrHarbaterHartmannWibmer:TheDifferentialGaloisGroupOfRationalFunctionField}.
 
 \begin{lemma} \label{lemma: base change of PV ring over constants}
 	Let $k\subseteq k'$ be an inclusion of algebraically closed fields and let $R/k(x)$ be a Picard-Vessiot ring with differential Galois group $G$. Then $R\otimes_{k(x)}k'(x)/k'(x)$ is a Picard-Vessiot ring with differential Galois group $G_{k'}$. \qed
 \end{lemma}

\subsection{Differential torsors} \label{subsec: differential torsors}

 Differential torsors were introduced in \cite{BachmayrHarbaterHartmannWibmer:DifferentialEmbeddingProblems} for the purpose of solving differential embedding problems. To study the behaviour of Picard-Vessiot rings and differential embedding problems under specialization, we will need a version of differential torsors that works over an arbitrary base and not just a differential field, as in \cite{BachmayrHarbaterHartmannWibmer:DifferentialEmbeddingProblems}.
 
 As in the introduction, consider an inclusion $k\subseteq k'$ of algebraically closed fields of characteristic zero and a differential equation $\de(y)=Ay$ over $k'(x)$ with differential Galois group $G$ and Picard-Vessiot ring $R/k'(x)$. To discuss specializations of $G$ and $R$ down to $k$ one first needs to spread out $G$ and $R$ into families, i.e., we would like to have a finitely generated $k$-subalgebra $\B$ of $k'$, an affine group scheme $\G$ and a differential ring $\R$ over $\B$ such that the base change from $\B$ to $k'$ yields $G$ and $R$. It is then natural to also ask for an action of $\G$ on $\R$ compatible with the action of $G$ on $R$. Moreover, since $R$ defines a $G$-torsor, also $\R$ should define a $\G$-torsor. This idea is formalized through the notion of differential torsor.

 \medskip
 
Let $\mathcal{B}$ be a $k$-algebra (considered as a constant $\de$-ring) and let $\mathcal{Q}$ be a $\mathcal{B}$-$\de$-algebra.
 The two most relevant cases for us are, firstly $\B=k$ and $\Q=k(x)$ and, secondly $\Q=\B[x]_f$, where $f\in \B[x]$ is a monic polynomial.

 For a $\mathcal{Q}$-$\de$-algebra $\mathcal{R}$, we define $\Autt(\mathcal{R}/\mathcal{Q})$ to be the functor from the category of $\mathcal{B}$\=/algebras to the category of groups given by $\Autt(\mathcal{R}/\mathcal{Q})(\mathcal{T})=\Aut(\mathcal{R}\otimes_{\mathcal{B}}\mathcal{T}/\mathcal{Q}\otimes_{\mathcal{B}}\mathcal{T})$ for any $\mathcal{B}$-algebra $\mathcal{T}$. Here $\mathcal{T}$ is considered as a constant $\de$-ring and the automorphisms are understood to be differential automorphisms. An element $g$ of $\Autt(\mathcal{R}/\mathcal{Q})(\mathcal{T})$ is thus a $\mathcal{Q}\otimes_\mathcal{B}\mathcal{T}$\=/$\de$\=/isomorphism $g\colon \mathcal{R}\otimes_\mathcal{B}\mathcal{T}\to \mathcal{R}\otimes_\mathcal{B}\mathcal{T}$. On morphisms $\Autt(\mathcal{R}/\mathcal{Q})$ is given by base change.
 
 Let $\G$ be an affine group scheme over $\mathcal{B}$. An \emph{action of $\G$ on $\mathcal{R}/\mathcal{Q}$} is a morphism $\G\to \Autt(\mathcal{R}/\mathcal{Q})$ of group functors. In particular, for a $\mathcal{B}$-algebra $\mathcal{T}$, every $g\in \G(\mathcal{T})$ defines a $\mathcal{Q}\otimes_\mathcal{B}\mathcal{T}$\=/$\de$\=/automorphism $g\colon \mathcal{R}\otimes_\mathcal{B}\mathcal{T}\to \mathcal{R}\otimes_\mathcal{B}\mathcal{T}$.

 \begin{lemma} \label{lemma: equivalence with coaction}
 	To specify an action of $\G$ on $\R/\mathcal{Q}$ is equivalent to specifying a morphism $\rho\colon\R\to\R\otimes_\B \B[\G]$ of $\Q$-$\de$-algebras such that the diagrams
 	$$
 	\xymatrix{
 		\R \ar^-\rho[r] \ar_-\rho[d] & \R\otimes_\B\B[\G] \ar^-{\rho\otimes \B[\G]}[d] \\
 		\R\otimes_\B\B[\G] \ar^-{\R\otimes\Delta}[r] & \R\otimes_\B\B[\G]\otimes_\B \B[\G]		
 	}
 	$$
 	and
 	$$
 	\xymatrix{
 		\R \ar^-\rho[rr] \ar_-{\id}[rd] & & \R\otimes_\B \B[\G] \ar^-{\id\cdot\varepsilon}[ld] \\
 		& \R	&
 	}
 	$$
 	commute, where $\Delta\colon \B[\G]\to \B[\G]\otimes_\B\B[\G]$ is the comultiplication and $\varepsilon\colon \B[\G]\to\B$ the counit of the Hopf algebra $\B[\G]$ over $\B$.	
 \end{lemma}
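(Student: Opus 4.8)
The plan is to prove this by the standard ``group action $\leftrightarrow$ coaction'' dictionary, adapted to the differential setting, and to verify that the differential condition is automatically compatible with the Hopf-algebraic manipulations because $\B$, $\B[\G]$, and all the tensor factors $\mathcal{T}$ are constant $\de$-rings.

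\begin{proof}[Proof sketch]
First I would set up the two directions of the correspondence. Given an action of $\G$ on $\R/\Q$, I apply it to the ``universal'' point. More precisely, let $\mathcal{T}=\B[\G]$ and let $u\in\G(\B[\G])=\Hom_\B(\B[\G],\B[\G])$ be the identity morphism (the generic point of $\G$). By hypothesis $u$ gives a $\Q\otimes_\B\B[\G]$-$\de$-automorphism $u\colon\R\otimes_\B\B[\G]\to\R\otimes_\B\B[\G]$; I define $\rho\colon\R\to\R\otimes_\B\B[\G]$ as the composition $\R\xrightarrow{r\mapsto r\otimes 1}\R\otimes_\B\B[\G]\xrightarrow{u}\R\otimes_\B\B[\G]$. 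One checks $\rho$ is a morphism of $\Q$-$\de$-algebras: it is $\Q$-linear because $u$ fixes $\Q\otimes_\B\B[\G]$, it is a ring morphism because $u$ is, and it commutes with $\de$ because $u$ is a differential automorphism and $\B[\G]$ is constant, so $\de$ on $\R\otimes_\B\B[\G]$ is just $\de_\R\otimes\id$. Conversely, given such a $\rho$, for any $\B$-algebra $\mathcal{T}$ and any $g\in\G(\mathcal{T})=\Hom_\B(\B[\G],\mathcal{T})$ I define $g\colon\R\otimes_\B\mathcal{T}\to\R\otimes_\B\mathcal{T}$ as the $\mathcal{T}$-linear extension of $\R\xrightarrow{\rho}\R\otimes_\B\B[\G]\xrightarrow{\id\otimes g}\R\otimes_\B\mathcal{T}$; again this is a $\Q\otimes_\B\mathcal{T}$-$\de$-algebra morphism for the same reasons, and it is functorial in $\mathcal{T}$ by construction.

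Next I would show these two assignments are mutually inverse and that, under them, the ``action'' axioms (compatibility with composition and with the identity in $\G(\mathcal{T})$, i.e. that $\G\to\Autt(\R/\Q)$ is a morphism of group functors) translate \emph{exactly} into the two commuting diagrams. The coassociativity square (the one involving $\R\otimes\Delta$ and $\rho\otimes\B[\G]$) is the coalgebra shadow of the condition $(g_1g_2)\cdot r = g_1\cdot(g_2\cdot r)$: evaluating both composites at the universal point and using that $\Delta^*$ is multiplication in $\G$ gives the multiplicativity of $g\mapsto(g\colon\R\otimes\mathcal{T}\to\R\otimes\mathcal{T})$. The counit triangle (with $\id\cdot\varepsilon$) encodes that the identity element $e\in\G(\mathcal{T})$, which factors through $\varepsilon$, acts as the identity automorphism of $\R\otimes_\B\mathcal{T}$. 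I would phrase this cleanly by noting that for ordinary (non-differential) affine schemes and group schemes over $\B$, this is precisely the classical statement that a right $\G$-coaction on the $\B$-algebra $\R$ is the same as a $\G$-action on $\spec(\R)$ over $\spec(\B)$ (e.g. Milne, \emph{Algebraic Groups}, or Waterhouse), so the only thing genuinely to add is that everything can be run in the category of $\Q$-$\de$-algebras rather than $\B$-algebras. But the differential structure poses essentially no obstruction: $\de$ acts as $\de_\R\otimes\id$ on all the objects in sight (since $\B[\G]$, $\B[\G]\otimes_\B\B[\G]$ are constant), and all the structure maps $\rho$, $\R\otimes\Delta$, $\id\cdot\varepsilon$, $\rho\otimes\B[\G]$ are obtained from $\B$-algebra maps by tensoring with $\R$, hence automatically commute with $\de$; and being a morphism of $\Q$-$\de$-algebras is preserved under all the relevant base changes $\mathcal{B}\to\mathcal{T}$.

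The one point that requires a little care — and is the closest thing to an obstacle — is the passage from ``the diagrams commute after applying to the universal point $u\in\G(\B[\G])$'' to ``$g\mapsto(g\colon\R\otimes\mathcal{T}\to\R\otimes\mathcal{T})$ is a group-functor morphism for \emph{all} $\mathcal{T}$''. This is handled by the Yoneda-type argument: an arbitrary $g\in\G(\mathcal{T})$ is the image of $u$ under $\G(\B[\G])\to\G(\mathcal{T})$ induced by $g^*\colon\B[\G]\to\mathcal{T}$, and the automorphism attached to $g$ is the base change along $g^*$ of the automorphism attached to $u$; hence identities verified at the universal point propagate to all $\mathcal{T}$. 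Conversely one must check that the identities of \emph{maps of functors} (composition, identity) force the two diagrams — again by specializing $\mathcal{T}=\B[\G]\otimes_\B\B[\G]$ and $\mathcal{T}=\B$ and taking $g$, $g'$ to be the two coordinate projections, respectively the counit. Finally, to close the loop I would verify that starting from an action, building $\rho$, and then rebuilding the action returns the original one (immediate from $g=(\id\otimes g)\circ u$ after identifying $g$ with $g^*\colon\B[\G]\to\mathcal{T}$), and starting from $\rho$, building the action, and extracting the coaction via the universal point returns $\rho$ (immediate from $(\id\otimes\id_{\B[\G]})\circ\rho=\rho$). This completes the equivalence.
\end{proof}
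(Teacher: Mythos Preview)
Your proof is correct and follows essentially the same approach as the paper: both directions of the correspondence are built from the universal point $\id\in\G(\B[\G])$ and the $\T$-linear extension of $\rho$ followed by $\id\otimes g$, with the two diagrams identified as the coalgebraic shadows of associativity and the unit axiom. The paper's own proof is only a brief sketch, so your version simply supplies more of the routine verifications (mutual inverse, Yoneda propagation, differential compatibility) that the paper leaves implicit.
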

 \begin{proof}
 	As this is a fairly standard argument, we only sketch the proof. First assume that an action of $\G$ on $\R/\Q$ is given. 
	For $\T=\B[\G]$ and $g=\id\in\G(\T)=\Hom(\B[\G],\B[\G])$ we thus have a $\Q\otimes_\B \B[\G]$-$\de$-automorphism $g\colon \R\otimes_\B\B[\G]\to \R\otimes_\B\B[\G]$. Then we can define $\rho$ as the composition $\rho\colon \R\to \R\otimes_\B \B[\G]\xrightarrow{g}\R\otimes_\B \B[\G]$.
 	
 	Conversely, given $\rho$, we define an action of $\G$ on $\R/\Q$ as follows. For a $\B$-algebra $\T$ and $g\in \G(\T)=\Hom(\B[\G],\T)$ we define $g\colon \R\otimes_\B\T\to \R\otimes_\B\T$ as the $\T$-linear extension of $\R\xrightarrow{\rho}\R\otimes_\B\B[\G]\xrightarrow{\R\otimes g} \R\otimes_\B \T$. The first diagram corresponds to the associativity of this action while the second diagram shows that the identity acts trivial.
 \end{proof}
 
 As above, let $\G$ act on $\R/\Q$ and let $\rho\colon \R\to \R\otimes_\B \B[\G]$ denote the \emph{coaction} (as in Lemma \ref{lemma: equivalence with coaction}). We call $\R/\Q$ a \emph{differential $\G$-torsor} if $\R$ is not the zero ring and the map $\R\otimes_\Q\R\to \R\otimes_\B\B[\G],\ a\otimes b \mapsto (a\otimes 1)\cdot \rho(b)$ is an isomorphism. 
 The geometric interpretation of this condition is the usual torsor condition, i.e., for $\mathcal{Z}=\spec(\R)$, the morphism
 $$\Z\times_\Q \G_\Q\to \Z\times_\Q\Z,\ (z,g)\mapsto (z, zg)$$
 of affine schemes over $\Q$ is an isomorphism.

 \begin{defi} \label{defi: differential Gtorsor for de(y)=Ay}
 	Let $\mathcal{A}\in\Q^{n\times n}$. A differential $\G$-torsor $\R/\Q$ is a \emph{differential $\G$-torsor for $\de(y)=\mathcal{A}y$} if there exists a $\Y\in\Gl_n(\R)$ such that
 	\begin{enumerate}
 		\item $\de(\Y)=\mathcal{A}\Y$,
 		\item $\R=\Q[\Y,\frac{1}{\det(\Y)}]$ and
 		\item for every $\B$-algebra $\T$ and every $g\in \G(\T)$ there exists a $[g]\in \Gl_n(\T)$ such that $g(\Y\otimes 1)=(\Y\otimes 1)(1\otimes [g])$.
 	\end{enumerate}
 	A matrix $\Y\in\Gl_n(\R)$ such that the above three conditions are satisfied is called a \emph{fundamental matrix}.
 \end{defi}
 
Assuming Definition \ref{defi: differential Gtorsor for de(y)=Ay}, the assignment $g\mapsto [g]$ defines a morphism $\G\to \Gl_{n,\B}$ of group schemes over $\B$. 
  
 \begin{ex}
 	The most basic example of the above definitions is when $\B=k$, $\Q=F$ is a differential field with $F^\de=k$ and $\mathcal{R}=R$ is a Picard-Vessiot ring over $F$ for $\de(y)=Ay$ ($A\in F^{n\times n}$) with differential Galois group $\G=G$. As explained in Section \ref{subsec:differential Galois theory}, $G$ acts on $R/F$ and $R/F$ is a differential $G$-torsor for $\de(y)=Ay$. Note that if $\Y=Y\in\Gl_n(R)$ is such that $\de(Y)=AY$ and $R=K[Y,\frac{1}{\det{Y}}]$, then condition (iii) of Definition \ref{defi: differential Gtorsor for de(y)=Ay} is automatically satisfied (because $(R\otimes_k T)^\de=T$ for any $k$-algebra $T$).
 \end{ex}

 In the following example we construct a differential torsor for the group scheme of $2\times 2$ monomial matrices.
 
 \begin{ex} \label{ex: torsor for monomial matrices}
 	Let $\B\subseteq \Q\subseteq \Q[\eta]$ be an inclusion of differential $k$-algebras such that $\B$ is constant and $\Q[\eta]$ is an integral domain. We assume that $\eta$ has minimal polynomial $y^2-a$ over the field of fractions of $\Q$, where $a$ is some element of $\Q^\times$. We also fix $b\in \Q$. Let $y_1,y_2$ be indeterminates over $\Q[\eta]$ and define a derivation $\de$ on $\R=\Q[\eta][y_1,y_2,y^{-1}_1,y^{-1}_2]$ by $\de(y_1)=(b+\eta)y_1,\ \de(y_2)=(b-\eta)y_2$.
 	Set
 	$$\Y=\begin{pmatrix}
 		y_1 & y_2 \\
 		\de(y_1) & \de(y_2)
 	\end{pmatrix}=\begin{pmatrix}
 		y_1 & y_2 \\
 		(b+\eta)y_1 & (b-\eta)y_2
 	\end{pmatrix}.
 	$$ 
 	Then $\det(\Y)=-2\eta y_1y_2$. Therefore $\R=\Q[\Y,\frac{1}{\det(\Y)}]$.
 	A direct computation shows that $\de(\Y)=\cA\Y$, where 
 	$$\cA=\begin{pmatrix}  0 & 1 \\ a+\de(b)-b^2-\frac{\delta(a)b}{2a} & 2b+\frac{\delta(a)}{2a}\end{pmatrix}\in\Q^{2\times 2}.$$
 We would like to describe $\Z=\spec(\R)$ explicitly as a closed subscheme of $\Gl_{2,\Q}$. To this end, let $\mathcal{I}$ denote the kernel of the morphism $\Q[X,\frac{1}{\det(X)}]\to \R,\ X\mapsto \Y$. As $\R$ is an integral domain, $\mathcal{I}$ is a prime differential ideal of $\Q[X,\frac{1}{\det(X)}]$, where $\de(X)=\cA X$. We have 
 $$(b+\eta)y_1\cdot(b-\eta)y_2=(b^2-a)y_1y_2 \quad \text{ and } \quad (b+\eta)y_1\cdot y_2+(b-\eta)y_2\cdot y_1=2by_1y_2. $$ 
 Therefore, the polynomials
 $$
 p_1=X_{21}X_{22}-(b^2-a)X_{11}X_{12} \quad \text{ and } \quad  p_2=X_{21}X_{12}+X_{22}X_{11}-2bX_{11}X_{12} $$
 lie in $\mathcal{I}$. We will show that $\mathcal{I}=(p_1,p_2)$. For this, it suffices to show that the induced map $\Q[X,\frac{1}{\det(X)}]/(p_1,p_2)\to \R$ is injective. 
 Let $z_1, z_2, z_1', z_2'$ denote the image in $\Q[X,\frac{1}{\det(X)}]/(p_1,p_2)$ of $X_{11},X_{12},X_{21},X_{22}$ respectively. Then
 \begin{equation} \label{eq: for ex 1}
 	z_1'z_2'=(b^2-a)z_1z_2,
 \end{equation}
 and
 \begin{equation} \label{eq: for ex 2}
 	z_1'z_2+z_2'z_1=2bz_1z_2.
 \end{equation}
 Subtracting $2z_1'z_2$ from (\ref{eq: for ex 2}) yields $z_1z_2'-z_1'z_2=2z_2(bz_1-z_1')$. Since $z_1z_2'-z_1'z_2$ is invertible, we see that also $z_2$ is invertible. Similarly, subtracting $2z_2'z_1$ from (\ref{eq: for ex 2}), we see that $z_1$ is invertible. So (\ref{eq: for ex 2}) can be rewritten as 
 \begin{equation}
 	\label{eq: for ex 3}
 	\tfrac{z'_1}{z_1}+\tfrac{z_2'}{z_2}=2b
 \end{equation} whereas (\ref{eq: for ex 1}) becomes $\frac{z'_1}{z_1}\cdot\frac{z_2'}{z_2}=b^2-a$. Plugging $\frac{z'_2}{z_2}=2b-\frac{z_1'}{z_1}$ into the latter equation yields $(\frac{z'_1}{z_1}-b)^2=a$. Set $\eta'=\frac{z'_1}{z_1}-b$. Then $\eta'^2=a$,  $z_1'=(b+\eta')z_1$ and from (\ref{eq: for ex 3}) we obtain $z_2'=(b-\eta')z_2$.
 
 Set $\mathcal{S}=\Q[\eta',z_1,z_2,z_1^{-1},z_2^{-1}]$. We claim that $\mathcal{S}=\Q[X,\frac{1}{\det(X)}]/(p_1,p_2)$. It suffices to show that $z_1',z_2'$ and $\frac{1}{z_1z_2'-z_1'z_2}$ lie in $\mathcal{S}$. But $z_1'=(b+\eta')z_1\in\mathcal{S}$, $z_2'=(b-\eta')z_2\in\mathcal{S}$ and 
 $z_1z_2'-z_1'z_2=2\eta'z_1z_2$. So $\frac{1}{z_1z_2'-z_1'z_2}\in \SSS$ and $\mathcal{S}=\Q[X,\frac{1}{\det(X)}]/(p_1,p_2)$ as claimed.
 
 The map
 \begin{equation} \label{eq: map}
 	\Q[\eta'][z_1,z_2,z_1^{-1},z_2^{-1}]=\Q[X,\tfrac{1}{\det(X)}]/(p_1,p_2)\longrightarrow \R=\Q[\eta][y_1,y_2,y^{-1}_1,y^{-1}_2]
 \end{equation}
 sends $\eta'$ to $\eta$, $z_1$ to $y_1$ and $z_2$ to $y_2$. 
 As $\eta'^2=a$, the restriction $\Q[\eta']\to \Q[\eta]$ is injective. But then, since $y_1$ and $y_2$ are algebraically independent over $\Q[\eta]$, the map (\ref{eq: map}) is injective. Thus $\mathcal{I}=(p_1,p_2)$ as claimed. In other words, $\Z$ is the closed subscheme of $\Gl_{2,\Q}$ defined by $p_1$ and $p_2$.
 	
 	Let $\G$ be the group scheme of monomial $2\times 2$ matrices over $\B$, i.e., 
 	\begin{equation} \label{eq: group of monomial matrices}
 		\G(\T)=\left\{\begin{pmatrix} g_{11} & g_{12} \\ g_{21} & g_{22} \end{pmatrix}\in\Gl_2(\T) \mid g_{11}g_{12}=g_{21}g_{22}=g_{11}g_{21}=g_{12}g_{22}=0\right\}\leq\Gl_2(\T)
 	\end{equation}
 	for any $\B$-algebra $\T$. In particular, if $\T$ is an integral domain then
 	$$
 	\G(\T)=\left\{\begin{pmatrix} s & 0 \\ 0 & t \end{pmatrix}\mid s,t\in \T^\times\right\}\cup \left\{\begin{pmatrix} 0 & s \\ t & 0  \end{pmatrix}\mid s,t\in \T^\times\right\}\leq\Gl_2(\T).
 	$$ 
 	Note that the equations in (\ref{eq: group of monomial matrices}) are redundant, in fact,
 	$$\G(\T)=\left\{\begin{pmatrix} g_{11} & g_{12} \\ g_{21} & g_{22} \end{pmatrix}\in\Gl_2(\T) \mid g_{11}g_{12}=g_{21}g_{22}=0\right\}$$
 	for any $\B$-algebra $\T$. For example, if $g_{11}g_{12}=g_{21}g_{22}=0$, then $(g_{11}g_{22}-g_{12}g_{21})g_{11}g_{21}=0$ and so also $g_{11}g_{21}=0$ since $g_{11}g_{22}-g_{12}g_{21}$ is invertible. 
 	 	
 	We will define an action of $\G$ on $\R/\Q$. Roughly, the idea is that a diagonal matrix $\begin{pmatrix} s & 0 \\ 0 & t \end{pmatrix}$ acts by $y_1\mapsto sy_1$, $y_2\mapsto ty_2$ and fixing $\eta$, whereas the permutation matrix 
 	$\begin{pmatrix} 0 & 1 \\ 1 & 0 \end{pmatrix}$
 	acts by interchanging $y_1$ with $y_2$ and $\eta$ with $-\eta$. In general, for $\T$ a $\B$-algebra, $g=\begin{pmatrix} g_{11} & g_{12} \\ g_{21} & g_{22} \end{pmatrix}\in\G(\T)$ acts on $\R\otimes_\B\T$ by $g(\Y)=\Y\otimes g$. To make sure that this is well-defined, we check that 
 	for any $\Q$-algebra $\SSS$, $Z=\begin{pmatrix} Z_{11} & Z_{12} \\ Z_{21} & Z_{22} \end{pmatrix}\in \Z(\SSS)$ and $g=\begin{pmatrix} g_{11} & g_{12} \\ g_{21} & g_{22} \end{pmatrix}\in\G(\SSS)$, we have $Zg=\begin{pmatrix} Z_{11}g_{11}+Z_{12}g_{21} & Z_{11}g_{12}+Z_{12}g_{22} \\ Z_{21}g_{11}+Z_{22}g_{21} & Z_{21}g_{12}+Z_{22}g_{22} \end{pmatrix}\in \Z(\SSS)$.
 	Using (\ref{eq: group of monomial matrices}), we find
 	\begin{align*}
 		p_1(Zg)&=(Z_{21}g_{11}+Z_{22}g_{21})(Z_{21}g_{12}+Z_{22}g_{22})-(b^2-a)(Z_{11}g_{11}+Z_{12}g_{21})(Z_{11}g_{12}+Z_{12}g_{22})\\
 		&=Z_{21}Z_{22}g_{11}g_{22}+Z_{22}Z_{21}g_{21}g_{12}-(b^2-a)(Z_{11}Z_{12}g_{11}g_{22}+Z_{12}Z_{11}g_{21}g_{12})\\
 		&=p_1(Z)g_{11}g_{22}+p_1(Z)g_{21}g_{12}=0
 	\end{align*}
 	and 
 	\begin{align*}
 		p_2(Zg)&=(Z_{21}g_{11}+Z_{22}g_{21})(Z_{11}g_{12}+Z_{12}g_{22})+( Z_{21}g_{12}+Z_{22}g_{22})(Z_{11}g_{11}+Z_{12}g_{21})-\\ 
 		& \qquad \qquad \qquad \qquad \qquad \qquad \qquad \qquad \qquad 2b(Z_{11}g_{11}+Z_{12}g_{21})( Z_{11}g_{12}+Z_{12}g_{22})\\
 		&=Z_{21}Z_{12}g_{11}g_{22}+Z_{22}Z_{11}g_{21}g_{12}+Z_{21}Z_{12}g_{12}g_{21}+Z_{22}Z_{11}g_{11}g_{22}-\\  & \qquad \qquad \qquad \qquad \qquad \qquad \qquad \qquad \qquad2b(Z_{11}Z_{12}g_{11}g_{22}+Z_{12}Z_{11}g_{21}g_{12})\\
 		&=p_2(Z)g_{11}g_{22}+p_2(Z)g_{21}g_{12}=0.
 	\end{align*}
 	To see that $\R/\Q$ is indeed a differential $\G$-torsor, it suffices to check that for $Z,Z'\in \Z(\SSS)$, the matrix $Z^{-1}Z'$ lies in $\G(\SSS)$.
  	As 	$$Z^{-1}Z'=\frac{1}{\det(Z)}\begin{pmatrix} Z_{22} & -Z_{12} \\
 		-Z_{21} & Z_{11}\end{pmatrix}\begin{pmatrix} Z'_{11} & Z'_{12} \\
 		Z'_{21} & Z'_{22}\end{pmatrix}=\frac{1}{\det(Z)}\begin{pmatrix} Z_{22}Z'_{11}-Z_{12}Z'_{21}  & Z_{22}Z'_{12}-Z_{12}Z'_{22} \\
 		Z_{11}Z'_{21}-Z_{21}Z'_{11} & Z_{11}Z'_{22}-Z_{21}Z'_{12}\end{pmatrix},$$
 	it suffices to show that 
 	\begin{equation} \label{eq: torsor 1}
 		(Z_{22}Z'_{11}-Z_{12}Z'_{21})(Z_{22}Z'_{12}-Z_{12}Z'_{22})=0
 	\end{equation} and 
 	\begin{equation} \label{eq: torsor 2}
 		(Z_{11}Z'_{21}-Z_{21}Z'_{11})(Z_{11}Z'_{22}-Z_{21}Z'_{12})=0.
 	\end{equation}
 	To verify these identities, it is helpful to rewrite $p_1(Z)=0$ and $p_2(Z)=0$ as
 	\begin{equation} \label{eq: for fractions}
 		\tfrac{Z_{21}}{Z_{11}}\cdot \tfrac{Z_{22}}{Z_{12}}=b^2-a\quad \text{ and } \tfrac{Z_{21}}{Z_{11}}+ \tfrac{Z_{22}}{Z_{12}}=2b.
 	\end{equation}
 	(Note that subtracting $2Z_{21}Z_{12}$ from $p_2(Z)=0$ and using the invertibility of $\det(Z)$ shows that $Z_{12}$ is invertible. By a similar argument, $Z_{11}$ is invertible.) From (\ref{eq: for fractions}) we deduce $(\frac{Z_{22}}{Z_{12}}-b)^2=a$, which leads to
 	\begin{align*}
 		0&=(\tfrac{Z_{22}}{Z_{12}})^2-2b\tfrac{Z_{22}}{Z_{12}}+(b^2-a)=(\tfrac{Z_{22}}{Z_{12}})^2-\left(\tfrac{Z'_{21}}{Z'_{11}}+ \tfrac{Z'_{22}}{Z'_{12}}\right)\tfrac{Z_{22}}{Z_{12}}-\tfrac{Z'_{21}}{Z'_{11}}\cdot \tfrac{Z'_{22}}{Z'_{12}}=\\
 		&=\left(\tfrac{Z_{22}}{Z_{12}}-\tfrac{Z'_{21}}{Z'_{11}}\right)\left(\tfrac{Z_{22}}{Z_{12}}-\tfrac{Z'_{22}}{Z'_{12}}\right). 
 	\end{align*}
 	Clearing denominators, we find (\ref{eq: torsor 1}). A similar computation verifies (\ref{eq: torsor 2}). In summary, we see that $\R/\Q$ is a differential $\G$-torsor for $\de(y)=\cA y$ with fundamental matrix $\Y$. 
 \end{ex}

 In the next lemma we collect three basic observations that will be used repeatedly in what follows.
 
 \begin{lemma} \label{lemma: actions}
 	Let $\G$ act on $\mathcal{R}/\mathcal{Q}$.
 	\begin{enumerate}
 		\item If $\mathcal{B}\to\mathcal{B'}$ is a morphism of $k$-algebras, then $\G_{\mathcal{B}'}$ acts on $\mathcal{R}\otimes_{\mathcal{B}}\mathcal{B'}/\mathcal{Q}\otimes_{\mathcal{B}}\mathcal{B'}$.
 		\item If $\mathcal{S}$ is a multiplicatively closed subset of $\mathcal{Q}$, then the action of $\G$ on $\mathcal{R}/\mathcal{Q}$ extends canonically to an action of $\G$ on $\mathcal{S}^{-1}\mathcal{R}/\mathcal{S}^{-1}\mathcal{Q}$.
 	\item If $\Q\to\Q'$ is a morphism of $\de$-rings, then $\G$ acts on $\R\otimes_\Q \Q'/\Q'$.
 	\end{enumerate}
 	Moreover, if $\R/\Q$ is a differential $\G$-torsor, then also the actions in {\rm (i)}, {\rm  (ii)} and  {\rm  (iii)} define differential torsors, provided that $\R\otimes_\B \B'$, $\mathcal{S}^{-1}\R$  and $\R\otimes_\Q \Q'$ respectively is not the zero ring. \qed
 \end{lemma}
 
 For later use we record a lemma on differential torsors in the case $\B=k$ and $\Q=k(x)$.
 
 \begin{lemma} \label{lemma: max de ideals dense}
 	Let $G$ be a linear algebraic group over $k$ and let $R/k(x)$ be a differential $G$-torsor. Then the intersection of all maximal $\de$-ideals of $R$ is zero.
 \end{lemma}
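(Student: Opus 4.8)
The plan is to use a single maximal $\de$-ideal of $R$ together with the torsor structure to manufacture a whole $G(k)$-indexed family of maximal $\de$-ideals whose intersection is already zero.

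Since $R\neq 0$, Zorn's lemma produces a maximal $\de$-ideal $\m_0$ of $R$ (a union of a chain of proper $\de$-ideals is again a proper $\de$-ideal), so $R_0:=R/\m_0$ is $\de$-simple. First I would check that the torsor isomorphism $\Psi\colon R\otimes_{k(x)}R\xrightarrow{\sim}R\otimes_kk[G]$, $a\otimes b\mapsto(a\otimes 1)\rho(b)$, is an isomorphism of $\de$-rings, where $k[G]$ is constant and $\de$ on $R\otimes_kk[G]$ is $\de\otimes\id$; this is a brief Leibniz-rule computation using that the coaction $\rho$ is a morphism of $\de$-algebras (Lemma \ref{lemma: equivalence with coaction}). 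Since $R$ is free, hence faithfully flat, over the field $k(x)$, the map $\Psi$ carries the ideal $\m_0\otimes_{k(x)}R$ onto $\m_0\otimes_kk[G]$, and on quotients it induces a $\de$-isomorphism $\Psi_0\colon R_0\otimes_{k(x)}R\xrightarrow{\sim}R_0\otimes_kk[G]$, $\bar a\otimes b\mapsto(\bar a\otimes 1)\bar\rho(b)$, where $\bar\rho=(\pi\otimes\id)\circ\rho$ and $\pi\colon R\to R_0$ is the projection. The key computation — which I expect to be the only delicate point, since it amounts to keeping track of which $R$-module structures are used in passing from $\Psi$ to $\Psi_0$ — is that, for $g\in G(k)$ with corresponding maximal ideal $\m_g\trianglelefteq k[G]$ (so that $k[G]/\m_g=k$), the composite
$$\iota_g\colon\ R\xrightarrow{\,b\mapsto 1\otimes b\,}R_0\otimes_{k(x)}R\xrightarrow{\ \Psi_0\ }R_0\otimes_kk[G]\xrightarrow{\ \id\otimes\operatorname{ev}_g\ }R_0\otimes_k(k[G]/\m_g)=R_0$$
equals $\pi\circ g$, where $g\colon R\to R$ denotes the $k(x)$-$\de$-automorphism $(\id_R\otimes\operatorname{ev}_g)\circ\rho$ of $R$ attached to the point $g$. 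Consequently $\ker(\iota_g)=g^{-1}(\m_0)$ is a maximal $\de$-ideal of $R$.

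To conclude, set $\iota:=\Psi_0\circ(b\mapsto 1\otimes b)\colon R\to R_0\otimes_kk[G]$, which is injective by faithful flatness of $R_0$ over $k(x)$ (as $R_0\neq 0$); then $g^{-1}(\m_0)=\iota^{-1}(R_0\otimes_k\m_g)$ for every $g\in G(k)$. Using a $k$-basis of $R_0$ one checks $\bigcap_{g\in G(k)}\bigl(R_0\otimes_k\m_g\bigr)=R_0\otimes_k\bigl(\bigcap_{g\in G(k)}\m_g\bigr)$, and since $k[G]$ is a finitely generated algebra over the algebraically closed field $k$ (hence Jacobson) which is moreover reduced (in characteristic zero, $G$ is smooth), $\bigcap_{g\in G(k)}\m_g$ is the nilradical of $k[G]$ and thus vanishes. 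Therefore $\bigcap_{g\in G(k)}g^{-1}(\m_0)=\iota^{-1}(0)=0$. Since each $g^{-1}(\m_0)$ is among the maximal $\de$-ideals of $R$, the intersection of \emph{all} maximal $\de$-ideals of $R$ is contained in $0$, hence equals $0$.
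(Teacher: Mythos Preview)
Your proof is correct and follows the same overall strategy as the paper's: fix a maximal $\de$-ideal $\m_0$, translate it by all of $G(k)$, and show the resulting intersection is already zero. The difference is only in how the last step is packaged. The paper argues geometrically: $I=\bigcap_{g\in G(k)}g(\m_0)$ is $G(k)$-stable, hence (implicitly using that $k[G]$ is reduced and $k$ algebraically closed) satisfies $\rho(I)\subseteq I\otimes_kk[G]$, so $I$ cuts out a $G_{k(x)}$-stable closed subscheme of the torsor $\spec(R)$, which must be everything. You instead unpack this into an explicit injection $\iota\colon R\hookrightarrow R_0\otimes_kk[G]$ via the torsor isomorphism and compute $\bigcap_g\iota^{-1}(R_0\otimes_k\m_g)=\iota^{-1}(0)$ directly from the Nullstellensatz. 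Both arguments rest on exactly the same two ingredients (the torsor isomorphism and $\bigcap_{g\in G(k)}\m_g=0$ in $k[G]$); the paper's version is shorter and more conceptual, yours more self-contained in that it makes the Nullstellensatz/reducedness step explicit rather than folding it into the passage from $G(k)$-stability to comodule stability.
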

 \begin{proof}
 	Let $\m$ be a maximal $\de$-ideal of $R$. For every $g\in G(k)$, also $g(\m)$ is a maximal $\de$-ideal of $R$. It thus suffices to show that the ideal $I=\bigcap_{g\in G(k)}g(\m)$ of $R$ is zero.
 	Clearly $g(I)\subseteq I$ for every $g\in G(k)$. In terms of the coaction $\rho\colon R\to R\otimes_k k[G]$ (as in Lemma~\ref{lemma: equivalence with coaction}) this property translates to $\rho(I)\subseteq I\otimes_k k[G]$. But then the closed subscheme $W$ of $\spec(R)$ defined by $I$ is stable under the action of $G_{k(x)}$. Since $\spec(R)$ is a $G_{k(x)}$-torsor, this is only possible if $W=\spec(R)$, i.e, $I=0$.
 \end{proof}

 To explain how differential torsors are obtained by ``spreading out'' a Picard-Vessiot ring into a family of differential rings we will need a basic lemma on linear algebraic groups.
 
 \begin{lemma} \label{lemma: spread out closed embedding}
 	Let $G$ be a linear algebraic group over $k$ and let $k'/k$ be a field extension. If $\f'\colon G_{k'}\to \Gl_{n,k'}$ is a closed embedding of algebraic groups over $k'$, then there exists a finitely generated $k$-subalgebra $\B$ of $k'$ and a closed embedding $\f\colon G_\B\to \Gl_{n,\B}$ of group schemes over $\B$ such that $\f_{k'}=\f'$.
 \end{lemma}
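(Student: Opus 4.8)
The statement is a standard ``spreading out'' result: a closed embedding of affine group schemes defined over the field $k'$ descends to a closed embedding over some finitely generated $k$-subalgebra $\B$ of $k'$. The strategy is to work with coordinate rings and use that $k'$ is the filtered union of its finitely generated $k$-subalgebras, so that any finite amount of algebraic data (finitely many elements, finitely many polynomial identities among them) is already defined over such a subalgebra, and then to shrink $\B$ (invert finitely many elements) to force the finitely many properties we need — surjectivity of the comorphism, and the group-scheme compatibilities — to hold over $\B$ as well.

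First I would choose presentations. The linear algebraic group $G$ over $k$ has coordinate ring $k[G]=k[X,\tfrac{1}{\det(X)}]/I$ for some ideal $I$, and $k'[G_{k'}]=k'[X,\tfrac1{\det(X)}]/I_{k'}$. The closed embedding $\f'\colon G_{k'}\to\Gl_{n',k'}$ corresponds to a surjective Hopf-algebra morphism $\f'^\sharp\colon k'[Y,\tfrac1{\det(Y)}]\to k'[G_{k'}]$, where $Y$ is a matrix of indeterminates of size $n'$. Pick finitely many generators $g_1,\dots,g_N$ of the $k$-algebra $k[G]$; each is also an element of $k'[G_{k'}]$, and by surjectivity of $\f'^\sharp$ we may write $g_\ell=\f'^\sharp(p_\ell)$ for suitable $p_\ell\in k'[Y,\tfrac1{\det(Y)}]$. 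Likewise, $\f'^\sharp(Y_{ij})$ is a specific element of $k'[G_{k'}]$, representable by a polynomial in $X$ (and $\tfrac1{\det(X)}$) with coefficients in $k'$. Finally, since $\f'^\sharp$ is a morphism of Hopf algebras, the comultiplication, counit and antipode of $k[\Gl_{n'}]$ are sent compatibly into those of $k[G]$; these are again finitely many polynomial identities with coefficients in $k'$.

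Next I collect all the finitely many coefficients from $k'$ occurring in: the ideal $I$ (already over $k$), the polynomials $p_\ell$, the images $\f'^\sharp(Y_{ij})$, and the identities witnessing that $\f'^\sharp$ is a well-defined algebra map and a Hopf-algebra map (i.e.\ that it kills the ideal defining $\Gl_{n'}$ — here trivial since $\Gl_{n'}$ is smooth with no relations besides $\det(Y)$ invertible — and is compatible with $\Delta,\varepsilon,S$). Let $\B_0$ be the finitely generated $k$-subalgebra of $k'$ generated by all these coefficients. Over $\B_0$ we then have a morphism of group schemes $\f_0\colon G_{\B_0}\to\Gl_{n',\B_0}$ with $(\f_0)_{k'}=\f'$, via the comorphism $\f_0^\sharp\colon \B_0[Y,\tfrac1{\det(Y)}]\to\B_0[G_{\B_0}]$ defined by the chosen polynomials. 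The only remaining point is that $\f_0^\sharp$ need not be surjective over $\B_0$, but it is surjective after $\otimes_{\B_0}k'$. Surjectivity of $\f_0^\sharp$ means the $\B_0$-module $M:=\operatorname{coker}(\f_0^\sharp)$ vanishes; $M$ is finitely generated (as $\B_0[G_{\B_0}]$ is a finitely generated $\B_0$-algebra, hence $M$ is a quotient of it as a module — more carefully, $\B_0[G_{\B_0}]$ is generated as a $\B_0$-algebra by the $g_\ell$, which lie in the image, so $M$ is generated over $\B_0$ by finitely many monomials in the $g_\ell$ of bounded degree... in fact the cleanest route: $\B_0[G_{\B_0}]$ is a finitely generated $\B_0$-algebra and its generators $g_\ell$ are in the image of $\f_0^\sharp$, so $\f_0^\sharp$ is surjective iff a finite set of products is in the image), so pick finitely many generators $m_1,\dots,m_t$ of $M$ over $\B_0$. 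Since $M\otimes_{\B_0}k'=0$ and $k'$ is the filtered colimit of finitely generated $\B_0$-subalgebras $\B$, each $m_i$ dies in some such $\B$; taking a common such $\B$ finitely generated over $\B_0$ (hence over $k$) we get $M\otimes_{\B_0}\B=0$, i.e.\ $\f^\sharp:=\f_0^\sharp\otimes_{\B_0}\B$ is surjective. Then $\f\colon G_\B\to\Gl_{n',\B}$, the morphism with comorphism $\f^\sharp$, is a closed embedding of group schemes over $\B$ with $\f_{k'}=\f'$, as required.

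\textbf{Main obstacle.} The conceptually routine part — moving finitely many coefficients and identities into a finitely generated subalgebra — is bookkeeping. The one place needing a little care is ensuring \emph{surjectivity} of the comorphism persists after the descent: a priori $\f_0^\sharp$ over $\B_0$ is merely ``generically'' surjective. The fix, as above, is to note that the cokernel is a finitely generated module that vanishes after base change to $k'$, hence vanishes already over a finitely generated intermediate algebra $\B$; equivalently one may simply invert finitely many suitable elements of $\B_0$. A secondary point is that one should check the group-scheme structure maps (comultiplication, counit) on $G_\B$ are the base changes of those on $G$ and are compatible with $\f^\sharp$; but this is automatic once the defining polynomial identities, which already hold over $k'$ and involve only finitely many coefficients, are arranged to hold over $\B$.
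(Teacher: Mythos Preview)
Your argument is correct and more explicit than the paper's, which simply invokes the general spreading-out machinery of EGA~IV (Theorems 8.8.2 and 8.10.5(iv)): write $k'$ as the directed union of its finitely generated $k$-subalgebras, spread out the morphism, then spread out the property ``closed immersion''. Your hands-on approach via coordinate rings has the advantage of being self-contained.

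One remark on your ``main obstacle'': it actually dissolves. Once the coefficients of the $p_\ell$ and of the $\f'^\sharp(Y_{ij})$ are in $\B_0$, the identity $g_\ell=\f_0^\sharp(p_\ell)$ already holds in $\B_0[G_{\B_0}]$, because it holds in $k'[G_{k'}]$ and the map $\B_0[G_{\B_0}]=k[G]\otimes_k\B_0\hookrightarrow k[G]\otimes_k k'=k'[G_{k'}]$ is injective ($\B_0\hookrightarrow k'$ and $k[G]$ is $k$-flat). Since the image of $\f_0^\sharp$ is a $\B_0$-subalgebra containing the algebra generators $g_\ell$, it is all of $\B_0[G_{\B_0}]$; surjectivity holds over $\B_0$ with no enlargement needed. (Your aside that the cokernel $M$ is a finitely generated $\B_0$-module is not justified in general---$\B_0[G_{\B_0}]$ is typically not a finitely generated $\B_0$-\emph{module}---but since in fact $M=0$ the point is moot.) The same injectivity argument handles the Hopf-algebra compatibilities automatically: they are identities in $\B_0[G_{\B_0}]$ or its tensor square that hold after base change to $k'$, hence already over $\B_0$.
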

 \begin{proof}
 	Writing $k'$ as the directed union of its finitely generated $k$-subalgebras, this follows from Theorem 8.8.2 and Theorem 8.10.5 (iv) in \cite{Grothendieck:EGAIV3}. 
  \end{proof}
 
 The following simple example illustrates Lemma \ref{lemma: spread out closed embedding}.
 
 \begin{ex}
 	Let $k'/k$ be a field extension and $b\in k'\smallsetminus\{0\}$. Consider the closed embedding $\f'\colon\mathbb{G}_{a,k'}\to \Gl_{2,k'}$ of algebraic groups over $k'$ (where $\mathbb{G}_{a,k'}$ denotes the additive group over $k'$) given by
 	\begin{equation} \label{eq: embed Ga}
 		g\mapsto \begin{pmatrix}
 			1 & bg \\
 			0 & 1
 		\end{pmatrix}.
 	\end{equation}
 	For $\B=k[b]\subseteq k'$, formula (\ref{eq: embed Ga}) defines a morphism $\mathbb{G}_{a,\B}\to \Gl_{2,\B}$ of group schemes over $\B$ that need not be a closed embedding, e.g., if $b$ is transcendental over $k$, the fibre over $b=0$ is not a closed embedding. However, for $\B=k[b,b^{-1}]\subseteq k'$, formula (\ref{eq: embed Ga}) defines a closed embedding $\mathbb{G}_{a,\B}\to \Gl_{2,\B}$ of group schemes over $\B$.
 \end{ex}
 
 One can think of differential torsors as the kind of objects one obtains  when ``spreading out'' a Picard-Vessiot ring into a nice family. This is formalized in the following lemma. 
 \begin{lemma} \label{lemma: spread out PVring}
 	Let $k\subseteq k'$ be an inclusion of algebraically closed fields and let $R/k'(x)$ be a Picard-Vessiot ring for $\de(y)=Ay$ with $A\in k'(x)^{n\times n}$ and $Y\in\Gl_n(R)$ such that $\de(Y)=AY$. Then there exist
 	\begin{itemize}
 		\item a finitely generated $k$-subalgebra $\B$ of $k'$,
 		\item a monic polynomial $f\in \B[x]$,
 		\item an affine group scheme $\G$ of finite type over $\B$ and
 		\item a differential $\G$-torsor $\R/\B[x]_f$		
 	\end{itemize}
 	such that 
 	\begin{itemize}
 		\item  $A\in\B[x]_f^{n\times n}$,
 		\item $\R$ is flat over $\B[x]_f$ and $\R/\B[x]_f$ is a differential $\G$-torsor for $\de(y)=Ay$ with fundamental solution matrix $\Y\in\Gl_n(\R)$ and $R^\gen=\R\otimes_{\B[x]_f}K(x)$ is $\de$-simple, where $K\subseteq k'$ is the algebraic closure of the field of fractions of $\B$ and
 		\item $R^\gen\otimes_{K(x)}k'(x)\simeq R$ via an isomorphism that identifies $\Y$ with $Y$.
 	\end{itemize}	
 	Moreover, if $\B_0$ is a finitely generated $k$-subalgebra of $k'$ and $f_0\in \B_0[x]$ is monic,
 	then $\B$ can be chosen such that $\B_0\subseteq \B$ and $f$ can be chosen such that $f_0$ divides $f$ in $\B[x]$. If, in addition, $f_0\in \B_0[x]$ is such that $A\in\B_0[x]_{f_0}^{n\times n}$, then $\B$ can, in addition, be chosen such that $\B$ is contained in the algebraic closure of the field of fractions of $\B_0$.

 	Furthermore, if the differential Galois group $G$ of $R/k'(x)$ is of the form $G=G_{0,k'}$ for some algebraic group $G_0$ over $k$, then $\G$ can be chosen to be equal to $G_{0,\B}$.

 \end{lemma}
 \begin{proof}
 	We consider the differential Galois group $G$ of $R/k'(x)$ as a closed subgroup of $\Gl_{n,k'}$ via the choice of $Y$. The contributions to $\B$ and $f$ came from three different sources. We go through them one by one:
 	
 	\renewcommand{\labelenumi}{{\rm (\alph{enumi})}}
 	
 	\begin{enumerate}
 		\item  Let $\B_1$ be a finitely generated $k$\=/subalgebra of $k'$ and $f_1\in \B[x]$ a monic polynomial such that $A\in \B_1[x]_{f_1}^{n\times n}$. Then $\B_1[x]_{f_1}[Y,\frac{1}{\det(Y)}]$ is a $\de$-subring of $R$.
 		\item Consider the ideal $I$ of $k'(x)[X,\frac{1}{\det(X)}]$ given by $I=\{p\in k'(x)[X,\frac{1}{\det(X)}]|\ p(Y)=0\}$. Choose $p_1,\ldots,p_r\in I$ such that $I=(p_1,\ldots,p_r)$. Let $\B_2$ be a finitely generated $k$\=/subalgebra of $k'$ and let $f_2\in \B_2[x]$ be monic such that $p_1,\ldots,p_r\in \B_2[x]_{f_2}[X,\frac{1}{\det(X)}]\subseteq k'(x)[X,\frac{1}{\det(X)}].$ 
 		\item Let $\B_3$ be a finitely generated $k$-subalgebra of $k'$ and $\G_3$ a closed subgroup scheme of $\Gl_{n,\B_3}$ such that $\G_{3,k'}=G$ as closed subgroups of $\Gl_{n,k'}$. By generic flatness (\cite[Theorem~14.4]{Eisenbud:view}), we may assume that $\G$ is flat over $\B_3$.
 	\end{enumerate}

 	Let $\B_4$ be the $k$-subalgebra of $k'$ generated by $\B_1,\B_2$ and $\B_3$. Furthermore, set $f_4=f_1f_2$ and consider the finitely generated $\B_4[x]_{f_4}$-algebra $\B_4[x]_{f_4}[Y,\frac{1}{\det(Y)}]\subseteq R$.	By generic flatness (\cite[Theorem~14.4]{Eisenbud:view}), there exists a nonzero (not necessarily monic) $h\in\B_4[x]_{f_4}$ such that $(\B_4[x]_{f_4})_{h}[Y,\frac{1}{\det(Y)}]$ is flat over $(\B_4[x]_{f_4})_{h}$. Write $h=\frac{h_1}{f_4^m}$ with $h_1\in\B_4[x]$ and $m\geq 1$. 
 	With $h'=f_4h_1\in\B_4[x]$ we then have $(\B_4[x]_{f_4})_{h}=\B_4[x]_{h'}$.
 
 	Let $\B$ be the $k$-subalgebra of $k'$ generated by $\B_4$ and the inverse (in $k'$) of the leading coefficient $b$ of $h'$.
 	In general, if $R\to S$ is a flat ring map and $f\in R$, then $S_f=S\otimes_R R_f$ is a flat $R_f$-module (\cite[\href{https://stacks.math.columbia.edu/tag/00HI}{Tag 00HI}]{stacks-project}). Therefore, since $\B_4[x]_{h'}[Y,\frac{1}{\det(Y)}]$ is a flat $\B_4[x]_{h'}$\=/module, $(\B_4[x]_{h'})_b[Y,\frac{1}{\det(Y)}]$ is a flat $(\B_4[x]_{h'})_{b}$\=/module. For the monic polynomial $f=\frac{1}{b}h'\in \B[x]$ 
 	we have $(\B_4[x]_{h'})_{b}=\B[x]_{f}$ and so $\B[x]_{f}[Y,\frac{1}{\det(Y)}]$ is a flat $\B[x]_{f}$-module. 
 	
 	We set $\R=\B[x]_{f}[Y,\frac{1}{\det(Y)}]$ and $\G=\G_{3,\B}$. Then $\R$ is a $\B[x]_f$-$\de$-algebra and flat over $\B[x]_f$. Moreover, the canonical surjection $\R\otimes_{\B[x]_f}k'(x)\to R$ is an isomorphism by item (b) above. Since $\G_3$ is flat over $\B_3$, we see that $\G$ is flat over $\B$.
 	Furthermore, $\G_{k'}=(\G_{3,\B})_{k'}=\G_{3,k'}=G$. In particular, $k'(x)\otimes_\B \B[\G]=k'(x)\otimes_{k'}k'[G]$.
 	 	
 	The above flatness properties allow us to identify $\R\otimes_\B \B[\G]$ with a $\de$-subring of $R\otimes_{k'}k'[\G]$. In detail, since $\B[\G]$ is a flat $\B$-algebra, $\B[x]_f\otimes_\B \B[\G]$ is a flat $\B[x]_f$-algebra and therefore $\R\otimes_\B\B[\G]=\R\otimes_{\B[x]_f}(\B[x]_f\otimes_\B\B[\G])$ is a flat $\B[x]_f$-algebra. This entails that the map $\R\otimes_\B\B[\G]\to (\R\otimes_\B\B[\G])\otimes_{\B[x]_f}k'(x)$ is injective. But 
 	\begin{align*}
 		(\R\otimes_\B\B[\G])\otimes_{\B[x]_f}k'(x)= & (\R\otimes_{\B[x]_f}k'(x))\otimes_{k'(x)}(k'(x)\otimes_\B\B[\G])= \\ = & R\otimes_{k'(x)}(k'(x)\otimes_{k'}k'[G])=R\otimes_{k'}k'[G].
 	\end{align*}
 	So we can consider $\R\otimes_\B\B[\G]$ as a subring of $R\otimes_{k'}k'[G]$.
 	
 	As explained in Section \ref{subsec:differential Galois theory}, the closed embedding $G\to \Gl_{n,k'}$ corresponds to the morphism $k'[X, \frac{1}{\det(X)}]\to k'[G]=(R\otimes_{k'(x)}R)^\de,\ X\mapsto Z=Y^{-1}\otimes Y$. Moreover, the coaction $R\to R\otimes_{k'}k'[G]$ is determined by $Y\mapsto Y\otimes Z$. In summary, we see that the coaction  $R\to R\otimes_{k'}k'[G]$ restricts to a map $\R\to \R\otimes_\B \B[\G]$. It is then clear that $\G$ acts on $\R/\B[x]_f$. Indeed, for a $\B$-algebra $\T$, every $g\in\G(\T)\leq\Gl_n(\T)$ acts on $\R\otimes_\B\T$ by $Y\otimes 1\mapsto Y\otimes g$.
 	 	
 	To see that $\R/\B[x]_f$ is a differential $\G$-torsor, let us first show that $\R\otimes_{\B[x]_f}\R\to R\otimes_{k'(x)}R$ is injective. Note that
 	$$(\R\otimes_{\B[x]_f}\R)\otimes_{\B[x]_f}k'(x)=(\R\otimes_{\B[x]_f}k'(x))\otimes_{k'(x)}(\R\otimes_{\B[x]_f}k'(x))=R\otimes_{k'(x)}R.$$
 	It thus suffices to show that the map $\R\otimes_{\B[x]_f}\R\to (\R\otimes_{\B[x]_f}\R)\otimes_{\B[x]_f}k'(x)$ is injective. 
 	But this follows from the flatness of $\B[x]_f\to \R\otimes_{\B[x]_f}\R$, which in turn follows from the flatness of $\B[x]_f\to \R$.
 	 	
 	We claim that the isomorphism
 	$R\otimes_{k'(x)}R\to R\otimes_{k'}k'[G]$ restricts to an isomorphism $\R\otimes_{\B[x]_f}\R\to \R\otimes_\B\B[\G]$. Injectivity being obvious, the surjectivity of $\R\otimes_{\B[x]_f}\R\to \R\otimes_\B\B[\G]$ follows from $\B[\G]=\B[Z,\frac{1}{\det(Z)}]$, where $Z=Y^{-1}\otimes Y\in \Gl_n(R\otimes_{k'(x)}R)$, and $\R\otimes_{\B[x]_f}\R\to \R\otimes_\B\B[\G]$ maps $Y^{-1}\otimes Y$ to $1\otimes Z$. Thus $\R/\B[x]_f$ is a differential $\G$-torsor. As $\de(Y)=AY$ and $\R=\B[x]_f[Y,\frac{1}{\det(\Y)}]$, we see that, in fact, $\R/\B[x]_f$ is a differential $\G$-torsor for $\de(y)=Ay$ with fundamental matrix $\Y=Y$.
 	
 	As $(\R\otimes_{\B[x]_f}K(x))\otimes_{K(x)}k'(x)=\R\otimes_{\B[x]_f}k'(x)=R$ is $\de$-simple, we see that also $R^\gen=\R\otimes_{\B[x]_f}K(x)$ must be $\de$-simple. We have thus established the first claim of the lemma.

 	To address the second claim, let $K_0\subseteq k'$ be the algebraic closure of the field of fractions of $\B_0$. Note that there is no difficulty in arranging $\B$ such that $\B_0\subseteq \B$. For example, in the above construction of $\B$, we could choose $\B_1$ such that $\B_0\subseteq \B_1$. Similarly, we can choose $f_1$ so that $f_0$ divides $f_1$. 
 	
 	Assume that $A\in \B_0[x]_{f_0}^{n\times n}$.
 	As $A\in K_0(x)^{n\times n}$, we may consider a Picard-Vessiot ring $R_0/K_0(x)$ for $\de(y)=Ay$. It then follows from Lemma~\ref{lemma: base change of PV ring over constants} that $R_0\otimes_{K_0(x)}k'(x)$ is a Picard-Vessiot ring for $\de(y)=Ay$ over $k'(x)$. By the uniqueness of Picard-Vessiot rings, we thus have $R\simeq R_0\otimes_{K_0(x)}k'(x)$.
 	We can now apply the first claim of the lemma to the inclusion $k\subseteq K_0$ and the Picard-Vessiot ring $R_0/K_0(x)$. Then automatically $\B\subseteq K_0$.
 	
 	Concerning the last claim of the lemma, note that, according to Lemma \ref{lemma: spread out closed embedding}, the closed embedding $G=G_{k'}\to \Gl_{n,k'}$ determined by $Y$ spreads out to a closed embedding $G_{\B'}\to \Gl_{n,\B'}$, where $\B'$ is a finitely generated $k$-subalgebra of $k'$. In point (c) above, we can thus choose $\B_3=\B'$ and $\G_3=G_{\B_3}$, so that $\G=G_{0,\B}$.
 
 	\renewcommand{\labelenumi}{{\rm (\roman{enumi})}}
 \end{proof}

\subsection{Proto-Picard-Vessiot rings}
\label{subsec:Proto-Picard-Vessiot rings}
 
For a linear algebraic group $G$, let $G^t$ denote the intersection of all kernels of all characters of $G^\circ$, the identity component of $G$. The algebraic group $G^t$ plays a fundamental role in the study of moduli of linear differential equations (\cite{Singer:ModuliOfLinearDifferentialEquations}) and in Hrushovski's algorithm (\cite{Hrushovski:ComputingTheGaloisGroupOfaLinearDifferentialEequation}). In regards to Hrushovski's algorithm, the key point concerning $G^t$ is the following. While the degree of defining equations of a closed subgroup of $\Gl_n$ cannot be bounded uniformly in $n$ (consider, e.g., the groups $\mu_m$ of $m$-th roots of unity in $\Gl_1$), there exists a function $\bfd$ of $n$ such that for every closed subgroup $H$ of $\Gl_n$ there exists a closed subgroup $G$ of $\Gl_n$ defined by equations of degree at most $\bfd(n)$ satisfying
\begin{equation}\label{eq: proto Galois group}
	G^t\leq H\leq G.
\end{equation}
Given $H$, an algebraic group $G$ satisfying (\ref{eq: proto Galois group}) is called a \emph{proto-Galois group} for $H$ (\cite[Def. 4.1]{AmzallagMinchenkoPogudin:DegreeBoundforToricEnvelope}).
In the first main step of Hrushovski's algorithm a proto-Galois group for the differential Galois group is computed.

For our purpose, the mere existence of the function $\bfd(n)$ is sufficient. To make Hrushovski's algorithm practical (a goal that seems rather far out of reach), it is important to find small bounds. In \cite{AmzallagMinchenkoPogudin:DegreeBoundforToricEnvelope} it is shown that one can take $\bfd(n)=(4n)^{3n^2}$.

One can think of $G$ as an ``approximation'' of $H$. This approximation is good in the sense that $G/G^t$ is small. Indeed, $G^t$ is normal in $G$ and the identity component of $G/G^t$ is a torus.
An alternative description of $G^t$ is, as the closed subgroup of $G$ generated by all unipotent elements of $G$.
In this section we introduce an analog of this notion for differential torsors: Proto-Picard-Vessiot ring. We think of them as ``approximations'' of Picard-Vessiot rings. We also establish a criterion that enables us to decide whether or not a proto-Picard-Vessiot ring is a Picard-Vessiot ring.

%
%
%

\medskip 

In addition to the standing assumption of Section \ref{sec: Some topics in differential Galois theory} that $F$ is a differential field with field of constants $k=F^\de$ algebraically closed and of characteristic zero, throughout Section~\ref{subsec:Proto-Picard-Vessiot rings} we make the following assumptions:
 \begin{itemize}
 	\item $G$ is a linear algebraic group over $k$;
 	\item $A\in F^{n\times n}$;
 	\item $R/F$ is a differential $G$-torsor for $\de(y)=Ay$ with fundamental matrix $Y\in \Gl_n(R)$ such that $R$ is an integral domain;
\item $F'$ is the integral closure of $F$ in $R$;
 	\item $E$ is the field compositum of $\overline{F}$ and the field of fractions of $R$ (inside the algebraic closure of the field of fractions of $R$). 
 \end{itemize}

%
%
%
 Note that if $\m$ is a maximal $\de$-ideal of $R$, then $R/\m$ is a Picard-Vessiot ring for $\de(y)=Ay$. As we will now explain, the differential Galois group $H$ of $R/\m$ is canonically a closed subgroup of $G$ (cf. the proof of Proposition 1.15 in \cite{BachmayrHarbaterHartmannWibmer:DifferentialEmbeddingProblems}).
 The composition
 \begin{equation} \label{eq: composite map}
 	R\otimes_k k[G]\xrightarrow{\simeq }R\otimes_{F}R\to R/\m\otimes_{F} R/\m\xrightarrow{\simeq} R/\m\otimes_k k[H]
 \end{equation}
 maps $k[G]$ into $k[H]$, because the elements of $k[G]$ are constant and the constants of $R/\m\otimes_k k[H]$ are just $k[H]$. Thus the surjection (\ref{eq: composite map}) is of the form $\pi\otimes\psi$, where $\pi\colon R\to R/\m$ is the canonical map and $\psi\colon k[G]\to k[H]$ is a surjective map of $k$-algebras.
 So $\psi$ defines a closed embedding $H\to G$ and we may identify $H$ with a closed subscheme of $G$. We claim that
 \begin{equation} \label{eq: H subgroup G}
 	H(T)=\{g\in G(T)|\ g(\m\otimes_k T)=\m\otimes_k T\}
 \end{equation}
 for every $k$-algebra $T$. In particular, $H$ is a closed subgroup of $G$.
 
 The commutative diagram
 $$
 \xymatrix{
 	R \ar^-\rho[r] \ar_-\pi[d]& R\otimes_k k[G] \ar^-{\pi\otimes\psi}[d] \\
 	R/\m \ar[r] & R/\m\otimes_k k[H]	
 }
 $$
 shows that $\rho(\m)\subseteq \m\otimes_k k[G]+R\otimes_k \I(H)$, where $\I(H)=\ker(\psi)\subseteq k[G]$ is the defining ideal of $H$. Therefore $g(\m\otimes_k T)\subseteq \m\otimes_k T$ for $g\in H(T)$. Since also $g^{-1}\in H(T)$, we deduce that indeed $g(\m\otimes_k T)=\m\otimes_k T$.
 
 Conversely, if $g\in G(T)$ satisfies $g(\m\otimes_k T)=\m\otimes_k T$, then $g$ induces a differential automorphism of $(R\otimes_k T)/(\m\otimes_k T)=R/\m\otimes_k T$ and thus belongs to $H(T)$, by definition of $H$.
 This proves (\ref{eq: H subgroup G}).
 
 Now let $\m'$ be another maximal $\de$-ideal of $R$. Then $R/\m'$ is a Picard-Vessiot ring over $F$ with differential Galois group $H'$ canonically contained in $G$. We will show that $G^t\leq H$ if and only if $G^t\leq H'$.
 
 As a first step, let us show that there exists a $g\in G(k)$ such that $\m'=g(\m)$. Consider the morphism
 \begin{equation}\label{eq: morphism for one to all}
 	k[G]\to R\otimes_k k[G]\xrightarrow{\simeq}R\otimes_{F} R\to R/\m'\otimes_{F} R/\m\to S,
 \end{equation}
 where $S$ is a quotient of $R/\m'\otimes_{F} R/\m$ by a maximal $\de$-ideal. As $S$ is $\de$-simple and finitely generated over $F$, we have $S^\de=k$ (\cite[Lemma 1.17]{SingerPut:differential}). Thus (\ref{eq: morphism for one to all}) maps $k[G]$ into $k$ and therefore defines a point $g\in G(k)$. By construction, $g(\m)\subseteq \m'$.
 Since $g(\m)$ is a maximal $\de$-ideal of $R$, we obtain $g(\m)=\m'$ as claimed.
 
 From (\ref{eq: H subgroup G}), we thus deduce that $H'=gHg^{-1}$. As $G^t$ is normal $G$, we see that $G^t\leq H$ if and only if $G^t\leq H'$.
 
 In summary, if $\m$ and $\m'$ are maximal $\de$-ideals in $R$, yielding differential Galois groups $H\leq G$ and $H'\leq G$ respectively, then $G^t\leq H$ if and only if $G^t\leq H'$. So we can use this property to define proto-Picard-Vessiot rings.
 
 \begin{defi}
 	The differential torsor $R/F$ is \emph{proto-Picard-Vessiot} if  
 	for one (equivalently every) maximal $\de$-ideal $\m$ of $R$ we have $G^t\leq H$, where $H$ is the differential Galois group of $R/\m$.
 \end{defi}
 Clearly, $R/F$ is proto-Picard-Vessiot if $R/F$ is Picard-Vessiot. Our next goal is to find a criterion to test if a differential torsor is proto-Picard-Vessiot. In the next defintion and lemma $k'$ is a field of characteristic zero.
  Following \cite{Feng:HrushovskisAlgorithmForComputingTheGaloisGroupOfaLinearDifferentialEquation} and \cite[Def. 2.3]{AmzallagMinchenkoPogudin:DegreeBoundforToricEnvelope} we make the below definition.
 
 

 \begin{defi}
 	\label{DEF:boundvarieties}
 	A geometrically reduced closed subscheme $Z$ of $\Gl_{n,k'}$ is \emph{bounded} by a positive integer $d$, if there exist polynomials $p_1,\dots,p_m\in k'[X]$ of degree at most $d$ such that
 	\begin{equation} \label{eq: bounded}
 		Z(\overline{k'})=\{g\in\Gl_n(\overline{k'})|\ p_1(g)=\ldots=p_m(g)=0\}.
 	\end{equation}
 	Note that, by Hilbert's Nullstellensatz, (\ref{eq: bounded}) is equivalent to $\I(Z)=\sqrt{(p_1,\ldots,p_m)}$, where $\I(Z)\subseteq k'[X,\frac{1}{\det{X}}]$ is the defining ideal of $Z$ and $(p_1,\ldots,p_m)$ is the ideal of $k'[X,\frac{1}{\det{X}}]$ generated by $p_1,\ldots,p_m$.
 \end{defi}
 
 \begin{lemma} \label{lemma: bound for torsor}
 	Let $G$ be a closed subgroup of $\Gl_{n,k'}$ and $Z$ a geometrically reduced closed subscheme of $\Gl_{n,k'}$ such that $Z$ is a $G$-torsor under right multiplication. If $G$ is bounded by $d$, then also $Z$ is bounded by $d$.
 \end{lemma}
 \begin{proof}
 	A point $z\in Z(\overline{k'})$ defines an isomorphism $G_{\overline{k'}}\to Z_{\overline{k'}},\ g\mapsto zg$ of schemes over $\overline{k'}$.
 	If $p_1,\ldots,p_m\in k'[X]$ are such that $$G(\overline{k'})=\{g\in\Gl_n(\overline{k'})|\ p_1(g)=\ldots=p_m(g)=0 \},$$ then $$Z(\overline{k'})=\{g\in\Gl_n(\overline{k'})|\ p_1(z^{-1}g)=\ldots=p_m(z^{-1}g)=0\}.$$ So $\I(Z_{\overline{k'}})=\sqrt{(p_1(z^{-1}X),\ldots,p_m(z^{-1}X))}\subseteq \overline{k'}[X,\frac{1}{\det(X)}]$.
 	
 	We have $\I(Z_{\overline{k'}})=\I(Z)\otimes_{k'}\overline{k'}$ and so, if $p_i(z^{-1}X)\in \overline{k'}[X]$ has degree at most $d$, i.e, $p_i$ has degree at most $d$, then $p_i(z^{-1}X)$ can be written as a $\overline{k'}$-linear combination of polynomials $q_{i,j}\in k'[X]\cap \I(Z)$ of degree at most $d$. The set of solutions of all $q_{ij}$'s in $\Gl_n(\overline{k'})$ is $Z(\overline{k'})$. Therefore $Z$ is bounded by $d$.
 \end{proof}
 
 As above, set
$
\bfd(n)=(4n)^{3n^2}.
$
The following lemma allows us to test if a differential torsor is proto-Picard-Vesssiot. Roughly speaking, it shows that $R/F$ is proto-Picard-Vessiot if $R$ encodes the algebraic relations among the entries of a fundamental solution matrix up to degree $\bfd(n)$.
 \begin{lemma}
 	\label{LM:criterionforprotoPV}
 	Let $\m$ be a maximal $\de$-ideal of $R$. If $p(Y)=0$ for every $p\in F[X]$ with $\deg(p)\leq \bfd(n)$ and $p(Y)\in\m$, then $R/F$ is proto-Picard-Vessiot.
 \end{lemma}
 \begin{proof}
 	Let $H\leq G$ be the differential Galois group of $R/\m$ and consider $H$ as a closed subgroup of $\Gl_{n,k}$ via the fundamental solution matrix $\overline{Y}\in\Gl_n(R/\m)$, the image of $Y$ in $R/\m$. Due to Corollary 4.1 of \cite{AmzallagMinchenkoPogudin:DegreeBoundforToricEnvelope}, there exists a closed subgroup $\tilde{G}$ of $\Gl_{n,k}$ bounded by $\bfd(n)$ such that
 	$
 	\tilde{G}^t \leq H \leq \tilde{G}.
 	$
 	
 	Let $\m'$ be the maximal $\de$-ideal of $F[X,\frac{1}{\det(X)}]$ whose image in $R$ is $\m$ and let $Z'$ be the closed subscheme of $\Gl_{n,F}$ defined by $\m'$. The image $\tilde{Z}$ of $Z'\times \tilde{G}_{F}\to \Gl_{n,F}, (z,g)\mapsto zg$ is a $\tilde{G}_{F}$-torsor. Indeed, for a point $z\in Z'(\overline{F})$ we have $$\tilde{Z}(\overline{F})=Z'(\overline{F})\tilde{G}(\overline{F})=zH(\overline{F})\tilde{G}(\overline{F})=z\tilde{G}(\overline{F}).$$
 	We know from Lemma \ref{lemma: bound for torsor} that $\tilde{Z}$ is bounded by $\bfd(n)$. Let $p_1,\ldots,p_m\in F[X]$ be polynomials of degree at most $\bfd(n)$ such that $\I(\tilde{Z})=\sqrt{(p_1,\ldots,p_m)}$. As
 	$Z'\subseteq \tilde{Z}$, we have $\I(\tilde{Z})\subseteq \m'$. Therefore $p_1(Y),\ldots,p_m(Y)\in \m$. By assumption, the image of $p_1,\ldots,p_m$ in $R$ is zero. Since $R$ is reduced, we see that in fact the image of $\I(\tilde{Z})$ in $R$ is zero. Geometrically, this means that $Z\subseteq \tilde{Z}$, where $Z\subseteq \Gl_{n,F}$ is the closed subscheme defined by $\I(Z)=\{p\in F[X,\frac{1}{\det(X)}]|\ p(Y)=0\}$. As $Z$ is a $G_{F}$-torsor and $\tilde{Z}$ is a $\tilde{G}_{F}$-torsor (both under right multiplication), we must have $G\leq \tilde{G}$. Therefore, $G^t\leq \tilde{G}^t\leq H$ and $R/F$ is proto-Picard-Vessiot.
 \end{proof}

We now work towards a criterion to decided if a proto-Picard-Vessiot ring is Picard-Vessiot.
 Recall that $G^\circ$ denotes the identity component of the algebraic group $G$ and that $F'$ is the integral closure of $F$ in $R$.
 
 \begin{lemma} \label{lemma: Is G0 torsor}
  The $F'$-$\de$-algebra $R/F'$ is a differential $G^\circ$-torsor for $\de(y)=Ay$.
 \end{lemma}
 \begin{proof}
 	As in \cite[Section 6.5]{Waterhouse:IntroductiontoAffineGroupSchemes}, for a $k'$-algebra $S$ over a field $k'$, let $\pi_0(S/k')$ denote the union of all \'{e}tale $k'$-subalgebras of $S$. In particular, $\pi_0(R/F)=F'$ and $\pi_0(k[G]/k)=k[G/G^\circ]$. In fact, the defining ideal $\I(G^\circ)\subseteq k[G]$ of $G^\circ$ is the ideal of $k[G]$ generated by $\ker(\varepsilon)\cap \pi_0(k[G]/k)$, where $\varepsilon\colon k[G]\to k$ is the counit (\cite[Section 6.7]{Waterhouse:IntroductiontoAffineGroupSchemes}).   
 	
 	
 In the diagram
 	\begin{equation} \label{eq: diagram kernels correspond}
 	\xymatrix{
 		R\otimes_FR \ar^-\simeq[r] \ar[d] & R\otimes_k k[G] \ar[d] \\
 		R\otimes_{F'} R  & R\otimes_k k[G^\circ] 	
 	}
 	\end{equation}
 	the kernel of the left vertical map is $(f\otimes 1-1\otimes f|\ f\in F')\subseteq R\otimes_F R$. The kernel of the right vertical map is $R\otimes_k \I(G^\circ)$, the ideal of $R\otimes_kk[G]$ generated by $\ker(\varepsilon)\cap \pi_0(k[G]/k)$. We claim that these two kernels correspond to each other under the isomorphism.
 	
 	The diagram
 	$$
 	\xymatrix{
 		F'\otimes_F F' \ar^-{\simeq}[rr] \ar[rd] & & F'\otimes_k\pi_0(k[G]/k) \ar^-{\id\cdot \varepsilon}[ld] \\
 		& F'&	
 	}
 	$$
 	where the top isomorphism is obtained by applying $\pi_0$ to $R\otimes_F R\simeq R\otimes_k k[G]$, commutes. The kernel of the left map is $(f\otimes 1-1\otimes f|\ f\in F')\subseteq F'\otimes_F F'$. The kernel of the right map is $F'\otimes_k(\pi_0(k[G]/k)\cap \ker(\varepsilon))$. This shows that the two kernels in (\ref{eq: diagram kernels correspond}) correspond to each other. We thus have an induced isomorphism $R\otimes_{F'}R\simeq R\otimes_kk[G^\circ]$, showing that $R/F'$ is a differential $G^\circ$-torsor.
 \end{proof}

Let $\overline{F}R$ denote the ring compositum of $\overline{F}$ and $R$ (inside the algebraic closure of the field of fractions of $R$). In other words, $\overline{F}R$ is the $\overline{F}$-algebra generated by $R$.

 	\begin{cor}\label{cor: move to algebraic closure}
 		We have $\overline{F}R=R\otimes_{F'}\overline{F}$ and $\overline{F}R/\overline{F}$ is a differential $G^\circ$-torsor.
 	\end{cor}	 
 	\begin{proof}
 		Since $R/F'$ is a differential $G^\circ$-torsor (Lemma \ref{lemma: Is G0 torsor}), it follows from Lemma \ref{lemma: actions} that $R\otimes_{F'}\overline{F}/\overline{F}$ is a differential $G^\circ$-torsor. As $G^\circ$ is connected, this implies that $R\otimes_{F'}\overline{F}$ is an integral domain, i.e., $R$ is a regular $F'$-algebra. This implies (\cite[Chapter V, \S 17, Sections 4 and 5 ]{Bourbaki:Algebra2} that $F'$ is relatively algebraically closed in the field of fractions $F'(R)$ of $R$ and that $F'(R)\otimes_F \overline{F}$ is a field. Thus the canonical map $F'(R)\otimes_{F'}\overline{F}\to E$ is injective (in fact bijective) and so also $R\otimes_{F'}\overline{F}\to\overline{F}R$ is injective.
 	\end{proof}	 
 
 Following \cite[Def. 4.5.2]{Grothendieck:EGAIV4}, for a field $k'$ and a $k'$-algebra $R'$, the number of irreducible components of $\spec(R'\otimes_{k'}\overline{k'})$ is called the \emph{geometric number of irreducible components of $R'/k'$}.

 \begin{cor} \label{cor: geometric number}
 	The geometric number of irreducible components of $R/F$ is $[F':F]$.
 \end{cor}
 \begin{proof}
 	Because $R/F$ is a differential $G$-torsor, $R\otimes_F{\overline{F}}$ is isomorphic to $\overline{F}[G_{\overline{F}}]$ (as an $\overline{F}$-algebra). The number of irreducible components of $G_{\overline{F}}$ is the same as the number of connected components of $G_{\overline{F}}$ and this is the same as the number of connected components of $G$. The latter is the same as the dimension $\dim_k \pi_0(k[G]/k)$ of $\pi_0(k[G]/k)$ as a $k$-vector space. As observed in the proof of Lemma \ref{lemma: Is G0 torsor}, we have $F'\otimes_F F'=F'\otimes_k \pi_0(k[G]/k)$. So $\dim_k \pi_0(k[G]/k)$ must equal $[F':F]$.
 \end{proof}

 Recall that $E$ denotes the field compositum of $\overline{F}$ and the field of fractions of $R$ (inside the algebraic closure of the field of fractions of $R$). The idea for the criterion to decide if a proto-Picard-Vessiot ring $R/F$ is Picard-Vessiot is to first Gauge transform the equation $\de(y)=Ay$ with fundamental sulution matrix $Y$ to an equation $\de(y)=A'y$ (where $A'=BAB^{-1}+\de(B)B^{-1}$) with fundamental matrix $BY$ via a matrix $B\in\Gl_n(\overline{F})$ such that $BY\in G^\circ(E)$ and then apply characters of $G^\circ$ to $BY$. The following corollary shows that we can always find such a transformation matrix $B$.
 
 \begin{cor} \label{cor: existence of h}
 	For $\vartheta\in\Hom_{F'}(R,\overline{F})$ and $B=\vartheta(Y)^{-1}\in\Gl_n(\overline{F})$ we have $BY\in G^\circ(E)$.
 \end{cor}
 \begin{proof}
 	Set $Z=\spec(R)$, considered as a scheme over $F'$. As $R/F'$ is a $G^\circ$-torsor by Lemma \ref{lemma: Is G0 torsor}, the morphism
 	\begin{equation}
 		\label{eq: torsor isom}
 		Z\times_{F'} G^\circ_{F'}\to Z\times_{F'} Z, \ (z,g)\mapsto (z,zg)
 	\end{equation}
 	is an isomorphism. If we consider $Z$ and $G^\circ_{F'}$ as closed subschemes of $\Gl_{n,F'}$ via $Y$, the inverse of (\ref{eq: torsor isom}) is given by $(z_1,z_2)\mapsto (z_1,z_1^{-1}z_2)$. In particular, for any field extension $\widetilde{F}$ of $F'$ and $z_1,z_2\in Z(\widetilde{F})$, we have $z_1^{-1}z_2\in G^\circ(\widetilde{F})$.
 	With $\widetilde{F}=E$, $z_1=\vartheta(Y)$ and $z_2=Y$, this implies the claim.
 \end{proof}
 
We note that there is a fixed embedding of $F'$ into $\overline{F}$ derived from forming the algebraic closure of $F$ inside the the algebraic closure of the field of fractions of $R$ and $\Hom_{F'}(R,\overline{F})$ has to be interpreted accordingly.
 
 We denote with $I_n$ the $n\times n$-identity matrix.
 
 \begin{lemma}
 	\label{LM:characters}
 	Let $\chi$ be a character of $G^\circ$, $\vartheta\in \Hom_{F'}(R,\overline{F})$ and $B=\vartheta(Y)^{-1}\in\Gl_n(\overline{F})$. Then
 	$$
 	\delta(\chi(BY))=\left(\sum_{i,j=1}^n\frac{\partial \chi}{\partial X_{ij}}(I_n)(BAB^{-1}+\delta(B)B^{-1})_{ij}\right)\chi(BY).
 	$$
 \end{lemma}
 \begin{proof}
 	We will work with the ring $E[\varepsilon]$ of dual numbers, i.e., $\varepsilon^2=0$. For $p\in F[X,\frac{1}{\det(X)}]$, $C\in\Gl_n(E)$ and $D\in E^{n\times n}$ (i.e., $C+\varepsilon D\in\Gl_n(E[\varepsilon])$,
 	 we then have $p(C+\varepsilon D)=p(C)+\varepsilon\sum_{i,j=1}^n\frac{\partial p}{\partial X_{ij}}(C)D_{ij}$. Therefore
 	\begin{equation}
 		\label{eq: dual numbers}
 		p(C+\varepsilon\de(C))=p(C)+\varepsilon\sum_{i,j=1}^n\frac{\partial p}{\partial X_{ij}}(C)\de(C_{ij})=p(C)+\varepsilon(\de(p(C))).
 	\end{equation}
 	For $Y'=BY\in\Gl_n(E)$ and $A'=BAB^{-1}+\delta(B)B^{-1}\in \overline{F}^{n\times n}$ we have $\de(Y')=A'Y'$.
 	As $Y'\in G^\circ(E)$ (Corollary \ref{cor: existence of h}) it follows from (\ref{eq: dual numbers}) that $Y'+\varepsilon\de(Y')\in G^\circ(E[\varepsilon])$. We have
 	
 	$$\chi(Y'+\varepsilon\de(Y'))\chi(Y'^{-1})=(\chi(Y')+\varepsilon\de(\chi(Y')))\chi(Y'^{-1})=1+\varepsilon\de(\chi(Y'))\chi(Y')^{-1}.$$
 	On the other hand,
 	$$ \chi(Y'+\varepsilon\de(Y'))\chi(Y'^{-1})=\chi((Y'+\varepsilon\de(Y'))Y'^{-1})=\chi(I_n+\varepsilon A')= 1+\varepsilon\sum_{i,j=1}^n\frac{\partial \chi}{\partial X_{ij}}(I_n)A'_{ij}.$$
 	Thus 
 	$$\de(\chi(Y'))=\left(\sum_{i,j=1}^n\frac{\partial \chi}{\partial X_{ij}}(I_n)A'_{ij}\right) \chi(Y')$$
 	as claimed.
 \end{proof}
 
 The proof of Lemma \ref{LM:characters} shows that $\sum_{i,j=1}^n\frac{\partial \chi}{\partial X_{ij}}(I_n)(BAB^{-1}+\delta(B)B^{-1})_{ij}\in \overline{F}$ does not depend on the lift of $\chi\in k[G^\circ]$ to an element of $k[X,\frac{1}{\det(X)}]$.

 Our criterion that characterizes Picard-Vessiot rings among proto-Picard-Vessiot rings is based on the concept of \emph{logarithmic independence} that we now define.
 
 \begin{defi} \label{defi: logarithmically independent}
 	Elements $f_1,\dots,f_m$ of a differential field $F$ are \emph{logarithmically independent} (over $F$) if $\sum_{i=1}^m d_i f_i =\delta(f)/f$ for $d_1,\ldots,d_m\in\mathbb{Z}$ and $f\in F^\times$ implies $d_1=\dots=d_m=0$.
 \end{defi}
 
 The significance of the above definition is the following. Let $h_1,\ldots,h_m$ be nonzero elements of some $\de$-field extension of $F$ with no new constants such that $\de(h_i)=f_i h_i$ for $i=1,\ldots,m$. By the Kolchin-Ostrowski theorem (\cite{Kolchin:AlgebraicGroupsAndAlgebraicDependence}), if $h_1,\ldots,h_m$ are algebraically dependent over $F$, then there exist $d_1,\ldots,d_m\in\mathbb{Z}$ not all equal to zero such that $h_1^{d_1}\ldots h_m^{d_m}=f\in F$. 
 Applying the logarithmic derivative $h\mapsto \frac{\de(h)}{h}$ to this equation yields  $\sum_{i=1}^m d_i f_i =\delta(f)/f$.
 
 On the other hand, if $\sum_{i=1}^m d_i f_i =\delta(f)/f$ holds, the logarithmic derivative of $\frac{h_1^{d_1}\ldots h_m^{d_m}}{f}$ is zero. So $\frac{h_1^{d_1}\ldots h_m^{d_m}}{f}$ is a constant $\lambda$ and $h_1^{d_1}\ldots h_m^{d_m}=f\lambda\in F$. In summary, we see that $h_1,\ldots,h_m$ are algebraically independent over $F$ if and only if $f_1,\ldots,f_m$ are logarithmically independent.

 Note that if $F\subseteq\widetilde{F}$ is an inclusion of differential fields, then $f_1,\ldots,f_m\in F$ may be logarithmically independent over $F$ but logarithmically dependent over $\widetilde{F}$. However, from the above discussion we deduce the following.
 
 \begin{rem} \label{rem: logarithmic independence preserved under algebraic extension}
 	If $\widetilde{F}/F$ is an algebraic extension of differential fields and $f_1,\ldots,f_m\in F$, then $f_1,\ldots,f_m$ are logarithmically independent over $F$ if and only if $f_1,\ldots,f_m$ are logarithmically independent over $\widetilde{F}$. 
 \end{rem}

 The following lemma provides a criterion to test if a proto-Picard-Vessiot ring is Picard-Vessiot. We denote with $X(G^\circ)$ the group of characters of $G^\circ$.
 
 \begin{lemma}\label{LM:criterion}
 	Assume that $R/F$ is proto-Picard-Vessiot and
 	let $\chi_1,\dots,\chi_m$ be a basis of $X(G^\circ)$ (as a $\mathbb{Z}$-module). Choose $\vartheta \in\Hom_{F'}(R,\overline{F})$ and set $B=\vartheta(Y)^{-1}$ and
 	$$
 	f_\ell=\sum_{i,j=1}^n\frac{\partial \chi_\ell}{\partial X_{ij}}(I_n)(BAB^{-1}+\delta(B)B^{-1})_{ij}\in \overline{F}.
 	$$
 	for $\ell=1,\ldots,m$. Then $R/F$ is Picard-Vessiot if and only if $f_1,\dots,f_m$ are logarithmically independent over $\overline{F}$.
 \end{lemma}
 \begin{proof}
 	 Suppose $R/F$ is not Picard-Vessiot, i.e., $R$ is not $\delta$-simple. We will show that $f_1,\ldots,f_m$ are logarithmically dependent. As a non-trivial $\de$-ideal of $R$, gives rise to a non-trivial $\de$-ideal of $R\otimes_{F'}\overline{F}$, 
 	this implies that also $R\otimes_{F'}\overline{F}$ is not $\de$-simple. 
 	
 	From Corollary \ref{cor: move to algebraic closure} we know that $\overline{F}R=R\otimes_{F'}\overline{F}$ is a differential $G^\circ$-torsor over $\overline{F}$. Let $\overline{\mathfrak{m}}$ be a maximal $\de$-ideal of  $\overline{F}R=R\otimes_{F'}\overline{F}$ and set $\m=\overline{\m}\cap R$. Then $(\overline{F}R)/\overline{\m}$ is a Picard-Vessiot ring for $\de(y)=Ay$ over $\overline{F}$ and $R/\m$ embeds into $(\overline{F}R)/\overline{\m}$. This implies that the field of fractions of $R/\m$ does not have new constants. So $R/\m$ is a Picard-Vessiot ring (over $F$) for $\de(y)=Ay$ and $\m$ is a maximal $\de$-ideal of $R$. Let $H$ be the differential Galois group of $R/\m$. 
 	As $R/F$ is proto-Picard-Vessiot, we have $G^t\leq H\leq G$.
 	
 	 The ring compositum $\overline{F}(R/\m)$ of $\overline{F}$ and $R/\m$ (inside the algebraic closure of the field of fractions of $R/\m)$ can be identified with $(\overline{F}R)/\overline{\m}$. In fact, using Corollary \ref{cor: move to algebraic closure} (applied to $R/\m$), we get a map $\overline{F}(R/\m)\to (\overline{F}R)/\overline{\m}$ which then necessarily is an isomorphism. By Corollary \ref{cor: move to algebraic closure}, the differential Galois group of $\overline{F}(R/\m)=(\overline{F}R)/\overline{\m}$ is $H^\circ$. 	
 	
 	As $\km$ is a nonzero prime ideal of $R$ and $R$ is an integral domain, we see that $$\dim(H)=\dim(R/\km)<\dim(R)=\dim(G).$$
 	Therefore $H^\circ$ is a proper subgroup of $G^\circ$. 
 	As the identity component $G^\circ/G^t$ of $G/G^t$ is a torus and $H^\circ/G^t$ is a proper subgroup of $G^\circ/G^t$, there exists a non-trivial character of $G^\circ/G^t$ that is trivial on $H^\circ/G^t$, 	
 	i.e., there exists a non-trivial character $\chi$ of $G^\circ$ such that $H^\circ\subseteq \ker(\chi)$.

 Let $\overline{Y}\in\Gl_n(R/\m)\subseteq\Gl_n((\overline{F}R)/\overline{\m})$ denote the image of $Y\in\Gl_n(R)$ and set $Y'=B\overline{Y}\in\Gl_n(\overline{F}(R/\m))=\Gl_n((\overline{F}R)/\overline{\m})$. As $BY\in G^\circ(\overline{F}R)$ (Corollary \ref{cor: existence of h}), we have $Y'\in G^\circ((\overline{F}R)/\overline{\m})$.
%
%
 	For every $h\in H^\circ(k)$ we can write $h(Y')=Y'[h]$ with $[h]\in \Gl_n(k)$. Indeed, as explained in Section \ref{subsec:differential Galois theory}, we may identify $H^\circ$ with a closed subgroup of $\Gl_{n,k}$. Then
 	$$h(\chi(Y'))=\chi(Y'[h])=\chi(Y')\chi([h])=\chi(Y')$$
 	for every $h\in H^\circ(k)$.

 	Thus $\chi(Y')=f\in \overline{F}\setminus\{0\}$ by the differential Galois correspondence. Write $\chi=\prod_{i=1}^m \chi_i^{d_i}$ with $d_i\in \bZ$. Then not all $d_i$ are zero, because $\chi$ is non-trivial.
By Lemma \ref{LM:characters}, we have $\de(\chi_i(BY))=f_i\chi_i(BY)$ for $i=1,\ldots,m$. This is an identity in $\overline{F}R$. Taking the quotient mod $\overline{\mathfrak{m}}$ yields $\de(\chi_i(Y'))=f_i\chi_i(Y')$ in $(\overline{F}R)/\overline{\mathfrak{m}}=\overline{F}(R/\mathfrak{m})$. So
 	$$
 	\frac{\de(f)}{f}=\frac{\delta(\chi(Y'))}{\chi(Y')}=\sum_{i=1}^m d_i\frac{\delta(\chi_i(Y'))}{\chi_i(Y')}=\sum_{i=1}^m d_if_i.
 	$$
 	Thus $f_1,\dots,f_m$ are logarithmically dependent over $\overline{F}$. 

 	Conversely, assume that $R/F$ is Picard-Vessiot. Then $G$ is the differential Galois group of $R/F$ and $E/\overline{F}$ is a Picard-Vessiot extension with differential Galois group $G^\circ$. Suppose
 	$f_1,\dots,f_m$ are logarithmically dependent, i.e., there are $d_1,\dots,d_m\in \bZ$ not all zero and $f\in \overline{F}\setminus \{0\}$ such that $\sum_{i=1}^m d_if_i=\delta(f)/f$. Write $\chi=\prod_{i=1}^m \chi_i^{d_i}$. Then $\chi$ is a non-trivial character of $G^\circ$. 
 	For $Y'=BY$, using Lemma \ref{LM:characters}, we obtain
 	$$\frac{\delta(\chi(Y'))}{\chi(Y')}=\sum_{i=1}^m d_i\frac{\delta(\chi_i(Y'))}{\chi_i(Y')}=\sum_{i=1}^m d_i f_i=\frac{\de(f)}{f}.$$
 	Thus $\de(\frac{\chi(Y')}{f})=0$, i.e., $\frac{\chi(Y')}{f}=\lambda\in k$. So $\chi(Y')=\lambda f\in \overline{F}$.
 	
 	For every $g\in G^\circ(k)$ we can write $g(Y')=Y'[g]$ for some $[g]\in \Gl_n(k)$, thereby identifying $G^\circ$ with a closed subgroup of $\Gl_{n,k}$. Then
 	$$\chi(Y')=g(\chi(Y'))=\chi(g(Y'))=\chi(Y'[g])=\chi(Y')\chi([g]),$$ 
 	for all $g\in G^\circ(k)$. This implies that $\chi$ is trivial; a contradiction. Therefore, $f_1,\dots,f_m$ are logarithmically independent over $\overline{F}$. 
 \end{proof}

 Note that in case $X(G^\circ)$ is trivial, Lemma \ref{LM:criterion} asserts that a proto-Picard-Vessiot ring is automatically Picard-Vessiot.

 \begin{rem} \label{rem: C1}
 	If $F=k(x)$ and $R/F$ is Picard-Vessiot with connected differential Galois group $G$, then the $\vartheta\in\Hom_{F'}(R,\overline{F})$ in Lemma \ref{LM:criterion} can be chosen such that $B=\vartheta(Y)^{-1}\in \Gl_n(F)$ and therefore also $f_1,\ldots,f_m\in F$.
 \end{rem}
 \begin{proof}
 	By Tsen's theorem the field $F=k(x)$ is a $C_1$-field and so by Steinberg's theorem every torsor for a connected linear algebraic group over $F$ is trivial (\cite[Chapter III, Section 2.3]{Serre:GaloisCohomology}). As $R$ defines a torsor for $G_F$, we see that there exists a morphisms $\vartheta\in\Hom_F(R,F)$. Because $G$ is connected, the differential Galois correspondence implies that the integral closure $F'$ of $F$ in $R$ is equal to $F$.  So $\vartheta\in\Hom_{F'}(R,\overline{F})$ and $\vartheta(Y)^{-1}\in\Gl_n(F)$ as desired.
 \end{proof}
 
 The following example illustrates Lemma \ref{LM:criterion}
 
 \begin{ex} \label{ex: elements for logarithmic test}
 	Let $a,b\in F$ such that $a\in F^\times $ is not a square in $F$. Then $F(\eta)=F[y]/(y^2-a)$ is a differential field extension of $F$. 
 	As in Example \ref{ex: torsor for monomial matrices} we set $$R=F(\eta)[y_1,y_2,y_1^{-1},y_2^{-1}]=F[X,\tfrac{1}{\det(X)}]/(p_1,p_2)=F[Y,\tfrac{1}{\det(Y)}],$$ 
 	where $\de(y_1)=(b+\eta)y_1$, $\de(y_2)=(b-\eta)y_2$,	
 	$$
 	p_1=X_{21}X_{22}-(b^2-a)X_{11}X_{12}, \quad  \quad  p_2=X_{21}X_{12}+X_{22}X_{11}-2bX_{11}X_{12,} $$
 	
 	$$A=\begin{pmatrix}  0 & 1 \\ a+\de(b)-b^2-\frac{\delta(a)b}{2a} & 2b+\frac{\delta(a)}{2a}\end{pmatrix}\in F^{2\times 2},$$
 	and $\de(X)=AX$. Then $R$ is an integral domain and $R/F$ is a differential $G$-torsor for $\de(y)=A y$, where $G\leq\Gl_{2,k}$ is the group of $2\times 2$ monomial matrices (as seen in Example~\ref{ex: torsor for monomial matrices}). As $G^t=1$, it is clear that $R/F$ is proto-Picard Vessiot.
 	
 	The algebraic group $G^\circ$ is the diagonal torus in $\Gl_{2,k}$ and so a basis $\chi_1,\chi_2$ of $X(G^\circ)$ is represented by $X_{11}$ and $X_{22}$.
 	The integral closure $F'$ of $F$ in $R$ equals $F'=F(\eta)$. Define $\vartheta\in\Hom_{F'}(R,\overline{F})$ by $\vartheta(y_1)=\vartheta(y_2)=1$. Then
 	$$
 	\vartheta(Y)=\begin{pmatrix} 1 & 1 \\ b+\eta & b-\eta \end{pmatrix}\in\Gl_2(\overline{F}) \quad \text{ and } \quad \vartheta(Y)\begin{pmatrix}
 		y_1 & 0 \\
 		0 & y_2
 	\end{pmatrix}=Y.
 	$$
%
%
 	 We have
 	
 	\begin{equation} \label{eq: systems}
 		\de\begin{pmatrix}
 			y_1 & 0 \\
 			0 & y_2
 		\end{pmatrix}=\begin{pmatrix}
 			b+\eta & 0 \\
 			0 & b-\eta
 		\end{pmatrix}\begin{pmatrix}
 			y_1 & 0 \\
 			0 & y_2
 		\end{pmatrix} \text{ and } \de(Y)=AY.	
 	\end{equation}
 Because  $\begin{pmatrix}
 		y_1 & 0 \\
 		0 & y_2
 	\end{pmatrix}=BY$ for $B=\vartheta(Y)^{-1}$, the matrices defining the differential equations in (\ref{eq: systems}) are related via the Gauge transformation defined by $B$, i.e., 	
 	$BAB^{-1}+\delta(B)B^{-1}=\begin{pmatrix}
 		b+\eta & 0 \\
 		0 & b-\eta
 	\end{pmatrix}$.
 	Therefore,
 	\begin{align*}
 		f_1=&\sum_{i,j=1}^2 \frac{\partial \chi_1}{\partial X_{ij}}(I_2)(BAB^{-1}+\delta(B)B^{-1})_{ij}=b+\eta \quad \text{ and }\\
 		f_2=&\sum_{i,j=1}^2 \frac{\partial \chi_2}{\partial X_{ij}}(I_2)(BAB^{-1}+\delta(B)B^{-1})_{ij}=b-\eta.
 	\end{align*}
 	So Lemma \ref{LM:criterion} shows that $R/F$ is Picard-Vessiot if and only if $b+\eta$ and $b-\eta$ are logarithmically independent over $F(\eta)$.   
 \end{ex}

  \subsection{Logarithmic independence via residues}
 
 \label{subsec: logarithmic independence via residues}
 
In the previous section (Lemma \ref{LM:criterion}), we have seen that whether or not a proto-Picard-Vessiot ring is Picard-Vessiot, depends on the logarithmic (in)dependence of certain algebraic functions.
The point of this short section is to present a criterion from \cite{CompointSinger:ComputingGaloisGroupsOfCompletelyReducibleDifferentialEquations} for the logarithmic independence of algebraic functions via differential forms and their residues. 
In Section \ref{sec: Specialization of differential torsors} we will then show that this criterion is preserved under many specializations.
 \medskip
 
Let $F$ be a finite field extension of $k(x)$, i.e., $F/k$ is a function field of one variable.
%
%
 Recall (\cite[Def. 4.2.10]{Stichtenoth:AlgebraicFunctionFieldsAndCodes}) that the \emph{residue} $\res_P(\omega)$ of a differential form $\omega$ of $F/k$ at a place $P$ of $F/k$ is defined as
 $\res_P(\omega)=\lambda_{-1}\in k$, where $\omega=f\,dt$ with $t\in F$ a uniformizing variable at $P$ and $f=\sum_{i=\ell}^\infty \lambda_it^i$ the $P$-adic expansion of $f\in F$. In other words, $\res_{P}(\omega)=\res_{P,t}(f)$.
 
 Let $f_1,\ldots,f_m\in F$ and set $\bff=(f_1,\dots,f_m)$. Let $\cP$ be the (finite) set of places of $F/k$ consisting of all poles of the differential forms $f_1dx,\ldots,f_mdx$. 
 Define
 \begin{align*}
 	Z_1(\bff,\cP)&=\left\{(d_1,\dots,d_m)\in \bZ^m\ \bigg|\ \sum_{i=1}^m d_i \res_{P}(f_idx)\in \bZ\ \forall\, P\in \cP \,\mbox{and}\,
 	\sum_{P\in\cP}\sum_{i=1}^m d_i \res_{P}(f_idx)=0\right\},\\
 	Z_2(\bff,\cP)&=\left\{(d_1,\dots,d_m)\in Z_1(\bff,\cP)\  \big| \ \mbox{$\sum_{{P}\in \cP} \left(\sum_{i=1}^m d_i \res_{P}(f_idx)\right) {P}$ is a principal divisor of $F/k$}\right\}.
 \end{align*}
 Note that $Z_1(\bff,\cP)$ and $Z_2(\bff,\cP)$ are submodules of $\bZ^m$ and thus are free $\bZ$-modules. Let $\{(e_{1,\ell},\dots,e_{m,\ell})\mid \ell=1,\dots,n\}$ be a basis of $Z_2(\bff,\cP)$ and for $\ell=1,\ldots,n$ write
 $$\sum_{P\in \cP} \left(\sum_{i=1}^m e_{i,\ell}\res_{P}(f_idx)\right) {P}=(h_\ell) \ \text{ with } \ h_\ell\in F^\times.
 $$
 For $\ell=1,\dots,n$, set
 $
 \omega_\ell=\left(\frac{\delta(h_\ell)}{h_\ell}-\sum_{i=1}^m e_{i,\ell} f_i\right) dx.
 $
 
 \begin{lemma}
 	\label{lemma:logarithmicderivative}
 	The elements $f_1,\dots,f_m$ are logarithmically independent over $F$ if and only if $\omega_1,\dots,\omega_n$ are $\mathbb{Z}$-linearly independent.
 	In particular, if $Z_2(\bff,\cP)=0$, then $f_1,\dots,f_m$ are logarithmically independent over $F$.
 \end{lemma}
 \begin{proof}
 	The proof of this lemma is contained in the proof of \cite[Prop. 2.4]{CompointSinger:ComputingGaloisGroupsOfCompletelyReducibleDifferentialEquations}. For the convenience of the reader we include the details.
 	
 	First assume that $f_1,\ldots,f_m$ are logarithmically independent over $F$. Let $d_1,\ldots,d_n\in\bZ$ be such that $\sum_{\ell=1}^nd_\ell\omega_\ell=0$. By definition of $\omega_\ell$, this implies $\sum_{\ell=1}^nd_\ell\frac{\de(h_\ell)}{h_\ell}=\sum_{\ell=1}^n\sum_{i=1}^md_\ell e_{i,\ell}f_i$. For $h=\prod_{\ell=1}^nh_\ell^{d_\ell}\in F^\times$, we thus have $$\tfrac{\de(h)}{h}=\sum_{\ell=1}^nd_\ell\tfrac{\de(h_\ell)}{h_\ell}=\sum_{i=1}^m\left(\sum_{\ell=1}^nd_\ell e_{i,\ell}\right)f_i.$$
 	The logarithmic independence of $f_1,\ldots,f_m$ implies that $\sum_{\ell=1}^nd_\ell e_{i,\ell}=0$ for $i=1,\ldots,m$.
 	The $\bZ$-linear independence of the $(e_{1,\ell},\dots,e_{m,\ell})$'s thus yields $d_1=\ldots=d_n=0$ as desired.
 	
 	\medskip
 	
 	Conversely, assume that $\omega_1,\ldots,\omega_n$ are $\bZ$-linearly independent. Let $d_1,\ldots,d_m\in\bZ$ and $h\in F^\times$ be such that $\frac{\de(h)}{h}=\sum_{i=1}^md_if_i$. We will show that $(d_1,\ldots,d_m)\in Z_2(\bff,\cP)$. First note that the differential form $\frac{\de(h)}{h}dx=\frac{dh}{h}$ has only simple poles and the residues are all integers. In fact, the residue at a place $P$ of $F/K$ is the $P$-valuation of $h$. (This can, for example, be deduced from \cite[Prop. 4.2.7]{Stichtenoth:AlgebraicFunctionFieldsAndCodes}.)
 	So $\sum_{i=1}^md_i\res_{P}(f_idx)=\res_{P}(\sum_{i=1}^md_if_idx)=\res_{P}(\frac{dh}{h})\in\mathbb{Z}$ for every place $P$ of $F/K$ and 
 	$$(h)=\sum_{P}\res_{P}(\tfrac{dh}{h}){P}=\sum_{P}\sum_{i=1}^md_i\res_{P}(f_idx){P},$$
 	where the sum is taken over all places of $F/K$. However, since $\res_{P}(\sum_{i=1}^md_if_idx)=0$, unless $P$ is a pole of one of the forms $f_idx$, the sum can be restricted to $P\in\cP$. As principal divisors have degree zero (\cite[Theorem 1.4.11]{Stichtenoth:AlgebraicFunctionFieldsAndCodes}), we see that $(d_1,\ldots,d_m)\in Z_1(\bff,\cP)$ and also $(d_1,\ldots,d_m)\in Z_2(\bff,\cP)$. Therefore, there exist $c_1,\ldots,c_n\in\mathbb{Z}$ such that $d_i=\sum_{\ell=1}^nc_\ell e_{i,\ell}$ for $i=1,\ldots,m$.
 	Then
 	\begin{align*}
 		\sum_{\ell=1}^nc_\ell\omega_\ell &=\sum_{\ell=1}^nc_\ell\omega_\ell-\Big(\tfrac{\delta(h)}{h}-\sum_{i=1}^m d_if_i\Big)dx= \\
 		&=\sum_{\ell=1}^nc_\ell\Big(\tfrac{\de(h_\ell)}{h_\ell}-\sum_{i=1}^me_{i,\ell}f_i\Big)dx-\tfrac{\de(h)}{h}dx+\sum_{i=1}^m\sum_{\ell=1}^nc_\ell e_{i,\ell}f_idx=\\
 		&=\frac{\de(h^{-1}\prod_{\ell=1}^nh_\ell^{c_\ell})}{h^{-1}\prod_{\ell=1}^nh_\ell^{c_\ell}}dx=\frac{d(h^{-1}\prod_{\ell=1}^nh_\ell^{c_\ell})}{h^{-1}\prod_{\ell=1}^nh_\ell^{c_\ell}}.
 	\end{align*}
 	By construction, the differential forms $\omega_\ell$ have no simple poles; so $\sum_{\ell=1}^nc_\ell\omega_\ell$ has no simple pole. On the other hand, $\frac{d(h^{-1}\prod_{\ell=1}^nh_\ell^{c_\ell})}{h^{-1}\prod_{\ell=1}^nh_\ell^{c_\ell}}$ has only simple poles. This is only possible if $\sum_{\ell=1}^nc_\ell\omega_\ell=0$.
 	
 	Now the $\mathbb{Z}$-linear independence of the $\omega_\ell$'s yields that all $c_\ell$'s are zero. Hence also all $d_i$'s are zero and the $f_i$'s are logarithmically independent as desired.
 \end{proof}
 
 The following example illustrates Lemma \ref{lemma:logarithmicderivative}
 
 \begin{ex} \label{ex: get elleptic curve}
 	Let $F=k(x,z)$ with $z^2=x^4+x+\alpha$ and $\alpha\in k$.
 	We assume that the discriminant $256\alpha^3-27$ of $x^4+x+\alpha$ is nonzero so that $x^4+x+\alpha$ has four distinct roots $\alpha_3,\alpha_4,\alpha_5,\alpha_6$ in $k$.
 	Let $\beta\in k$ with $\beta^4+\beta+\alpha\neq 0$. Set $b=\frac{(x-\beta)^2}{x^4+x+\alpha}=(\frac{x-\beta}{z})^2$ and $\eta=\frac{x-\beta}{z}$. We will investigate the logarithmic independence of $f_1=b+\eta$ and $f_2=b-\eta$.
 	(This choice of $f_1,f_2$ is motivated by Example \ref{ex: elements for logarithmic test}.)
 	
 	We first need to find the set $\cP$ of poles of $f_1dx$ and $f_2dx$. Note that the projective curve $C'\subseteq \mathbb{P}^2_k$ defined by the affine equation $z^2=x^4+x+\alpha$ has a singularity at infinity.
 	
 	The transformation 
 	$$ x=-\tfrac{v+1}{2u},\quad  z=(\tfrac{v+1}{2u})^2-\tfrac{u}{2}$$
 	yields a smooth model $C\subseteq \mathbb{P}_k^2$ defined by the affine equation $v^2=u^3-4\alpha u+1$. The inverse transformation is given by 
 	$$ u=2(x^2-z), \quad v=-4(x^2-z)x-1.$$
 	These transformations yield an isomorphism between $C\smallsetminus\{(0:1:1), (0:1:0)\}$ and $C'\smallsetminus\{(0:1:0)\}$. The singular point $(0:1:0)$ of $C'$ has two preimages in $C$, namely $(0:1:1)$ and  $(0:1:0)$.
 	The places of $F/k$ are in one-to-one correspondence with $C(k)$. 
 	The inclusion $k(x)\subseteq F$ of function fields corresponds to the morphism
 	$$C\to \mathbb{P}^1_k,\ (u:v:w)\mapsto (v+w: -2u).$$
 	
 	By the Hurwitz genus formula (\cite[Cor. 3.5.6]{Stichtenoth:AlgebraicFunctionFieldsAndCodes}), this morphism has exactly four ramification points. They are $\alpha_3,\alpha_4,\alpha_5,\alpha_6\in \mathbb{A}^1(k)\subseteq\mathbb{P}^1(k)$. Let $P_3,P_4,P_5,P_6$ denote the (unique) places of $F/k$ extending the places of $k(x)$ corresponding to  $\alpha_3,\alpha_4,\alpha_5,\alpha_6$. The point of $C(k)$ corresponding to $P_i$ is $(2\alpha_i^2:-4\alpha_i^3-1:1)$ for $i=3,\ldots,6$.

 	For a place $P$ of $F/k$ let $\nu_P$ denote the corresponding valuation on $F$. Similarly, every $\lambda\in k\cup\{\infty\}$ defines a valuation $\nu_\lambda$ on $k(x)$. 
 	For $a\in k(x)$ we have $\nu_{P_i}(a)=2\nu_{\alpha_i}(a)$ for $i=3,\ldots,6$. For any other place $P$ of $F/k$ (i.e., $P\notin\{ P_3,P_4,P_5,P_6\}$) we have $\nu_{P}(a)=\nu_\lambda(a)$, where $\lambda$ corresponds to the restriction of $P$. With this preparation, one sees immediately that the poles of $b=\frac{(x-\beta)^2}{x^4+x+\alpha}$ are $P_3,P_4,P_5,P_6$ and $\nu_{P_i}(b)=-2$ for $i=3,\ldots,6$.
 	Because $b=\eta^2$, it follows that the poles of $\eta$ are $P_3,P_4,P_5,P_6$ and $\nu_{P_i}(\eta)=-1$ for $i=3,\ldots,6$.

 	There are two places $P_1,P_2$ of $F/k$ that restrict to the infinite place of $k(x)$. They correspond to the points $(0:1:0)$ and $(0:1:1)$ of $C(k)$. Then the poles of $x$ are $P_1, P_2$ and $\nu_{P_i}(x)=-1$ for $i=1,2$.
 	
 	Set $\cP=\{P_1,\ldots,P_6\}$. Then the poles of $bdx$ and $\eta dx$ belong to $\cP$, because outside of $\cP$ neither $b,\eta$ or $x$ has a singularity. As $\nu_{P_i}(\frac{1}{x})=1$ for $i=1,2$ we can use $x^{-1}$ as a uniformizer at $P_1,P_2$. Then $bdx=-bx^2d(x^{-1})$ and so $\nu_{P_i}(bdx)=\nu_{P_i}(-bx^2)=\nu_\infty(-bx^2)=0$. Hence $P_1,P_2$ are not poles of $bdx$.
 	
 	As 
 	$$\nu_{P_i}(z^2)=\nu_{P_i}(x^4+x+\alpha)=2\nu_{\alpha_i}(x^4+x+\alpha)=2,$$
 	for $i=3,\ldots,6$, we see that $z$ is a uniformizer at $P_3,P_4,P_5,P_6$.
 	From $z^2=x^4+x+\alpha$ it follows that $\de(z)=\frac{4x^3+1}{2z}$ and so 
 	$$bdx=\tfrac{(x-\beta)^2 2z}{z^2(4x^3+1)}dz=\tfrac{2(x-\beta)^2}{(4x^3+1)z}dz.$$
 	Therefore $\nu_{P_i}(bdx)=-1$ for $i=3,\ldots,6$. Thus the poles of $bdx$ are $P_3,P_4,P_5,P_6$.
 	
 	For $i=1,2$ we have $\nu_{P_i}(b)=2$ and so $\nu_{P_i}(\eta)=1$. It follows that
 	$$\nu_{P_i}(\eta dx)=\nu_{P_i}(-\eta x^2d(x^{-1}))=\nu_{P_i}(-\eta x^2)=-1$$
 	for $i=1,2$. So $P_1,P_2$ are poles of $\eta dx$. On the other hand,
 	$\eta dx=\frac{2\eta z}{4x^3+1}dz$ and so $\nu_{P_i}(\eta dx)=0$ for $i=3,\ldots,6$.
 	So the poles of $\eta dx$ are $P_1,P_2$.
 	
 	In summary, we see that the poles of $f_1dx$ as well as of $f_2dx$ are exactly $\cP$.
 	
 	\medskip
 	
 	We next compute the residues. As $\nu_{P_i}(x-\alpha_i)=2$ for $i=3,\dots,6$, the $P_i$-adic expansion of $x-\alpha_i$ is of the form  $c_iz^2+\dots$ with $c_i\in k^\times$. Substituting $x=\alpha_i+c_iz^2+\dots$ we find
 	$$
 	bdx=\tfrac{2(x-\beta)^2}{(4x^3+1)z}dz=\tfrac{2(\alpha_i-\beta+c_iz^2+\dots)^2}{4(\alpha_i+c_iz^2+\ldots)^3+1}\tfrac{1}{z}dz=\left ( \tfrac{2(\alpha_i-\beta)^2}{4\alpha_i^3+1}\tfrac{1}{z}+\dots\right)dz.
 	$$
 	Therefore $\res_{P_i}(bdx)=\frac{2(\alpha_i-\beta)^2}{4\alpha_i^3+1}$ for $i=3,\dots,6$. We next compute $\res_{P_i}(\eta dx)$ for $i=1,2$.
 	
 	As $\nu_{P_i}(z^2)=\nu_{P_i}(x^4+x+\alpha)=-4$, we see that $\nu_{P_i}(z)=-2$ for $i=1,2$. So the $P_i$-adic expansion of $z$ is of the form $z=c_i(x^{-1})^{-2}+\ldots$ with $c_i\in K^\times$ for $i=1,2$. We have
 	$$\eta dx=-\eta x^2d(x^{-1})=\tfrac{-(x-\beta)x^2}{z}d(x^{-1})=\tfrac{-(x^{-1})^{-3}+\beta(x^{-1})^{-2}}{c_i(x^{-1})^{-2}+\ldots}d(x^{-1})$$
 	and so $\res_{P_i}(\eta dx)=-1/c_i$.
 	
 	Plugging $z=c_i(x^{-1})^{-2}+\ldots$ into $z^2=x^4+x+\alpha$ we find $(c_i(x^{-1})^{-2}+\ldots)^2=(x^{-1})^{-4}+\ldots$ and so $c_i\in\{1,-1\}$ for $i=1,2$. Suppose $c_1=c_2$. Then $\frac{z}{x^2}$ and $\frac{1}{x}$ are rational functions on $C$ that take the same value in $k$ on $P_1$ and $P_2$ and $F=k(\frac{z}{x^2},\frac{1}{x})$. Thus $P_1=P_2$; a contradiction. So $c_1\neq c_2$.
 	To find the exact value of $c_i$ we can evaluate $\frac{z}{x^2}=1-\frac{2u^3}{(v+1)^2}$ at the point $(0:1:1)$ corresponding to $P_2$. This yields $c_2=1$ and so $c_1=-1$. Thus $\res_{P_1}(\eta dx)=1$ and $\res_{P_2}(\eta dx)=-1$.
 	
 	With this information at hand we can now calculate
 	$$Z_1((f_1,f_2),\cP)=\left\{(d_1,d_2)\in \bZ^2\ \bigg|\ \sum_{i=1}^2 d_i \res_{P}(f_idx)\in \bZ\ \forall\, P\in \cP \,\mbox{and}\,
 	\sum_{P\in\cP}\sum_{i=1}^2 d_i \res_P(f_idx)=0\right\}.$$
 	Note that for $P_i$ ($i=3,\ldots,6$), the first condition is $(d_1+d_2)\frac{2(\alpha_i-\beta)^2}{4\alpha_i^3+1}\in\mathbb{Z}$, whereas for $P_1,P_2$ the first condition is vacuous, because $\res_{P_i}(f_j)\in\mathbb{Z}$ for $i=1,2$.
 	There indeed exist $\alpha,\beta\in k$ such that $\frac{2(\alpha_i-\beta)^2}{4\alpha_i^3+1}\in\mathbb{Q}$ for $i=3,\ldots,6$, for example, $\alpha=0$ and $\beta=\frac{1}{2}$.

 	Set 
 	$
 	r=\prod_{i=3}^6 \frac{(\alpha_i-\beta)^2}{4\alpha_i^3+1}.
 	$
 	A direct calculation shows that
 	$
 	r=\frac{(\beta^4+\beta+\alpha)^2}{256\alpha^3-27}.
 	$ Thus, the first condition mentioned above, implies that $r\in \bQ$ if $d_1+d_2\neq 0$.
 	
 	Assume that $r\notin\mathbb{Q}$. Then
 	$$Z_1((f_1,f_2),\cP)=\{(-d,d)\in\mathbb{Z}^2|\ d\in\mathbb{Z}\}$$
 	and for $(-d,d)\in Z_1((f_1,f_2),\cP)$ we have 
 	$$\sum_{P\in \cP} \left(\sum_{i=1}^2 d_i \res_{P}(f_idx)\right) {P}=-2d{P_1}+2d{P_2}=2d{(P_2-P_1)}$$
 	and so
 	$$Z_2((f_1,f_2),\cP)=\{(-d,d)\in\mathbb{Z}^2|\ 2d(P_2-P_1) \text{ is a principal divisor of } F/k\}.$$
 	
 	Recall (see e.g., \cite[Prop. 6.1.6]{Stichtenoth:AlgebraicFunctionFieldsAndCodes}) that the choice of a point of $C(k)$ defines a bijection between the degree zero divisor class group $\operatorname{Cl}^0(F/k)$ and $C(k)$. As usual, we choose the point to be $(0:1:0)\in C(k)$. Thus the point $(0:1:0)$ corresponding to $P_1$ is the neutral element of the group $C(k)$. Then $2d{(P_2-P_1)}$ is principal, i.e., equal to zero in $\operatorname{Cl}^0(F/k)$, if and only if $2d(0:1:1)=0$ in $C(k)$. 
 	Therefore $Z_2((f_1,f_2),\cP)$ is trivial if and only if the point $(0,1)$ is not a torsion point of the elliptic curve $v^2=u^3-4\alpha u+1$.
 	In this case $f_1,f_2$ are logarithmically independent over $F$ by Lemma~\ref{lemma:logarithmicderivative}. It is not easy to exactly determine the values of $\alpha$ for which $(0,1)$ is a torsion point, but this question will be further discussed in Example \ref{ex: need Jac-open}.
 	
 	If $(0,1)$ is a torsion point, then
 	a basis of $Z_2((f_1,f_2),\cP)$ is given by $e_1=(-e,e)$, where $2e$ is the order of $(0,1)$ in $C(k)$. One can then write $2e{(P_2-P_1)}=(h_1)$ with $h_1\in F^\times$. In this case, $f_1,f_2$ are logarithmically independent if and only if 
 	$$\tfrac{\de(h_1)}{h_1}-(-e)f_1-ef_2=\tfrac{\de(h_1)}{h_1}+2e\eta$$ is nonzero.
 	The latter questions will also be further discussed in Example \ref{ex: need Jac-open}.
 \end{ex}

\subsection{Exponential bounds for linear differential operators} \label{subsec: exponential bound}
 
\label{subsec: Exponential bounds for linear differential operators} 
  
 
 
A solution $h$ of a homogeneous scalar linear differential equation over $k(x)$ is an \emph{exponential solution} if its logarithmic derivative $\de(h)/h$ lies in $k(x)$, i.e., $h$ satisfies $\de(h)=ah$, with $a\in k(x)$. The goal of this section is to describe a fairly explicit bound on the degrees of the logarithmic derivatives of the exponential solutions. 

%
%


 \medskip 
 
 We will work with the (non-commutative) ring $k(x)[\de]$ of linear differential operators with coefficients in $k(x)$ (as defined in \cite[Section 2.1]{SingerPut:differential}).
 Thus a (scalar) linear differential equation
 $$\de^n(y)+a_{n-1}\de^{n-1}(y)+\ldots+a_0y=0$$
 with $a_0,\ldots,a_n\in k(x)$, can be written more succinctly as $Ly=0$, where
 $$L=\de^n+a_{n-1}\de^{n-1}+\ldots+a_0\in k(x)[\de]$$
 is the corresponding linear differential operator.
 
 A nonzero solution $h$ of $Ly=0$ (in some Picard-Vessiot extension of $k(x)$) is called an \emph{exponential solution} of $Ly=0$ if $\de(h)/h\in k (x)$. The reader is referred to Section 4.1 of \cite{SingerPut:differential} for more background on exponential solutions and their computation.

 Recall that the \emph{degree} of a nonzero rational function $a=\frac{a_1}{a_2}\in k(x)$ with $a_1,a_2,\in k[x]$ relatively prime, is defined as $\deg(a)=\max\{\deg(a_1),\deg(a_2)\}$. By convention, $\deg(0)=-\infty$.

 \begin{defi}
 	\label{defi:exponentialbounds}
 	An integer $N$ is an \emph{exponential bound} for $L y=0$, if
 	$\deg(\de(h)/h)\leq N$ for any exponential solution $h$ of $L y=0$ (belonging to some Picard-Vessiot extension of $k(x)$).
 \end{defi}
 The main point of this section is to describe an exponential bound $N(L)$ for $Ly=0$ that is well-behaved under specialization. This bound is also described in the second paragraph on page 109 of \cite{SingerPut:differential}.

 The elements $u\in k(x)$ of the form $u=\delta(h)/h$ for some exponential solution $h$ of $L y=0$ are exactly the solutions of the Riccati equation associated with $L y=0$ (\cite[Def. 4.6]{SingerPut:differential}).
 Another description of these $u$'s is, as the set of elements of $k(x)$ such that $\de-u\in k(x)[\de]$ is a right-hand factor of $L$.
 
 To describe the bound $N(L)$ we need to introduce some notation from \cite{vanHoeij:Formalsolutionsandfactorization} and \cite{vanHoeij:Factorizationofdifferentialoperatos}.  Set $\bar{\de}=x\de$. Then $k(x)[\de]=k(x)[\bar{\de}]$, i.e., every operator in $\de$ can be rewritten as an operator in $\bar{\de}$. Set
 \[
 {\bE=\bigcup_{r\in \bZ_{>0}} k[x^{-1/r}]}
 \]
 and let $e\in \bE$. 
 The \emph{ramification index} $\ram(e)$ of $e$ is the minimal $r$ such that $e\in k[x^{-1/r}]$. 
 For $L\in k(x)[\bar{\de}]$, or more generally, for $L\in  k((x))[\bar{\de}]$ we set
 $S_e(L)=L(\bar{\de}+e)\in k((x^{-1/r}))[\bar{\de}]$ and
  write
 $$
 S_e(L)=\sum_{i\geq \ell } x^{i/r} N_{i,e,L}(\bar{\de}),
 $$
 where $N_{i,e,L}\in k[T]$ (with $T$ a new variable) and $N_{\ell,e,L}\neq 0$. Following \cite[Section~3.4]{vanHoeij:Formalsolutionsandfactorization} $N_{\ell,e,L}$ is called the \emph{Newton polynomial} of $S_e(L)$ at $0$ for slope $0$. Let $m_{e,0}(L)$ be the multiplicity of $0$ in $N_{\ell,e,L}(T)$.  Following \cite[Def. 3.1]{vanHoeij:Factorizationofdifferentialoperatos}, we define the generalized exponents of $L$:
 
 \begin{defi}
 	\label{defi:Newton Polynomial at 0}
 	An element $e\in\bE$ is a \emph{generalized exponent} of $L$ at $0$ if   $m_{e,0}(L)>0$.
 \end{defi}
 The number $m_{e,0}(L)$ is called the \emph{multiplicity} of $e$ in $L$.
 By (3.3) on page 542 of \cite{vanHoeij:Factorizationofdifferentialoperatos}, we have
 \begin{equation}
 	\label{eq:sum of multiplicities}
 	\sum_{e\in\bE} m_{e,0}(L)=\ord(L) (=n).
 \end{equation}
 In particular, $L$ only has finitely many generalized exponents at $0$.
 
 \begin{ex} \label{ex: generalized exponents for Bessel}
 	Let us determine the generalized exponents for the linear differential operator $L=\de^2+\frac{1}{x}\de+1-(\frac{\alpha}{x})^2\in k(x)[\de]$ corresponding to Bessel's differential equation, where $\alpha$ is an arbitrary element of $k$. Rewriting $L$ in terms of $\bar{\de}$, we have
 	$L=\frac{1}{x^2}\bar{\de}^2+1-(\frac{\alpha}{x})^2$. So, after normalizing (which does not effect the generalized exponents), the relevant operator is $L'=\bar{\de}^2+x^2-\alpha^2$. For $e\in \bE$ we have
 	$$L'(\bar{\de}-e)=(\bar{\de}-e)^2+x^2-\alpha^2=\bar{\de}^2-2e\bar{\de}-x\de(e)+e^2+x^2-\alpha^2.$$ Thus, choosing $e\in \{\alpha,-\alpha\}$, we have $L'(\bar{\de}-e)=\bar{\de}^2-2e\bar{\de}+x^2$, which has Newton polynomial $T^2-2eT$. By (\ref{eq:sum of multiplicities}), the generalized exponents of $L$ are $\alpha$ and $-\alpha$. In case $\alpha=0$, the generalized exponent $e=0$ has multiplicity two.
 \end{ex}

 \begin{rem}
 	\label{rem:exponentialpart}
 	Due to Lemma 3.4 of \cite{vanHoeij:Factorizationofdifferentialoperatos}, if $\bar{\de}-u\in k((x))[\bar{\de}]$ is a right-hand factor of $L\in k(x)[\bar{\de}]\subseteq k((x))[\bar{\de}]$, then $u=e+u'$, where $e\in k[x^{-1}]$ is a generalized exponent of $L$ at $0$ with $\ram(e)=1$ and $u'\in xk[[x]]$. However, this can also be seen directly: If $L=L_1\cdot(\bar{\de}-u)$ is a factorization in $k((x))[\bar{\de}]$, and $u$ is written as $u=e+u'$ with $e\in k[x^{-1}]$ and $u'\in xk[[x]]$, then $S_e(L)=S_e(L_1)\cdot(\bar{\de}-u')$, which has a Newton polynomial vanishing at zero, because for every power of $x$ occurring in $S_e(L_1)u'$, there is a smaller power of $x$ occurring in $S_e(L_1)\bar{\de}$.
 \end{rem}

 In order to define the generalized exponents at a point $\lambda\in\mathbb{P}^1(k)=k\cup\{\infty\}$ different from $0$, we introduce (following \cite[Section 3.4]{vanHoeij:Factorizationofdifferentialoperatos}) the $k$-algebra automorphism
 $$l_\lambda: k(x)[\de] \rightarrow k(x)[\de]$$ as follows: if $\lambda\in k$, then $l_\lambda(x)=x+\lambda$ and $l_\lambda(\de)=\de$, otherwise $l_\infty(x)=1/x$ and $l_\infty(\de)=-x^2\de$. For $e\in\bE$ set $m_{e,\lambda}(L)=m_{e,0}(l_\lambda(L))$.
 \begin{defi}
 	\label{defi:exponential part at p}
 	If $m_{e,\lambda}(L)>0$, then $e$ is called a \emph{generalized exponent of $L$ at $\lambda$} and $m_{e,\lambda}(L)$ is called the multiplicity of $e$ in $L$ at $\lambda$.
 \end{defi}
 Since we are only interested in the factors of $L\in k(x)[\de]$ that are of the form $\de-u$ with $u\in k(x)$, we only need to consider the generalized exponents $e$ of $L$ with $\ram(e)=1$.
 However, generalized exponents with ramification index larger than $1$ are necessary for (\ref{eq:sum of multiplicities}) to hold and this formula is crucial for showing that our exponential bound is preserved under many specializations (Proposition \ref{prop: exponential bound}). We will next explain how Remark~\ref{rem:exponentialpart} can be used to obtain the possible principal parts of an exponential solution of $L$.

 Before proceeding, let us recall some terminology for rational function. The \emph{principal part} of an element $u\in k(x)$ at $\lambda\in k$ is $\sum_{i=\ell}^{-1}u_i(x-\lambda)^i$, when $u=\sum_{i=\ell}^\infty u_i(x-\lambda_i)^i$ is written as an element of $k((x-\lambda))$. The \emph{residue} $\res_\lambda(u)$ of $u$ at $\lambda$ is $\res_\lambda(u)=u_{-1}$.
 The \emph{order} $\ord_\lambda(u)$ of $u$ at $\lambda$ is $\ord_\lambda(u)=-\ell$, where $u_\ell\neq 0$. By convention, $\ord_\lambda(0)=0$.
 
 The residue at infinity is defined as $\res_\infty(u)=\res_0(-\frac{1}{x^2}u(\frac{1}{x}))$, while the order at infinity is $\ord_\infty(u)=\ord_0(u(\frac{1}{x}))$. These definitions are such that the sum of all residues of $u$ is zero (see e.g. \cite[Cor. 4.3.3]{Stichtenoth:AlgebraicFunctionFieldsAndCodes}).
 
 The principal part of $u$ at $\infty$ is $\sum_{i=\ell}^1u_ix^{-i}$, when $u=\sum_{i=\ell}^\infty u_ix^{-i}$ is written as an element of $k((x^{-1}))$. This definition is such that $\res_\infty(u)$ can be determined (as $-u_1$) from the principal part at infinity.
 
 Assume that $e\in\bE$ is a generalized exponent of $L$ at $\lambda$ with $\ram(e)=1$. We set
 \[
 \mpp(e,\lambda)=
 \begin{cases}
 	\frac{l_{-\lambda}(e)}{x-\lambda} & \text{ if } \lambda\in k\\
 	-\frac{l_\infty(e)}{x} & \text{ if } \lambda=\infty. 
 \end{cases}
 \]

 \begin{lemma} \label{lemma: get pp of u}
 	Let $u\in k(x)$ be such that $\de-u$ is a right-hand factor of $L$. Then, for every $\lambda\in \mathbb{P}^1(k)$, the principal part of $u$ at $\lambda$ is of the form $\mpp(e,\lambda)$ for some generalized exponent $e$ of $L$ at $\lambda$ with $\ram(e)=1$.
 \end{lemma}
 \begin{proof}
 	We write $L=L_1\cdot(\de-u)$, with $L_1\in k(x)[\de]$. First assume that $\lambda\in k$. Applying $l_\lambda$ yields
 	$l_\lambda(L)=l_\lambda(L_1)\cdot(\de-\l_\lambda(u))=L_2\cdot(\bar{\de}-xl_\lambda(u))$
 	for some $L_2\in k(x)[\bar{\de}]$.
 	By Remark~\ref{rem:exponentialpart}, $xl_\lambda(u)=e+u'$, where $e\in\bE$ is a generalized exponent of $L$ at $\lambda$ with $\ram(e)=1$ and $u'\in xk[[x]]$. 
 	Applying $l_{-\lambda}$ to $xl_\lambda(u)=e+u'$, we find $u=\frac{l_{-\lambda}(e)}{x-\lambda}+\frac{l_{-\lambda}(u')}{x-\lambda}$.
 	Thus $\frac{l_{-\lambda}(e)}{x-\lambda}$ is the principal part of $u$ at $\lambda$.
 	
 	Now let us consider the case $\lambda=\infty$.
 	We have $$l_\infty(L)=l_\infty(L_1)\cdot(-x^2\de-l_\infty(u))=l_\infty(L_1)\cdot(-x\bar{\de}-\l_\infty(u))=L_2\cdot\big(\bar{\de}-\tfrac{-l_\infty(u)}{x}\big).$$
 	Therefore, as above, by Remark \ref{rem:exponentialpart}, we have $\frac{-l_\infty(u)}{x}=e+u'$, where $e$ is a generalized exponent of $L$ at $\infty$ with $\ram(e)=1$ and $u'\in xk[[x]]$. Applying $(l_\infty)^{-1}=l_\infty$, we obtain $u=-\frac{l_\infty(e)+l_\infty(u')}{x}$. This implies the claim.
 \end{proof}
 For $\lambda\in\mathbb{P}^1(k)=k\cup\{\infty\}$ we set
 $$
 \pp(L,\lambda)=\left\{\mpp(e,\lambda) \mid \mbox{$e\in \bE$ is a generalized exponent of $L$ at $\lambda$ with $\ram(e)=1$}\right\}.
 $$
 With $\sing(L)\subseteq k$ we denote the set of all singular points of $L$, i.e., the elements of $k$ that are poles of one of the coefficients $a_i$ of $L$.
 Write $\sing(L)\cup \{\infty\}=\{\lambda_1,\dots,\lambda_m\}$ and define
 \[
 \begin{array}{cccc}
 	\Phi_L: \pp(L,\lambda_1)\times \dots \times\pp(L,\lambda_m) &\longrightarrow & k \\
 	(f_1,\dots,f_m) &\longmapsto & -\sum_{i=1}^m\res_{\lambda_i}(f_i)
 \end{array}
 \]
 and
 $$
 N(L)=\max \{\{0\}\cup (\im(\Phi_L)\cap \bZ)\}+\sum_{\lambda\in \sing(L)\cup\{\infty\}} \max_{f\in \pp(L,\lambda)} \ord_\lambda(f).
 $$

 \begin{lemma} \label{lemma: exponential bound}
 	The number $N(L)$ is an exponential bound for $Ly=0$.
 \end{lemma}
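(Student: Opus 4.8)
The plan is to show that if $h$ is any exponential solution of $Ly=0$, then writing $u=\de(h)/h\in k'(x)$, we have $\deg(u)\leq N(L)$. The degree of a nonzero rational function $u$ can be read off from its poles (with multiplicity) and from its behaviour at infinity. Concretely, if $u$ has finite poles $p_1,\dots,p_t\in k'$ and $\ord_\infty(u)=-s$, then $\deg(u)$ equals $\sum_{i}\ord_{p_i}(u)$ if $s\leq 0$ (i.e.\ $u$ vanishes at infinity or is finite there), and in general one has the identity $\deg(u)=\sum_{p\in k'}\ord_p(u)-\ord_\infty(u)$ when $\ord_\infty(u)\le 0$, while if $\ord_\infty(u)>0$ (a pole at infinity) one must be slightly more careful. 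The cleanest bookkeeping is: $\deg(u)=\deg(\text{numerator})$ and the numerator degree is controlled by the pole orders of $u$ at all finite points together with the residue sum condition at infinity. So the first step is to set up this elementary degree formula precisely.

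Next I would invoke Lemma \ref{lemma: get pp of u}: since $h$ is an exponential solution, $\de-u$ is a right-hand factor of $L$, so for every $p\in\mathbb{P}^1(k')$ the principal part of $u$ at $p$ is $\mpp(e_p,p)$ for some generalized exponent $e_p$ of $L$ at $p$ with $\ram(e_p)=1$. In particular $u$ can only have poles at points of $\sing(L)\cup\{\infty\}$ — at a regular point the only generalized exponents are non-negative integers, whose associated principal parts $\mpp(e,p)$ are zero, so $u$ is regular there. For $p\in\sing(L)\cup\{\infty\}$, the order of the pole of $u$ at $p$ is $\ord_p(\mpp(e_p,p))\leq \max_{h\in\pp(L,p)}\ord_p(h)$. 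Summing over $p\in\sing(L)\cup\{\infty\}$ accounts for the first summand in $N(L)$.

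Then I would handle the contribution from infinity separately, since $\deg(u)$ is not merely the sum of finite pole orders: one also needs the coefficient of $x^{-1}$ in the expansion of $u$ at infinity, i.e.\ essentially $\res_\infty(u)$. The point is that $\res_\infty(u)=-\sum_{p\in k'}\res_p(u)$ (sum of residues is zero), and each finite residue $\res_p(u)=\res_p(\mpp(e_p,p))$ for a generalized exponent $e_p$ of $L$ at $p$ (taking $e_p$ to be a non-negative integer, hence with zero principal part, at regular points $p$, so those contribute $0$). Hence $-\res_\infty(u)=\sum_{i=1}^l \res_{p_i}(f_i)$ for suitable $f_i\in\pp(L,p_i)$, which is exactly $-\Phi_L(f_1,\dots,f_l)$; and since $u\in k'(x)$, this residue is an integer, so it lies in $\im(\Phi_L)\cap\mathbb{Z}$. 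Tracking how this residue affects the numerator degree contributes at most $\max\{\{0\}\cup(\im(\Phi_L)\cap\mathbb{Z})\}$ to $\deg(u)$. Combining the two contributions gives $\deg(u)\le N(L)$.

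The main obstacle I anticipate is purely bookkeeping: getting the degree formula at infinity exactly right so that the ``$+\max\{\{0\}\cup(\im(\Phi_L)\cap\mathbb Z)\}$'' term (rather than something larger) genuinely suffices. One must be careful about the two regimes — whether $u$ has a pole or a zero at $\infty$ — and about the convention $\ord_p(0)=0$, and one must verify that at a regular point $p$ one can indeed choose the generalized exponent $e_p$ to be a non-negative integer with $\mpp(e_p,p)=0$, so that regular points contribute neither to the order sum nor (via their residues) to $\Phi_L$. I would also double-check that $\mpp(e,p)$ has a pole of order exactly $\ord_p(e)$-related quantity matching $\max_{h\in\pp(L,p)}\ord_p(h)$, using the explicit formulas $\mpp(e,p)=\frac{l_{-p}(e)}{x-p}$ and $\mpp(e,\infty)=-\frac{l_\infty(e)}{x}$. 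Everything else is a direct assembly of Lemma \ref{lemma: get pp of u}, the residue theorem, and elementary facts about degrees of rational functions.
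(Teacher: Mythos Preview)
Your proposal has a genuine gap at the central claim that $u=\de(h)/h$ can only have poles at points of $\sing(L)\cup\{\infty\}$. You assert that ``at a regular point the only generalized exponents are non-negative integers, whose associated principal parts $\mpp(e,p)$ are zero.'' The first half is correct, but the second half is not: for a constant exponent $e\in\{0,1,\dots,\mu-1\}$ one has $l_{-p}(e)=e$ and hence $\mpp(e,p)=\frac{e}{x-p}$, which is nonzero whenever $e>0$. In fact Lemma~\ref{lemma: get pp of u} tells you exactly that at a regular point the principal part of $u$ is $\frac{m}{x-p}$ for some $m\in\{0,1,\dots,\mu-1\}$, and this is precisely what happens when the exponential solution $h$ has a zero of order $m$ at $p$. (A concrete example: for the Hermite operator $L=\de^2-2x\de+2$ the polynomial $h=x$ is an exponential solution, and $u=1/x$ has a pole at the ordinary point $0$.) So $u$ can have simple poles at \emph{arbitrarily many} regular points, and these contribute to $\deg(u)$ in a way your scheme does not see.

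This is exactly what the paper's proof handles by invoking the decomposition $u=\frac{\de(P)}{P}+Q+\frac{R}{S}$ from \cite[(4.3)]{SingerPut:differential}, where the zeros of $P$ are the regular-point poles of $u$ and the zeros of $S$ lie in $\sing(L)$. The sum $\sum_{p\in\sing(L)\cup\{\infty\}}\max_{h\in\pp(L,p)}\ord_p(h)$ bounds $\deg(Q+\tfrac{R}{S})$, not $\deg(u)$. The role of $\Phi_L$ is then to bound $\deg(P)$: since $\sum_{p\in B}\res_p(u)=\deg(P)$ (where $B$ is the set of regular-point poles) and the total residue sum vanishes, one gets $\deg(P)=-\sum_{p\in\sing(L)\cup\{\infty\}}\res_p(u)\in\im(\Phi_L)\cap\mathbb{Z}$. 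Your reading of $\Phi_L$ as controlling ``the behaviour at infinity'' misidentifies what this term is for; it bounds the total multiplicity of the regular-point zeros of $h$, and without isolating the $\frac{\de(P)}{P}$ piece you cannot connect $\im(\Phi_L)$ to $\deg(u)$.
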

 \begin{proof}
 	Let $h$ be an exponential solution of $Ly=0$ and set $u=\de(h)/h\in k(x)$. We have to show that $\deg(u)\leq N(L)$. As $u$ is a solution of the Riccati equation associated with $Ly=0$, it follows from (4.3) on page 107 of \cite{SingerPut:differential}, that $u$ is of the form
 	\begin{equation}
 		\label{eq:formoffactor}
 		u=\frac{\delta(p_1)}{p_1}+q_1+\frac{p_2}{q_2},
 	\end{equation}
 	where $p_i,q_i\in k[x]$, $\deg(p_2)<\deg(q_2)$, the zeros of $q_2$ are in $\sing(L)$ and the zeros of $p_1$ are not in $\sing(L)$.
 	
 	Note that the partial fraction decomposition of $\frac{\delta(p_1)}{p_1}$ is of the form $\sum_{i=1}^d\frac{n_i}{x-\lambda'_i}$, with $n_i\in\mathbb{N}$, $n_1+\ldots+n_d=\deg(p_1)$ and $\lambda'_i\in k\smallsetminus\sing(L)$. The partial fraction decomposition of $\frac{p_2}{q_2}$ is a sum of terms of the form $\sum_{i=1}^d\frac{\lambda'_i}{(x-\lambda)^i}$ with $\lambda'_i\in k$ and $\lambda\in\sing(L)$. The polynomial $q_1$ can be thought of as the contribution at $\infty$.
 	
 	By Lemma \ref{lemma: get pp of u} we have
 	
 	$$\ord_\lambda(u)\leq \max_{f\in \pp(L,\lambda)}\{\ord_\lambda(f)\}$$ for any $\lambda\in\mathbb{P}(k)$.
 	As $\ord_\lambda(q_1+\frac{p_2}{q_2})\leq\ord_\lambda(u)$ for $\lambda\in \sing(L)\cup\{\infty\}$, this yields
 	$\ord_\lambda(q_1+\frac{p_2}{q_2})\leq \max_{f\in \pp(L,\lambda)}\{\ord_\lambda(f)\}$ for $\lambda\in \sing(L)\cup\{\infty\}$. Thus
 	\begin{equation} \label{eq: bound for QRS}
 		\deg(q_1+\tfrac{p_2}{q_2})\leq\sum_{\lambda\in \sing(L)\cup\{\infty\}}\ord_\lambda(q_1+\tfrac{p_2}{q_2})\leq \sum_{\lambda\in \sing(L)\cup\{\infty\}} \max_{f\in \pp(L,\lambda)}\ord_\lambda(f).
 	\end{equation}	
 	To also bound $\deg(p_1)$, let $B\subseteq k$ be the set of poles of $u$ that are not in $\sing(L)$. Then $B$ is the set of zeros of $p_1$. Moreover, $q_1$ and $\frac{p_2}{q_2}$ have residue zero at any point of $B$.
 	Therefore
 	$$\sum_{\lambda\in B}\res_\lambda(u)=\sum_{\lambda\in B}\res_\lambda(\tfrac{\de(p_1)}{p_1})=\deg(p_1).$$
 	As $\sing(L)\cup B\cup\{\infty\}$ contains all poles of $u$, 
 	we have
 	$$
 	0=\sum_{\lambda\in \sing(L)\cup B\cup\{\infty\}} \res_\lambda(u)=\deg(p_1)+\sum_{\lambda\in \sing(L)\cup \{\infty\}}\res_\lambda(u).
 	$$

 	On the other hand, $\res_\lambda(u)=\res_\lambda(f)$ for some $f\in \pp(L,\lambda)$.
 	Therefore $\deg(p_1)\in \im(\Phi_L)\cap\mathbb{Z}$. So $\deg(p_1)\leq \max\{\{0\}\cup(\im(\Phi_L)\cap\mathbb{Z})\}$.
 	Combing this with (\ref{eq:formoffactor}) and (\ref{eq: bound for QRS}) yields the claim of the lemma.
 \end{proof}

 \begin{ex} \label{ex: N(L) for Bessel}
 	Let us determine the exponential bound $N(L)$
 	from Lemma \ref{lemma: exponential bound} for the linear differential operator $L=\de^2+\frac{1}{x}\de+1-(\frac{\alpha}{x})^2\in k(x)[\de]$ corresponding to Bessel's differential equation. Note that $\sing(L)=\{0\}$.
 	We already determined the generalized exponents of $L$ at $0$ in Example \ref{ex: generalized exponents for Bessel}. They are $\alpha$ and $-\alpha$. Thus $\pp(L,0)=\{\frac{\alpha}{x},\frac{-\alpha}{x}\}$.
 	
 	To compute $N(L)$, we also need to determine the generalized exponents at $\infty$. We have $l_\infty(L)=(-x^2\de)^2-x^3\de+1-(\alpha x)^2=x^2\bar{\de}^2+1-(\alpha x)^2$. So, after normalizing, the relevant operator is $L'=\bar{\de}^2+\frac{1}{x^2}-\alpha^2$. For $e\in \bE$, we have $L'(\bar{\de}-e)=\bar{\de}^2-2e\bar{\de}-x\de(e)+e^2+\frac{1}{x^2}-\alpha^2$. Let $i$ and $-i$ denote the roots of the polynomial $T^2+1$ in $k$. For $e=\frac{i}{x}-\frac{1}{2}$ we find
 	$L'(\bar{\de}-e)=\bar{\de}^2-(\frac{2i}{x}-1)\bar{\de}+\frac{1}{4}-\alpha^2$, which has Newton polynomial $2iT$. Similarly, for $e=\frac{-i}{x}-\frac{1}{2}$, one finds the Newton polynomial $-2iT$.
 	Thus, by (\ref{eq:sum of multiplicities}), the generalized exponents of $L$ at $p=\infty$ are $\frac{i}{x}-\frac{1}{2}$ and $\frac{-i}{x}-\frac{1}{2}$. Therefore $\pp(L,\infty)=\{i+\frac{1}{2x},\ -i+\frac{1}{2x}\}$. The image of $\Phi_L\colon \pp(L,0)\times\pp(L,\infty)\to k$ is $\{-\alpha-\frac{1}{2},\ \alpha-\frac{1}{2}\}$.
 	
 	Let us first assume that $\alpha-\frac{1}{2}$ is not an integer. Then $\im(\Phi_L)\cap\mathbb{Z}=\emptyset$ and it follows that $N(L)=1$. More precisely, if $u\in k(x)$ is a solution of the Riccati equation associated with $Ly=0$ and $u=\frac{\de(p_1)}{p_1}+q_1+\frac{p_2}{q_2}$ is written as in (\ref{eq:formoffactor}), then the proof of Lemma \ref{lemma: exponential bound} shows that $\deg(p_1)=0$, i.e., $\frac{\de(P_1)}{P_1}=0$. Moreover, the only possibilities for $q_1+\frac{p_2}{q_2}$ are $i+\frac{\alpha}{x},\ -i+\frac{\alpha}{x},\ i-\frac{\alpha}{x},\ -i-\frac{\alpha}{x}$. These are not solutions of the Riccati equation
 	$$ \de(u)+u^2+\tfrac{1}{x}u+1-(\tfrac{\alpha}{x})^2=0$$
 	associated with $Ly=0$. Thus, there are no exponential solutions in this case.
  	
 	Now assume that $\alpha-\frac{1}{2}\in\mathbb{Z}$. Then $\max\{\{0\}\cup (\im(\Phi_L)\cap \mathbb{Z})\}=|\alpha|-\frac{1}{2}$. Therefore $N(L)=|\alpha|+\frac{1}{2}$. 
 	In this case there are indeed exponential solutions. The interested reader can find their explicit form in the Appendix of \cite{Kolchin:AlgebraicGroupsAndAlgebraicDependence}.
 \end{ex}

 \subsection{Linear relations}
 \label{subsec: Linear relations}
 Let $F$ be a differential field and $R/F$ a Picard-Vessiot ring. For a vector $v\in R^\ell$ satisfying $\de(v)=A'v$ for some $A'\in F^{\ell\times\ell}$ we set
 $$
 \linrel(v,F)=\{p\in F[y_1,\dots,y_\ell] \mid \mbox{$p$ is linear homogeneous and $p(v)=0$}\}.
 $$
 Then $ \linrel(v,F)$ is an $F$-vector space of dimension at most $\ell$; the vector space of linear relations among the entries of $v$.	
 The main goal of this section is to describe, in the case $F=k(x)$, with the help of Section \ref{subsec: exponential bound}, a bound $N$ such that $\linrel(v,F)$ has a basis consisting of elements of the form $a_1y_1+\ldots+a_\ell y_\ell$, with $a_1,\ldots,a_\ell\in k(x)$ and $\deg(a_i)\leq N$ for $i=1,\ldots,\ell$.
 
 By the cyclic vector lemma, any linear differential system $\de(y)=Ay$, with $A\in k(x)^{n\times n}$ is equivalent to a scalar linear differential equation $Ly=0$ with $L=\de^n+a_{n-1}\de^{n-1}+\ldots+a_0\in k(x)[\de]$, i.e., there
 exists a matrix $T\in\Gl_n(k(x))$ such that $T^{-1}AT+\delta(T^{-1})T$ is of the form
 \[
 \begin{pmatrix}
 	0& 1& 0 &  &  \\
 	& \ddots & \ddots & \ddots &  \\
 	&  & \ 0 & 1 & 0 \\
 	& &  & 0 & 1\\
 	-a_0 & -a_1 & \dots & \dots & -a_{n-1}
 \end{pmatrix}\in k(x)^{n\times n}.
 \]
 We call such a $T$ a \emph{transformation matrix} from $\de(y)=Ay$ to $Ly=0$. Furthermore, $\deg(T)$ is defined as the maximum of all degrees of entries of $T$.

 To a linear differential system $\de(y)=Ay$, with $A\in k(x)^{n\times n}$, one can associate, for every $i=1,\ldots,n$, a new linear differential system $\de(y)=(\bigwedge^i A)y$ with $\bigwedge^i A\in k(x)^{{n\choose i}\times {n \choose i}}$. If one works with differential modules (as in \cite[Section 2.2]{SingerPut:differential}) and $M$ is the differential module associated with $\de(y)=Ay$, then $\bigwedge^iM$ is the differential module associated with $\de(y)=(\bigwedge^iA)y$.

 \begin{lemma} \label{lemma: linear relations}
 	Let $v\in R^\ell$ be a solution of $\de(y)=A'y$, where $A'\in k(x)^{\ell\times \ell}$ and $R/k(x)$ is a Picard-Vessiot ring (not necessarily for $\de(y)=A'y$). Let $d$ be the dimension of the $k(x)$-vector space generated by the entries of $v$ and let $T\in k(x)^{m\times m}$ $(m={ \ell \choose d})$ be a transformation matrix from $\de(y)=(\bigwedge^dA')y$ to $Ly=0$, where $L\in k(x)[\de]$. Furthermore, let $N$ be an exponential bound for $L$.
 	
 	Then $\linrel(v,k(x))$ has a $k(x)$-basis consisting of elements of the form $a_1y_1+\ldots+a_\ell y_\ell$, with $a_1,\ldots,a_\ell\in k(x)$ and
 	$\deg(a_i)\leq 2m\deg(T)+m(m-1) N$ for $i=1,\ldots,\ell$.
 \end{lemma}
 \begin{proof}
 	
 	Write $v=(v_1,\ldots,v_\ell)^t\in R^\ell$.
 	Without loss of generality, we may assume that $v_1,\dots,v_d$ are linearly independent over $k(x)$. For $i=d+1,\dots,\ell$, write
 	\begin{equation} \label{eq: vj}
 		v_i=\sum_{j=1}^d a_{i,j} v_j, \,\,a_{i,j}\in k(x).
 	\end{equation}
 	 Then $\big(y_i-\sum_{j=1}^d a_{i,j}y_j\big)_{i=d+1,\dots,\ell}$ is a $k(x)$-basis of $\linrel(v,k(x))$ and our goal is to bound $\deg(a_{i,j})$.
 	Let $V$ be the $k$-subspace of $R^\ell$ generated by $\{g(v)|\ g\in G(k)\}$, where $G$ is the differential Galois group of $R/k(x)$. We claim that $\dim_{k}V=d$. Set
 	$$\operatorname{Rel}(V)=\{(a_1,\ldots,a_\ell)^t\in k(x)^\ell|\ a_1u_1+\ldots+a_\ell u_\ell=0 \ \forall\ u=(u_1,\ldots, u_\ell)^t\in V\}.$$
 	From (\ref{eq: vj}) it follows that $u_i=\sum_{j=1}^d a_{i,j} u_i$ for every $(u_1,\ldots, u_\ell)^t\in V$ and $i=d+1,\ldots,\ell$. On the other hand, $v_1,\ldots,v_d$ are $k(x)$-linearly independent. Thus $\operatorname{Rel}(V)$ is a $k(x)$-vector space of dimension $\ell-d$.
 	
 	We will show (cf. \cite[Prop. 1.5]{BeukersBrownawellHeckman:SiegelNormality}) that also $\dim_{k(x)} \operatorname{Rel}(V)=\ell-\dim_{k}V$. Let $b _1,\ldots,b_r$ be a basis of $V$ and let $B\in R^{\ell\times r}$ be the matrix with columns $b_1,\ldots,b_r$.
 	As $b_1,\ldots,b_r$ are solutions of $\de(y)=A'y$, their $k$-linear independence, implies their $E$\=/linear independence, where $E$ is the field of fractions of $R$ (\cite[Lemma 1.7]{SingerPut:differential}).
 	Thus there exists an $r\times r$-submatrix $C\in\Gl_r(E)$ of $B$. As $g\in G(k)$ acts on $B$ and $C$ via right multiplication with a matrix in $\Gl_r(k)$, we see that $BC^{-1}$ is fixed by $g$. So $BC^{-1}\in k(x)^{\ell\times r}$ by the differential Galois correspondence. We have
 	$$\operatorname{Rel}(V)=\{a\in k(x)^\ell|\ aB=0\}=\{a\in k(x)^\ell|\ aBC^{-1}=0\}.$$
 	Since $BC^{-1}\in k(x)^{\ell\times r}$ has linearly independent columns, we see that $\dim_{k(x)}\operatorname{Rel}(V)=\ell-r=\ell-\dim_k(V)$. Therefore $\dim_{k}V=d$ as claimed.
 	
 	Let $b _1,\ldots,b_d$ be a $k$-basis of $V$.
 	For a vector or a matrix $w$, denote by $w^{(i)}$ the $i$-th row of $w$.
 	For $I=(n_1,\dots,n_d)$ with $1\leq n_1<n_2<\dots<n_d\leq \ell$, set $b_{I}=\det((b_j^{(n_i)})_{1\leq i,j\leq d}) \in R$. A calculation, that is probably best understood by using the exterior power of the associated differential module (\cite[Lemma 2.27]{SingerPut:differential}), shows that
 	\begin{equation} \label{eq: exterior}
 		\delta(b_I)=\sum_{\substack{J=(m_1,\dots,m_d)\\ 1\leq m_1 <m_2<\dots<m_d\leq \ell}} a_{I,J} b_J,
 	\end{equation}
 	where  $a_{I,J}\in k(x)$ and $\bigwedge^d A'=(a_{I,J})\in k(x)^{m\times m}$.
 	That is, the vector $(b_I)_I\in R^{m}$ is a solution of $\delta(y)=(\bigwedge^d A') y$.
 	We extend the definition of the $a_{i,j}$'s from (\ref{eq: vj}) to all values of $i\in\{1,\ldots,\ell\}$ by
 	$$a_{i,j}=\begin{cases}
 		1 \text{ if } i=j,\\
 		0 \text{ otherwise}
 	\end{cases}
 	$$
 	for $i=1,\ldots,d$ and $j=1,\ldots,d$. Then $(\ref{eq: vj})$ is true for all values of $i\in\{1,\ldots,\ell\} $ and so
 	$b_n^{(i)}=\sum_{j=1}^da_{i,j}b_n^{(j)}$ for $i=1,\ldots,\ell$ and $n=1,\ldots,d$, or in matrix form
 	$$\Big(b_j^{(i)}\Big)_{1\leq i\leq \ell \atop 1\leq j\leq d}=\Big(a_{i,j}\Big)_{1\leq i\leq \ell \atop 1\leq j\leq d}\Big(b_j^{(i)}\Big)_{1\leq i\leq d \atop 1\leq j\leq d}.$$
 	For $I=(n_1,\dots,n_d)$ with $1\leq n_1<n_2<\dots<n_d\leq \ell$, we thus have
 	\begin{equation} \label{eq: fo exterior}
 		b_I=c_I b_{(1,2,\dots,d)},
 	\end{equation}
 	with $c_I=\det((a_{n_i,j})_{1\leq i,j\leq d})\in k(x)$. In particular,
 	\begin{equation}
 		\label{eq:coefficients}
 		c_I=\begin{cases}
 			1 & \text{ for } I=(1,2,\dots,d), \\
 			(-1)^{d-j}a_{i,j} & \text{ for } I=(1,\dots,j-1,j+1,\dots,d,i),
 		\end{cases}
 	\end{equation}
 	where $1\leq j \leq d$ and $d<i\leq \ell$.
 	From (\ref{eq: exterior}) and (\ref{eq: fo exterior}) it follows that $b_{(1,2,\dots,d)}\in R$ is exponential over $k(x)$, i.e., $\de(b_{(1,2,\dots,d)})/b_{(1,2,\dots,d)}\in k(x)$.
 	
 	As $T$ transforms $\de(y)=(\bigwedge^d A')y$ into $Ly=0$, the solution $(b_I)_I\in R^m$ of $\de(y)=(\bigwedge^d A')y$ is transformed to a vector $T^{-1}(b_I)_I=(h,\de(h),\ldots,\de^{\ell-1}(h))^t\in R^\ell$ with  $L(h)=0$. Using 
 	(\ref{eq: fo exterior}), we see that $h=b_{(1,2,\dots,d)}c$ with $c\in k(x)$. It follows that $h$ is an exponential solution of $L(y)=0$. Write $\de(h)=a_1h$ with $a_1\in k(x)$. Then also $\de^i(h)=a_i h$ with $a_i\in k(x)$ for $i=1,\ldots,m-1$. So
 	$$
 	T^{-1}(b_I)_I=h\begin{pmatrix} 1 \\ a_1 \\ \vdots \\ a_{m-1} \end{pmatrix} \quad \text{ or } \quad  (b_I)_I=hT\begin{pmatrix} 1 \\ a_1 \\ \vdots \\ a_{m-1} \end{pmatrix} .
 	$$
 	The last equation implies $b_{I}=hT^{(I)}(1,a_1,\ldots,a_{m-1})^t$, where $T^{(I)}$ denotes the $I$-th row of $T$.
 	Combining this with (\ref{eq: fo exterior}) and (\ref{eq:coefficients}) yields
 	\begin{equation} \label{eq: formula for aij}
 		(-1)^{d-j} a_{i,j}=\frac{T^{(I)}\begin{pmatrix}1 \\ \vdots \\ a_{m-1} \end{pmatrix}}{T^{((1,2,\dots,d))}\begin{pmatrix}1 \\ \vdots \\ a_{m-1} \end{pmatrix}},
 	\end{equation}
 	where $I=(1,\dots,j-1,j+1,\dots,d,i)$.
 	
 	We claim that $\deg(a_i)\leq i\deg(a_1)$ for $i=1,\ldots,m-1$. Indeed, writing $a_1=\frac{p_1}{q}$ with $p_1,q\in k[x]$ relatively prime, we will show by induction on $i$, that $a_i=\frac{p_i}{q^i}$ with $p_i\in k[x]$ and $\deg(p_i)\leq i\deg(a_1)$. We have
 	$$a_{i+1}=\delta(a_i)+a_ia_1=\frac{\de(p_i)q^i-ip_iq^{i-1}\de(q)}{q^{2i}}+\frac{p_ip_1}{q^{i+1}}=\frac{\de(p_i)q-ip_i\de(q)+p_ip_1}{q^{i+1}}.$$
 	So $\deg(p_{i+1})=\deg(\de(p_i)q-ip_i\de(q)+p_ip_1)\leq (i+1)\deg(a_1).$

 	Recall that the degree of a rational function satisfies $\deg(ab)\leq \deg(a)+\deg(b)$ and $\deg(a+b)\leq \deg(a)+\deg(b)$ for $a,b\in k(x)$. 
 	From $\deg(a_i)\leq i\deg(a_1)$, we obtain
 	$$\deg(T^{(I)}(1,a_1,\ldots,a_{m-1})^t)\leq\sum_{i=0}^{m-1}(\deg(T)+i\deg(a_1))\leq m\deg(T)+\tfrac{m(m-1)}{2}\deg(a_1).$$
 	Using (\ref{eq: formula for aij}), we find
 	\begin{equation}
 		\label{eq:degreeboundforaij}
 		\deg(a_{i,j})\leq 2m \deg(T)+m(m-1) \deg(a_1).
 	\end{equation}
 	
 	As $a_1=\de(h)/h$, with $h$ an exponential solution of $Ly=0$, this implies the claim of the lemma.
 \end{proof}

\section{Specializations}
\label{sec: Specialization of differential torsors}

 In this section we prove our main specialization result (Theorem \ref{theo: main specialization}). To this end, we first show that various criteria and properties discussed in Section \ref{sec: Some topics in differential Galois theory} are preserved under many specializations.
Throughout Section \ref{sec: Specialization of differential torsors} we make the following assumptions:

\begin{itemize}
	\item $k$ is an algebraically closed field of characteristic zero;
	\item $\B$ is a finitely generated $k$-algebra that is an integral domain;
	\item $\X=\spec(\B)$;
	\item $K$ is the algebraic closure of the field of fractions of $\B$.
\end{itemize}	

\medskip

We usually identify a $c\in \X(k)$ with the corresponding $k$-algebra morphism $c:\B\rightarrow k$. We denote with $b^c$ the image of $b\in\B$ under $c$. For a polynomial or Laurent polynomial $p$ with coefficients in $\B$, we denote with $p^c$ the polynomial obtained from $p$ by applying $c$ to its coefficients. A similar notation applies to fractions of polynomials with coefficients in $\B$ (assuming that the denominator does not specialize to zero under $c$). Moreover, $M^c=\{p^c|\ p\in M\}$ for any set $M$ such that $p^c$ makes sense for any $p\in M$. In a similar spirit, if $v$ is a tuple or matrix such that the specialization $c$ can be applied to the entries of $v$, then $v^c$ is the tuple or matrix obtained from $v$ by applying $c$ to the entries of $v$.

\medskip

We now provide an outline of the proof of the main specialization result (respectively Theorem B from the introduction). Starting from an inclusion of algebraically closed fields $k\subseteq k'$ and a differential equation $\de(y)=Ay$, $A\in k'(x)^{n\times n}$ with differential Galois group $G$ and Picard-Vessiot ring $R/k'(x)$, we first spread out (using Lemma \ref{lemma: spread out PVring}) $R$ and $G$ into nice families $\R$ and $\G$ over a parameter space $\X$. Our goal then is to show that the fibre $\R^c$ over $c$ is Picard-Vessiot for all $c$ in an ad$\times$Jac-open subset of $\X(k)$. This is achieved via two main steps, corresponding to the two main steps of Hrushovski's algorithm. The first main step is to show, using the criterion of Lemma \ref{LM:criterionforprotoPV}, that $\R^c/k(x)$ is proto-Picard-Vessiot for all $c$ in an ad-open subset of $\X(k)$. The key ingredient for this is Theorem \ref{theo:basisunderspecialization}, which, roughly speaking, states that for a fixed degree $d$, a basis of the space of algebraic relations of degree at most $d$ among the entries of a solution matrix for $\de(y)=Ay$ specializes to a basis of the space of algebraic relations of degree at most $d$ among the entries of a solution matrix for $\de(y)=A^cy$ for all $c$ in an ad-open subset of $\X(k)$. Theorem \ref{theo:basisunderspecialization} is proved in Section \ref{subsec: Algebraic relations under specialization}, using Section \ref{subsec: Linear relations} (via a reduction to the study of linear relations) and Section~\ref{subsec: The exponential bound under specialization}, where it is shown that the exponential bound from Section \ref{subsec: Exponential bounds for linear differential operators} is preserved under specialization on an ad-open subset of $\X(k)$.

The second main step of the proof is to find, inside the ad-open subset of $\X(k)$ over which $\R^c/k(x)$ is proto-Picard-Vessiot, an ad$\times$Jac-open subset over which $\R^c/k(x)$ is Picard-Vessiot. The criterion of Lemma \ref{LM:criterion} allows us to decide if a proto-Picard-Vessiot ring is Picard-Vessiot by testing if certain algebraic functions are logarithmically independent. 
To apply this criterion, we thus need to show that logarithmic independence of algebraic functions is preserved on an ad$\times$Jac-open subset of the parameter space (Theorem~\ref{theo: main Appendix B}. This is proved in Section \ref{subsec: Logarithmic independence is preserved}, where it is shown, building on specialization results for divisors from Section \ref{subsec: Divisors under specialization}, that the criterion of Section \ref{subsec: logarithmic independence via residues} to test the logarithmic independence of algebraic functions is well-behaved under specialization.

\subsection{The exponential bound under specialization}

\label{subsec: The exponential bound under specialization}
In this section we show that the exponential bound from Lemma \ref{lemma: exponential bound} is preserved on an ad-open subset of the parameter space.

We first note a remark regarding the possibility of extending $\B$ if necessary that will be used repeatedly in what follows.

\begin{rem} \label{rem: extend B basic}
	Assume we are given a family $\mathcal{F}$ over $\X$ and we would like to show that a certain property holds of the fibre $\mathcal{F}_c$ for all $c$ in a
	\begin{itemize}
	\item Zariski open,
	\item ad-open,
	\item Jac-open or
	\item ad$\times$Jac-open 
	\end{itemize}	
	subset of $\X(k)$. Then we can, without loss of generality, replace $\B$ with 
	a finitely generated $\B$-subalgebra of $K$. 
\end{rem}	
\begin{proof}
In the applications below it will always be clear what the family $\mathcal{F}$ is. Let $\B'$ be a finitely generated $\B$-subalgebra of $K$. We assume here that we can base change the family $\mathcal{F}$ over $\X$ to a family $\mathcal{F}'$ over $\X'=\spec(\B')$ such that $\mathcal{F}'_{c'}=\mathcal{F}_{\f(c')}$ for all $c'\in\X'(k)$, where $\f\colon\X'(k)\to \X(k)$ is the morphism corresponding to the inclusion $\B\subseteq\B'$. By Chevalley's theorem, if $\U'$ is a nonempty Zariski open subset of $\X'(k)$, then $\f(\U')$ contains a nonempty Zariski open subset of $\X(k)$. By Lemma \ref{lemma: lift opens}, a similar statement holds for ad-open, Jac-open, and ad$\times$Jac-open subsets of $\X(k)$.
\end{proof}	


Let $f\in\B[x]$ be a monic polynomial and let $$\L=\de^n+a_{n-1}\de^{n-1}+\ldots+a_0$$ be a linear differential operator with $a_0,\ldots,a_{n-1}\in \B[x]_f\subseteq K(x)$. So $\L\in K(x)[\de]$
and for any $c\in \X(k)$ we have a linear differential operator
$$\L^c=\de^n+a_{n-1}^c\de^{n-1}+\ldots+a_0^c\in k(x)[\de].$$

\begin{ex}
	Let $\B=k[\alpha]$ (with $\alpha$ transcendental over $k$), $f=x$ and $\L=\de^2+\frac{1}{x}\de+1-(\frac{\alpha}{x})^2\in \B[x]_f[\de]$.
	From Example \ref{ex: N(L) for Bessel} we know that $N(\L)=1$ and $N(\L^c)=1$ for any $c\in\X(k)=k$ with $c-\frac{1}{2}\notin \mathbb{Z}$. On the other hand, $N(\L^c)=|c|+\frac{1}{2}$ in case $c-\frac{1}{2}\in\mathbb{Z}$.
	So $N(\L)$ specializes well only on an ad-open subset of $\X(k)$.
\end{ex}

\begin{prop}
	\label{prop: exponential bound}
	There exists an ad-open subset $\U$ of $\X(k)$ such that $N(\L^c)=N(\L)$ for all $c\in \U$. In particular, $N(\L)$ is an exponential bound for $\L^c$ for all $c\in\U$.
\end{prop}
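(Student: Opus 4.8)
The plan is to trace through every ingredient in the definition of $N(L)$ and show that each one specializes well on an ad-open subset, then intersect finitely many such sets (using Remark~\ref{rem: intersection of ad-opens}, so that a finite intersection of ad-open sets again contains an ad-open set). The quantity $N(\L)$ is built from three kinds of data: the set $\sing(\L)\cup\{\infty\}$ of singular points, the sets $\pp(\L,p)$ of possible principal parts (which in turn come from the generalized exponents of $\L$ at $p$ with ramification index $1$), and the integer $\max\{\{0\}\cup(\img(\Phi_\L)\cap\bZ)\}$ coming from residue sums. First I would handle the singular points: the poles of the coefficients $\lambda_i\in\B[x]_f$ are the zeros of the denominators, and on a Zariski-dense (hence ad-open, since a principal open set $D(b)$ equals $W_\X(\Ga,\langle b\rangle)$) subset of $\X(k)$ the factorization pattern of these denominators over $k[x]$ is stable; thus there is a nonempty Zariski open $\U_1$ with $\sing(\L^c)=\sing(\L)^c$ (same number of points, specialized) for $c\in\U_1$.

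Next comes the analysis of the generalized exponents. The key observation is that a generalized exponent $e$ of $\L$ at $p$ with $\ram(e)=1$ is an element of $k'[x^{-1}]$, i.e. a polynomial in $x^{-1}$ whose coefficients are obtained algebraically from the Newton-polynomial machinery $S_e(L)=L(\bar\de+e)$; since $\ram(e)=1$ these coefficients lie in a finite extension of $k(\B)$, and after enlarging $\B$ (which is harmless by Remark~\ref{rem: extend B}, or rather its Appendix-A analogue — here we may simply pass to a finitely generated algebraic extension) we may assume all the relevant generalized exponents of $\L$ at all $p\in\sing(\L)\cup\{\infty\}$ have their coefficients in $\B[x]_f$. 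The operator $l_p$ is given by an explicit substitution ($x\mapsto x+p$ or $x\mapsto 1/x$, $\de\mapsto -x^2\de$), so $l_p(\L)$ again has coefficients that are rational functions of the coefficients of $\L$ and of $p$. The multiplicity $m_{e,0}(\L)$ is the order of vanishing of a Newton polynomial at $T=0$, and by \eqref{eq:sum of multiplicities} the multiplicities sum to $\ord(\L)=\mu$, which is specialization-independent. Therefore, on a suitable ad-open subset $\U_2$ (ad-open rather than merely Zariski open precisely because distinguishing a generalized exponent like $\alpha$ from $-\alpha$, or detecting when $e_1^c=e_2^c$ for two distinct generalized exponents, requires the nonvanishing of differences that may be $\bZ$-linear combinations — cf. the Bessel example where $\alpha$ and $\alpha-\tfrac12$ interfere), the sets $\pp(\L^c,p^c)$ are exactly the specializations $\pp(\L,p)^c$ and the order $\ord_p(h)$ of each principal part is preserved. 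Hence $\sum_{p}\max_{h\in\pp(\L,p)}\ord_p(h)$ is constant on $\U_1\cap\U_2$.

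Finally I would handle the residue term $\max\{\{0\}\cup(\img(\Phi_\L)\cap\bZ)\}$. The map $\Phi_\L$ sends a tuple $(f_1,\dots,f_l)\in\pp(\L,p_1)\times\cdots\times\pp(\L,p_l)$ to $-\sum_i\res_{p_i}(f_i)$, and since the $f_i=\mpp(e_i,p_i)$ are explicit in the generalized exponents $e_i$, their residues are explicit $\B$-linear (after enlarging $\B$) expressions; the residue sum is thus a finite family of elements $\gamma_{(e_1,\dots,e_l)}\in\B$ (one for each tuple), and we need to control precisely which of them specialize into $\bZ$ and which $\bZ$-value they take, as well as which become equal. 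This is exactly the kind of condition an ad-open set is designed for: taking $\Gamma\leq(\B,+)$ to be the subgroup generated by the finitely many elements $\gamma_{(e_1,\dots,e_l)}-n$ for $n$ ranging over the (finite) set of integers that occur as $\gamma^c$ for some $c$ — more carefully, generated by all pairwise differences $\gamma_{\vec e}-\gamma_{\vec e'}$ together with the $\gamma_{\vec e}$ themselves whenever $\gamma_{\vec e}\in\bZ$ already, and an extra generator witnessing the sign/size — the set $W_\X(\Ga,\Gamma)$ guarantees that $\gamma_{\vec e}^c\in\bZ$ iff $\gamma_{\vec e}\in\bZ$ and then $\gamma_{\vec e}^c=\gamma_{\vec e}$. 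On the intersection $\U=\U_1\cap\U_2\cap W_\X(\Ga,\Gamma)$, which contains an ad-open set by Remark~\ref{rem: intersection of ad-opens}, every summand in the formula for $N$ is preserved, so $N(\L^c)=N(\L)$. The last sentence of the proposition is then immediate from Lemma~\ref{lemma: exponential bound}. The main obstacle is the bookkeeping around generalized exponents with ramification index $1$: one must be sure that enlarging $\B$ to capture their coefficients and residues is legitimate in the Appendix-A setting and that the irreducibility/factorization behavior of the Newton polynomials over $k(\B)[T]$ persists on a Zariski-dense set (Bertini–Noether type arguments), so that multiplicities — and hence the sets $\pp(\L,p)$ — do not jump; the additive interference phenomenon then forces ``ad-open'' rather than ``Zariski open.''
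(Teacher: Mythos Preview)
Your overall plan matches the paper's proof: enlarge $\B$, control the singular set and the generalized exponents on a Zariski open, use the multiplicity sum \eqref{eq:sum of multiplicities} to conclude $\pp(\L^c,p^c)=\pp(\L,p)^c$ and hence $\im(\Phi_{\L^c})=\im(\Phi_\L)^c$, and finally use an ad-open set to freeze $\im(\Phi_\L)\cap\bZ$. Two points deserve correction.

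First, you misplace where the ad-open (as opposed to Zariski open) condition is needed. Everything in your step~2---that the singular points stay distinct, that $\ram(e^c)=\ram(e)$, that $\ord_0(e^c)=\ord_0(e)$, and that $m_{e^c,p^c}(\L^c)=m_{e,p}(\L)$---is a \emph{Zariski}-open condition: one only needs certain leading coefficients (of $f$ viewed in $\B((x))$, and of the Newton polynomials $N_{\ell,e,\L}$) not to vanish. The Bessel example does not illustrate an ad-obstruction at the level of generalized exponents: the exponents $\alpha,-\alpha$ collide only at the single point $\alpha=0$. The ad-obstruction in that example is entirely in the residue step, where $\im(\Phi_\L)=\{\pm\alpha-\tfrac12\}$ meets $\bZ$ exactly when $\alpha\in\tfrac12+\bZ$. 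So your $\U_2$ can be taken Zariski open.

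Second, your description of $\Gamma$ is not a working one. ``Pairwise differences $\gamma_{\vec e}-\gamma_{\vec e'}$ together with the $\gamma_{\vec e}$ that are already integers'' does not force a non-integer $\gamma$ to stay out of $\bZ$: if no $\gamma_{\vec e}$ happens to be an integer, your $\Gamma$ contains no integer at all, and nothing prevents $\gamma^c\in\bZ$. The paper's choice is simply
\[
\Gamma=\langle\,\{1\}\cup\im(\Phi_\L)\,\rangle\ \le\ (\B,+).
\]
If $c$ is injective on this $\Gamma$ and $\gamma^c=n\in\bZ$ for some $\gamma\in\im(\Phi_\L)$, then $(\gamma-n)^c=0$ with $\gamma-n\in\Gamma$, hence $\gamma=n\in\bZ$; conversely integer $\gamma$'s are fixed. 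Thus $\im(\Phi_{\L^c})\cap\bZ=\im(\Phi_\L)^c\cap\bZ=\im(\Phi_\L)\cap\bZ$, and both summands in $N$ are preserved. Taking $\U$ to be an ad-open subset of $\U_1\cap W_\X(\Ga,\Gamma)$ (Remark~\ref{rem: intersection of ad-opens}) finishes the proof.
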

\begin{proof}
	Enlarging $\B$ if necessary (Remark \ref{rem: extend B basic}), we may assume that $\sing(\L)\subseteq \B$ and that all coefficients of all generalized exponents at points in $\sing(\L)\cup\{\infty\}$ belong to $\B$.
	There exists a nonempty Zariski open subset $\U_1$ of $\X(k)$ such that
	for any $c\in \U_1$,
	\begin{enumerate}
		\item $\sing(\L)^c=\sing(\L^c)$ and $\lambda_1^c\neq \lambda_2^c$ for $\lambda_1,\lambda_2\in \sing(\L)$ (one can use resultants for this step);
		\item $\ram(e^c)=\ram(e)$ and $\ord_0(e^c)=\ord_0(e)$ for every generalized exponent $e$ of $\L$ at every point of $\sing(\L)\cup\{\infty\}$;
		\item $m_{e^c,\lambda^c}(\L^c)=m_{e,\lambda}(\L)$ for every $\lambda\in \sing(\L)\cup \{\infty\}$ and every generalized exponent $e$ of $\L$ at $\lambda$.
	\end{enumerate}
	For (iii), note that the units of the ring $\B((x))$ of formal Laurent series over $\B$, are exactly those Laurent series whose lowest nonzero coefficient is a unit in $\B$. So $f$ may not be a unit in $\B((x))$. However, if $b\in\B$ is the lowest nonzero coefficient of $f$, then $f$ is a unit in $\B_{b}((x))$ and therefore $\B[x]_f\subseteq \B_{b}((x))$. It follows that for $\lambda\in\sing(\L)\cup\{\infty\}$ and $e\in \B[x^{-1/r}]$, the operator $S_e(l_\lambda(\L))\in K((x^{1/r}))[\bar{\de}]$ has coefficients in $\B_{b}((x^{1/r}))$. Thus, all that is required to guarantee (iii), is that $c$ does not specialize $b$ or any coefficient of the Newton polynomial of $S_e(l_\lambda(\L))$ for slope $0$ to zero.
	
	Assume that $c\in \U_1$. By (iii), $e^c$ is a generalized exponent of $\L^c$ at $\lambda^c$ with multiplicity $m_{e,\lambda}(\L)$, for every generalized exponent $e$ of $\L$ at $\lambda\in\sing(\L)\cup\{\infty\}$. From formula (\ref{eq:sum of multiplicities}), it follows that every generalized exponent of $\L^c$ at $p^c$ is equal to $e^c$ for some generalized exponent $e$ of $\L$ at $\lambda$. Hence $\pp(\L^c,\lambda^c)=\pp(\L,\lambda)^c$ for every $\lambda\in \sing(\L)\cup\{\infty\}$. Furthermore, $\im(\Phi_{\L^c})=\im(\Phi_{\L})^c$.
	
	In general, $\im(\Phi_\L)^c\cap\mathbb{Z}$ can be larger than $\im(\Phi_\L)\cap\mathbb{Z}$. However, if $\Gamma$ is the subgroup of $\Ga(\B)$ generated by $\{1\}\cup \im(\Phi_\L)$, then $\im(\Phi_\L)^c\cap \bZ=\im(\Phi_\L)\cap \bZ$  for $c\in W_\X(\Ga,\Gamma)$.
	Therefore, for $\U$ an ad-open subset of $\U_1\cap W_\X(\Ga,\Gamma)$ and $c\in \U$, we have
	$\im(\Phi_{\L^c})\cap\bZ=\im(\Phi_\L)\cap\bZ$ and consequently $N(\L^c)=N(\L)$.
\end{proof}

\subsection{Divisors under specialization}
\label{subsec: Divisors under specialization}

Our next goal is to show that logarithmic independence of algebraic functions is preserved under many specializations. To show that the criterion for the logarithmic independence from Lemma \ref{lemma:logarithmicderivative} is preserved under many specializations, we first need to know that principal divisors and residues are preserved under many specializations. 

\medskip

%
%

Let $F/K$ be a function field of one variable, i.e., a finitely generated field extension of transcendence degree one.
Let $C$ be a smooth projective model of the function field $F/K$, i.e., $C$ is a smooth projective (irreducible) curve over $K$ with function field $F$. We fix a closed embedding of $C$ into $\mathbb{P}^n_K=\operatorname{Proj}(K[y_0,\ldots,y_n]).$ So $C$ is defined by a homogeneous prime ideal $I$ of $K[y_0,\ldots,y_n]$. Enlarging $\B$ if necessary, we can assume that $I$ is generated by homogeneous polynomials with coefficients in $\B$, and so $\mathcal{I}=I\cap \B[y_0,\ldots,y_n]$ is a homogeneous prime ideal of $\B[y_0,\ldots,y_n]$ such that $\mathcal{I}\otimes_\B K=I$ in $\B[y_0,\ldots,y_n]\otimes_\B K=K[y_0,\ldots,y_n]$. 

Set $\mathcal{C}=\operatorname{Proj}(\B[y_0,\ldots,y_n]/\mathcal{I})$, i.e., $\mathcal{C}$ is the closed subscheme of $\mathbb{P}^n_\B$ defined by $\mathcal{I}$. Then $\mathcal{C}$ is an integral scheme of finite type over $k$, projective over $\X$. Since $(\B[y_0,\ldots,y_n]/\mathcal{I})\otimes_\B K=K[y_0,\ldots,y_n]/I$, we have $\mathcal{C}\times_\X\spec(K)=C$. Thus the generic fibre of $\mathcal{C}\to \X$ is smooth, geometrically irreducible and of dimension one. These three properties are generic properties. (For geometric irreducibly see \cite[\href{https://stacks.math.columbia.edu/tag/0559}{Tag 0559}]{stacks-project}, for fibre dimension see \cite[\href{https://stacks.math.columbia.edu/tag/05F7}{Tag 05F7}]{stacks-project} and for smoothness see \cite[Prop. 17.7.11]{Grothendieck:EGAIV4}.) Thus there exists a nonempty Zariski open subset $\U$ of $\X$ such that for every $u\in\U$, the fibre $\mathcal{C}_u$ over $u$ is a smooth geometrically irreducible curve (over the residue field at $u$). Hence, replacing $\B$ with a localization of $\B$ if necessary, we may (and henceforth do) assume that $\mathcal{C}_u$ is a smooth geometrically irreducible curve for all $u\in \X$. In line with our notation for specializations we set 
$C^c=\mathcal{C}_c$ for $c\in \X(k)\subseteq \X$. Note that, concretely, $C^c\subseteq \mathbb{P}^n_k$ is the closed subscheme defined by the homogeneous polynomials obtained by applying $c$ to a set of homogeneous generators of $\mathcal{I}$.

Throughout Section \ref{subsec: Divisors under specialization} we assume that $\mathcal{C}$ is as described above. In particular, $\mathcal{C}\subseteq\mathbb{P}_\B^n$ is an integral scheme of finite type over $k$, projective over $\X$ and all fibres $C^c\subseteq\mathbb{P}^n_k$ are smooth, irreducible curves. 

In the sequel, we will usually identify the places of $F/K$ with $C(K)$ (and similarly for $k(C^c)/k$ and $C^c(k)$). The discrete valuation on $F/K$ (or $k(C^c)/k$) corresponding to a point (respectively place) $P$ is denoted $\nu_P$.

For every $c\in\X(k)$ we have a specialization map 
$$\mathcal{C}(\B)\to C^c(k),\ P\mapsto P^c,$$
where $P^c\colon\spec(k)\to\mathcal{C}\times_\X \spec(k)$ is such that the composition of $P^c$ with the projection to $\mathcal{C}$ is the composition of $\spec(k)\xrightarrow{c}\X$ with $P\colon\X\to \mathcal{C}$. 

A rational function $h$ on $\mathcal{C}$ yields a rational function $h^\gen$ on $C$ via the projection $C=\mathcal{C}\times_\X\spec(K)\to \mathcal{C}$. Note that $h^\gen$ is well-defined because  
for a nonempty Zariski open subset $\U$ of $\mathcal{C}$, the inverse image $\U\times_\X\spec(K)$ of $\U$ under the projection $\mathcal{C}\times_\X\spec(K)\to \mathcal{C}$ is nonempty, since the generic point of $\U$ and the point of $\spec(K)$ map to the generic point of $\X$.

Roughly speaking, the following lemma shows that principal divisors are preserved under specialization on a nonempty Zariski open subset. We think this lemma is well-known to the experts, but we were not able to locate a suitable reference.

\begin{lemma} \label{lemma: principal divisors under specialization}
	Let $h$ be a nonzero rational function on $\mathcal{C}$ and let $Z_1,\ldots,Z_r$ and $P_1,\ldots,P_s$ be the zeros and poles of $h^\gen$ in $C(K)$. Assume that $Z_1,\ldots,Z_r,P_1,\ldots,P_s\in\mathcal{C}(\B)\subseteq \mathcal{C}(K)=C(K)$. Then there exists a nonempty Zariski open subset $\U$ of $\X(k)$ such that for every $c\in \U$
	\begin{itemize}
		\item $h$ yields a rational function $h^c$ on $C^c$ via the projection $C^c=\mathcal{C}\times_\X\spec(k)\to \mathcal{C}$; 
		\item $Z_1^c,\ldots,Z_r^c,P_1^c,\ldots,P_s^c$ are all distinct;
		\item $Z_1^c,\ldots,Z_r^c$ is the set of zeros of $h^c$ and $P_1^c,\ldots,P_s^c$ is the set of poles of $h^c$;
		\item $\nu_{Z_i^c}(h^c)=\nu_{Z_i}(h^\gen)$ and $\nu_{P_i^c}(h^c)=\nu_{P_i}(h^\gen)$ for all $i$.
	\end{itemize}
	In particular, if $(h^\gen)=\sum_{i=1}^rd_iZ_i+\sum_{j=1}^se_jP_j$, then $(h^c)=\sum_{i=1}^rd_iZ^c_i+\sum_{j=1}^se_jP^c_j$.
\end{lemma}
\begin{proof}
	Assume that $h$ is given by a regular function on a nonempty Zariski open subset $\VV$ of $\mathcal{C}$. For $h^c$ to be well-defined, we need to know that $\mathcal{V}\times_\X\spec(k)\subseteq \mathcal{C}\times_\X\spec(k)=C^c$ is nonempty.
	Since $\mathcal{V}\to \X$ is a dominant morphism of schemes of finite type over $k$, the image contains a nonempty Zariski open subset $\U_1$ by Chevalley's theorem. Then $\mathcal{V}\times_\X\spec(k)$ is nonempty for every $c\in\U_1(k)$ because there is an element of $\mathcal{V}$ mapping to $c\in \X$. This takes care of the first item in the lemma. 
	
	For the remaining three items, we choose a linear form $\ell=a_0y_0+\ldots+a_ny_n\in K[y_0,\ldots,y_n]$ such that the hyperplane $H\subseteq\mathbb{P}^n_K$ defined by $\ell=0$ does not contain any of the $Z_i$'s or $P_i$'s. Enlarging $\B$ if necessary, we may assume that $\ell\in \B[y_0,\ldots,y_n]$. In fact, after localizing $\B$ if necessary, we can assume that $(a_0,\ldots,a_n)\in \B^{n+1}$ is a column of an invertible matrix in $\B^{(n+1)\times(n+1)}$. Thus, after a linear change of coordinates on $\mathbb{P}_\B^n$, we can assume that $\ell$ is given by $\ell=y_0$. 
	
	Let $\U_0$ be the open subscheme of $\mathbb{P}^n_\B$ where $x_0$ does not vanish. So $\U_0$ can be identified with $\A^n_\B=\spec(\B[y_1,\ldots,y_n])$. Recall (see \cite[Chapter 7, Section d]{Milne:AlgebraicGroupsTheTheoryOfGroupSchemesOfFiniteTypeOverAField} or \cite[Prop.~3.4]{Strickland:FormalSchemesAndFormalGroups}) that a point $P\in\mathbb{P}^n_\B(\B)$, i.e., a morphism $\X\to \mathbb{P}^n_\B$ (over $\B$) can be identified with a submodule $L$ of $\B^{n+1}$ such that $L$ is a direct summand (i.e., there exists a submodule $M$ of $\B^{n+1}$ such that $\B^{n+1}=L\oplus M$) and $L$ is locally free of rank $1$. The point $P$ lies in $\U_0(\B)\subseteq \mathbb{P}^n_\B$ (i.e., $\X\to \mathbb{P}^n_\B$ factors through $\U_0\hookrightarrow \mathbb{P}^n_\B$) if and only if $L$ has a basis vector of the form $(b_0,\ldots,b_n)\in \B^n$ with $b_0=1$. As the $Z_i$'s and $P_i$'s (considered as elements of $\mathbb{P}^n_K(K)$) do not lie in $H(K)$, we see that, after replacing $\B$ with a localization of $\B$, we may assume that all the $Z_i$'s and $P_i$'s (considered as elements of $\mathbb{P}^n_\B(\B)$) lie in $\U_0(\B)$.
	
	Consider the open subscheme $\mathcal{C}_0=\mathcal{C}\cap \U_0$ of $\mathcal{C}$. Then $\mathcal{C}_0$ identifies with a closed subscheme of $\U_0=\A^n_\B$ and the $Z_i$'s and $P_i$'s identify with elements of $\A^n_\B(\B)=\B^n$. On the Zariski open subset of $\X(k)$ where the product of suitable differences of coordinates of the $Z_i$'s and $P_i$'s does not vanish, the elements $Z_1^c,\ldots,Z_r^c,P_1^c,\ldots,P_s^c$ are all distinct. 
	
	Let $Z=(z_1,\ldots,z_n)\in \B^n\subseteq K^n$ be one of the $Z_i$'s. Assume that $h^\gen$ has a zero of multiplicity $m\geq 1$ at $Z$, i.e., $m=\nu_Z(h^\gen)$. So $h^\gen$ is contained in the $m$-th power $\m_{C,Z}^m$ of the maximal ideal $\m_{C,Z}$ of the local ring $\mathcal{O}_{C,Z}.$ 
	
	Let $C_0$ be the open subscheme of $C$ where $y_0$ does not vanish, in other words, $C_0=\mathcal{C}_0\times_\X\spec(K)$. Then $K(C)=K(C_0)$ and $\mathcal{O}_{C,Z}=\mathcal{O}_{C_0,Z}$. As the maximal ideal $\m_{C_0,Z}$ of $\mathcal{O}_{C_0,Z}$ is generated by the images of $y_1-z_1,\ldots,y_n-z_n$, we see that $\m_{C_0,Z}^m$ is generated by the set $M$ consisting of the images of $m$-fold products of elements from $\{y_1-z_1,\ldots,y_n-z_n\}$. So we can write $h^\gen=f_1m_1+\ldots+f_dm_d$ with $f_1,\ldots,f_d\in \mathcal{O}_{C_0,Z}\subseteq K(C_0)$ and $m_1,\ldots,m_d\in M$. Enlarging $\B$ if necessary, we may assume that $f_1,\ldots,f_d$ come from rational functions on $\mathcal{C}$, i.e., $f_i=h_i^\gen$ for some rational function $h_i$ on $\mathcal{C}$ for $i=1,\ldots,d$. 
	
	Recall that $k(\B)$ denotes the field of fractions of $\B$. Note that the function field $k(\mathcal{C})$ of $\mathcal{C}$ (considered as a scheme over $k$) agrees with the function field $k(\B)(\mathcal{C}_{k(\B)})$ of $\mathcal{C}_{k(\B)}=\mathcal{C}\times_\X\spec(k(\B))$ (considered as a scheme over $k(\B)$) and the latter is contained in the function field $K(C)$ (considered as a scheme over $K$). In fact, $K(C)=k(\B)(\mathcal{C}_{k(\B)})\otimes_{k(\B)}K$. So the identity $h^\gen=f_1m_1+\ldots+f_dm_d$ that holds in $K(C)$, can be interpreted as an identity
	\begin{equation} \label{eq: specialize to get order}
		h=h_1m_1+\ldots+h_dm_d
	\end{equation} that holds in $k(\mathcal{C})$.
	
	We already observed in the beginning of the proof that a rational function $h$ on $\mathcal{C}$ can be specialized to a rational function $h^c$ on $C^c$ for all $c$ in some nonempty Zariski open subset of $\X(k)$. To specialize (\ref{eq: specialize to get order}), we would like to know that if $h^\gen\in\mathcal{O}_{C_0,\z}$, then $h^c\in\mathcal{O}_{C_0^c,Z^c}$, where $C_0^c$ is the open subscheme of $C^c$ where $y_0$ does not vanish, i.e., $C_0^c=\mathcal{C}_0\times_\X\spec(k)$. 
	
	But $h^\gen\in\mathcal{O}_{C_0,Z}$ means that $h$ can locally be described  by a fraction$\frac{p}{q}$ of two polynomials $p,q\in \B[y_1,\ldots,y_n]$ such that $q$ does not vanish at $Z$. Thus, on the Zariski open subset of $\X(k)$, where $q(Z)\in \B$ does not vanish, the rational function $h^c$ is locally described by the fraction $\frac{p^c}{q^c}$ and so $h^c\in\mathcal{O}_{C_0^c,Z^c}$.
	
	
	Since the maximal ideal $\m_{C_0^c,Z^c}$ of $\mathcal{O}_{C_0^c,Z^c}$ is generated by the images of $y_1-c(z_1),\ldots,y_n-c(z_n)$, the specialization
	$h^c=h_1^cm_1^c+\ldots+h^c_dm^c_d$ of (\ref{eq: specialize to get order}) shows that $h^c\in\m_{C_0^c,Z^c}^m$. Therefore $\nu_{Z^c}(h^c)\geq m=\nu_{Z}(h^\gen)$ for all $c$ in a nonempty Zariski open subset of $\X(k)$.
	
	From this inequality for the zeros, we can immediately derive a similar inequality for the poles, because a point is a pole of multiplicity $m$ of a rational function if and only if it is a zero of multiplicity $m$ of the inverse of the rational function. So there exists a nonempty Zariski open subset $\U_2$ of $\X(k)$ such that for all $c\in\U_2$
	\begin{itemize}
		\item $h^c$ is well-defined;
		\item $Z_1^c,\ldots,Z_r^c,P_1^c,\ldots,P_s^c$ are all distinct;
		\item  $\nu_{{Z}_i^c}(h^c)\geq\nu_{{Z}_i}(h^\gen)$ for $i=1,\ldots,r$; in particular, $Z_1^c,\ldots,Z_r^c$ are zeros of $h^c$;
		\item $\nu_{P^c_i}(h^c)\leq\nu_{P_i}(h^\gen)$ for all $i=1,\ldots,s$; in particular, $P_1^c,\ldots,P_s^c$ are poles of $h^c$.
	\end{itemize}
	
	Note that the statement of the lemma is trivial in case $h$ is constant (i.e., $h$ lies in $K$) since in that case there are no zeros or poles. We may thus assume that $h$ is not constant. Under this assumption we will now show that
	\begin{equation} \label{eq: equality of degrees}
		[K(C):K(h^\gen)]=[k(C^c):k(h^c)]
	\end{equation}
	for all $c$ in a nonempty Zariski open subset of $\X(k)$. Since $h^\gen$ is transcendental over $K$, the field extension $K(C)/K(h^\gen)$ is finite and we can choose $\eta\in K(C)$ such that $K(C)=K(h^\gen,\eta)$. Let $q\in K(h^\gen)[y]$ be the minimal polynomial of $\eta$ over $K(h^\gen)$. Replacing $\eta$ with a $K(h^\gen)$-multiple of $\eta$ if necessary, we can assume that $q\in K[h^\gen][y]$. Enlarging $\B$ if necessary, we can indeed assume that $q\in \B[h^\gen][y]$. Let $p\in\B[x,y]$ be such that $p(h^\gen,y)=q$. By Gauss's lemma, $p\in K[x,y]$ is irreducible. In fact, since $p$ is monic as a polynomial in $y$, we see that $p$ is a prime element of $\B[x,y]$. So $\mathcal{D}=\spec(\B[x,y]/(p))$ is an integral scheme and $\mathcal{D}_K=\mathcal{D}\times_\X\spec(K)=\spec(K[x,y]/(p))$ has function field $K(h^\gen,\eta)=K(C)$.

	Let $\mathcal{V}=\spec(\B[a_1,\ldots,a_m])$ be an open affine subscheme of $\mathcal{C}$. Then $$\mathcal{V}_K=\mathcal{V}\times_\X\spec(K)=\spec(\B[a_1,\ldots,a_m]\otimes_\B K)$$ is a nonempty affine open subscheme of $C=\mathcal{C}\times_\X\spec(K)$. Moreover $\mathcal{D}_K$ and $\mathcal{V}_K$ are birationally equivalent (over $K$). In other words, the fields of fractions of
	$\B[\overline{x},\overline{y}]\otimes_\B K$ and $\B[a_1,\ldots,a_m]\otimes_\B K$ are isomorphic as field extensions of $K$, where $\overline{x}$ and $\overline{y}$ stand for the images of $x$ and $y$ in $\B[x,y]/(p)$ respectively.
	Such an isomorphism and its inverse is given by fractions of polynomials with coefficients in $K$. Enlarging $\B$ if necessary, we may assume that all coefficients belong to $\B$. Then there exist nonzero elements $b\in \B[\overline{x},\overline{y}]$ and $a\in \B[a_1,\ldots,a_m]$ such that $\B[\overline{x},\overline{y}]_b$ and $\B[a_1,\ldots,a_n]_a$ are isomorphic over $\B$. 
	In other words, there exist an open subscheme $\mathcal{D}'$ of $\mathcal{D}$ and an open subscheme $\mathcal{V}'$ of $\mathcal{C}$ such that $\mathcal{D}'$ and $\mathcal{V}'$ are isomorphic over $\B$. Note that under this isomorphism, the rational function $h^\gen$ on $\mathcal{V}'$ corresponds to $\overline{x}$.
	
	By Chevalley's theorem, there exists a nonempty Zariski open subset $\U_3$ of $\X(k)$ such that $\U_3$ is contained in the image of the vertical maps in the commutative diagram
	$$
	\xymatrix{
		\mathcal{D}' \ar_{\pi_1}[rd] \ar^-\simeq[rr] & & \mathcal{V'}. \ar^{\pi_2}[ld] \\
		& \X &
	}
	$$
	For $c\in \U_3\subseteq \X(k)\subseteq \X$, the fibre $\pi_2^{-1}(c)$ is a nonempty open subscheme of
	$C^c$ and therefore has function field $k(C^c)$. On the other hand, the fibre $\pi_2^{-1}(c)$ is an open subscheme of the spectrum of $(\B[x,y]/(p))\otimes_\B k=k[x,y]/(p^c)$. 
	The isomorphism $\mathcal{D}'\simeq\mathcal{V}'$ induces an isomorphism $\pi_1^{-1}(c)\simeq \pi_2^{-1}(c)$. Under the corresponding isomorphism of functions fields, $h^c\in k(C^c)$ corresponds to the image of $x$ in the field of fractions of $k[x,y]/(p^c)$. The degree of the field of fractions of $k[x,y]/(p^c)$ over the image of $x$ is the degree of $p$ as a polynomial in $y$, which equals $[K(C):K(h^\gen)]$. Thus (\ref{eq: equality of degrees}) holds for all $c\in \U_3$.
	
	Set $\U=\U_2\cap \U_3$. By \cite[Theorem 1.4.11]{Stichtenoth:AlgebraicFunctionFieldsAndCodes}, the degree of the zero divisor and the degree of the pole divisor of $h^\gen$ is equal to $[K(C):K(h^\gen)]$ and similarly for $h^c$ in place of $h^\gen$. Thus
	\begin{align*}
		[K(C):K(h^\gen)]=\sum_{i=1}^r\nu_{Z_i}(h^\gen)\leq \sum_{i=1}^r\nu_{Z^c_i}(h^c)\leq [k(C^c):k(h^c)]=[K(C):K(h^\gen)]
	\end{align*}
	for all $c\in \U$. This implies that $\nu_{Z_i^c}(h^c)=\nu_{Z_i}(h^\gen)$ for $i=1,\ldots,r$ and that $h^c$ has no zeros other than $Z_1^c,\ldots,Z_r^c$. A similar argument applies for the poles instead of the zeros. We conclude that $\U$ has the desired properties.	 
\end{proof}

Roughly speaking, the following lemma shows that residues are preserved under specialization on a nonempty Zariski open subset.

\begin{lemma} \label{lemma: residues under specialization}
	Let $h$ and $t$ be rational functions on $\mathcal{C}$ such that $t^\gen$ is a uniformizer at $P\in\mathcal{C}(\B)\subseteq\mathcal{C}(K)=C(K)$. Then there exists a nonempty Zariski open subset $\U$ of $\X(k)$ such that for all $c\in \U$
	\begin{itemize}
		\item $h^c, t^c$ and  $\res_{P,t^\gen}(h^\gen)^c$ are well-defined;
		\item $t^c$ is a uniformizer at $P^c$ and
		\item $\res_{P^c,t^c}(h^c)=\res_{P,t^\gen}(h^\gen)^c$.
	\end{itemize}
\end{lemma}
\begin{proof}
	Note that uniformizers are characterized by the property that their valutation is one. It thus follows from Lemma \ref{lemma: principal divisors under specialization} that there exists a nonempty Zariski open subset $\U_1$ of $\X$ such that $h^c$ and $t^c$ are well-defined and that $t^c$ is a uniformizer at $P^c$ for all $c\in \U_1$.
	
	If $\nu_{P}(h^\gen)\geq 0$, then $\res_{P,t^\gen}(h^\gen)=0$ and the claim follows from Lemma \ref{lemma: principal divisors under specialization}. We may thus assume that $\nu_P(h^\gen)=-m$, with $m\geq 1$.
	
	From the uniqueness of the $P$-adic representation, it follows that there exist uniquely determined elements $a_{-m},\ldots,a_{-1}\in K$ such that $\nu_P(h^\gen-\sum_{i=1}^ma_{-i}(t^\gen)^{-i})\geq 0$ and that in this situation $a_{-1}=\res_{P,t^\gen}(h^\gen)$. Enlarging $\B$ if necessary, we may assume that $a_{-m},\ldots,a_{-1}\in\B$. By Lemma \ref{lemma: principal divisors under specialization}, there exists a nonempty Zariski open subset $\U$ of $\X(k)$ such that for every $c\in \U$ the rational function $f=h-\sum_{i=1}^ma_{-i}t^{-i}$ on $\mathcal{C}$ has a well-defined specialization $f^c=h^c-\sum_{i=1}^ma^c_{-i}(t^c)^{-i}$ with $\nu_{P^c}(f^c)\geq 0$. It follows that $a_{-1}^c=\res_{P^c,t^c}(h^c)$.
\end{proof}

The following lemma explains the origin of the Jac-open subset that occurs in our main specialization result (Theorem \ref{theo: main specialization}).
We denote with $\Div^0(-)$ the group of divisors of degree zero.

\begin{lemma} \label{lemma: get ab open set}
	Let $P_1,\ldots,P_r\in \mathcal{C}(\B)\subseteq\mathcal{C}(K)=C(K)$. Then there exists an ad$\times$Jac-open subset $\U$ of $\X(k)$ such that for all $c\in \U$ and all $d_1,\ldots,d_r\in\mathbb{Z}$, if $d_1P_1+\ldots+d_rP_r\in\Div^0(C)$ is not principal, then $d_1P_1^c+\ldots+d_rP^c_r\in\Div^0(C^c)$ is not principal.	
\end{lemma}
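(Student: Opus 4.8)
The statement is essentially a ``spreading out'' of the Mordell--Weil / N\'{e}ron picture for the Jacobian of $C$, combined with Lemma~\ref{lemma: principal divisors under specialization}. The plan is to reduce everything to a statement about the Jacobian $J$ of $C/K$ and its specializations, and then to invoke Corollary~\ref{cor: Nerons specialization theorem} (N\'{e}ron's specialization theorem) in the guise of Proposition~\ref{prop: ab contains Hilbert} / the ad$\times$Jac-open framework. First I would fix a smooth projective model $\mathcal{C}\to\X$ of $F/K$ with all the good properties assumed throughout Section~\ref{subsec: Divisors under specialization}, so that the $P_i\in\mathcal{C}(\B)$ have well-defined specializations $P_i^c\in C^c(k)$ for all $c\in\X(k)$. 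The subgroup $\Lambda\subseteq \Div^0(C)$ generated by the degree-zero combinations of the $P_i$ is finitely generated; its image $\overline{\Lambda}$ in $\Div^0(C)/\!\sim\,=\operatorname{Cl}^0(F/K)=J(K)$ is therefore a finitely generated subgroup of $J(K)$. The assertion to be specialized is: if $\gamma\in\Lambda$ has nonzero image $\overline{\gamma}$ in $J(K)$, then its specialization $\gamma^c$ has nonzero image in $J(C^c)(k)$. Since $\Lambda$ is finitely generated, it suffices to control the finitely many images of a $\mathbb{Z}$-basis of $\Lambda$, i.e.\ to ensure that the specialization map $J(K)\to J(C^c)(k)$ is injective on $\overline{\Lambda}$.

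The technical heart is matching the naive divisor-theoretic specialization $\gamma\mapsto\gamma^c$ with the geometric specialization map $J(K)\to J(C^c)(k)$ arising from the N\'{e}ron model (or from an abelian scheme over a suitable $\X'$) of the family of Jacobians. Concretely: after possibly enlarging $\B$ to an integral domain $\B'$ finitely generated and algebraic over $\B$ (permissible by Lemma~\ref{lemma: lift opens}, exactly as in the proof of Theorem~\ref{theo: main specialization}), one can arrange that the relative Jacobian $\mathcal{J}=\operatorname{Pic}^0_{\mathcal{C}/\X'}$ exists as a group scheme over $\X'$ of Jacobian type, with generic fibre $J$ and special fibres $J(C^c)$, and that the classes $\overline{\gamma_1},\dots,\overline{\gamma_t}$ of a $\mathbb{Z}$-basis of $\Lambda$ lie in $\mathcal{J}(\B')$ (this uses that each $P_i$, being a $\B'$-point of $\mathcal{C}$, gives a section, hence a $\B'$-point of $\mathcal{J}$, after adjusting for degree via a fixed auxiliary section). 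The key compatibility is that for $c\in\X'(k)$ the specialization morphism $\sigma_c\colon\mathcal{J}(\B')\to\mathcal{J}(k(c))=J(C^c)(k)$ sends $\overline{\gamma_i}$ to the class of $\gamma_i^c$; this follows because $\operatorname{Pic}^0$ of a section commutes with base change. Granting this, the definition of the Jac-open set $W_{\X'}(\mathcal{J},\Gamma)$ with $\Gamma$ the subgroup of $\mathcal{J}(\B')$ generated by $\overline{\gamma_1},\dots,\overline{\gamma_t}$ does exactly the job: for $c$ in this set, $\sigma_c$ is injective on $\Gamma$, so $\overline{\gamma}\neq 0$ implies $\gamma^c\not\sim 0$, i.e.\ $d_1P_1^c+\dots+d_rP_r^c$ is not principal on $C^c$. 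Intersecting with a nonempty Zariski open (ad-open) subset on which Lemma~\ref{lemma: principal divisors under specialization} applies to the finitely many auxiliary rational functions needed to identify the specializations, and pushing down via Lemma~\ref{lemma: lift opens}, produces an ad$\times$Jac-open $\U\subseteq\X(k)$ with the stated property.

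I would organize the write-up as follows: (1) enlarge $\B$, construct $\mathcal{C}$ and, over $\B'$, the relative Jacobian $\mathcal{J}$, and arrange $\overline{\gamma_i}\in\mathcal{J}(\B')$; (2) prove the base-change compatibility $\sigma_c(\overline{\gamma_i}) = [\gamma_i^c]$ for $c$ in a nonempty Zariski-open subset, invoking Lemma~\ref{lemma: principal divisors under specialization} to know that the finitely many rational functions witnessing principality/non-principality specialize correctly; (3) apply the definition of $W_{\X'}(\mathcal{J},\Gamma)$ and the fact (Theorem~\ref{theo: ad meets ab is dense}, or just Definition~\ref{defi: ab-open subset}) that this is a Jac-open set; (4) use Lemma~\ref{lemma: lift opens} to descend from $\X'(k)$ to an ad$\times$Jac-open $\U\subseteq\X(k)$. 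The main obstacle I anticipate is step~(2): making precise and correct the identification of the abstract specialization map on $\mathcal{J}(\B')$ with the concrete operation $d_1P_1+\dots+d_rP_r\mapsto d_1P_1^c+\dots+d_rP_r^c$ on divisor classes. This requires some care with the normalization of $\operatorname{Pic}^0$ versus $\operatorname{Pic}$ (degree adjustment), with the existence of the relative Jacobian over an open part of $\X'$ (or over all of $\X'$ after shrinking), and with the fact that principal divisors are detected by sections that themselves spread out and specialize well, which is exactly what Lemma~\ref{lemma: principal divisors under specialization} guarantees. Everything else is bookkeeping with the ad$\times$Jac-open formalism already developed.
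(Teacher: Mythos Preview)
Your plan is correct and follows the same core idea as the paper: pass to the Jacobian, spread it out to a group scheme $\mathcal{J}$ over (an extension of) $\B$, take $\Gamma$ the subgroup generated by the images of the $P_i$, and use the Jac-open set $W(\mathcal{J},\Gamma)$. However, your implementation is more elaborate than necessary, and the paper's proof sidesteps exactly the obstacle you flag in step~(2).

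Rather than working with the relative $\operatorname{Pic}^0_{\mathcal{C}/\X'}$ and worrying about degree adjustment and base-change compatibility of $\operatorname{Pic}^0$, the paper simply spreads out an Abel--Jacobi embedding $\varphi\colon C\hookrightarrow J$ to a closed embedding $\phi\colon\mathcal{C}\hookrightarrow\mathcal{J}$ over $\X$ (after enlarging $\B$), arranged so that $\phi^c\colon C^c\hookrightarrow\mathcal{J}^c$ is a canonical embedding of $C^c$ into its own Jacobian for all $c$ in a Zariski open $\U_1$. Then the $P_i$ themselves, viewed via $\mathcal{C}(\B)\subseteq\mathcal{J}(\B)$, generate $\Gamma$, and the compatibility you worry about is immediate from the commutative square of specialization maps for $\mathcal{C}$ and $\mathcal{J}$. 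Since for a degree-zero combination one has $\sum d_i\varphi(P_i)=[\sum d_iP_i]$ in $J(K)$ (and similarly after specialization), non-principality is just non-vanishing in $\mathcal{J}$, and injectivity of $\sigma_c$ on $\Gamma$ finishes. No appeal to Lemma~\ref{lemma: principal divisors under specialization} is needed here, and N\'{e}ron's theorem (Corollary~\ref{cor: Nerons specialization theorem}, Proposition~\ref{prop: ab contains Hilbert}) is not invoked either---that is only used to show ad$\times$Jac-open sets are dense, not to construct $\U$.
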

\begin{proof}
	Let $J$ be the Jacobian of $C$ and let $\varphi\colon C\to J$ be a canonical embedding. For more background on Jacobian varieties see e.g., \cite{Milne:AbelianVarieties} or \cite[Chapter 9]{Bosch:NeronModels}. Enlarging $\B$ if necessary (Remark~\ref{rem: extend B basic}), there exists a commutative separated group scheme $\mathcal{J}$ of finite type over $\B$ and a closed embedding $\f\colon\mathcal{C}\to \mathcal{J}$ over $\X$ such that 
	\begin{itemize}
		\item $\f_K=\varphi$ (in particular, $\mathcal{J}$ is of Jacobian type)  and
		\item there exists a nonempty Zariski open subset $\U_1$ of $\X(k)$ such that the base change $\f^c\colon C^c\to \mathcal{J}^c$ of $\f$ via $c\colon \spec(k)\to \X$ is a canonical embedding of $C^c$ into its Jacobian $\mathcal{J}^c$ for every $c\in\U_1$. 
	\end{itemize}
	For every $c\in \U_1$ we have a commutative diagram 
	$$
	\xymatrix{
		\mathcal{C}(\B) \ar@{^{(}->}[r] \ar[d] & \mathcal{J}(\B) \ar[d] \\
		C^c(k) \ar@{^{(}->}[r]  & \mathcal{J}^c(k)	
	}
	$$
	where the vertical maps are the specialization maps.	
	Let $\Gamma$ be the subgroup of $\mathcal{J}(\B)$ generated by $P_1,\ldots,P_r\in \mathcal{C}(\B)\subseteq\mathcal{J}(\B)$ and set $\mathcal{V}=W_\X(\mathcal{J},\Gamma)$ (notation as in Section~\ref{subsec: Open subsets defined by commutative group schemes}). 
	
	Then $\U=\U_1\cap \mathcal{V}$ is an ad$\times$Jac-open subset of $\X(k)$. 
	Let $c\in \U$ and assume that $d_1P_1+\ldots+d_rP_r\in\Div^0(C)$ is not principal. Then $d_1P_1+\ldots+d_rP_r\in\mathcal{J}(\B)\subseteq\mathcal{J}(K)=J(K)$ is nonzero because $\Div^0(C)\to J(K)$ is surjective with kernel the subgroup of principal divisors. Since the specialization map $\mathcal{J}(\B)\to \mathcal{J}^c(k)$ is injective on $\Gamma$, we see that $d_1P^c_1+\ldots+d_rP^c_r\in\mathcal{J}^c(k)$ is nonzero.
	Therefore $d_1P^c_1+\ldots+d_rP^c_r\in \Div^0(C^c)$ is not principal.
\end{proof}

\subsection{Logarithmic independence under specialization}
\label{subsec: Logarithmic independence is preserved}

In this section we show that logarithmic independence is preserved on an Ad$\times$Jac-open subset of the parameter space.

\medskip

Let $F$ be a finite field extension of $K(x)$. So $F/K$ is a function field of one variable, considered as a differential field via $\delta=\frac{d}{dx}$. Let $\eta \in F$ be such that $F=K(x,\eta)$ and the minimal polynomial of $\eta$ over $K(x)$ has coefficients in $\B[x]$. Let $f\in \B[x]$ be a monic polynomial  such that $\B[x]_f[\eta]$ is a differential subring of $F$. 
%
%

Let $p\in \B[x,y]\subseteq K(x)[y]$ be the minimal polynomial of $\eta$ over $K(x)$. 
By Gauss's lemma $p\in K[x,y]$ is irreducible and by 
the Bertini-Noether theorem (\cite[Prop.~9.4.3]{FriedJarden:FieldArithmetic}) there exists a nonempty Zariski open subset $\U$ of $\X(k)$ such that $p^c\in k[x,y]$ is irreducible for all $c\in \U$. Then
$$\B[x]_f[\eta]\otimes_{\B[x]_f}k(x)=(\B[x]_f[y]/(p))\otimes_{\B[x]_f}k(x)=k(x)[y]/(p^c)$$
is a finite field extension of $k(x)$ for all $c\in\U$. Replacing $\B$ with a localization of $\B$, we mays assume that $\U=\X(k)$. For $c\in \X(k)$ we set 
$$F^c=\B[x]_f[\eta]\otimes_{\B[x]_f}k(x)$$ and for $h\in \B[x]_f[\eta]$ we set $h^c=h\otimes 1\in F^c$. The following lemma explains that this notation is consistent with the notation from Section \ref{subsec: Divisors under specialization}. Let $C$ be a smooth projective model of $F/K$
and let $\mathcal{C}$ and $C^c$ be as described in Section \ref{subsec: Divisors under specialization}.

\begin{lemma} \label{lemma: comparison}
	After enlarging $\B$ if necessary, there exists a nonzero element $b\in \B[x]_f[\eta]$ such that $\spec((\B[x]_f[\eta])_b)$ is isomorphic (over $\B$) to an open subscheme of $\mathcal{C}$. Moreover, there exists a nonempty Zariski open subset $\U$ of $\X(k)$ such that for all $c\in \U$ we have $F^c\simeq k(C^c)$ and for $h\in \B[x]_f[\eta]$ the rational function $h^c\in k(C^c)$ as defined in Lemma~\ref{lemma: principal divisors under specialization} corresponds to $h^c=h\otimes 1$ as defined above.
\end{lemma}
\begin{proof}
	The argument is quite similar to the argument used towards the end of the proof of Lemma \ref{lemma: principal divisors under specialization}. We therefore only replicate the main points.
	
	Let $\mathcal{D}=\spec(\B[x,y]/(p))$ be the closed subscheme of $\mathbb{A}^2_\B$ defined by $p$.
	Then $\mathcal{D}$ is an integral scheme of finite type over $k$ such that the geometric generic fibre $\mathcal{D}_K=\mathcal{D}\times_\X\spec(K)=\spec(K[x,y]/(p))$ of $\mathcal{D}\to\X$ is birationally equivalent (over $K$) to $C$. 
	
	Let $\mathcal{V}=\spec(\B[a_1,\ldots,a_m])$ be an open affine subscheme of $\mathcal{C}$. Then $$\mathcal{V}_K=\mathcal{V}\times_\X\spec(K)=\spec(\B[a_1,\ldots,a_m]\otimes_\B K)$$ is a nonempty affine open subscheme of $C=\mathcal{C}\times_\X\spec(K)$ and $\mathcal{D}_K$ and $\mathcal{V}_K$ are birationally equivalent (over $K$). In other words, the fields of fractions of
	$\B[\overline{x},\overline{y}]\otimes_\B K$ and $\B[a_1,\ldots,a_m]\otimes_\B K$ are isomorphic as field extensions of $K$, where $\overline{x}$ and $\overline{y}$ stand for the images of $x$ and $y$ in $\B[x,y]/(p)$ respectively.
	Enlarging $\B$ if necessary, we can find elements  $b\in \B[\overline{x},\overline{y}]$ and $a\in \B[a_1,\ldots,a_m]$ such that
	$\B[\overline{x},\overline{y}]_b$ and $\B[a_1,\ldots,a_n]_a$ are isomorphic over $\B$. 	
	
	Localizing both sides one more time, we get an isomorphism between $\B[\overline{x},\overline{y}]_{bf}$ and $\B[a_1,\ldots,a_m]_{a'}$ for some $a'\in\B[a_1,\ldots,a_m]$. This establishes the first claim of the lemma since $\mathcal{V'}=\spec(\B[a_1,\ldots,a_m]_{a'})$ is an open subscheme of $\mathcal{C}$ and $$\B[\overline{x},\overline{y}]_{bf}=(\B[x,y]/(p))_{bf}=(\B[x][\eta])_{bf}=(\B[x]_f[\eta])_b.$$
	By Chevalley's theorem, there exists a nonempty Zariski open subset $\U$ of $\X(k)$ such that $\U$ is contained in the image of the vertical maps in the commutative diagram
	$$
	\xymatrix{
		\spec((\B[x]_f[\eta])_b) \ar_{\pi_1}[rd] \ar^-\simeq[rr] & & \mathcal{V'}. \ar^{\pi_2}[ld] \\
		& \X &
	}
	$$
	For $c\in \U\subseteq \X(k)\subseteq \X$ the fibre $\pi_1^{-1}(c)$ is the spectrum of $(\B[x]_f[\eta])_b\otimes_\B k=(k[x,y]/(p^c))_{(bf)^c}$; a ring with field of fractions $F^c=k(x)[y]/(p^c)=\B[x]_f[\eta]\otimes_{\B[x]_f}k(x)$.
	On the other hand, the fibre $\pi_2^{-1}(c)=\mathcal{V}'\times_\X\spec(k)$ is an open subscheme of $\mathcal{C}\times_\X\spec(k)=C^c$ and therefore has function field $k(C^c)$. Thus the function fields $F^c$ and $k(C^c)$ of the fibres are isomorphic. On each fibre, $h^c$ is the interpretation of $h$ as a rational function on the fibre.	
\end{proof}

In the sequel, we will consider elements of $\B[x]_f[\eta]$ as rational functions on $\mathcal{C}$ as explained in Lemma \ref{lemma: comparison}. Note that if $h\in \B[x]_f[\eta]$ is interpreted as a rational function on $\mathcal{C}$, then $h^\gen$ is simply $h$, considered as an element of $F$. In this situation, we will therefore usually write $h$ instead of $h^\gen$. 

\begin{lemma} \label{lemma: residues of differentials under specialization}
	Let $h\in\B[x]_f[\eta]$ and $P\in \mathcal{C}(\B)\subseteq C(K)$. Then there exists a nonempty Zariski open subset $\U$ of $\X(k)$ such that $\res_P(h dx)^c=\res_{P^c}(h^c dx)$ for all $c\in \U$.
\end{lemma}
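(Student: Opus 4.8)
The statement says that the residue of the differential $h\,dx$ at a place $P$, which a priori is defined using some uniformizer, specializes well. The natural strategy is to reduce to the residue lemma already proved for rational functions on $\mathcal{C}$, namely Lemma~\ref{lemma: residues under specialization}, by exhibiting a concrete uniformizer $t$ that specializes well. So the first step is to choose a uniformizer. Working with $x$ as the ambient rational function: for all but finitely many places $P$ of $F/K$, the function $x-x(P)$ (if $P$ lies over a finite place of $K(x)$ that is unramified in $F/K$, with $x(P)\in K$ the value of $x$ at $P$), or $1/x$ (if $P$ lies over $\infty$, unramified), is a uniformizer at $P$; at the finitely many ramified places one needs a different choice, but after enlarging $\B$ we may take $t$ to be an element of $\B[x]_f[\eta]$ that is a uniformizer at $P$ — such a $t$ exists because $F=K(x,\eta)$ and $P\in\mathcal{C}(\B)$, and one can clear denominators into $\B[x]_f[\eta]$ after suitably localizing $\B$.

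\textbf{Key steps.} First I would fix, after possibly enlarging $\B$, a rational function $t\in\B[x]_f[\eta]$ on $\mathcal{C}$ such that $t^\gen$ is a uniformizer at $P$ (using $P\in\mathcal{C}(\B)$ and $F=K(x,\eta)$). Then I would apply Lemma~\ref{lemma: residues under specialization} to the pair $h,t$: this yields a nonempty Zariski open $\U_1$ of $\X(k)$ such that for $c\in\U_1$ the specializations $h^c,t^c$ and $\res_{P,t^\gen}(h^\gen)^c$ are well-defined, $t^c$ is a uniformizer at $P^c$, and
\[
\res_{P^c,t^c}(h^c)=\res_{P,t^\gen}(h^\gen)^c.
\]
The remaining point is purely a matter of unwinding definitions: the residue of the differential form $h\,dx$ at $P$ equals $\res_{P,t^\gen}(h^\gen\cdot \tfrac{dx}{dt})$, i.e.\ $\res_{P,t^\gen}(h^\gen\,\delta(x)/\delta_t)$ where the change-of-variable factor $dx/dt$ is again an element of $F$ (indeed of $\B[x]_f[\eta]$ after enlarging $\B$, since $dx/dt = 1/\tfrac{d t}{d x}$ and $t\in\B[x]_f[\eta]$ with $\delta(t)\in\B[x]_f[\eta]$, so $dx/dt$ is the inverse of an element of $\B[x]_f[\eta]$ — one localizes $\B$ so that the leading behaviour of $\delta(t)$ at $P$ is invertible, which is automatic as $t$ is a uniformizer). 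So set $h'=h\cdot (dx/dt)\in (\B[x]_f[\eta])_a$ for suitable $a$; then $\res_P(h\,dx)=\res_{P,t^\gen}(h'^\gen)$ and $\res_{P^c}(h^c\,dx)=\res_{P^c,t^c}(h'^c)$. Apply Lemma~\ref{lemma: residues under specialization} to the pair $h',t$ to obtain a nonempty Zariski open $\U_2$ with $\res_{P^c,t^c}(h'^c)=\res_{P,t^\gen}(h'^\gen)^c$ for $c\in\U_2$. Finally one needs $(h'^\gen)^c=(h'^c)$, i.e.\ that specialization of the rational function $h'$ in the sense of Lemma~\ref{lemma: principal divisors under specialization} agrees with $h\otimes 1$ times the specialization of $dx/dt$; this is exactly the compatibility recorded in Lemma~\ref{lemma: comparison} together with the fact that $\delta=d/dx$ commutes with the ring map $c\colon\B[x]_f[\eta]\to F^c$ (both are $\B$-linear and fix $x$), so $(\delta(t))^c=\delta(t^c)$ and hence $(dx/dt)^c = dx/d(t^c)$. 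Take $\U=\U_1\cap\U_2$ intersected with the open set of Lemma~\ref{lemma: comparison}.

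\textbf{Main obstacle.} The only genuinely delicate point is the existence of a uniformizer $t$ at $P$ lying in $\B[x]_f[\eta]$ (rather than merely in $F$), and simultaneously the control of the change-of-variable factor $dx/dt$: one must be sure that after finitely many localizations of $\B$, this factor is the image of an element of a localization of $\B[x]_f[\eta]$ whose specialization behaves as expected at $P^c$. This is handled exactly as in the proof of Lemma~\ref{lemma: principal divisors under specialization} — enlarge $\B$ to absorb the finitely many coefficients appearing in a local description of $t$ and $dx/dt$ near $P$, then invoke Lemma~\ref{lemma: principal divisors under specialization} (and Lemma~\ref{lemma: residues under specialization}) to pass to a nonempty Zariski open subset of $\X(k)$. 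Everything else is bookkeeping: residues of differential forms are defined via residues of functions against a uniformizer, and all the needed stability-under-specialization statements for functions are already available.
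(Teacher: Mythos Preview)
Your approach is essentially the same as the paper's: choose a uniformizer $t\in\B[x]_f[\eta]$ (after enlarging $\B$ and $f$), rewrite $h\,dx=g\,dt$ with $g=h/\delta(t)$ (your $h'$), observe that $h^c=g^c\delta(t^c)$ so $h^c\,dx=g^c\,dt^c$, and apply Lemma~\ref{lemma: residues under specialization} to the pair $(g,t)$. One minor redundancy: your first application of Lemma~\ref{lemma: residues under specialization} to the pair $(h,t)$ yields $\res_{P^c,t^c}(h^c)=\res_{P,t^\gen}(h^\gen)^c$, which is not the quantity you want and plays no role; only the application to $(h',t)$ is needed, and the paper does exactly that in a single step.
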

\begin{proof}
	After enlarging $\B$ and replacing $f$ with a multiple of $f$ if necessary, we can write $h dx=h_1 dt$, where $h_1,t\in \B[x]_f[\eta]$ and $t$ is a uniformizer at $P$. We then have $h=h_1\de(t)$ and so $h^c=h_1^c\de(t^c)$, i.e., $h^cdx=h_1^cdt^c$ for all $c\in \X(k)$.
	
	By Lemma \ref{lemma: residues under specialization}, there exists a nonempty Zariski open subset $\U$ of $\X(k)$ such that $t^c$ is a uniformizer at $P^c$ and $\res_{P^c,t^c}(h_1^c)=\res_{P,t}(h_1)^c$ for all $c\in \U$. Therefore,
	$$\res_P(hdx)^c=\res_P(h_1dt)^c=\res_{P,t}(h_1)^c=\res_{P^c,t^c}(h_1^c)=\res_{P^c}(h_1^cdt^c)=\res_{P^c}(h^cdx).$$
\end{proof}

We next show that the $\mathbb{Z}$-modules $Z_1$ and $Z_2$ (as defined in Section \ref{subsec: logarithmic independence via residues}) are preserved under many specializations.

\begin{lemma} \label{lemma: Z1 and Z2 under specialization}
	Let $\bff=(f_1,\ldots,f_m)$ be a tuple of elements of $\B[x]_f[\eta]$ and let $\cP$ be the set of poles of the differential forms $f_1dx,\ldots,f_mdx$ on $C$. Assume that $\cP\subseteq\mathcal{C}(\B)\subseteq\mathcal{C}(K)=C(K)$.
	Then there exists an ad-open subset $\U$ of $\X(k)$ such that for all $c\in \U$
	\begin{itemize}
		\item $\cP^c$ is the set of poles of the differential forms $f_1^cdx,\ldots,f_m^c dx$ on $C^c$ and
		\item $Z_1(\bff^c,\cP^c)=Z_1(\bff,\cP)$.
	\end{itemize}
	Moreover, there exists an ad$\times$Jac-open subset $\mathcal{V}$ of $\X(k)$ such that, in addition to the above two properties, we also have $Z_2(\bff^c,\cP^c)=Z_2(\bff,\cP)$ for all $c\in\mathcal{V}$.
\end{lemma}
\begin{proof}
	To resolve the first item, let us first discuss how the poles of a differential form $\omega=h dx$ are determined by $h$ and $x$. To see if $\omega$ has a pole at a place $P$, one can write $\omega=h_1 dt$ with $t=t_P$ a uniformizer at $P$ and then see if $\nu_{P}(\omega)=\nu_{P}(h_1)$ is negative. But $hdx=h_1dt$ entails $h_1=h\de_t(x)$, where $\de_t$ is the derivation determined by $\de_t(t)=1$. So $\nu_{P}(h_1)=\nu_{P}(h)+\nu_{P}(\de_t(x))$. 
	Using \cite[Prop. 4.2.7]{Stichtenoth:AlgebraicFunctionFieldsAndCodes}, we see that 
	$$\nu_P(\de_t(x))=\begin{cases} \nu_P(x)-1\ \text{ if } \ \nu_P(x)\neq 0, \\
		\nu_{P}(x-a_P)-1\ \text{ if } \ \nu_P(x)= 0, \end{cases}$$
	where $a_P$ denotes the value of $x$ at $P$, i.e., the image of $x$ in the residue field $\mathcal{O}_P/\m_P$. 
	Thus $\omega=hdx$ has a pole at $P$ if and only if ($\nu_P(x)\neq 0$ and $\nu_P(h)+\nu_P(x)-1\leq -1$) or ($\nu_P(x)=0$ and $\nu_P(h)+\nu_P(x-a_P)-1\leq -1$).
	
	Now let $h$ be one of the $f_i$'s. After enlarging $\B$ if necessary (Remark \ref{rem: extend B basic}), we may assume that all zeros and poles of $x$ and $h$ lie in $\mathcal{C}(\B)$. Then, by Lemma \ref{lemma: principal divisors under specialization}, there exists a nonempty Zariski open subset $\U_1$ of $\X(k)$ such that the principal divisors defined by $x$ and $h$ are preserved under all specializations belonging to $\U_1$. Similarly, by the same lemma, and after enlarging $\B$ if necessary, we can also assume that for every $c\in \U_1$ and every pole $P$ of $h$ we have $\nu_P(x-a_P)=\nu_{P^c}((x-a_P)^c)=\nu_{P^c}(x^c-a_{P^c})$.

	In particular, for $c\in \U_1$, every zero or pole of $x^c\in k(C^c)$ is the specialization of a zero or pole of $x\in F/K$. For a place $Q$ of $k(C^c)/k$ that is not the specialization of a zero or pole of $x$ or of a pole of $h$, 
	we then have 
	$\nu_Q(x^c)=0$ and $\nu_Q(h^c)+\nu_Q(x^c-a_Q)-1\geq 0+0=0$.
	So, by the above consideration, $Q$ is not a pole $h^cdx$.
	
	By construction, for those places $Q$ of $k(C^c)/k$ that come from zeros or poles of $x$ or from poles of $h$, all the relevant valuations are preserved under specialization and so we see that for $c\in \U_1$ the poles of $h^cdx$ are exactly the specializations of the poles of $hdx$.
	Thus we can find a nonempty Zariski open subset $\U_2$ of $\X(k)$ such that the first item of the lemma is satisfied for all $c\in\U_2$.
	
	By Lemma \ref{lemma: residues of differentials under specialization}, there exists a nonempty Zariski open subset $\U_3$ of $\X(k)$ such that $\res_{P}(f_idx)^c=\res_{P^c}(f_i^cdx)$ for all $P\in\cP$ and $i=1,\ldots,m$. Thus $Z_1(\bff,\cP)\subseteq Z_1(\bff^c,\cP^c)$ for all $c\in\U_2\cap\U_3$. For the reverse inclusion, let $\Gamma$ be the finitely generated subgroup of $(\B,+)$ generated by $1$ and $\res_P(f_idx)$ $(P\in\cP,\ i\in\{1,\ldots,m\})$. Let $\U$ be a nonempty ad-open subset contained in $\U_2\cap\U_3\cap W_\X(\Ga,\Gamma)$.
	
	Let $c\in \U$ and  $(d_1,\ldots,d_m)\in Z_1(\bff^c,\cP^c)$, i.e.,  $d_{P,c}=\sum_{i=1}^md_i\res_{P^c}(f_i^c dx)\in \mathbb{Z}$ for all $P\in\cP$ and $\sum_{P\in\cP}d_{P,c}=0$.
	Then $$\left(\sum_{i=1}^md_i\res_{P}(f_i dx)-d_{P,c}\right)^c=\sum_{i=1}^md_i\res_{P}(f_i dx)^c-d_{P,c}=\sum_{i=1}^md_i\res_{P^c}(f_i^c dx)-d_{P,c}=0$$ for all $P\in\cP$.
	As $c\in W_\X(\Ga,\Gamma)$, we can conclude that $\sum_{i=1}^md_i\res_{P}(f_i dx)-d_{P,c}=0$. In particular, $\sum_{i=1}^md_i\res_{P}(f_i dx)\in\mathbb{Z}$. Moreover, $\sum_{P\in\cP}\sum_{i=1}^md_i\res_{P}(f_i dx)=\sum_{P\in\cP}d_{P,c}=0$. Thus $(d_1,\ldots,d_m)\in Z_1(\bff,\cP)$ and so $Z_1(\bff,\cP)=Z_1(\bff^c,\cP^c)$ for all $c\in \U$.
	
	\medskip
	
	Our next goal is to find a nonempty Zariski open subset $\U_4$ of $\X(k)$ such that $Z_2(\bff,\cP)\subseteq Z_2(\bff^c,\cP^c)$ for all $c\in \U_2\cap\U_3\cap W_\X(\Ga,\Gamma)\cap\U_4$.
	
	Let $(e_{1,\ell},\ldots,e_{m,\ell})_{1\leq \ell\leq n}$ be a basis of the $\mathbb{Z}$-module $Z_2(\bff,\cP)$ and for $\ell=1,\ldots,n$ write 
	$\sum_{P\in\cP}e_{\ell,P}P=(h_\ell)$, with $h_\ell\in F^\times$ and $e_{\ell,P}=\sum_{i=1}^me_{i,\ell}\res_{P}(f_idx)\in\mathbb{Z}$. Replacing $f$ with a mulitple and enlarging $\B$ if necessary, we may assume that $h_1,\ldots,h_n,h_1^{-1},\ldots,h_n^{-1}\in \B[x]_f[\eta]$. By Lemma \ref{lemma: principal divisors under specialization}, there exists a nonempty Zariski open subset $\U_4$ of $\X(k)$ such that $(h_\ell^c)=\sum_{P\in \cP}e_{\ell,P}P^c$ for $\ell=1,\ldots,n$ and all $c\in \U_4$.
	
	Let $c\in\U_2\cap\U_3\cap W_\X(\Ga,\Gamma)\cap\U_4$ and  $(d_1,\ldots,d_m)\in Z_2(\bff,\cP)$. Then there exist $a_1,\ldots,a_n\in\mathbb{Z}$ such that $d_i=\sum_{\ell=1}^na_\ell e_{i,\ell}$ for $i=1,\ldots,m$.
	Moreover, since $\sum_{i=1}^md_i\res_{P}(f_idx)\in \mathbb{Z}$, we have 
	$$\sum_{i=1}^md_i\res_{P}(f_idx)=\left(\sum_{i=1}^md_i\res_{P}(f_idx)\right)^c=\sum_{i=1}^md_i\res_P(f_idx)^c=\sum_{i=1}^md_i\res_{P^c}(f_i^cdx)$$
	for all $P\in\cP$. Therefore
	\begin{align*}
		((h_1^{a_1}\ldots h_n^{a_n})^c)& = ((h_1^c)^{a_1}\ldots((h_n)^c)^{a_n})=\sum_{\ell=1}^na_\ell(h_\ell^c)=\sum_{\ell=1}^n\sum_{P\in  \cP}a_\ell e_{\ell,P}P^c=\\
		&=\sum_{\ell=1}^n\sum_{P\in \cP}\sum_{i=1}^ma_\ell e_{i,\ell}\res_{P}(f_idx)P^c=\sum_{P\in\cP}\sum_{i=1}^md_i\res_P(f_idx)P^c=\\
		&=\sum_{P\in\cP}\sum_{i=1}^md_i\res_{P^c}(f^c_idx)P^c.
	\end{align*}
	This shows that $(d_1,\ldots,d_m)\in Z_2(\bff^c,\cP^c)$. So $Z_2(\bff,\cP)\subseteq Z_2(\bff^c,\cP^c)$ for all $c\in \U_2\cap\U_3\cap W_\X(\Ga,\Gamma)\cap\U_4$.
	
	For the reverse inclusion, we can use Lemma \ref{lemma: get ab open set} to find an ad$\times$Jac-open subset $\mathcal{V}_1$ of $\X(k)$ such that for all $c\in \mathcal{V}_1$ and all $d_P\in\mathbb{Z},\ (P\in\cP)$, if $\sum_{P\in\cP}d_PP\in\Div^0(C)$ is not principal, then $\sum_{P\in\cP}d_{P}{P}^c \in\Div^0(C^c)$ is not principal.
	For all $c\in \U_2\cap\U_3\cap W_\X(\Ga,\Gamma)\cap\U_4\cap\mathcal{V}_1$ we then have 
	$Z_2(\bff,\cP)=Z_2(\bff^c,\cP^c)$.
\end{proof}

We are now prepared to show that logarithmic independence is preserved on an Ad$\times$Jac-open subset.

\begin{theo} \label{theo: main Appendix B}
	Let $f_1,\ldots,f_m\in \B[x]_f[\eta]$ be logarithmically independent over $F$. Then there exists an ad$\times$Jac-open subset $\U$ of $\X(k)$ such that $f_1^c,\ldots,f_m^c$ are logarithmically independent over $F^c$ for all $c\in \U$.
\end{theo}
\begin{proof}
	Set $\bff=(f_1,\ldots,f_m)$ and 
	let $\cP$ be the set of poles of $f_1,\ldots,f_m\in F$. Fix a basis  $\{(e_{1,\ell},\ldots,e_{m,\ell})|\ \ell=1,\ldots,n\}$ of $Z_2(\bff,\cP)$ and for $\ell=1,\ldots,n$ write 
	\begin{equation} \label{eq: for hl}
		\sum_{P\in \cP} \left(\sum_{i=1}^m e_{i,\ell}\res_P(f_idx)\right)P=(h_\ell) \ \text{ with } \ h_\ell\in F^\times.
	\end{equation}
	After enlarging $\B$ and replacing $f$ with a multiple if necessary, we can assume that
	\begin{itemize}
		\item the set $\cP$ belongs to $\mathcal{C}(\B)\subseteq\mathcal{C}(K)=C(K)$ and
		\item $h_1,\ldots,h_n, h_1^{-1},\ldots,h_n^{-1}\in \B[x]_f[\eta]$.
	\end{itemize}	
	Let $d$ be the degree of $\eta$ over $K(x)$. As $a_\ell=\frac{\delta(h_\ell)}{h_\ell}-\sum_{i=1}^m e_{i,\ell}f_i\in \B[x]_f[\eta]$ for $\ell=1,\ldots,n$, we can find $r,s\in\nn$ and $b_{i,j,\ell}\in\B$ such that
	$$a_\ell=\frac{1}{f^r}\left(\sum_{i=1}^s \sum_{j=1}^{d} b_{i,j,\ell} x^i \eta^j\right)$$
	for $\ell=1,\ldots,n$. Since $f_1,\ldots,f_m$ are logarithmically independent over $F$, it follows from Lemma \ref{lemma:logarithmicderivative} that the differential forms $a_1dx,\ldots,a_ndx$ are $\mathbb{Z}$-linearly independent. Note that the latter condition is equivalent to: If $d_1,\ldots,d_n\in\mathbb{Z}$ such that $d_1b_{i,j,1}+\ldots+d_nb_{i,j,n}=0$ for all $1\leq i\leq s,\ 1\leq j\leq d$, then $d_1=\ldots=d_n=0$.
	
	Let $\Gamma$ be the subgroup of $\Ga(\B)=(\B,+)$ generated by all $b_{i,j,\ell}$'s and let $\U_1=W_\X(\Ga,\Gamma)$ be the ad-open subset of $\X(k)$ defined by $\Gamma$. By Lemma \ref{lemma: Z1 and Z2 under specialization} there exists an ad$\times$Jac-open subset $\U_2$ of of $\X(k)$ such that $\cP^c$ is the set of poles of $\bff^c$ and $Z_2(\bff^c,\cP^c)=Z_2(\bff,\cP)$ for all $c\in \U_2$. Moreover, by Lemmas \ref{lemma: principal divisors under specialization} and \ref{lemma: residues of differentials under specialization}, we can find a nonempty Zariski open subset $\U_3$ of $\X(k)$ such that the principal divisors defined by $h_\ell,\ (\ell=1,\ldots,n)$ are preserved under specialization and $\res_P(f_idx)^c=\res_{P^c}(f_i^cdx) \ (i=1,\ldots,m,\ P\in\cP)$ for all $c\in\U_3$. 
	
	Let $\U$ be an ad$\times$Jac-open subset of $\X(k)$ contained in $\U_1\cap\U_2\cap \U_3$. We claim that $f_1^c,\ldots,f_m^c$ are logarithmically independent over $F^c$ for all $c\in\U$. 
	
	Let $c\in\U$. Using that $c\in \U_3$, we can specialize (\ref{eq: for hl}) to
	$$\sum_{P\in \cP} \left(\sum_{i=1}^m e_{i,\ell}\res_{P^c}(f^c_idx)\right)=(h^c_\ell)$$
	for $\ell=1,\ldots,n$. Furthermore, $a_\ell^c=\left(\frac{\delta(h_\ell)}{h_\ell}-\sum_{i=1}^m e_{i,\ell}f_i\right)^c=\frac{\delta(h^c_\ell)}{h^c_\ell}-\sum_{i=1}^m e_{i,\ell}f^c_i$.
	Using that $Z_2(\bff^c,\cP^c)=Z_2(\bff,\cP)$, the criterion of Lemma \ref{lemma:logarithmicderivative} states that $f_1^c,\ldots,f_m^c$ are logarithmically independent over $F^c$ if and only if $a_1^c,\ldots,a_n^c$ are $\mathbb{Z}$-linearly independent.
	Note that the degree of $\eta^c$ over $k(x)$ equals $d$, the degree of $\eta$ over $K(x)$ (cf. the discussion before Lemma~\ref{lemma: comparison}). As 
	$$a_\ell^c=\frac{1}{(f^c)^r}\left(\sum_{i=1}^s \sum_{j=1}^{d} b_{i,j,\ell}^c x^i (\eta^c)^j\right),$$ we see that $a_1^c,\ldots,a_n^c$ are $\mathbb{Z}$-linearly independent if and only if $d_1b^c_{i,j,1}+\ldots+d_nb^c_{i,j,n}=0$ for $d_1,\ldots,d_n\in\mathbb{Z}$, $1\leq i\leq s,\ 1\leq j\leq d$ implies $d_1=\ldots=d_n=0$. But $d_1b^c_{i,j,1}+\ldots+d_nb^c_{i,j,m}=0$ implies, by the injectivity of $\Gamma\to k,\ \gamma\mapsto \gamma^c$, that $d_1b_{i,j,1}+\ldots+d_nb_{i,j,n}=0$.
	Thus $a_1^c,\ldots,a_n^c$ are $\mathbb{Z}$-linearly independent and $f_1^c,\ldots,f_m^c$ are logarithmically independent as desired.
\end{proof}

\begin{rem} \label{rem: eta=1 logarithmic independence}
	In case $\eta=1$, i.e., $F=K(x)$, the Jac-open subset is not needed, i.e., the set $\U$ in Theorem \ref{theo: main Appendix B} can be chosen to be ad-open.
\end{rem}
\begin{proof}
	In this case we can choose $\mathcal{C}=\mathbb{P}^1_\B$ and so $C^c=\mathbb{P}^1_k$ for all $c\in \X(k)$. In this case Lemma \ref{lemma: get ab open set} is trivial because all degree zero divisors are principal. Thus Lemma \ref{lemma: get ab open set} does not create a Jac-open set and also the subsequent results (Lemma \ref{lemma: Z1 and Z2 under specialization} and Theorem \ref{theo: main Appendix B}) can make do without the Jac-open set.
\end{proof}

\subsection{The setup for specializing Picard-Vessiot rings}

Let $k\subseteq k'$ be an inclusion of algebraically closed fields. In Lemma \ref{lemma: spread out PVring} we have seen how a Picard-Vessiot ring $R/k'(x)$ can be spread out to a differential torsor $\R/\B[x]_f$. In this section we discuss some first properties of $\R$ and its specializations $R^c$.
We now describe our framework for specializing Picard-Vessiot rings.

\begin{notation}
	\label{notation: setup}
	\mbox{}
	\begin{itemize}
		\item $k$ is an algebraically closed field of characteristic zero;
		\item $\B$ is a finitely generated $k$-algebra and an integral domain;
		\item $\X=\spec(\B)$;
		\item $K$ is the algebraic closure of the field of fractions $k(\B)$ of $\B$;
		\item $\G$ is an affine group scheme of finite type over $\B$;
		\item $f\in \B[x]$ is a monic polynomial and $\B[x]_f$ is a differential ring with respect to $\de=\frac{d}{dx}$;
		\item  $\cA\in\B[x]_f^{n\times n}$;
		\item $\R/\B[x]_f$ is a differential $\G$-torsor for $\de(y)=\cA y$ with fundamental matrix $\Y\in\Gl_n(\R)$ such that $\R$ is flat over $\B[x]_f$;
		
		\item $R^c=\R\otimes_{\B[x]_f}k(x)$, where $c\in \X(k)=\Hom_k(\B,k)$ and the tensor product is formed using the extension of $c\colon \B\to k$ to $c\colon \B[x]_f\to k(x)$ determined by $c(x)=x$;
		\item $Y^c=\Y\otimes 1\in\Gl_n(R^c)$ is the image of $\Y$ in $R^c$;
		\item $A^c\in k(x)^{n\times n}$ is obtained from $\cA$ by applying $c\colon \B[x]_f\to k(x)$ to the coefficients of $\cA$;
		\item $G^c=\G_c$ is the algebraic group over $k$ obtained from $\G$ by base change via $c\colon \B\to k$;
	
		\item $R^\gen=\R\otimes_{\B[x]_f}K(x)$ is $\de$-simple;
		\item $Y^\gen=\Y\otimes 1\in \Gl_n(R^\gen)$ is the image of $\Y$ in $R^\gen$;
		\item $G^\gen=\G_K$ is the algebraic group over $K$ obtained from $\G$ by base change via $\B\to K$.
		\end{itemize}
	Furthermore, for $h\in \R$, denote with $h^c=h\otimes 1\in R^c$ the image of $h$ in $R^c$.
	Similarly, $h^\gen=h\otimes 1\in R^\gen$ denotes the image of $h$ in $R^\gen$.
\end{notation}

\noindent {\bf For the remainder of Section \ref{sec: Specialization of differential torsors} we assume Notation \ref{notation: setup}.}

\medskip
Notation \ref{notation: setup} gives a precise meaning to the idea of a family of potential Picard-Vessiot rings for a family of linear differential equations with prescribed potential differential Galois groups.  (Lemmas \ref{lemma: generic} and \ref{lemma: R^c} below elaborate this point.) We think of $\R$ as defining the family $(R^c)_{c\in\X(k)}$ and we would like to compare the individual $R^c$'s with the ring $R^\gen$ at the generic fibre.

We now explore some first consequences of Notation \ref{notation: setup}.

\begin{lemma} \label{lemma: injective}
	The ring $\R$ is an integral domain and the morphisms $\B[x]_f\to \R$,\ $\B\to \B[\G]$ and $\R\to R^\gen$ are injective.
\end{lemma}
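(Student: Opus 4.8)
The plan is to derive all four statements from facts that are part of Notation~\ref{notation: setup}: $\R$ is not the zero ring (it is a differential $\G$-torsor) and is flat over $\B[x]_f$, and $R^\gen$ is $\de$-simple. I would first record that $\B[x]_f$ is an integral domain: $\B$ is a domain, hence so is $\B[x]$, and since $f$ is monic (in particular nonzero), the localization $\B[x]_f$ is a domain sitting inside the field $k(\B)(x)$. To see that $\B[x]_f\to\R$ is injective, take a nonzero $g\in\B[x]_f$; multiplication by $g$ is injective on $\B[x]_f$, so multiplication by $g$ on $\R$, obtained by tensoring with the flat $\B[x]_f$-module $\R$, is injective too, and as $\R\neq 0$ this gives $g\cdot 1_\R\neq 0$. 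This is precisely the injectivity of $\B[x]_f\to\R$.

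Next I would treat the injectivity of $\R\to R^\gen$ and the integrality of $\R$ together. The inclusions of rings $\B[x]_f\subseteq k(\B)(x)\subseteq\overline{k(\B)}(x)=K(x)$, tensored over $\B[x]_f$ with the flat module $\R$, yield an injection $\R=\R\otimes_{\B[x]_f}\B[x]_f\hookrightarrow\R\otimes_{\B[x]_f}K(x)=R^\gen$. It remains to see that $R^\gen$ is an integral domain, for then $\R$, being a subring of $R^\gen$, is one as well. By Definition~\ref{defi: differential Gtorsor for de(y)=Ay}(ii) we have $\R=\B[x]_f[\Y,\tfrac{1}{\det(\Y)}]$, so $R^\gen=K(x)[Y^\gen,\tfrac{1}{\det(Y^\gen)}]$ is a finitely generated $\de$-algebra over the characteristic-zero $\de$-field $K(x)$; being $\de$-simple, it is an integral domain by the classical structure result for $\de$-simple finitely generated algebras over a $\de$-field of characteristic zero (the statement underlying the fact that Picard-Vessiot rings are domains; see the textbooks cited in Section~\ref{subsec:differential Galois theory}).

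Finally, $\B\to\B[\G]$ is injective because it admits a left inverse: the counit $\varepsilon\colon\B[\G]\to\B$ of the Hopf algebra $\B[\G]$ (equivalently, the map on coordinate rings induced by the identity section $\spec(\B)\to\G$) composes with the structure map $\B\to\B[\G]$ to a $\B$-algebra endomorphism of $\B$, which must be the identity. I do not expect a genuine obstacle: the lemma is essentially a bookkeeping exercise combining flatness with the definitions collected in Notation~\ref{notation: setup}, and the only non-formal input is the classical fact invoked above, that a $\de$-simple finitely generated algebra over a $\de$-field of characteristic zero has no zero divisors.
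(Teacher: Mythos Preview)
Your proof is correct. The arguments for the injectivity of $\R\to R^\gen$ and for $\R$ being an integral domain coincide with the paper's. For the other two claims you take a slightly different, and in fact more direct, route. For the injectivity of $\B[x]_f\to\R$ the paper argues that a nonzero $h\in\B[x]_f$ becomes a unit in $K(x)$, so if its image in $\R$ were zero then $R^\gen$ would be the zero ring, contradicting $\de$-simplicity; you instead use only that a flat algebra over a domain is torsion-free and that $\R\neq 0$ (which is part of the definition of a differential torsor), avoiding any reference to $R^\gen$. For the injectivity of $\B\to\B[\G]$ the paper routes the argument through the coaction $\rho\colon\R\to\R\otimes_\B\B[\G]$ and the second diagram of Lemma~\ref{lemma: equivalence with coaction} (and then uses the already-established injectivity of $\B[x]_f\to\R$), whereas you observe directly that the counit $\varepsilon$ provides a $\B$-algebra retraction. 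Both of your alternatives are cleaner and self-contained; the paper's versions have the minor advantage of exercising exactly the hypotheses of Notation~\ref{notation: setup} that will be used repeatedly later (the $\de$-simplicity of $R^\gen$ and the coaction diagrams).
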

\begin{proof}
	Suppose that $h\in \B[x]_f$ is a nonzero element of the kernel of  $\B[x]_f\to \R$. Then the image $h^\gen$ of $h$ in $R^\gen$ is zero but also a unit. Thus $R^\gen$ must be the zero ring. This contradicts the assumption that $R^\gen$ is $\de$-simple.
	
	Assume that $b\in\B$ lies in the kernel of $\B\to\B[\G]$. Then the image of $b$ in $\R\otimes_\B \B[\G]$ is zero. But then the second diagram of Lemma \ref{lemma: equivalence with coaction} implies that $b=0$.

As $\R$ is flat over $\B[x]_f$ it is clear that $\R\to R^\gen$ is injective.	From this it follows that $\R$ is an integral domain because differentially simple rings are integral domains.
\end{proof}

Because $\R\to R^\gen$ is injective, it is not really necessary to distinguish between an element $h\in\R$ and its image $h^\gen$ in $R^\gen$.
However, we will sometimes use the notation $h^\gen$ to emphasize that we are considering $h$ as an element of $R^\gen$.

At the generic fibre we have a Picard-Vessiot ring:

\begin{lemma} \label{lemma: generic}
	The $K(x)$-$\de$-algebra $R^\gen$ is a Picard-Vessiot ring for $\de(y)=\cA y$ with fundamental matrix $Y^\gen$ and differential Galois group $G^\gen$.
\end{lemma}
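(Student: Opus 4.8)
The plan is to verify the three defining properties of a Picard-Vessiot ring for $R^\gen/K(x)$: that $R^\gen$ is $\de$-simple, that it is generated as a $K(x)$-$\de$-algebra by the entries of a fundamental solution matrix and the inverse of its determinant, and that its differential Galois group is $G^\gen$. The first property, $\de$-simplicity, is already part of Notation \ref{notation: setup}, so nothing needs to be done there. For the second property, recall that $\R/\B[x]_f$ is a differential $\G$-torsor for $\de(y)=\cA y$ with fundamental matrix $\Y\in\Gl_n(\R)$, meaning in particular $\de(\Y)=\cA\Y$ and $\R=\B[x]_f[\Y,\frac{1}{\det(\Y)}]$. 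Base changing along $\B[x]_f\to K(x)$ yields $R^\gen=\R\otimes_{\B[x]_f}K(x)=K(x)[Y^\gen,\frac{1}{\det(Y^\gen)}]$ with $\de(Y^\gen)=\cA Y^\gen$ (the image $\cA$ of $\cA$ in $K(x)^{n\times n}$ being the same as $\cA$ since $\B[x]_f\hookrightarrow K(x)$). Together with $\de$-simplicity this shows $R^\gen$ is a Picard-Vessiot ring for $\de(y)=\cA y$ with fundamental matrix $Y^\gen$.

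It then remains to identify the differential Galois group of $R^\gen/K(x)$ with $G^\gen=\G_K$. By Lemma \ref{lemma: actions}(i), base change of the action of $\G$ on $\R/\B[x]_f$ along $\B\to K$ gives an action of $\G_K=G^\gen$ on $R^\gen\otimes_{K(x)}\cdot$; more precisely, by Lemma \ref{lemma: actions}, since $R^\gen=\R\otimes_{\B[x]_f}K(x)$ is obtained by a localization-type base change and is nonzero, the differential $\G$-torsor structure descends to a differential $G^\gen$-torsor structure on $R^\gen/K(x)$, with the same fundamental matrix $Y^\gen$ satisfying condition (iii) of Definition \ref{defi: differential Gtorsor for de(y)=Ay}. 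Now I would invoke the general principle that a differential $G$-torsor structure on a Picard-Vessiot ring coincides with the canonical differential Galois action: concretely, the coaction $\rho\colon R^\gen\to R^\gen\otimes_K K[G^\gen]$ coming from the torsor structure sends $Y^\gen\mapsto Y^\gen\otimes Z$ for $Z=(Y^\gen)^{-1}\otimes Y^\gen$ under the torsor isomorphism $R^\gen\otimes_{K(x)}R^\gen\xrightarrow{\sim}R^\gen\otimes_K K[G^\gen]$, and this is exactly the description of the canonical coaction of the differential Galois group recalled in Section \ref{subsec:differential Galois theory}. Since both $K[G^\gen]=(R^\gen\otimes_{K(x)}R^\gen)^\de$ and the torsor ring $K[\G_K]$ are identified with the same object via $a\otimes b\mapsto (a\otimes 1)\rho(b)$, the two group schemes agree, and the functorial actions agree as well.

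Alternatively, and perhaps more cleanly, one can deduce the identification directly: the differential Galois group $G(R^\gen/K(x))$ acts on $R^\gen$ and, being a subgroup of $\Gl_n$ via $Y^\gen$, its coordinate ring is $(R^\gen\otimes_{K(x)}R^\gen)^\de$. The torsor condition for $\R/\B[x]_f$ says $\R\otimes_{\B[x]_f}\R\cong\R\otimes_\B\B[\G]$, which after base change to $K(x)$ gives $R^\gen\otimes_{K(x)}R^\gen\cong R^\gen\otimes_K K[\G_K]$; taking $\de$-invariants and using $(R^\gen)^\de=K$ (valid since $R^\gen$ is a Picard-Vessiot ring) yields $(R^\gen\otimes_{K(x)}R^\gen)^\de\cong K[\G_K]$, so $G(R^\gen/K(x))\cong\G_K=G^\gen$, and one checks the isomorphism is compatible with the group structure and the embedding into $\Gl_n$ induced by $\Y$. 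The main obstacle is not any deep difficulty but rather bookkeeping: one must be careful that the various base-change and localization operations (from $\B[x]_f$ to $K(x)$ over the ring, and from $\B$ to $K$ over the constants) commute appropriately and preserve the torsor and coaction structures, and that taking $\de$-invariants commutes with the relevant base changes — the latter using that $K/k$ and $K(x)/k(x)$ do not introduce new constants beyond what is expected, together with flatness of $\R$ over $\B[x]_f$. All of this is routine given Lemmas \ref{lemma: injective} and \ref{lemma: actions} and the standard facts from Section \ref{subsec:differential Galois theory}.
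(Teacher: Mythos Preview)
Your proposal is correct and follows essentially the same route as the paper: verify $\de$-simplicity (given in Notation \ref{notation: setup}), base change $\R=\B[x]_f[\Y,\tfrac{1}{\det(\Y)}]$ to get $R^\gen=K(x)[Y^\gen,\tfrac{1}{\det(Y^\gen)}]$ with $\de(Y^\gen)=\cA Y^\gen$, and use Lemma \ref{lemma: actions} to obtain a differential $G^\gen$-torsor structure on $R^\gen/K(x)$. The only difference is in the final step: the paper simply cites \cite[Prop.~1.12]{BachmayrHarbaterHartmannWibmer:DifferentialEmbeddingProblems} for the identification of the differential Galois group of $R^\gen/K(x)$ with $G^\gen$, whereas you unpack that argument directly via the torsor isomorphism $R^\gen\otimes_{K(x)}R^\gen\cong R^\gen\otimes_K K[G^\gen]$ and taking $\de$-invariants --- which is exactly the content of that cited proposition.
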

\begin{proof}
	As $\R/\B[x]_f$ is a differential $\G$\=/torsor, $\R\otimes_\B K/\B[x]_f\otimes_\B K$ is a differential $G^\gen$\=/torsor by Lemma \ref{lemma: actions} (i). But $\R\otimes_\B K=\R\otimes_{\B[x]_f}K[x]_f$ and $\B[x]_f\otimes_\B K=K[x]_f$. Thus $R^\gen/K(x)$ is a differential $G^\gen$-torsor by Lemma \ref{lemma: actions} (ii).
	
	As $\R=\B[x]_f[\Y,\frac{1}{\det(\Y)}]$ and $\de(\Y)=\cA\Y$, we have $R^\gen=K(x)[Y^\gen,\frac{1}{\det(Y^\gen)}]$ 
	and $\de(Y^\gen)=\cA Y^\gen$. Because $R^\gen$ is assumed to be $\de$-simple, we see that $R^\gen/K(x)$ is a Picard-Vessiot ring for $\de(y)=\cA y$.
	
	It follows from \cite[Prop. 1.12]{BachmayrHarbaterHartmannWibmer:DifferentialEmbeddingProblems} that the differential Galois group of $R^\gen/K(x)$ is $G^\gen$.
\end{proof}

So, reversing the idea of Lemma \ref{lemma: spread out PVring}, we can think of $\R/\B[x]_f$ as having been obtained by ``spreading out'' the Picard-Vessiot ring $R^\gen/K(x)$. At the special fibres we either have nothing or a ring that ``looks like'' a Picard-Vessiot ring:

\begin{lemma} \label{lemma: R^c}
	If $c\in \X(k)$ is such that $R^c$ is not the zero ring, then $R^c/k(x)$ is a differential $G^c$-torsor for $\de(y)=A^cy$ with fundamental matrix $Y^c$. Moreover, there exists a nonempty Zariski open subset $\U$ of $\X(k)$ such that $R^c$ is not the zero ring for every $c\in \U$.
\end{lemma}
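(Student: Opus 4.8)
The statement has two parts, and I would treat them in turn.

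For the first part, fix $c\in\X(k)$ with $R^c\neq 0$. The main point is that the structure map $c\colon\B[x]_f\to k(x)$ (sending $x$ to $x$ and $b\in\B$ to $b^c$) is a base change followed by a localization: since $f$ is monic, $f^c\in k[x]$ is monic, hence nonzero, so $\B[x]_f\otimes_\B k=k[x]_{f^c}$, and $k[x]_{f^c}\hookrightarrow k(x)=\Frac(k[x])$ is the localization at the multiplicative set $\mathcal{S}$ of images of the nonzero elements of $k[x]$. Correspondingly $R^c=(\R\otimes_\B k)\otimes_{k[x]_{f^c}}k(x)=\mathcal{S}^{-1}(\R\otimes_\B k)$, so the hypothesis $R^c\neq 0$ forces $\R\otimes_\B k\neq 0$. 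Applying Lemma~\ref{lemma: actions}(i) to $c\colon\B\to k$ shows that $G^c=\G_c$ acts on $(\R\otimes_\B k)/k[x]_{f^c}$, and this is a differential $G^c$-torsor because $\R\otimes_\B k\neq 0$; applying Lemma~\ref{lemma: actions}(ii) to $\mathcal{S}$ then shows that $G^c$ acts on $R^c/k(x)$ and, since $R^c\neq 0$, that $R^c/k(x)$ is a differential $G^c$-torsor. That it is in fact a torsor for $\de(y)=A^cy$ with fundamental matrix $Y^c$ — i.e. that conditions (i)--(iii) of Definition~\ref{defi: differential Gtorsor for de(y)=Ay} hold — is then routine inheritance: (i) and (ii) follow by applying $c$ to $\de(\Y)=\cA\Y$ and to $\R=\B[x]_f[\Y,\tfrac1{\det\Y}]$, while (iii) holds because, by the very construction of the base-changed and localized actions, for a $k$-algebra $\T$ and $g\in G^c(\T)=\G(\T)$ the automorphism $g$ of $R^c\otimes_k\T$ is induced from the automorphism of $\R\otimes_\B\T$ attached to $g\in\G(\T)$, which already satisfies $g(\Y\otimes 1)=(\Y\otimes 1)(1\otimes[g])$.

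For the second part, I would argue geometrically. By Lemma~\ref{lemma: injective}, $\R$ is an integral domain and $\B[x]\hookrightarrow\B[x]_f\hookrightarrow\R$; moreover $\R$ is finitely generated over $\B[x]$ (it is generated by $\tfrac1f$, the entries of $\Y$, and $\tfrac1{\det\Y}$). Hence the morphism $\psi\colon\spec(\R)\to\spec(\B[x])$ is dominant and of finite type, so by generic flatness (equivalently, Chevalley's theorem) its image contains a dense open subset of $\spec(\B[x])$; choose a nonzero $G\in\B[x]$ with $D(G)\subseteq\img(\psi)$. Now for $c\in\X(k)$, the map $\B[x]\to k(x)$, $x\mapsto x$, $b\mapsto b^c$ has kernel the prime $\q_c=\m_c\B[x]$ (with $\m_c=\ker(c)$), the generic point of the vertical line $\{c\}\times\A^1$ in $\spec(\B[x])=\X\times\A^1$, with residue field $\kappa(\q_c)=k(x)$; and since $\B[x]_f\otimes_{\B[x]}k(x)=k(x)$ we have $R^c=\R\otimes_{\B[x]}\kappa(\q_c)$, i.e. $R^c$ is the scheme-theoretic fibre of $\psi$ over $\q_c$. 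Therefore $R^c\neq 0$ precisely when $\q_c\in\img(\psi)$, and this holds in particular whenever $\q_c\in D(G)$, i.e. whenever $G\notin\m_c\B[x]$, i.e. whenever $G^c\neq 0$ in $k[x]$. Writing $G=\sum_i b_ix^i$ with $b_i\in\B$ and choosing an index $i_0$ with $b_{i_0}\neq 0$, the set $\U=\{c\in\X(k)\mid b_{i_0}^c\neq 0\}$ is a Zariski open subset of $\X(k)$, nonempty by the Nullstellensatz since $\B$ is a finitely generated domain over the algebraically closed field $k$, and $R^c\neq 0$ for every $c\in\U$.

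The main obstacle I anticipate is conceptual rather than computational: one must recognize that $R^c$ is \emph{not} the naive fibre $\R\otimes_\B k$ of $\spec(\R)\to\X$ over $c$, but requires an additional localization at the generic point of $\A^1$, so that the good locus is governed by a hypersurface in the \emph{total} parameter space $\X\times\A^1$, and hence by the non-vanishing of a single coefficient of $G$, which carves out the desired Zariski open subset of $\X(k)$. Everything else — the factorization of the structure map, and the behaviour of conditions (i)--(iii) of Definition~\ref{defi: differential Gtorsor for de(y)=Ay} under base change and localization — is bookkeeping on top of Lemma~\ref{lemma: actions} and Lemma~\ref{lemma: injective}.
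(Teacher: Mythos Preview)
Your proof is correct, and the first part is essentially identical to the paper's: both factor $\B[x]_f\to k(x)$ as the base change $\B\to k$ followed by localization at the nonzero elements of $k[x]_{f^c}$, invoke Lemma~\ref{lemma: actions} (i) and (ii), and then check conditions (i)--(iii) of Definition~\ref{defi: differential Gtorsor for de(y)=Ay} by pushing forward the corresponding properties of $\Y$.

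For the second part you take a slightly different route. The paper applies Bourbaki's extension lemma (\cite[Cor.~3, Chapter~V, \S3.1]{Bourbaki:commutativealgebra}) to $\B[x]_f\subseteq\R$: this produces a nonzero $h\in\B[x]$ such that any homomorphism $\B[x]_f\to L$ into an algebraically closed field with $h\mapsto$ nonzero extends to $\R$; taking $L=\overline{k(x)}$ and $a$ the leading coefficient of $h$ then yields an explicit map $R^c\to\overline{k(x)}$ whenever $a^c\neq 0$. You instead identify $R^c$ as the scheme-theoretic fibre of $\spec(\R)\to\spec(\B[x])$ over the prime $\q_c=\m_c\B[x]$ and invoke Chevalley's theorem to find $G\in\B[x]$ with $D(G)$ contained in the image. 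The two tools are essentially equivalent (Bourbaki's lemma is one of the standard proofs of Chevalley's theorem), and both reduce to ``pick a nonzero coefficient of a polynomial in $\B[x]$''; the paper takes the leading coefficient, you take an arbitrary nonzero one. Your fibre identification is arguably cleaner conceptually, while the paper's version has the minor advantage of directly producing a point of $\spec(R^c)$ with values in $\overline{k(x)}$, a construction reused later (e.g.\ in Lemma~\ref{lemma: nonzero extends}).
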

\begin{proof}
	Assume that $R^c$ is not the zero ring. As in Lemma \ref{lemma: generic}, using Lemma \ref{lemma: actions}, one sees that $R^c/k(x)$ is a differential $G^c$\=/torsor. Furthermore, $R^c=k(x)[Y^c,\frac{1}{\det(Y^c)}]$ and $\de(Y^c)=A^cY^c$.
	Let $T$ be a $k$-algebra and $g\in G^c(T)$. When interpreted as an element of $\G(T)$, $g$ acts on $\Y\otimes1\in\Gl_n(\R\otimes_\B T)$ by multiplication with a matrix $[g]\in \Gl_n(T)$. Thus $g$ also acts on $Y^c\otimes 1\in\Gl_n(R^c\otimes_kT)$ by multiplication with $[g]$. This proves the first claim of the lemma.
	
	For the second claim, note that $\B[x]_f\subseteq \R$ by Lemma \ref{lemma: injective} and that $\R$ is a finitely generated $\B[x]_f$-algebra. By \cite[Cor. 3, Chapter V, \S 3.1]{Bourbaki:commutativealgebra} (applied with $b=1$) there exists a nonzero $h\in \B[x]_f$ such that any morphism $\psi$ (of rings) from $\B[x]_f$ into an algebraically closed field with $\psi(h)\neq 0$, extends to a morphism on $\R$. Without loss of generality we may assume $h\in\B[x]$. Let $a\in \B$ be the leading coefficient of $h$ and let $\U$ be the Zariski open subset of $\X(k)$ defined by $a$, i.e., $\U$ is the complement of the vanishing locus of $a$.
	
	Let $c\in \U$. We will show that $R^c$ is not the zero ring. Taking $\psi$ as $\psi\colon \B[x]_f\xrightarrow{c} k(x)\to \overline{k(x)}$, we see that $\psi(h)\neq 0$ and so $\psi$ extends to a morphism $\psi\colon \R\to \overline{k(x)}$. Then $\R\otimes_{\B[x]_f}k(x)\to\overline{k(x)},\ r\otimes s\mapsto \psi(r)s$ is a morphism of rings. Thus $\R\otimes_{\B[x]_f}k(x)$ cannot be the zero ring.
\end{proof}

We can do slightly better:

\begin{lemma}
	\label{lemma:domain}
	There exists a nonempty Zariski open subset $\U$ of $\X(k)$ such that $R^c$ is an integral domain for all $c\in\U$.
\end{lemma}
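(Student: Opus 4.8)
The plan is to upgrade the conclusion of Lemma \ref{lemma: R^c}, which only gives a Zariski open set where $R^c$ is nonzero, to one where $R^c$ is an integral domain. The key input is that $R^\gen = \R\otimes_{\B[x]_f}K(x)$ is $\de$-simple, hence an integral domain by the standard fact that differentially simple rings are integral domains (already used in the proof of Lemma \ref{lemma: injective}). So the problem reduces to spreading out the property ``integral domain'' from the generic fibre to nearby special fibres.

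First I would replace $\X$ by the nonempty Zariski open subset furnished by Lemma \ref{lemma: R^c}, so that we may assume $R^c\neq 0$ for all $c\in \X(k)$; in particular $\R$ is flat over $\B[x]_f$ (Notation \ref{notation: setup}) and a finitely generated $\B[x]_f$-algebra, hence a finitely generated $\B$-algebra, and it is an integral domain by Lemma \ref{lemma: injective}. Now consider the structure morphism $\spec(\R)\to \X = \spec(\B)$. Its generic fibre is $\spec(R^\gen_0)$ where $R^\gen_0 = \R\otimes_\B K$; since $R^\gen = \R\otimes_{\B[x]_f}K(x)$ is obtained from $R^\gen_0$ by the localization inverting $f$ and the nonzero elements of $K[x]$, and $R^\gen$ is an integral domain, so is $R^\gen_0$ (a localization of a ring is a domain only if... — more precisely, $R^\gen_0$ embeds into $R^\gen$, since $R^\gen_0$ is flat, indeed a localization is not needed: $\R$ is a domain, $\R\to R^\gen$ is injective by Lemma \ref{lemma: injective}, and $R^\gen_0 = \R\otimes_\B K$ sits between them because $K$ is flat over $\B$). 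So the generic fibre of $\spec(\R)\to\X$ is geometrically integral — here one uses that $K$ is algebraically closed, so ``integral'' over $K$ is the same as ``geometrically integral''.

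The main step is then to invoke the standard constructibility/openness results of EGA: if $g\colon \mathcal{Z}\to \X$ is a morphism of finite presentation and the fibre over the generic point of the integral scheme $\X$ is geometrically integral, then there is a nonempty open $\U\subseteq \X$ over which all fibres are geometrically integral. Concretely this is \cite[Thm.~9.7.7]{Grothendieck:EGAIV3} (the set of points where the fibre is geometrically integral is locally constructible, combined with the fact that it contains the generic point, hence contains a nonempty open since $\X$ is irreducible). Shrinking $\X$ to this $\U$, for every $c\in\U(k)$ the fibre $\spec(\R)\times_\X \spec(k)=\spec(\R\otimes_\B k)$ is geometrically integral over $k$, in particular $\R\otimes_\B k$ is a domain; and then $R^c = (\R\otimes_\B k)\otimes_{k[x]_f}k(x)$ is a localization of a domain, hence a domain. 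The hard part is essentially bookkeeping: making sure the generic fibre really is geometrically integral (using $K=\overline{k(\B)}$ and the domain property of $R^\gen$) and citing the precise EGA statement about the locus of geometrically integral fibres; the geometric content is entirely in that spreading-out theorem. Note also that one could alternatively bypass the ``geometrically'' subtlety by working directly: $\R\otimes_\B K = K\otimes_{k(\B)}(\R\otimes_\B k(\B))$ is a domain, so $\R\otimes_\B k(\B)$ is a geometrically integral $k(\B)$-algebra, and then one spreads out over $\spec(\B)$.
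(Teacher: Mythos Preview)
Your proof is correct and follows essentially the same route as the paper: show that $\R\otimes_\B K$ embeds into $R^\gen$ (using flatness of $\R$ over $\B[x]_f$, which makes $\R\otimes_{\B[x]_f}K[x]_f\hookrightarrow\R\otimes_{\B[x]_f}K(x)$), conclude the geometric generic fibre of $\spec(\R)\to\X$ is integral, spread out geometric integrality to an open subset (the paper cites \cite[Theorem~3.2.1~(ii)]{Poonen:RationalPointsOnVarieties} rather than EGA, but it is the same result), and then observe $R^c$ is a localization of the domain $\R\otimes_\B k$, combining with Lemma~\ref{lemma: R^c} to rule out the zero ring. The only cosmetic difference is that the paper intersects with the open set from Lemma~\ref{lemma: R^c} at the end rather than shrinking $\X$ at the start.
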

\begin{proof}
The key fact from algebraic geometry that we will use is that being geometrically integral ``spreads out'' from the generic fibre (\cite[Theorem 3.2.1 (ii)]{Poonen:RationalPointsOnVarieties}). More concretely, if $\B\subseteq \R$ is an inclusion of finitely generated $k$-algebras, such that $\R\otimes_\B K$ is an integral domain (where, as usual, $K$ is the algebraic closure of the field of fractions $k(\B)$ of $\B$), then there exists a nonempty Zariski open subset $\U$ of $\spec(\B)$ such that $\R\otimes_\B \overline{k(\p)}$ is an integral domain for every $\p\in\U$.

	To apply this to our situation, note that $\R\otimes_\B K$ is an integral domain because $\R\otimes_\B K=\R\otimes_{\B[x]_f}K[x]_f\subseteq \R\otimes_{\B[x]_f}K(x)=R^\gen$ and the latter is an integral domain. 
	
	By the first paragraph, there exists a nonempty Zariski open subset $\U$ of $\X(k)$ such that $\R\otimes_\B k$ is an integral domain for all $c\in \U$. As
	$$R^c=\R\otimes_{\B[x]_f}k(x)=(\R\otimes_{\B[x]_f}k[x]_{f^c})\otimes_{k[x]_{f^c}}k(x)=(\R\otimes_\B k)\otimes_{k[x]_{f^c}}k(x),$$
	we see that $R^c$ is a localization of the integral domain $R\otimes_\B k$ for all $c\in \U$. So $R^c$ is either the zero ring or an integral domain. Thus the claim follows from Lemma \ref{lemma: R^c}.
\end{proof}

The following example illustrates Lemma \ref{lemma:domain}.

\begin{ex} \label{ex: integral domain}
	Let $a$ be a unit of $\B[x]_f$ such that $a$ is not a square in $k(\B)(x)$. Then $\B[x]_f[y]/(y^2-a)=\B[x]_f[\eta]$ is an integral domain and naturally a differential ring (with $\de(\eta)=\frac{\delta(a)}{2a}\eta$). Let $b\in\B[x]_f$. As in Example \ref{ex: torsor for monomial matrices} set $$\R=\B[x]_f[\eta][y_1,y_2,y_1^{-1},y_2^{-1}]=\B[x]_f[X,\tfrac{1}{\det(X)}]/(p_1,p_2)=\B[x]_f[\Y,\tfrac{1}{\det(\Y)}],$$ 
	where $\de(y_1)=(b+\eta)y_1$, $\de(y_2)=(b-\eta)y_2$,	
		$$
	p_1=X_{21}X_{22}-(b^2-a)X_{11}X_{12}, \quad  \quad  p_2=X_{21}X_{12}+X_{22}X_{11}-2bX_{11}X_{12,} $$
	
	 $$\cA=\begin{pmatrix}  0 & 1 \\ a+\de(b)-b^2-\frac{\delta(a)b}{2a} & 2b+\frac{\delta(a)}{2a}\end{pmatrix}\in\B[x]_f^{2\times 2},$$
	 and $\de(X)=\cA X$. Then $\R$ is an integral domain and $\R/\B[x]_f$ is a differential $\G$-torsor for $\de(y)=\cA y$, where $\G$ is the group scheme of $2\times 2$ monomial matrices over $\B$ (as seen in Example \ref{ex: torsor for monomial matrices}).
	 
	  We claim that for $c\in \X(k)$
	 the $k(x)$-$\de$-algebra $R^c=\R\otimes_{\B[x]_f}k(x)$ is an integral domain if and only if $a^c$ is not a square in $k(x)$.
	 Note that $\R$ is a Laurent-polynomial ring in $y_1,y_2$ over $\B[x]_f[\eta]$. Therefore, $\R\otimes_{\B[x]_f}k(x)$ is a Laurent polynomial ring over $\B[x]_f[\eta]\otimes_{\B[x]_f}k(x)$. The latter is an integral domain, if and only if  $\B[x]_f[\eta]\otimes_{\B[x]_f}k(x)=k(x)[y]/(y^2-a^c)$ is an integral domain and this is the case if and only if $a^c$ is not a square in $k(x)$.
	 
	 Thus, for example, for $\B=k[\alpha,\beta]$, with $\alpha,\beta$ algebraically independent over $k$ and $a=x^2+\alpha x+\beta$, we see that $R^c$ is an integral domain if and only if $c\in \U$, where $\U\subseteq \X(k)$ is the Zariski open subset where the discriminant $\alpha^2-4\beta$ does not vanish.
\end{ex}

By Lemma \ref{lemma: R^c} the $R^c$'s ``look like'' Picard-Vessiot rings for $\de(y)=A^cy$ with differential Galois group $G^c$. The only missing piece of information is whether or not $R^c$ is $\de$-simple. If $R^c$ is $\de$-simple, then $R^c/k(x)$ is a Picard-Vessiot ring with differential Galois group $G^c$.

The main question is: Are there ``many'' $c$'s in $\X(k)$ such that $R^c/k(x)$ is Picard-Vessiot?
(In Theorem \ref{theo: main specialization} we will see that the answer is yes.)

The following example illustrates that the set of all $c\in \X(k)$ such that $R^c$ is Picard-Vessiot is in general not constructible.

\begin{ex}
	Let $\B=k[\alpha]$, with $\alpha$ a variable over $k$ and set $f=x$. Let
	$$\mathcal{A}=\begin{pmatrix}
	0 & 1 \\
	(\frac{\alpha}{x})^2-1 & -\frac{1}{x}
\end{pmatrix}\in \B[x]_f^{2\times 2}=k[\alpha,x]_x^{2\times 2}
$$
be the companion matrix of Bessel's differential equation $\de^2(y)+\frac{1}{x}\de(y)+(1-(\frac{\alpha}{x})^2)y=0$. Let $X$ be a $2\times 2$ matrix of indeterminates over $\B[x]_f$. Consider $\B[x]_f[X,\frac{1}{\det(X)}]$ as a $\B[x]_f$-$\de$-algebra via $\de(X)=\mathcal{A}{X}$. Then
$$\de(\det(X)-\tfrac{1}{x})=\de(\det(X))+\tfrac{1}{x^2}=-\tfrac{1}{x}\det(X)+\tfrac{1}{x^2}=-\tfrac{1}{x}(\det(X)-\tfrac{1}{x})$$
and so $(\det(X)-\frac{1}{x})$ is a $\de$-ideal of $\B[x]_f[X,\frac{1}{\det(X)}]$ and $\R=\B[x]_f[X,\frac{1}{\det(X)}]/(\det(X)-\frac{1}{x})$ is a $\B[x]_f$-$\de$-algebra. Let $\Y\in \Gl_n(\R)$ denote the image of $X$ in $\R$. Let $\G=\Sl_2=\Sl_{2,\B}$ be the group scheme $\Sl_2$ over $\B$. Then $\G$ acts on $\R/\B[x]_f$: For a $\B$-algebra $\T$, a $g\in\Sl_2(\T)$ defines a $\B[x]_f\otimes_\B \T$-$\de$-automorphism $g\colon \R\otimes_\B\T\to \R\otimes_\B\T$ by $g(\Y\otimes 1)=(\Y\otimes 1)(1\otimes g)$.
The corresponding morphism $\rho\colon \R\to \R\otimes_\B \B[\Sl_2]$ of $\B[x]_f$-$\de$-algebras (as in Lemma \ref{lemma: equivalence with coaction}) is given by $\rho(\Y)=(\Y\otimes 1)(1\otimes T)$, where $T\in\B[\Sl_2]^{2\times 2 }$ is the matrix of coordinate functions on $\Sl_2$. The morphism $\R\otimes_{\B[x]_f}\R\to \R\otimes_\B \B[\G],\ a\otimes b\mapsto (a\otimes 1)\cdot\rho(b)$ is an isomorphism, its inverse is determined by $1\otimes T\mapsto (\Y\otimes 1)^{-1}(1\otimes \Y)$. So $\R/\B[x]_f$ is a differential $\G$-torsor for $\de(y)=\mathcal{A} y$.

From a more geometric perspective, if $\Q$ is any ring (e.g., $\B[x]_f$) and $a\in\Q^\times$, it is clear that the closed subscheme $\Z$ of $\Gl_{2,\Q}$ defined by $\det(X)-a$ is an $\Sl_{2,\Q}$-torsor: For any $\Q$-algebra $\Q'$, the map
$\Z(\Q')\times\Sl_2(\Q')\to \Z(\Q')\times \Z(\Q'),\ (z,g)\mapsto (z,zg)$ is bijective with inverse $(z_1,z_2)\mapsto (z_1, z_1^{-1}z_2)$.

The differential Galois group of Bessel's equation is $\Sl_{2}$ if $\alpha-\frac{1}{2}\notin\mathbb{Z}$ and the multiplicative group otherwise (Appendix of \cite{Kolchin:AlgebraicGroupsAndAlgebraicDependence}). The significance of the condition $\alpha-\frac{1}{2}\notin\mathbb{Z}$ is also explained in Example \ref{ex: N(L) for Bessel}. It follows that $R^\gen=\R\otimes_{\B[x]_f}K(x)/K(x)$ is Picard-Vessiot. Moreover, $R^c$ is Picard-Vessiot if and only if $c\in k\smallsetminus\{\frac{1}{2}+m|\ m\in\mathbb{Z}\}$.
\end{ex}
The following three lemmas will be useful later on.
\begin{lemma} \label{lemma: nonzero extends}
	Let $h\in \R$ be nonzero. Then there exists a nonempty Zariski open subset $\U$ of $\X(k)$ such that $h^c\in R^c$ is nonzero for every $c\in \U$.
\end{lemma}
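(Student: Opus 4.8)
The plan is to realize the special fibre $R^c$ as a localization of $\R\otimes_\B k$ and then use generic freeness over $\B$ to keep track of the image of $h$.

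First I would recall, exactly as in the proof of Lemma~\ref{lemma:domain}, that $R^c=(\R\otimes_\B k)\otimes_{k[x]_{f^c}}k(x)$; thus $R^c$ is the localization of $\R\otimes_\B k$ at the image $S$ of $k[x]_{f^c}\smallsetminus\{0\}$. By Lemma~\ref{lemma:domain} (and its proof, which incorporates Lemma~\ref{lemma: R^c}) there is a nonempty Zariski open $\mathcal{U}_0\subseteq\X(k)$ such that for every $c\in\mathcal{U}_0$ both $\R\otimes_\B k$ and $R^c$ are nonzero integral domains. Since $R^c\neq 0$ forces $0\notin S$, every element of $S$ is a nonzero element of the domain $\R\otimes_\B k$, hence a nonzerodivisor, so the localization map $\R\otimes_\B k\to R^c$ is injective for every $c\in\mathcal{U}_0$. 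Therefore, for $c\in\mathcal{U}_0$, it suffices to arrange that the image $\bar h$ of $h$ in $\R\otimes_\B k$ is nonzero.

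Next I would apply generic freeness. Since $\R=\B[x]_f[\Y,\tfrac{1}{\det(\Y)}]$ is a finitely generated $\B$-algebra, \cite[Theorem~14.4]{Eisenbud:view} yields a nonzero $b\in\B$ such that $\R_b:=\R\otimes_\B\B[b^{-1}]$ is a free $\B[b^{-1}]$-module; fix a basis $(e_i)_{i\in I}$. The composite $\B\hookrightarrow\B[x]_f\hookrightarrow\R$ (Lemma~\ref{lemma: injective}) is injective and $\R$ is a domain, so $b\neq 0$ in $\R$ and $\R\to\R_b$ is injective; in particular $h\neq 0$ in $\R_b$. Write $h=\sum_{i\in I_0}c_ie_i$ with $I_0\subseteq I$ finite and $c_i\in\B[b^{-1}]$; since $h\neq 0$, some $c_{i_0}$ is nonzero, say $c_{i_0}=b'/b^N$ with $b'\in\B\smallsetminus\{0\}$ and $N\geq 0$.

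Finally, I would take $\mathcal{U}$ to be the set of $c\in\mathcal{U}_0$ with $b^c\neq 0$ and $b'^c\neq 0$: a finite intersection of nonempty Zariski open subsets of $\X(k)$, which is irreducible since $\B$ is an integral domain, so $\mathcal{U}$ is a nonempty Zariski open subset. For $c\in\mathcal{U}$, the map $c\colon\B\to k$ factors through $\B[b^{-1}]$, so $\R\otimes_\B k=\R_b\otimes_{\B[b^{-1}]}k$ has $(e_i\otimes 1)_{i\in I}$ as a $k$-basis; the coefficient of $e_{i_0}\otimes 1$ in $\bar h=\sum_{i\in I_0}c_i^c\,(e_i\otimes 1)$ equals $c_{i_0}^c=b'^c/(b^c)^N\neq 0$, so $\bar h\neq 0$ in $\R\otimes_\B k$, and hence $h^c\neq 0$ in $R^c$ by the first step. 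The only point requiring care is that first step, namely guaranteeing that passing from $\R\otimes_\B k$ to its localization $R^c$ does not kill $\bar h$; this is precisely where the genericity of Lemma~\ref{lemma:domain} (a nonzero integral domain at the fibre) is needed. Everything else is routine bookkeeping with generic freeness.
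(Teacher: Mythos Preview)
Your proof is correct and takes a genuinely different route from the paper. The paper applies \cite[Cor.~3, Chapter~V, \S 3.1]{Bourbaki:commutativealgebra} to the inclusion $\B[x]_f\subseteq\R$: this yields a nonzero $g\in\B[x]_f$ such that any ring homomorphism $\psi\colon\B[x]_f\to\overline{k(x)}$ with $\psi(g)\neq 0$ extends to $\R$ with $\psi(h)\neq 0$; taking $\U$ to be where the leading coefficient of $g$ does not vanish, one obtains for each $c\in\U$ a map $R^c\to\overline{k(x)}$ sending $h^c$ to $\psi(h)\neq 0$. Your argument instead splits the problem into two pieces: generic freeness over $\B$ guarantees that $h$ survives in $\R\otimes_\B k$, and Lemma~\ref{lemma:domain} guarantees the injection $\R\otimes_\B k\hookrightarrow R^c$. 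The paper's approach is independent of Lemma~\ref{lemma:domain} (and hence of the spreading-out of geometric integrality), reusing only the Bourbaki extension lemma already invoked for Lemma~\ref{lemma: R^c}; your approach is more transparent in that it isolates the two ways $h^c$ could vanish and dispatches each with a standard tool.
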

\begin{proof}
Applying \cite[Cor. 3, Chapter V, \S 3.1]{Bourbaki:commutativealgebra} to the inclusion $\B[x]_f\subseteq \R$, we find a nonzero element $h'\in\B[x]_f$ such that every morphism
$\psi$ (of rings) from $\B[x]_f$ into an algebraically closed field with $\psi(h')\neq 0$, extends to a morphism $\psi$ on $\R$ with $\psi(h)\neq 0$. Without loss of generality we may assume that $h'\in \B[x]$. Let $a\in \B$ be the leading coefficient of $h'$ and let $\U$ be the Zariski open subset of $\X(k)$ defined by $a$.

For $c\in \U$ let $\psi$ be defined by $\psi\colon \B[x]_f\xrightarrow{c}k(x)\to \overline{k(x)}$. Then $\psi(h')\neq 0$ and $\psi$ extends to $\R$ with $\psi(h)\neq 0$. We can thus define a morphism $\psi'\colon R^c=\R\otimes_{\B[x]_f}k(x)\to \overline{k(x)},\ r\otimes s\mapsto \psi(r)s$. Then $\psi'(h^c)=\psi(h)\neq 0$. Thus $h^c\neq 0$.
\end{proof}

Recall (Definition \ref{defi: differential Gtorsor for de(y)=Ay}) that the fundamental matrix $\Y\in\Gl_n(\R)$ defines a morphism $\G\to \Gl_{n,\B}$ of group schemes over $\B$.

\begin{lemma} \label{lemma: closed embedding}
There exists a nonzero element $b\in\B$ such that the base change of $\G\to \Gl_{n,\B}$ via $\B\to\B_b$ is a closed embedding. 
\end{lemma}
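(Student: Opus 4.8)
The plan is to obtain $b$ by spreading out from the generic fibre, where the analogous statement is already available. By Lemma~\ref{lemma: generic}, $R^\gen/K(x)$ is a Picard-Vessiot ring for $\de(y)=\cA y$ with fundamental matrix $Y^\gen$ and differential Galois group $G^\gen=\G_K$. As recalled in Section~\ref{subsec:differential Galois theory}, the morphism $G^\gen\to\Gl_{n,K}$, $g\mapsto[g]$, attached to the fundamental matrix $Y^\gen$ is a closed embedding; moreover, both this morphism and the morphism $\f\colon\G\to\Gl_{n,\B}$ defined by $\Y$ are given by the same recipe $g(\Y\otimes 1)=(\Y\otimes 1)(1\otimes[g])$ from Definition~\ref{defi: differential Gtorsor for de(y)=Ay}(iii), so the former is the base change to $K$ of the latter. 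Dually, writing $\psi\colon\B[X,\tfrac{1}{\det(X)}]\to\B[\G]$ for the ring map corresponding to $\f$, we know that $\psi_K$ is surjective.

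From this I would first deduce that $\psi_{k(\B)}$ is surjective: its cokernel is a $k(\B)$-vector space, and it vanishes after $\otimes_{k(\B)}K$, since $\psi_K$ is (up to the canonical identifications of source and target) the base change $\psi_{k(\B)}\otimes_{k(\B)}K$, which is surjective; a $k(\B)$-vector space that dies after extending scalars is already zero. Next, pick $\B$-algebra generators $u_1,\dots,u_m$ of $\B[\G]$ (here we use that $\G$ is of finite type over $\B$). By surjectivity of $\psi_{k(\B)}$ there are $p_i\in k(\B)[X,\tfrac{1}{\det(X)}]$ with $\psi_{k(\B)}(p_i)=u_i$ in $k(\B)[\G]$. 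Since $k(\B)=\varinjlim_{0\neq b\in\B}\B_b$ and tensor products commute with colimits, we have $k(\B)[\G]=\varinjlim_b\B_b[\G]$ and $k(\B)[X,\tfrac{1}{\det(X)}]=\varinjlim_b\B_b[X,\tfrac{1}{\det(X)}]$; clearing denominators and using that $\B$ is a domain (so a common $b$ works for the finitely many indices $i$), there is a single nonzero $b\in\B$ such that all $p_i$ lie in $\B_b[X,\tfrac{1}{\det(X)}]$ and $\psi_{\B_b}(p_i)=u_i$ in $\B_b[\G]$ for every $i$.

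Finally, the image of $\psi_{\B_b}$ is a $\B_b$-subalgebra of $\B_b[\G]=\B[\G]\otimes_\B\B_b$ containing the $\B_b$-algebra generators $u_1,\dots,u_m$, hence equals $\B_b[\G]$; so $\psi_{\B_b}$ is surjective, which means precisely that $\f_{\B_b}\colon\G_{\B_b}\to\Gl_{n,\B_b}$ is a closed embedding, as desired. There is no real obstacle here: the argument is routine spreading-out, and the only ingredient with content is the closed-embedding property at the generic fibre, which is supplied by differential Galois theory via Lemma~\ref{lemma: generic}. (Alternatively one could invoke the general EGA~IV limit theorems, in the spirit of Lemma~\ref{lemma: spread out closed embedding}, but the direct argument above is shorter.)
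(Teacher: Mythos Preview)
Your proof is correct and follows essentially the same strategy as the paper: establish that the dual map $\psi$ becomes surjective at the generic fibre, then use finite generation of $\B[\G]$ to spread out to some $\B_b$. The only minor difference is where the generic-fibre surjectivity comes from: the paper works directly over $k(\B)$, arguing via the differential $\G_{k(\B)}$-torsor $\R\otimes_{\B[x]_f}k(\B)(x)$ that $\G_{k(\B)}(\T)\to\Gl_n(\T)$ is injective for all $\T$ (hence a closed embedding by \cite[Section~15.3]{Waterhouse:IntroductiontoAffineGroupSchemes}), whereas you invoke Lemma~\ref{lemma: generic} to get the closed embedding over $K$ and then descend to $k(\B)$ by faithful flatness. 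Both routes are equally valid and the spreading-out argument in the second half is identical to the paper's.
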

\begin{proof}
Set $\R'=\R\otimes_{\B[x]_f}k(\B)(x)$. As in the proof of Lemma \ref{lemma: generic} we see that $\R'/k(\B)(x)$ is a differential $\G_{k(\B)}$-torsor for $\de(y)=\cA y$ with fundamental matrix $\Y\otimes 1\in \Gl_n(\R')$.
From this we deduce that the morphism $\G_{k(\B)}\to\Gl_{n,k(\B)}$ determined by $\Y\otimes 1$ is a closed embedding as follows: Assume $\T$ is a $k(\B)$\=/algebra and $g\in\G_{k(\B)}(\T)=\Hom_{k(\B)}(k(\B)[\G],\T)$ is in the kernel of $\G_{k(\B)}(\T)\to\Gl_n(\T)$, i.e., $g$ acts trivially on $\R'\otimes_{k(\B)}\T$. 
Let $\rho'\colon \R'\to \R'\otimes_{k(\B)}k(\B)[\G]$ denote the coaction and $e\in\Hom(k(\B)[\G],\T)=\G(\T)$ the identity element. As $g$ acts trivially, the maps
 $$\R'\xrightarrow{\rho'} \R'\otimes_{k(\B)}k(\B)[\G]\xrightarrow{\R'\otimes g} \R'\otimes_{k(\B)}\T  $$

and
 $$\R'\xrightarrow{\rho'} \R'\otimes_{k(\B)}k(\B)[\G]\xrightarrow{\R'\otimes e} \R'\otimes_{k(\B)}\T  $$
 agree (with $a\mapsto a\otimes 1$). As the $\R'$-linear extension of $\rho'$ is an isomorphism, it follows that $\R'\otimes g=\R'\otimes e$ and therefore $g=e$. This shows that $\G_{k(\B)}(\T)\to \Gl_n(\T)$ is injective for every $k(\B)$-algebra $\T$. Therefore $\G_{k(\B)}\to \Gl_{n,k(\B)}$ is a closed embedding (\cite[Section~15.3]{Waterhouse:IntroductiontoAffineGroupSchemes}).

The morphism $\G\to \Gl_{n,\B}$ determined by $\Y$, after base change via $\B\to k(\B)$, yields the morphism $\G_{k(\B)}\to\Gl_{n,k(\B)}$ determined by $\Y\otimes 1$ (and, as we have just seen, the latter is a closed embedding). So the map $\psi\colon \B[X,\frac{1}{\det(X)}]\to \B[\G]$, dual to the morphism $\G\to \Gl_{n,\B}$, becomes surjective after the base change $\B\to k(\B)$. In other words, the map $S^{-1}\psi\colon S^{-1}\B[X,\frac{1}{\det(X)}]\to S^{-1}\B[\G]$ is surjective, where $S$ is the multiplicatively closed subset of all nonzero elements of $\B$.

Let $a_1,\ldots,a_m\in \B[\G]$ be generators of $\B[\G]$ as a $\B$-algebra and let $\frac{f_1}{s_1},\ldots,\frac{f_m}{s_m}\in S^{-1}\B[X,\frac{1}{\det(X)}]$ be such that $(S^{-1}\psi)(\frac{f_i}{s_i})=\frac{a_i}{1}$ for $i=1,\ldots,m$.
Then there exist $s_1',\ldots,s_m'\in S$ such that $s_i'(\psi(f_i)-s_ia_i)=0$ for $i=1,\ldots,m$. Thus $b=s_1\ldots s_ms_1'\ldots s_m'$ has the desired property.
\end{proof}


\begin{lemma} \label{lemma: integral closure specializes}
	Let $\B[x]_f'$ denote the integral closure of $\B[x]_f$ in $\R$ and for $c\in\X(k)$ let $k(x)'$ denote the integral closure of $k(x)$ in $R^c$. Then there exists a nonempty Zariski open subset $\U$ of $\X(k)$ such that $\B[x]_f'\otimes_{\B[x]_f}k(x)\to k(x)'\subseteq R^c=\R\otimes_{\B[x]_f}k(x)$ is an isomorphism for every $c\in\U$.
\end{lemma}
\begin{proof}
	Let $m$ denote the geometric number of irreducible components of $R^\gen/K(x)$. As $\overline{k(\B)(x)}=\overline{K(x)}$, it is clear that the geometric number of irreducible components of $\mathcal{R}\otimes_{\B[x]_f}k(\B)(x)/k(\B)(x)$ is also $m$.
	
    We claim that the (vector space) dimension $\dim_{k(\B)(x)}\B[x]'_f\otimes_{\B[x]_f}k(\B)(x)$ equals $m$.
	The inclusion $k(\B)(x)=\B[x]_f\otimes_{\B[x]_f}k(\B)(x)\subseteq \R\otimes_{\B[x]_f}k(\B)(x)$ can be identified with the localization of the inclusion $\B[x]_f\subseteq\R$ at the multiplicatively closed subset of all nonzero elements of $\B[x]_f$. As integral closure commutes with localization (\cite[\href{https://stacks.math.columbia.edu/tag/0307}{Tag 0307}]{stacks-project}), the integral closure $k(\B)(x)'$ of $k(\B)(x)$ in $\R\otimes_{\B[x]_f}k(\B)(x)$ is $k(\B)(x)'=\B[x]'_f\otimes_{\B[x]_f}k(\B)(x)$. As integral closure commutes with separable algebraic base change (\cite[\href{https://stacks.math.columbia.edu/tag/0CBF}{Tag 0CBF}]{stacks-project}), it follows that the integral closure $K(x)'$ of $K(x)$ in $$(\R\otimes_{\B[x]_f}k(\B)(x))\otimes_{k(\B)(x)}K(x)=\R\otimes_{\B[x]_f}K(x)=R^\gen$$ is $K(x)'=(\B[x]'_f\otimes_{\B[x]_f}k(\B)(x))\otimes_{k(\B)(x)}K(x)$. 
	As $[K(x)': K(x)]=m$ by Corollary \ref{cor: geometric number}, it follows that  $\dim_{k(\B)(x)}\B[x]'_f\otimes_{\B[x]_f}k(\B)(x)=m$ as claimed.
	
	It is known that $\B[x]'_f$ is a finitely generated $\B[x]_f$-module (\cite[Cor. 13.13]{Eisenbud:view}). Thus, there exists a nonempty Zariski open subset $\mathcal{V}_1$ of $\spec(\B[x]_f)$ such that $\dim_{k(\p)}\B[x]_f'\otimes_{\B[x]_f}k(\p)$ is constant on $\mathcal{V}_1$ (e.g., using \cite[Theorem 14.4]{Eisenbud:view}). As $\dim_{k(\B)(x)}\B[x]'_f\otimes_{\B[x]_f}k(\B)(x)=m$, we must have $\dim_{k(\p)}\B[x]_f'\otimes_{\B[x]_f}k(\p)=m$ for all $\p\in\mathcal{V}_1$.

	For a morphism of schemes of finite type the number of geometrically irreducible components of the fibres is constant on a nonempty Zariski open subset (\cite[\href{https://stacks.math.columbia.edu/tag/055A}{Tag 055A}]{stacks-project}). 
	Applying this to the dual of the inclusion $\B[x]_f\subseteq \R$ shows that there exists a non-empty Zariski open subset $\mathcal{V}_2$ of $\spec(\B[x]_f)$ such that the number of geometrically irreducible components of $\R\otimes_{\B[x]_f}k(\p)$ is constant on $\mathcal{V}_2$. As the number of  geometrically irreducible components of $\R\otimes_{\B[x]_f}k(\B)(x)$ is $m$, we see that the number of geometrically irreducible components of $\R\otimes_{\B[x]_f}k(\p)$ is $m$ for all $\p$ in $\mathcal{V}_2$.
	
	The inclusion $\B\subseteq K$, being a composition of a localization with a field extension, makes $K$ a flat $\B$-module. Thus the map $\B[x]'_f\otimes_\B K\to \R\otimes_\B K$ is injective. We already noted in the proof of Lemma \ref{lemma:domain} that $\R\otimes_\B K$ is an integral domain and so  $\B[x]'_f\otimes_\B K$ is an integral domain. As in loc. cit. it follows that there exists a nonempty Zariski open subset $\U_1$ of $\X(k)$ such that $\B[x]'_f\otimes_{\B[x]_f}k(x)$ is an integral domain for all $c\in\U_1$.
	
	Let $h\in\B[x]$ be a nonzero polynomial such that $D(h)\subseteq\mathcal{V}_1\cap\mathcal{V}_2$ and let $\U$ be the Zariski open subset of $\U_1$ where the leading coefficient of $h$ does not vanish. We will show that 
	$\B[x]_f'\otimes_{\B[x]_f}k(x)\to k(x)'$ is an isomorphism for every $c\in \U$. Let $c\in\U$ and let $\p$ be the kernel of $c\colon \B[x]_f\to k(x)$. Then $\B[x]_f'\otimes_{\B[x]_f}k(\p)=\B[x]_f'\otimes_{\B[x]_f}k(x)$ and $\R\otimes_{\B[x]_f}k(\p)=\R\otimes_{\B[x]_f}k(x)=R^c$. As $c$ does not vanish on the leading coefficient of $h$ we have $\p\in\mathcal{V}_1\cap \mathcal{V}_2$. So $\dim_{k(x)}\B[x]_f'\otimes_{\B[x]_f}k(x)=m$ and the geometric number of connected components of $R^c$ is $m$. It thus follows from Corollary \ref{cor: geometric number} that $[k(x)':k(x)]=m$. Therefore, $\B[x]_f'\otimes_{\B[x]_f}k(x)$ and $k(x)'$ have the same degree $m$ over $k(x)$. It thus suffices to show that $\B[x]_f'\otimes_{\B[x]_f}k(x)\to k(x)'$ is injective. But as $c\in \U_1$, the ring $\B[x]_f'\otimes_{\B[x]_f}k(x)$ is an integral domain and therefore a field, since it is integral over $k(x)$.
\end{proof}


The following remark, which is important from a technical perspective and will be used repeatedly in what follows, specializes Remark \ref{rem: extend B basic} to the context of Notation \ref{notation: setup}.
 
\begin{rem} \label{rem: extend B}
	Assume that in the setup of Notation \ref{notation: setup} we would like to show that there exists
	\begin{itemize}
		\item an ad-open,
		\item a Jac-open or
		\item an ad$\times$Jac-open
	\end{itemize}
	subset $\U$ of $\X(k)$ such that 
	
	\begin{itemize}
		\item the differential $k(x)$-algebra $R^c$ has a certain property for every $c\in \U$ or
		\item that for given $f_1,\ldots,f_m\in \R$, the elements $f_1^c,\ldots,f_m^c\in R^c$ have a certain property,
	\end{itemize}
	then we can, without loss of generality, replace $\B$ with an integral domain $\B'$ such that $\B\subseteq \B'$ and $\B'$ is finitely generated and algebraic over $\B$. 	
\end{rem}
\begin{proof}	
	Note that $k(\B)\subseteq k(\B')\subseteq \overline{k(\B)}=K$.	
	Set $$\R'=\R\otimes_\B\B'=\R\otimes_{\B[x]_f}\B'[x]_f\subseteq\R\otimes_{\B[x]_f}K(x)=R^\gen.$$ Then $\R'$ is not the zero ring and so $\R'$ is a differential   $\G'=\G_{\B'}$\=/torsor over $\B[x]_f\otimes_\B\B'=\B'[x]_f$ by Lemma \ref{lemma: actions}. 
	We next verify that $\R'/\B'[x]_f$ has all the properties listed in Notation \ref{notation: setup}. The matrix $\Y'=\Y\otimes1\in \Gl_n(\R')$ is such that $\de(\Y')=\cA\Y'$ and $\R'=\B'[x]_f[\Y',\frac{1}{\det(\Y')}]$. Since $\G$ acts on $\Y$ through matrix multiplication, also $\G'$ acts on $\Y'$ through matrix multiplication. Thus $\R'/\B'[x]_f$ is a differential $\G'$-torsor for $\de(y)=\cA y$ with fundamental matrix $\Y'$.
	As $\R$ is a flat over $\B[x]_f$, we see that $\R'=\R\otimes_{\B[x]_f}\B'[x]_f$ is flat over $\B'[x]_f$. Moreover, $$R'^\gen=\R'\otimes_{\B'[x]_f}K(x)=(\R\otimes_{\B[x]_f}\B'[x]_f)\otimes_{\B'[x]_f}K(x)=\R\otimes_{\B[x]_f}K(x)=R^\gen$$
	is $\de$-simple.
	
	Similarly, if $\f\colon\X'(k)\to \X(k)$ is the morphism induced by the inclusion $\B\subseteq \B'$ and $c'\in\X(k)$ is such that $\f(c')=c$, then $${R'}^{c'}=\R'\otimes_{\B'[x]_f}k(x)= (\R\otimes_{\B[x]_f}\B'[x]_f)\otimes_{\B'[x]_f}k(x)=\R\otimes_{\B[x]_f}k(x)=R^c.$$ 
	Furthermore, for $i=1,\ldots,m$, the image $f_i^{c'}$ of $f_i$ in $R'^{c'}$ agrees with the image $f_i^{c}$ of $f_i$ in $R^c$.
	
	Thus, if there exists a subset $\U'$ of $\X'(k)$ of one of the three kinds listed in the remark such that one of the two properties of the remark is true for all $c'\in \U'$, then Lemma~\ref{lemma: lift opens} yields a subset $\U$ of $\X(k)$ of the corresponding kind such that $\U\subseteq \f(\U')$ and the corresponding property holds for all $c\in \U$.
\end{proof}


\subsection{Algebraic relations under specialization}

\label{subsec: Algebraic relations under specialization}

Recall that we assume Notation \ref{notation: setup} for the remainder of Section \ref{sec: Specialization of differential torsors}. The main result of this section (Theorem \ref{theo:basisunderspecialization}) is that a basis of the vector space of algebraic relations of degree at most $d$ among the entries of a fundamental solution matrix for $\de(y)=\cA y$ specializes to a basis of the vector space of algebraic relations of degree at most $d$ among the entries of a fundamental solution matrix for $\de(y)=A^c y$ for all $c$ in an ad-open subset of the parameter space. This is in spirit similar to Theorem B from the introduction. However, the present result is weaker because we have to fix the degree of the algebraic relations. 

By considering the vector of all monomials of degree at most $d$ in the entries of a fundamental solution matrix, the study of the algebraic relations of degree at most $d$ among the entries of a fundamental solution matrix can be reduced to the study of the linear relations satisfied by the entries of one solution vector. We therefore, first study the behavior of  linear relations under specialization.
 Combining the main result of Section \ref{subsec: Linear relations} with the main result of Section \ref{subsec: The exponential bound under specialization} we obtain:

\begin{prop}
	\label{prop:uniform bound for vector spaces}
	Let $v\in \R^\ell$ be such that $v^\gen\in (R^\gen)^\ell$ is a solution of $\delta(y)=\mathcal{A'}y$, with $\cA'\in K(x)^{\ell\times \ell}$. Then there exists a positive integer $N$ and an ad-open subset $\U$ of $\X(k)$ such that for every $c\in \U$ the following holds: If $\m_c$ is a maximal $\de$\=/ideal of $R^c$ and $\overline{v^c}\in (R^c/\m_c)^\ell$ is the image of $v^c$ in $R^c/\m_c$, then
	the $k(x)$-vector space $\linrel(\overline{v^c},k(x))$ has a basis consisting of elements of the form $a_1y_1+\ldots+a_\ell y_\ell$, with $a_1,\ldots,a_\ell\in k(x)$ and $\deg(a_i)\leq N$ for $i=1,\ldots,\ell$.
\end{prop}
\begin{proof}
	For $i=1,\dots,\ell$, let $\L_i \in K(x)[\de]$ be a monic differential operator, equivalent to $\delta(y)=(\bigwedge^i\cA')y$ and let $T_i\in \Gl_{{\ell\choose i}}(K(x))$ be a corresponding transformation matrix. (See Section \ref{subsec: Linear relations}.) Enlarging $\B$ if necessary (Remark \ref{rem: extend B}), we can assume that $\cA'$, $T_i$ and $\L_i$ $(1\leq i\leq \ell)$ have coefficients in $\B[x]_{ff_1}$, where $f_1\in \B[x]$ is a monic polynomial. So, for a nonempty Zariski open subset $\U'$ of $\X(k)$, the matrix $T_i^c$ lies in $\Gl_{{\ell\choose i}}(k(x))$ and is a transformation matrix from $\de(y)=(\bigwedge^i \cA')^cy=(\bigwedge^i\cA'^c)y$ to $\L_i^cy=0$. Note that  $v^c$ is a solution of $\de(y)=\cA'^cy$ for every $c\in\X(k)$.
	
	By Proposition~\ref{prop: exponential bound}, for each $i=1,\dots,\ell$, there exists an ad-open subset $\U_i$ of $\X(k)$ such that $N(\L_i)$ is an exponential bound for $\L_i^c y=0$ for any $c\in \U_i$.
	Note that $\dim_{k(x)}\linrel(\overline{v^c},k(x))$  lies between $1$ and $\ell$ for any $c\in\X(k)$. It thus follows from Lemma \ref{lemma: linear relations} that
	$$
	N=\max_{1\leq i\leq \ell} \left\{ 2{\ell\choose i}\deg(T_i)+{\ell\choose i}\left({\ell\choose i}-1\right) N(\L_i) \right\}
	$$ and $\U$ any ad-open subset of $\U'\cap \U_1\cap\ldots\cap\U_\ell$ have the required property.
\end{proof}

Let $F$ be a differential field, $A\in F^{n\times n}$ and $R/F$ a Picard-Vessiot ring for $\de(y)=Ay$ with fundamental matrix $Y\in\Gl_n(R)$. For a fixed positive integer $d$, we set
$$
\operatorname{AlgRel}(Y,d,F)=\{p\in F[X]\mid p(Y)=0,\ \deg(p)\leq d\}.
$$
Then $\algrel(Y,d,F)$ is a finite dimensional $F$-vector space; the vector space of algebraic relations of degree at most $d$ among the entries of $Y$.

%

The following theorem shows that a basis of the vector space of algebraic relations of degree at most $d$ among the entries of a fundamental solution matrix is preserved on an ad-open subset of the parameter space.

\begin{theo}
	\label{theo:basisunderspecialization}
	
	For any $c\in\X(k)$, let $\m_c$ be a maximal $\de$-ideal of $R^c$ and let $\overline{Y^c}\in \Gl_n(R^c/\m_c)$ denote the image of $Y^c$ in $R^c/\m_c$.
	Let $p_1,\ldots,p_m\in \B[x][X]$ be a $K(x)$\=/basis of $\algrel(Y^{\gen},d,K(x))$.
	Then there exists an ad-open subset $\U$ of $\X(k)$ such that $p_1^c,\ldots,p_m^c$ is a $k(x)$\=/basis of $\algrel(\overline{Y^c},d,k(x))$
	for any $c\in \U$.
\end{theo}
\begin{proof}
	The main step of the proof is to show that there exists an ad-open subset $\U_1$ of $\X(k)$ such that
	\begin{equation} \label{eq: bound dim}
		\dim_{k(x)}\algrel(\overline{Y^c},d,k(x))\leq  \dim_{K(x)}\algrel(Y^{\gen},d,K(x))
	\end{equation}
	for every $c\in\U_1$.
	
	Let $\bfm_1,\ldots,\bfm_r$ denote all monomials in $X$ of degree at most $d$ (so $r={n^2+d \choose d}$)
	and let $\ell$ be the dimension of the $K(x)$-vector space generated by $\bfm_1(Y^\gen),\ldots,\bfm_r(Y^\gen)$ (inside $R^\gen$).
	We may assume, without loss of generality, that $\bfm_1(Y^\gen),\ldots,\bfm_\ell(Y^\gen)$ are $K(x)$-linearly independent. We see that $m=\dim_{K(x)}\algrel(Y^{\gen},d,K(x))=r-\ell$. 
%
	
	As $\de(Y^\gen)=\cA Y^\gen$, the vector $(\bfm_1(Y^\gen),\ldots,\bfm_r(Y^\gen))^t\in (R^\gen)^r$ is a solution of $\de(y)=\cA_1y$ for some $\cA_1\in K(x)^{r\times r}$. Since $\bfm_i(Y^\gen)$ $(\ell+1\leq i \leq r)$ can be expressed as a $K(x)$-linear combination of $\bfm_1(Y^\gen),\ldots,\bfm_\ell(Y^\gen)$, it follows that the vector $(\bfm_1(Y^\gen),\ldots,\bfm_\ell(Y^\gen))^t$ is a solution of $\de(y)=\cA'y$ for some $\cA'\in K(x)^{\ell\times \ell}$.
	Set $v=(\bfm_1(\Y),\ldots,\bfm_\ell(\Y))^t\in \R^{\ell}$.
	
	By Proposition \ref{prop:uniform bound for vector spaces}, there exists a positive integer $N$ and an ad-open subset $\U'$ of $\X(k)$ such that for every $c\in \U'$ the following  holds: If $\widetilde{\m_c}$ is a maximal $\de$-ideal of $R^c$ and $\widetilde{v^c}\in (R^c/\widetilde{\m_c})^\ell$ is the image of $v^c$ in $R^c/\widetilde{\m_c}$, then $\linrel(\widetilde{v^c},k(x))$ has a $k(x)$-basis of the form
	$a_1y_1+\ldots+a_\ell y_\ell$ with $a_i\in k(x)$ and $\deg(a_i)\leq N$ for $i=1,\ldots,\ell$.

	Recall that the \emph{wronskian} $\operatorname{wr}(h_1,\ldots,h_s)$ of elements $h_1,\ldots,h_s$ of some differential ring $R$, is defined as the determinant of the wronskian matrix
	
	$$\begin{pmatrix}
		h_1 & \ldots & h_s \\
		\de(h_1) & \ldots & \de(h_s) \\
		\vdots & & \vdots \\
		\de^{s-1}(h_1) & \ldots & \de^{s-1}(h_s)
	\end{pmatrix}.
	$$
	The crucial property of the wronskian is that, if $R$ is a field, then $h_1,\ldots,h_s$ are $R^\de$-linearly independent if and only if $\operatorname{wr}(h_1,\ldots,h_s)$ is nonzero (\cite[Lemma 1.12]{SingerPut:differential}).
	
	Because $\bfm_1(Y^\gen),\dots,\bfm_{\ell}(Y^\gen)\in R^\gen$ are $K(x)$-linearly independent, the elements
	$x^j\bfm_i(Y^\gen)\ (0\leq j \leq \ell N,\ 1\leq i\leq \ell)$ of $R^\gen$ are $K$-linearly independent. Thus their wronskian is nonzero.
	Let $w\in \R$ be the wronskian of $x^j\bfm_i(\Y)\ (0\leq j\leq \ell N,\ 1\leq i\leq \ell)$.
	By Lemma \ref{lemma: nonzero extends}, there exists a nonempty Zariski open subset $\U''$ of $\X(k)$ such that $w^c\in R^c$ is nonzero for every $c\in \U'$. Therefore, by Lemma \ref{lemma: max de ideals dense}, there exists, for every $c\in \U''$, a maximal $\de$-ideal $\widetilde{\m_c}$ of $R^c$ such that $w^c\notin\widetilde{\m_c}$. Let $\widetilde{Y^c}\in\Gl_n(R^c/\widetilde{\m_c})$ denote the image of $Y^c$ in $R^c/\widetilde{\m_c}$. As the image of $w$ in $R^c/\widetilde{\m_c}$ is nonzero for every $c\in\U''$, the family  $x^j\bfm_i(\widetilde{Y^c})\ (0\leq j\leq \ell N,\ 1\leq i\leq\ell)$ is $k$-linearly independent.
	
	We claim that $\bfm_1(\widetilde{Y^c}),\ldots,\bfm_{\ell}(\widetilde{Y^c})$ are $k(x)$-linearly independent for every $c\in \U'\cap \U''$. Suppose this is not the case. Then $\linrel(\widetilde{v^c},k(x))$ is nonzero for some $c\in\U'\cap\U''$. By the choice of $N$ and $\U'$ above, there exist $a_1,\ldots,a_\ell\in k(x)$ with $\deg(a_i)\leq N$ and $a_1\bfm_1(\widetilde{Y^c})+\ldots+a_\ell \bfm_\ell(\widetilde{Y^c})=0$. Clearing denominators, we find that the family $x^j\bfm_i(\widetilde{Y^c})\ (0\leq j\leq \ell N,\ 1\leq i\leq \ell)$ is $k$-linearly dependent; a contradiction.

	Thus $\bfm_1(\widetilde{Y^c}),\ldots,\bfm_{\ell}(\widetilde{Y^c})\in R^c/\widetilde{\m_c}$ are $k(x)$-linearly independent. This implies that
		$$
		\dim_{k(x)}\algrel(\widetilde{Y^c},d,k(x))\leq r-\ell=\dim_{K(x)}\algrel(Y^{\gen},d,K(x)).
		$$
	Since $\dim_{k(x)}\algrel(\overline{Y^c},d,k(x))$ does not depend on the choice of the maximal $\de$-ideal $\m_c$ of $R^c$, we obtain (\ref{eq: bound dim}) with $\U_1$ an ad-open subset of $\U'\cap \U''$.
	
	As $p_i(Y^\gen)=0$, also $p_i^c(Y^c)=(p_i(\Y))^c=0$ for $i=1,\ldots,m$ and $c\in\X(k)$. So $p_1^c,\ldots,p_m^c\in \algrel(\overline{Y^c},d,k(x))$ for any $c\in\X(k)$.

	The $K(x)$-linear independence of $p_1,\ldots,p_m\in \B[x][X]$ can be expressed through the non-vanishing of determinants. Thus, there exists a nonempty Zariski open $\U_2$ of $\X(k)$ such that $p_1^c,\ldots,p_m^c\in k(x)[X]$ are $k(x)$-linearly independent for any $c\in \U_2$. Combining this with (\ref{eq: bound dim}), we see that, for $c\in \U_1\cap\U_2$, equality holds in (\ref{eq: bound dim}) and that $p_1^c,\ldots,p_m^c$ is a $k(x)$-basis of $\algrel(\overline{Y^c},d,k(x))$.
	We can thus choose $\U$ as any ad-open subset of $\U_1\cap\U_2$.
\end{proof}

\subsection{Picard-Vessiot rings under specialization}

Recall that we assume Notation \ref{notation: setup} for the remainder of Section \ref{sec: Specialization of differential torsors}. In this section we prove our main specialization result, namely, that $R^c/k(x)$ is Picard-Vessiot for all $c$ in an ad$\times$Jac-open subset of $\X(k)$.


Combining the criterion of Lemma \ref{LM:criterionforprotoPV} with Theorem \ref{theo:basisunderspecialization}, we first show that $R^c/k(x)$ is proto-Picard-Vessiot for all $c$ in an ad-open subset of $\X(k)$.

\begin{prop}
	\label{PROP:protoPV}
	There exists an ad-open subset $\U$ of $\X(k)$ such that $R^c/k(x)$ is proto-Picard-Vessiot for all $c\in\X(k)$.
\end{prop}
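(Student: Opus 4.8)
The plan is to apply the criterion of Lemma~\ref{LM:criterionforprotoPV} fibrewise, using the main specialization result of Appendix~A (Theorem~\ref{theo:basisunderspecialization}) to control the algebraic relations of bounded degree among the entries of the fundamental matrix under specialization. First I set $d=\bfd(n)=(4n)^{3n^2}$, which depends only on $n$ and is therefore the same at the generic and at every special fibre. Consider the $K(x)$\=/vector space $V^\gen$ of all $h\in K(x)[X]$ of degree at most $d$ such that $h(Y^\gen)\in\m^\gen$ for the (unique, since $R^\gen$ is already $\de$-simple — in fact the relevant $\m^\gen$ is zero) maximal $\de$\=/ideal; more precisely, since $R^\gen$ is Picard-Vessiot by Lemma~\ref{lemma: generic}, I take $V^\gen$ to be the space of $h\in K(x)[X]_{\le d}$ with $h(Y^\gen)=0$, i.e. the degree\=/$\le d$ part of the defining ideal $\I(Z^\gen)$ of the torsor $Z^\gen=\spec(R^\gen)\subseteq\Gl_{n,K(x)}$. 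Pick a basis $f_1,\dots,f_r$ of $V^\gen$; after clearing denominators I may assume $f_1,\dots,f_r\in\R[X]$, or more carefully that their coefficients lie in $\B[x]_f$ (enlarging $\B$ if necessary, which is harmless by Remark~\ref{rem: extend B}), so that the specializations $f_i^c$ make sense.

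Next I invoke Theorem~\ref{theo:basisunderspecialization} from Appendix~A: there is an ad\=/open subset $\U_1$ of $\X(k)$ such that for all $c\in\U_1$ the specializations $f_1^c,\dots,f_r^c$ form a $k(x)$\=/basis of the space of all $k(x)$\=/algebraic relations of degree at most $d$ among the entries of the fundamental solution matrix $Y^c$ for $\de(y)=A^cy$. Shrinking to a smaller ad\=/open subset (intersection of ad\=/open subsets contains an ad\=/open subset, Remark~\ref{rem: intersection of ad-opens}), I may also assume $c\in\U_1$ lies in the Zariski\=/open locus where $R^c$ is a (nonzero) integral domain (Lemma~\ref{lemma:domain}); then $R^c/k(x)$ is a differential $G^c$\=/torsor for $\de(y)=A^cy$ by Lemma~\ref{lemma: R^c}, with $R^c$ an integral domain, so Lemma~\ref{LM:criterionforprotoPV} is applicable to $R^c$. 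Now fix any maximal $\de$\=/ideal $\m^c$ of $R^c$ and let $h\in k(x)[X]$ with $\deg h\le d$ and $h(Y^c)\in\m^c$. I claim $h(Y^c)=0$: on the one hand, in the Picard-Vessiot ring $R^c/\m^c$ the relation $h(\overline{Y^c})=0$ holds, so $h$ is an algebraic relation of degree $\le d$ among the entries of a fundamental solution matrix of $\de(y)=A^cy$; on the other hand, the $f_i^c$ span all such relations and each $f_i^c$ actually vanishes on $Y^c$ in $R^c$ (since $f_i$ lies in $\I(Z^\gen)$ and the torsor structure is compatible under specialization — the entries of $Y^c$ satisfy exactly the equations defining $Z^c\subseteq\Gl_{n,k(x)}$, which, as $Z^\gen$ was a $G^\gen$\=/torsor bounded appropriately, contains the locus cut out by $\I(Z^\gen)^c$). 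Hence $h$ is a $k(x)$\=/linear combination of the $f_i^c$, and so $h(Y^c)=\sum c_i f_i^c(Y^c)=0$ in $R^c$. By Lemma~\ref{LM:criterionforprotoPV}, $R^c/k(x)$ is proto-Picard-Vessiot for every $c\in\U:=\U_1$.

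The main obstacle I anticipate is the bookkeeping needed to make ``$f_i^c(Y^c)=0$ in $R^c$'' rigorous, i.e. to verify that the specialization of the defining ideal of the generic torsor $Z^\gen$ is annihilated on $Y^c$ in $R^c$. This requires knowing that the family $\R/\B[x]_f$ genuinely spreads out $Z^\gen$ as a subscheme of $\Gl_{n,\B[x]_f}$ in a flat way, so that $\I(Z^\gen)\cap\B[x]_f[X,\tfrac1{\det X}]$ specializes correctly; flatness of $\R$ over $\B[x]_f$ (part of Notation~\ref{notation: setup}) together with Lemma~\ref{lemma: nonzero extends} and a generic\=/flatness argument handles this, but one must be careful to shrink $\U$ by only a Zariski\=/open (hence ad\=/open\=/absorbable) amount. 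A secondary point is the harmless replacement of $\B$ by a finitely generated algebraic extension (Remark~\ref{rem: extend B}) so that $V^\gen$ — which a priori is defined over $K=\overline{k(\B)}$ — admits a basis with coefficients in $\B[x]_f$; this does not change $R^c$ or the conclusion. Once these are in place, the statement follows by combining Theorem~\ref{theo:basisunderspecialization}, Lemma~\ref{lemma:domain}, Lemma~\ref{lemma: R^c} and Lemma~\ref{LM:criterionforprotoPV} as above. (Note: the statement as displayed says ``for all $c\in\X(k)$'' but evidently means ``for all $c\in\U$''; I read it that way.)
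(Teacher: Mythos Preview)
Your proposal is correct and follows essentially the same route as the paper's proof: use Lemma~\ref{lemma:domain}, extend $\B$ so the $f_i$ have coefficients in $\B[x]$, invoke Theorem~\ref{theo:basisunderspecialization}, and finish via Lemma~\ref{LM:criterionforprotoPV}. The ``main obstacle'' you anticipate is actually immediate and requires no flatness argument: since $f_i(Y^\gen)=0$ in $R^\gen$ and $\R\hookrightarrow R^\gen$ (Lemma~\ref{lemma: injective}), we have $f_i(\Y)=0$ already in $\R$, hence $f_i^c(Y^c)=(f_i(\Y))^c=0$ in $R^c$ for every $c$.
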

\begin{proof}
	By Lemma~\ref{lemma:domain}, there exists a nonempty Zariski open subset $\U_1$ of $\X(k)$ such that $R^c$ is an integral domain for every $c\in\U_1$.
%
	
	Let $p_1,\dots,p_m\in K[x][X]$ be a $K(x)$-basis of $\algrel(Y^\gen,\bfd(n),K(x))$. Extending $\B$ if necessary (Remark \ref{rem: extend B}), we may assume that $p_1,\ldots,p_m\in \B[x][X]$.
	As $p_i(\Y)=0$, also $p_i^c(Y^c)=(p_i(\Y))^c=0$ for $i=1,\ldots,m$ and all $c\in\X(k)$.

	For $c\in \X(k)$ let $\m_c$ be a maximal $\de$-ideal of $R^c$ and let $\overline{Y^c}\in\Gl_n(R^c/\m_c)$ denote the image of $Y^c$ in $R^c/\m_c$. By Theorem~\ref{theo:basisunderspecialization}, there exists an ad-open subset $\U_2$ of $\X(k)$ such that $p_1^c,\dots,p_m^c$ is a $k(x)$-basis of $\algrel(\overline{Y^c},\bfd(n),k(x))$ for any $c\in\U_2$.
	
	We will show that $R^c/k(x)$ is proto-Picard-Vessiot for any $c\in \U_1\cap\U_2$. Let $p\in k(x)[X]$ with $\deg(p)\leq \bfd(n)$ and $p(Y^c)\in\m_c$. According to Lemma \ref{LM:criterionforprotoPV}, it suffices to show that $p(Y^c)=0$.

	As $p(\overline{Y^c})=0$, i.e., $p\in\algrel(\overline{Y^c},\bfd(n),k(x))$,  we have $p=\sum_{i=1}^m a_i p_i^c$ for appropriate $a_1,\ldots,a_m\in k(x)$.
	Therefore, $p(Y^c)=\sum_{i=1}^m a_i p_i^c(Y^c)=0$.
	Thus, we can choose $\U$ as any ad-open subset of $\U_1\cap\U_2$.
\end{proof}

%

We next make the step from proto-Picard-Vessiot to Picard-Vessiot. Roughly, this corresponds to the second main step of Hrushovski's algorithm, which is more or less the Compoint-Singer algorithm from \cite[Section 2.5]{CompointSinger:ComputingGaloisGroupsOfCompletelyReducibleDifferentialEquations}. 
 Besides Proposition \ref{PROP:protoPV}, key ingredients for the proof are the characterization of Picard-Vessiot rings among proto-Picard-Vessiot rings from Lemma \ref{LM:criterion} and the preservation of logarithmic independence under specialization from Theorem \ref{theo: main Appendix B}.
%
%
The following theorem is our main specialization result.
 We assume the setup of Notation \ref{notation: setup}.

\begin{theo} \label{theo: main specialization}
There exists an ad$\times$Jac-open subset $\U$ of $\X(k)$ such that $R^c/k(x)$ is Picard-Vessiot for all $c\in\U$. In particular, the set of all $c\in \X(k)$ such that $R^c/k(x)$ is Picard-Vessiot, is Zariski dense in $\X(k)$.
\end{theo}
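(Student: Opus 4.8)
The plan is to combine the two specialization steps already prepared in the text. By Proposition~\ref{PROP:protoPV} there is an ad-open subset $\U_1$ of $\X(k)$ such that $R^c/k(x)$ is proto-Picard-Vessiot for every $c\in\U_1$; in particular, by Lemma~\ref{lemma:domain} we may shrink $\U_1$ (intersecting with a Zariski open subset, which is ad-open) so that $R^c$ is additionally an integral domain for $c\in\U_1$. For such $c$ the criterion of Lemma~\ref{LM:criterion} applies: $R^c/k(x)$ is Picard-Vessiot if and only if a certain finite list of elements $f_1^c,\dots,f_s^c$ lying in a finite extension of $k(x)$ is logarithmically independent. The whole point is to produce these $f_l$'s uniformly in a family over $\B$, apply Lemma~\ref{LM:criterion} at the generic fibre (where $R^\gen/K(x)$ \emph{is} Picard-Vessiot by Lemma~\ref{lemma: generic}), so that the $f_l^\gen$ are logarithmically independent over $\overline{K(x)}$, and then invoke the main result of Appendix~B to conclude that logarithmic independence is preserved on an ad$\times$Jac-open subset.

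Concretely, I would first, using Remark~\ref{rem: extend B}, enlarge $\B$ so that it is big enough to ``see'' all the relevant algebraic data: a splitting of $\G$ into identity component and component group, the relative algebraic closure $\B'[x]_f$-analogue of $F'$, a basis $\chi_1,\dots,\chi_s$ of $X(G^{\gen,\circ})$ defined over $\B$, and a point $h$ realizing the triviality of the relevant torsor so that $h(\Y)^{-1}\Y$ lands in $(\G^\circ)$; here one uses that spreading out a closed embedding (Lemma~\ref{lemma: spread out closed embedding}) and the finite-presentation arguments from Lemma~\ref{lemma: spread out PVring} let one arrange $h\in\Hom_{\B[x]_f}(\R,\B'[x]_f)$ after adjoining finitely many algebraic elements to $\B$ and to $f$. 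This produces a single matrix $g\in\Gl_n(\B'[x]_f)$ and hence elements $f_1,\dots,f_s\in\B'[x]_f$ (or in a finite extension thereof) by the formula in Lemma~\ref{LM:criterion}, whose specializations $f_l^c$ are exactly the elements the criterion requires at the fibre $c$, and whose generic-fibre images $f_l^\gen$ are the elements the criterion requires at the generic fibre. Since $R^\gen/K(x)$ is Picard-Vessiot, Lemma~\ref{LM:criterion} gives that $f_1^\gen,\dots,f_s^\gen$ are logarithmically independent over $\overline{K(x)}$, equivalently (Remark~\ref{rem: logarithmic independence preserved under algebraic extension}) over the relevant finite extension. (If $X(G^{\gen,\circ})$ is trivial there is nothing to prove beyond Proposition~\ref{PROP:protoPV}.)

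Now apply the main theorem of Appendix~B to the finite collection $f_1,\dots,f_s$: it yields an ad$\times$Jac-open subset $\U_2$ of $\X(k)$ such that $f_1^c,\dots,f_s^c$ remain logarithmically independent for all $c\in\U_2$. Intersecting $\U_1$ with $\U_2$ (and noting, via Remark~\ref{rem: intersection of ad-opens} and the stability of ad$\times$Jac-open sets under finite intersection, that the result still contains an ad$\times$Jac-open set) gives an ad$\times$Jac-open $\U$. For $c\in\U$: $R^c$ is an integral domain, $R^c/k(x)$ is proto-Picard-Vessiot, $R^c/k(x)$ is a differential $G^c$-torsor for $\de(y)=A^cy$ (Lemma~\ref{lemma: R^c}), and $f_1^c,\dots,f_s^c$ are logarithmically independent; the only subtlety is checking that the $f_l^c$ are precisely the elements attached to $R^c$ by the recipe of Lemma~\ref{LM:criterion} — i.e. that $h^c\colon R^c\to\overline{k(x)}$ satisfies $h^c(Y^c)^{-1}Y^c\in (G^c)^\circ(E^c)$ and that $\chi_1^c,\dots,\chi_s^c$ form a basis of $X((G^c)^\circ)$ — which follows because component groups and character lattices of the fibres agree with those of the generic fibre over a Zariski-open, hence ad-open, locus (shrink $\U$ further if needed). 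Then Lemma~\ref{LM:criterion} forces $R^c/k(x)$ to be Picard-Vessiot. Finally, by Theorem~\ref{theo: ad meets ab is dense}, $\U$ is Zariski dense in $\X(k)$, which gives the last assertion.

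The main obstacle I expect is the bookkeeping of the second paragraph: arranging, by a finite enlargement of $\B$ and $f$ (within the algebraic closure of $k(\B)$, so as not to spoil ad$\times$Jac-openness by Lemma~\ref{lemma: lift opens}/Remark~\ref{rem: extend B}), that \emph{all} the ingredients of the Picard-Vessiot criterion of Lemma~\ref{LM:criterion} — the section $h$, the component-group splitting, and the character basis — are defined uniformly over $\B$ and specialize correctly, so that ``being Picard-Vessiot of $R^c$'' is genuinely equivalent to ``logarithmic independence of the fixed elements $f_l^c$'' for all $c$ in a cofinal ad-open set. Once this is set up, the result is a clean intersection of the output of Proposition~\ref{PROP:protoPV}, the output of Appendix~B, and a Zariski-open locus, together with the denseness Theorem~\ref{theo: ad meets ab is dense}.
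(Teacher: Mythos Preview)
Your proposal is correct and follows essentially the same route as the paper: combine Proposition~\ref{PROP:protoPV} with the logarithmic-independence criterion of Lemma~\ref{LM:criterion}, spread out the section $h$, the defining equations of $(G^\gen)^\circ$, and a basis of its character group to $\B[x]_f[\eta]$, and then invoke Theorem~\ref{theo: main Appendix B} and Theorem~\ref{theo: ad meets ab is dense}. The one place where the paper is more explicit than your sketch is the construction of $h^c$: rather than producing $h$ as a global $\B[x]_f$-morphism $\R\to\B'[x]_f$ up front, the paper fixes $h$ at the generic fibre (via Corollary~\ref{cor: existence of h}), regards it as a $\B[x]_f$-map $\R\to\B[x]_f[\eta]$, and then for each $c$ extends $c\colon\R\to R^c$ to $c\colon\R[\eta]\to\overline{E^c}$ (using \cite[Cor.~3, Ch.~V, \S3.1]{Bourbaki:commutativealgebra} and Bertini--Noether for the irreducibility of the minimal polynomial of $\eta$) in order to verify $P_i^c(h^c(Y^c)^{-1}Y^c)=0$ and hence $h^c(Y^c)^{-1}Y^c\in(G^c)^\circ(E^c)$; the uniform control of $(G^c)^\circ$ and $X((G^c)^\circ)$ on a Zariski open is imported from \cite[Prop.~3.5]{Feng:DifferenceGaloisGrousUnderSpecialization}.
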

\begin{proof}
		Extending $\B$ if necessary (Remark \ref{rem: extend B}), we can assume that the morphism $\G\to \Gl_{n,\B}$ defined by $\Y$ is a closed embedding (Lemma \ref{lemma: closed embedding}). We can thus consider $\G$ as a closed subgroup of $\Gl_{n,\B}$. Note that the equations defining $\G$ as a closed subgroup of $\Gl_{n,\B}$, also define $G^\gen$ as a closed subgroup of $\Gl_{n,K}$. 
	 Let $\{\chi_1,\dots,\chi_m\}$ be a basis of $X((G^\gen)^\circ)$ and let $q_1,\ldots,q_m\in K[X,\frac{1}{\det(X)}]$ be such that the image of $q_i$ in 
	 $K[(G^\gen)^\circ]$ agrees with $\chi_i$ for $i=1,\ldots,m$. 
	 Let $K(x)'$ denote the integral closure of $K(x)$ in $R^\gen$ and let $\vartheta\in \Hom_{K(x)'}(R^\gen, \overline{K(x)})$. Note that $K(x)'$ is canonically embedded into $\overline{K(x)}$ by forming $\overline{K(x)}$ inside the algebraic closure of the field of fractions of $R^\gen$. As $\vartheta(R^\gen)$ is a finite field extension of $K(x)$, there exists an $\eta\in \overline{K(x)}$ such that $\vartheta(R^\gen)=K(x)[\eta]=K(x,\eta)$.  In particular, $B=\vartheta(Y^\gen)^{-1}\in \Gl_n(\overline{K(x)})$ has coefficients in $K(x)[\eta]$.
	  Replacing $\eta$ with a $K(x)$\=/multiple of $\eta$ if necessary, we can assume that the minimal polynomial $p\in K(x)[y]$ of $\eta$ over $K(x)$ has coefficients in $K[x]$. 
	 
	 Extending $\B$ and replacing $f$ by a multiple of $f$ if necessary (Remark \ref{rem: extend B}), we may assume that
	 \begin{itemize}
	 	\item  the coefficients of $p,q_1,\ldots,q_m$ are all in $\B$,
	 	\item $\delta(\eta)\in \B[x]_f[\eta]$, so that $\B[x]_f[\eta]$ is a differential subring of $K(x,\eta)$ and
	 	\item $B,B^{-1}\in \B[x]_f[\eta]^{n\times n}$. 
	 	\end{itemize}
	
	 By \cite[Prop. 3.5]{Feng:DifferenceGaloisGrousUnderSpecialization}, there exists a nonempty Zariski open subset $\U_1$ of $\X(k)$ such that 
	   the images $\chi_1^c,\ldots,\chi_m^c$ of $q_1^c,\ldots,q_m^c$ in $k[(G^c)^\circ]$ are a basis of $X((G^c)^\circ)$ for every $c\in\U_1$.
	 
 As $p\in\B[x,y]$ is irreducible in $K(x)[y]$ and monic in $y$, it follows from Gauss's lemma that $p$ is irreducible in $K[x,y]$. By the Bertini-Noether theorem (\cite[Prop.~9.4.3]{FriedJarden:FieldArithmetic}), there exits a nonempty Zariski open subset $\U_2$ of $\U_1$ such that $p^c(x,y)$ is irreducible in $k[x,y]$, and hence also irreducible in $k(x)[y]$, for any $c\in \U_2$. So, for $c\in \U_2$, the ring $\B[x]_f[\eta]\otimes_{\B[x]_f}k(x)=k(x)[y]/(p^c)$ is a field, in fact, a finite field extension of $k(x)$.
	 
	 For $c\in \X(k)$, let $k(x)'$ denote the integral closure of $k(x)$ in $R^c$.
	 Our next goal is to construct, using $\vartheta$, elements $\vartheta^c\in \Hom_{k(x)'}(R^c,\overline{k(x)})$.
	 Recall (Lemma \ref{lemma: injective}) that we may regard $\R$ as a $\de$-subring of $R^\gen$. So the morphism $\vartheta\colon R^\gen\to K(x,\eta)$ of $K(x)'$\=/algebras restricts to a morphism $\vartheta\colon \R\to \B[x]_f[\eta]$ of $\B[x]_f'$-algebras. Thus for $c\in\U_2$ we can base change $\vartheta\colon \R\to \B[x]_f[\eta]$ via $c\colon \B[x]_f\to k(x)$ to a morphism $\theta^c\colon R^c=\R\otimes_{\B[x]_f}k(x)\to \B[x]_f[\eta]\otimes_{\B[x]_f}k(x)$ of $\B[x]'_f\otimes_{\B[x]_f}k(x)$-algebras. By Lemma \ref{lemma: integral closure specializes}, there exists a nonempty Zariski open subset $\U_3$ of $\U_2$ such that the map $\B[x]'_f\otimes_{\B[x]_f}k(x)\to k(x)'\subseteq R^c$ is an isomorphism. So composing $\theta^c$ with a $\B[x]'_f\otimes_{\B[x]_f}k(x)=k(x)'$ embedding of $\B[x]_f[\eta]\otimes_{\B[x]_f}k(x)$ into $\overline{k(x)}$, we obtain a morphism $\vartheta^c\colon R^c\to \overline{k(x)}$ of $k(x)'$-algebras. Note that $B^c=\vartheta^c(Y^c)^{-1}\in\Gl_n(\overline{k(x)})$ agrees with the image of $B$ under $\B[x]_f[\eta]\to \B[x]_f[\eta]\otimes_{\B[x]_f}k(x)\to \overline{k(x)}$ for all $c\in\U_3$.

	 For $\ell=1,\ldots,m$ we set
	 $$
	 f_\ell=\sum_{i,j=1}^n\frac{\partial q_\ell}{\partial X_{ij}}(I_n)(B\mathcal{A}B^{-1}+\delta(B)B^{-1})_{ij}\in \B[x]_f[\eta]\subseteq K(x,\eta)\subseteq \overline{K(x)}.
	 $$
	 As $R^\gen/K(x)$ is Picard-Vessiot, it follows from Lemma \ref{LM:criterion} and Remark \ref{rem: logarithmic independence preserved under algebraic extension} that $f_1,\ldots,f_m$ are logarithmically independent over $K(x,\eta)$.
	 
	 For an element $a$ of $\B[x]_f[\eta]$ and $c\in\U_3$, let us write $a^c=a\otimes 1\in \B[x]_f[\eta]\otimes_{\B[x]}k(x)$ for the image of $a$ in $\B[x]_f[\eta]\otimes_{\B[x]}k(x)\hookrightarrow \overline{k(x)}$.
	 	 For $\ell=1,\ldots,m$, we then have
$$
	f_\ell^c=\sum_{i,j=1}^n\frac{\partial q_{\ell}^c}{\partial X_{ij}}(I_n)(B^cA^c(B^c)^{-1}+\delta(B^c)(B^c)^{-1})_{ij}\in \B[x]_f[\eta]\otimes_{\B[x]_f}k(x)\hookrightarrow\overline{k(x)}.
	$$
  By Theorem~\ref{theo: main Appendix B}, there exists an ad$\times$Jac-open subset $\U'$ of $\X(k)$ contained in $\U_3$ such that $f_1^c,\dots,f_m^c$ are logarithmically independent over $\B[x]_f[\eta]\otimes_{\B[x]_f}k(x)$ for all $c\in \U'$. Then $\U'$ is of the form $\U'=\U_4\cap\mathcal{V}$ with $\U_4$ ad-open and $\mathcal{V}$ Jac-open.
 
  By Proposition~\ref{PROP:protoPV}, there exists an ad-open subset $\U_5$ of $\X(k)$ such that $R^c/k(x)$ is proto-Picard-Vessiot for any $c\in \U_5$. 
  Let $\U_6$ be an ad-open subset of $\U_4\cap\U_5$. Then $\U=\U_6\cap\mathcal{V}$ is ad$\times$Jac-open. 
 Now Lemma~\ref{LM:criterion} implies that $R^c/k(x)$ is Picard-Vessiot for any $c\in \U$.
 
 The last statement of the theorem follows from Theorem \ref{theo: ad meets ab is dense}. 
\end{proof}

The proof of Theorem \ref{theo: main specialization} shows that if the algebraic group $G^\gen$ over $K$ is such that $(G^\gen)^t=G^\gen$ (e.g., $G^\gen$ is semisimple or unipotent), then the Jac-open subset is not needed, i.e., the set of good specializations contains an ad-open subset.
However, we can do much better than that:

\begin{cor} \label{cor: no Jac if connected}
	Assume that in the setup of Notation \ref{notation: setup} the generic differential Galois group $G^\gen$ is connected. Then there exists an ad-open subset $\U$ of $\X(k)$ such that $R^c/k(x)$ is Picard-Vessiot for every $c\in\U$.
\end{cor}
\begin{proof}
	If $G^\gen$ is connected, then the $\vartheta\in\Hom_{K(x)}(R^\gen,\overline{K(x)})$ in the proof of Theorem~\ref{theo: main specialization} can be chosen such that $B=\vartheta(Y^\gen)^{-1}\in \Gl_n(K(x))$ by Remark \ref{rem: C1}. We can therefore choose $\eta=1$ and so $f_1,\ldots,f_m\in \B[x]_f\subseteq K(x)$. By Remark \ref{rem: eta=1 logarithmic independence}, the Jac-open subset $\mathcal{V}$ is then not needed so that $\U'=\U_5$ is ad-open and therefore also $\U$ is ad-open.
\end{proof}

While Jac-open subsets do not occur when the generic differential Galois is connected, they do indeed occur in the non-connected case. This is illustrated in the following example.  

\begin{ex} \label{ex: need Jac-open}
		 Let $\alpha$ and $\beta$ be variables over $k$ and set $\B=k[\alpha,\beta]$. In line with Notation~\ref{notation: setup} we also set $K=\overline{k(\alpha,\beta)}$. 

Let $a=\frac{(x-\beta)^2}{x^4+x+\alpha}$, $f=(x^4+x+\alpha)(x-\beta)$ and consider the differential equation
\begin{equation}
\label{eq: dgl for elleptic curve}
\de^2(y)-\big(2a+\tfrac{\de(a)}{2a}\big)\de(y)+\big(a^2-a-\tfrac{\de(a)}{2}\big)y=0
\end{equation}
over $\B[x]_f$. Note that the companion matrix $\cA\in \B[x]_f^{2\times 2}$ of (\ref{eq: dgl for elleptic curve}) is of the form described in Examples \ref{ex: torsor for monomial matrices} and \ref{ex: integral domain} with $b=a$.

As $a$ is not a square in $k(\alpha,\beta)(x)$, we see that $\B[x]_f[y]/(y^2-a)=\B[x]_f[\eta]$ is an integral domain and a differential ring.
As in the previous examples we set $$\R=\B[x]_f[\eta][y_1,y_2,y_1^{-1},y_2^{-1}]=\B[x]_f[X,\tfrac{1}{\det(X)}]/(p_1,p_2)=\B[x]_f[\Y,\tfrac{1}{\det(\Y)}],$$ where
	$$
p_1=X_{21}X_{22}-(a^2-a)X_{11}X_{12} \quad  \quad  p_2=X_{21}X_{12}+X_{22}X_{11}-2aX_{11}X_{12}, $$
$\de(y_1)=(a+\eta)y_1$, $\de(y_2)=(a+\eta)y_2$ and
$\de(\Y)=\cA \Y$.

 We know from Example \ref{ex: torsor for monomial matrices} that $\R/\B[x]_f$ is a differential  $\G$-torsor for $\de(y)=\cA y$ where $\G$ is the group scheme of $2\times 2$ monomial matrices over $\B$.
Therefore, $$R^\gen=\R\otimes_{\B[x]_f}K(x)=K(x,\eta)[y_1,y_2,y_1^{-1},y_{2}^{-1}]=K(x)[Y^\gen,\tfrac{1}{Y^\gen}]$$ 
is a differential $G^\gen=\G_K$-torsor.
Example \ref{ex: elements for logarithmic test} shows that $R^\gen/K(x)$ is Picard-Vessiot if and only if $f_1=a+\eta$ and $f_2=a-\eta$ are logarithmically independent over $K(x,\eta)$.
From Example \ref{ex: get elleptic curve} we know that $f_1,f_2$ are logarithmically independent over $K(x,\eta)$ if the point $(0,1)$ is not a torsion point on the elliptic curve (over $K$) defined by $v^2=u^3-4\alpha u+1$.

Combining Example 2.4 (i) of \cite{Zannier:ICM2014} and Proposition 12.1 of \cite{Zannier:unlikelyintersectionsandpellequationsinpolynomials} we see that the latter is the case and so $R^\gen/K(x)$ is Picard-Vessiot. (Note that in the notation of these references $\infty_+=(0,1)$ and $\infty_-$ is the neutral element.)

Set $\X=\spec(\B)=\mathbb{A}^2_k$. 
We would like to determine an ad$\times$Jac-open subset $\U$ of $\X(k)=k^2$ such that $R^c=\R\otimes_{\B[x]_f}k(x)$ is Picard-Vessiot for every $c\in \U$. To this end, we follow the same strategy as at the generic point.
Example \ref{ex: integral domain} shows that $R^c$ is an integral domain provided that $256(\alpha^c)^3-27\neq 0$. For these $c$'s, Example \ref{ex: elements for logarithmic test} shows that $R^c$ is Picard-Vessiot if and only if $f_1^c=a^c+\eta^c$ and $f_2^c=a^c-\eta^c$ are logarithmically independent over $F^c=k(x,\eta^c)=k(x)[y]/(y^2-a^c)$. With $z^c=\frac{x-\beta^c}{\eta^c}$, we have
$F^c=k(x,z^c)$ and $(z^c)^2=x^4+x+\alpha^c$.

Set $\U=W_{\X}(\Ga, \Gamma) \cap \VV $, where $\Gamma$ is the subgroup of $(\B,+)$ generated by $(\beta^4+\beta+\alpha)^2$ and $256\alpha^3-27$ and  $\VV$ is the set of all $c=(\alpha^c,\beta^c)\in \X(k)$ such that $256(\alpha^c)^3-27\neq 0$ and $(0:1:1)$ is not a torsion point of the elliptic curve $\cE^c$ with affine equation: $u^2=v^3-4\alpha^c v+1$. 
So $\U$ is an ad$\times$Jac-open subset of $\X(k)$. We will show that $R^c/k(x)$ is Picard-Vessiot for every $c\in \U$. 

Let $c\in \U$. Because $c\in W_{\X}(\Ga, \Gamma)$, we have 
$$
 (\beta^c)^4+\beta^c+\alpha^c\neq 0, \,\,256(\alpha^c)^3-27\neq 0,\,\,\mbox{and}\,\,\frac{((\beta^c)^4+\beta^c+\alpha^c)^2}{256(\alpha^c)^3-27}\notin \bQ.
$$ From Example~\ref{ex: get elleptic curve} with $\alpha,\beta$ being replaced by $\alpha^c,\beta^c$ respectively, we see that $Z_1((f_1^c,f_2^c),\cP)=\{(d,-d)\mid d\in \bZ\}$, where $\cP=\{P_1,P_2,\dots,P_6\}$ are all poles of $f_1^c dx$ and $f_2^c dx$.
 In particular, $P_1,P_2$ are places of the function field $F^c=k(x,z^c)$ with $(z^c)^2=x^4+x+\alpha^c$ and $P_1$ is the place corresponding to the neutral element $(0:1:0)\in E^c(k)$, while $P_2$ corresponds to the point $(0:1:1)\in E^c(k)$. For each $(d,-d)\in Z_1((f_1^c, f_2^c), \cP)$,
$$
  \sum_{P\in \cP} \left(\sum_{i=1}^2 d \res_{P} (f_i^c dx)\right)P=2d(P_2-P_1)
 $$
 Furthermore, as $c\in \VV$, $2d(P_2-P_1)$ is not a torsion point of $\cE^c(k)$ if $d\neq 0$. Thus $Z_2((f_1^c,f_2^c), \cP)$ is trivial and therefore $f_1^c, f_2^c$ are logarithmically independent (see Example~\ref{ex: get elleptic curve} for details). So $R^c/k(x)$ is Picard-Vessiot for every $c\in\U$.

In the following, we also show that there are infinitely many $c=(\alpha^c,\beta^c)\in k^2=\X(k)$ with $256(\alpha^c)^3-27\neq 0$ such that $R^c$ is not Picard-Vessiot, i.e., $f_1^c,f_2^c$ are logarithmically dependent. 
By Example 2.4 (i) of \cite{Zannier:ICM2014}, there exist infinitely many $\alpha^c\in k$ with $256(\alpha^c)^3-27\neq 0$ such that $(0,1)$ is a torsion point on the elliptic curve $\cE^c$. For every such $\alpha^c\in k$, let $m_c\geq 2$ denote the order of $(0,1)$. Now $m_c(0,1)$ is the neutral element of $\cE^c$, if and only if the divisor $m_c(P_2-P_1)$ is principal. Thus, there exist an $h_c\in F^c$ such that $(h_c)=m_c(P_2-P_1)$. Note that $\frac{dx}{z^c}$ is a regular differential of $F^c$. The only poles of $\frac{dh^c}{h_c}=\frac{\de(h_c)}{h_c}dx$ are $P_1$ and $P_2$. Moreover, these poles are simple with residues $-m_c$ and $m_c$ respectively. On the other hand, $\frac{m_c x}{z^c}dx=\frac{m_cx^2}{z^c}d(x^{-1})$ and the $P_i$-adic expansion of $z^c$ is $z^c=c_i(x^{-1})^{-1}+\ldots$ for $i=1,2$ with $c_1=-1$ and $c_2=1$ (Example~\ref{ex: get elleptic curve}). It follows that also the differential $\frac{m_cx}{z^c}dx-\frac{\de(h_c)}{h_c}dx$ is regular. 
As $F^c/k$ has genus one, the space of regular differentials is one dimensional. Therefore, there exists an $s\in k$ such that $\frac{m_cx}{z^c}-\frac{\de(h_c)}{h_c}=\frac{s}{z^c}$. If we set $\beta^c=\frac{s}{m_c}\in k$, then
$$m_c(f_1^c-f_2^c)=m_c\eta^c=\tfrac{m_c(x-\beta^c)}{z^c}=\tfrac{\de(h_c)}{h_c},$$
so that indeed $f_1^c,f_2^c$ are logarithmically dependent for $c=(\alpha^c,\beta^c)\in\X(k)$.

As the above $c$'s are not really given explicitly, let us present at least one explicit $c\in W_\X(\Ga,\Gamma)$ such that $R^c/k(x)$ is not Picard-Vessiot. (In this sense the set $\mathcal{V}$ is necessary.)

Set $c=(\alpha^c, \beta^c)=(-\frac{1+\sqrt{-3}}{4}, -\frac{1+\sqrt{-3}}{8})$. Then 
$$
   256\alpha^c-27=5, \quad ((\beta^c)^4+\beta^c+\alpha^c)^2=\frac{37249 (\sqrt{-3}+1)^2}{262144}.
$$
So $c=(\alpha^c,\beta^c)\in W_\X(\Ga,\Gamma)$ but $f_1^c, f_2^c$ are logarithmically dependent. In fact, one has that
$$
  2(f_1^c-f_2^c)dx=4\frac{(x-\frac{1+\sqrt{-3}}{8}) dx}{\sqrt{x^4+x-\frac{1+\sqrt{-3}}{4}}}=\frac{\delta(h)}{h},
$$
where 
$h=p+q\sqrt{x^4+x-\frac{1+\sqrt{-3}}{4}}$ with
\begin{align*}
    p&=(2x^4-x^2+x)\sqrt{3}+\sqrt{-1}(2x^4+4x^3+x^2+x+1),\\
    q&=(2x^2-1)\sqrt{3}+\sqrt{-1}(2x^2+4x+1).
\end{align*}
\end{ex}

\section{Applications of the specialization theorem}

\label{sec: Applications of the specialization theorem}

In this section, we present some applications of our specialization theorem (Theorem~\ref{theo: main specialization}). The main application, which in fact motivated this entire paper, is the proof of Matzat's conjecture. Besides Matzat's conjecture, we also present the proofs for the results announced in the introduction and a very short proof of the solution of the inverse problem.

\subsection{Exceptional parameter values}
In this section we deduce the results announced in the introduction from Theorem~\ref{theo: main specialization}.

\subsubsection{Exceptional parameter values for the algebraic relations among the solutions in a family of linear differential equations} \label{subsubsec: Exception parameter values for relations}

Let $k\subseteq k'$ be an inclusion of algebraically closed fields of characteristic zero and let $R/k'(x)$ be a Picard-Vessiot ring for $\de(y)=
Ay$ with $A\in k'(x)^{n\times n}$. Then $R$ is of the form $R=k'(x)[X,\frac{1}{\det(X)}]/\m$ for some maximal differential ideal $\m$ of $k'(x)[X,\frac{1}{\det(X)}]$, where $\de(X)=AX$. Let $p_1,\ldots,p_m$ be a generating set of the ideal $\m$. Fix a finitely generated $k$-subalgebra $\B_0$ of $k'$ and $f_0\in\B_0[x]$ a monic polynomial such that $A\in \B_0[x]_{f_0}^{n\times n}$ and $p_1,\ldots,p_m\in \B_0[x]_{f_0}[X,\frac{1}{\det(X)}]$. Set $\X_0=\spec(\B_0)$ and for $c_0\in\X_0(k)$ let $\m^{c_0}$ denote the ideal of $k(x)[X,\frac{1}{\det(X)}]$ generated by $p_1^{c_0},\ldots,p_m^{c_0}$.


\begin{cor} \label{cor: algebraic relations preserved}
	There exists an ad$\times$Jac-open subset $\U_0$ of $\X_0(k)$ such that $\m^{c_0}$ is a maximal differential ideal of $k(x)[X,\frac{1}{\det(X)}]$ for every $c_0\in\U_0$, where $k(x)[X,\frac{1}{\det(X)}]$ as a differential ring with respect to the derivation determined by $\de(X)=A^{c_0}X$.	
\end{cor}
\begin{proof}
	By Lemma \ref{lemma: spread out PVring}, there exists
	\begin{itemize}
		\item a finitely generated $k$-subalgebra $\B$ of $k'$ containing $\B_0$ and contained in the algebraic closure of the field of fractions of $\B_0$;
		\item a monic polynomial $f\in\B[x]$ such that $f_0$ divides $f$ in $\B[x]$;
		\item an affine group scheme $\G$ of finite type over $\B$;
		\item a differential $\G$-torsor $\R/\B[x]_{f}$ for $\de(y)=Ay$
such that 
$\R$ is a flat $\B[x]_{f}$-module,
 $\R\otimes_{\B[x]_{f}}K(x)$ is $\de$-simple, where $K\subseteq k'$ is the algebraic closure of the field of fractions of $\B$ and $\R\otimes_{\B[x]_{f}}k'(x)\simeq R$. 
	\end{itemize}
Let $Y$ denote the image of $X$ in $R=k'(x)[X,\frac{1}{\det(X)}]/\m$ and let $\Y\in \Gl_n(\R)$ be the fundamental matrix corresponding to $Y$ under the isomorphism $\R\otimes_{\B[x]_{f}}k'(x)\simeq R$. Then $$\m=\left\{p\in k'(x)[X,\tfrac{1}{\det(X)}]\ \big|\ p(Y)=0\right\}$$ and for
$$\mathcal{I}=\{p\in \B[x]_{f}[X,\tfrac{1}{\det(X)}]\ \big|\ p(\Y )=0\}$$ we have $\mathcal{I}\otimes_{\B[x]_{f}}k'(x)=\m=(p_1,\ldots,p_m)\subseteq k'(x)[X,\frac{1}{\det(X)}]$. 

Set $\X=\spec(\B)$ and let $\f\colon \X(k)\to \X_0(k)$ be the map induced by the inclusion $\B_0\subseteq \B$.

 We claim that there exists a nonempty Zariski open subset 
$\U$ of $\X(k)$
such that $\mathcal{I}\otimes_{\B[x]_{f}}k(x)=\m^{\f(c)}\subseteq k(x)[X,\frac{1}{\det(X)}]$ for all $c\in \U$,
 where the tensor product $\mathcal{I}\otimes_{\B[x]_{f}}k(x)$ is formed using $c\colon \B[x]_{f}\to k(x)$. As $p_1,\ldots,p_m\in \mathcal{I}$, we have $\m^{\f(c)}\subseteq \mathcal{I}\otimes_{\B[x]_{f}}k(x)$ for all $c\in \X(k)$.

Set
$$\mathcal{I}'=\{p\in k(\B)(x)[X,\tfrac{1}{\det(X)}]\ \big|\ p(\Y)=0\}.$$
As $\mathcal{I}'\otimes_{k(\B)(x)}k'(x)=\m=(p_1,\ldots,p_m)\subseteq k'(x)[X,\frac{1}{\det(X)}],$
we must have $\mathcal{I}'=(p_1,\ldots,p_m)\subseteq k(\B)(x)[X,\tfrac{1}{\det(X)}]$. 

Let $q_1,\ldots,q_d\in \B[x]_{f}[X,\frac{1}{\det(X)}]$ be such that $\mathcal{I}=(q_1,\ldots,q_d)$. As $q _i\in \mathcal{I}\subseteq \mathcal{I}'=(p_1,\ldots,p_m)$, we can find a nonzero $b\in\B$ and a monic $f'\in \B[x]$ such that $bf'q_i\in (p_1,\ldots,p_m)\subseteq \B[x][X,\frac{1}{\det(X)}]$ for $i=1,\ldots,d$. Let $\U$ be the Zariski open subset of $\X(k)$ where $b$ does not vanish. Then $q_i^{c}\in(p_1^{c},\ldots,p_m^{c})\subseteq k(x)[X,\frac{1}{\det(X)}]$ for $i=1,\ldots,d$ for every $c\in\U$. Therefore
$$\mathcal{I}\otimes_{\B[x]_{f}}k(x)=(q_1,\ldots,q_d)\otimes_{\B[x]_{f}}k(x)=(q_1^{c},\ldots,q_d^{c})\subseteq (p_1^{c},\ldots,p_m^{c})=\m^{\f(c)}$$
and so  $\mathcal{I}\otimes_{\B[x]_{f}}k(x)=\m^{\f(c)}$ for all $c\in\U$ as claimed. In particular, since $\mathcal{I}$ is a differential ideal, also $\m^{\f(c)}$ is a differential ideal for all $c\in\U$.

Theorem \ref{theo: main specialization} applied to the differential torsor $\R/\B[x]_{f}$ yields an ad$\times$Jac-open subset $\mathcal{V}$ of $\X(k)$ such that $\R\otimes_{\B[x]_{f}}k(x)$ is $\de$-simple for every $c\in \mathcal{V}$. For $c\in\U\cap\mathcal{V}$ we then have that
\begin{align*}
\R\otimes_{\B[x]_{f}}k(x)=(\B[x]_{f}[X,\tfrac{1}{\det(X)}]/\mathcal{I})\otimes_{\B[x]_{f}}k(x) & =     k(x)[X,\tfrac{1}{\det(X)}]/\mathcal{I}\otimes_{\B[x]_{f}}k(x)= \\
& =k(x)[X,\tfrac{1}{\det(X)}]/\m^{\f(c)}
\end{align*}
is $\de$-simple, i.e., $\m^{\f(c)}$ is a maximal $\de$-ideal. By Lemma \ref{lemma: lift opens}, there exists an ad$\times$Jac-open subset $\U_0$ of $\X_0(k)$ such that $\U_0\subseteq \f(\U\cap\mathcal{V})$. Then $\m^{c_0}$ is a maximal $\de$-ideal for all $c_0\in \U_0$.
\end{proof}

\begin{rem} \label{rem: no Jac for connected in relations}
	If, in the context of Corollary \ref{cor: algebraic relations preserved}, the differential Galois group of $\de(y)=Ay$ (over $k'(x)$) is connected, then the set $\U_0$ can be chosen to be ad-open.
\end{rem}
\begin{proof}
	If the differential Galois group $G$ of $\de(y)=Ay$ is connected, also the differential Galois group $G^\gen$ of $\R^\gen=\R\otimes_{\B[x]_{f}}K(x)$ is connected because $G=(G^\gen)_{k'}$ by Lemma~\ref{lemma: base change of PV ring over constants}. Thus, by Corollary \ref{cor: no Jac if connected}, the ad$\times$Jac-open subset $\mathcal{V}$ of $\X(k)$ in the proof of Corollary \ref{cor: algebraic relations preserved} can be chosen to be ad-open. Then by Lemma \ref{lemma: lift opens} also $\U_0$ can be chosen to be ad-open.
\end{proof}

\subsubsection{Families of not solvable algebraic groups} \label{subsubsec: Families of non-solvable algebraic groups}

We next work towards a proof of Corollary B1 from the introduction. A linear differential equation is solvable by Liouvillian functions if and only if the identity component of the differential Galois group is solvable (\cite[Theorem 1.43]{SingerPut:differential}). For the proof of Corollary~B1 we need to know that the contrapositive of this property spreads out from the generic fibre to an open subset of the parameter space.

Throughout Section \ref{subsubsec: Families of non-solvable algebraic groups} we make the following assumptions:
\begin{itemize}
	\item $k$ is an algebraically closed field of characteristic zero;
	\item $\B$ is a finitely generated $k$-algebra that is an integral domain;
	\item $K=\overline{k(\B)}$ is the algebraic closure of the field of fractions $k(\B)$ of $\B$;
	\item $\X=\spec(\B)$;
	\item $\G$ is an affine group scheme of finite type over $\X$ such that $\G\to \X$ is dominant, i.e., the dual map $\B\to \B[\G]$ is injective.
	\item $\xi$ is the generic point of $\X$ and $\G_\xi=\G\times_\X\spec(k(\xi))$ is the generic fibre (an algebraic group over $k(\xi)=k(\B)$) whereas $\G_{\overline{\xi}}=\G\times_\X\spec(K)$ is the geometric generic fibre (an algebraic group over $K$). Moreover, $\G_c$ is the fibre over $c\in\X$ (an algebraic group over the residue field of $c$).
\end{itemize}	

The main goal of this subsection is to prove the following:

\begin{prop} \label{prop: not solvable spreads out}
	If $(\G_{\overline{\xi}})^\circ$ is not solvable, then there exists a nonempty Zariski open subset $\U$ of $\X(k)$ such that $(\G_{c})^\circ$ is not solvable for every $c\in \U$.
\end{prop}

The proof of Proposition \ref{prop: not solvable spreads out} is given at the end of Section \ref{subsubsec: Families of non-solvable algebraic groups}. We will need some properties of the derived subgroup of an algebraic group. For more background see \cite[Section 6 d]{Milne:AlgebraicGroupsTheTheoryOfGroupSchemesOfFiniteTypeOverAField} or \cite[Section 10.1]{Waterhouse:IntroductiontoAffineGroupSchemes}.
Let $k'$ be a field of characteristic zero and let $G$ be an algebraic group over $k'$. The \emph{derived group} $\mathcal{D}(G)$ of $G$ can be defined as the smallest closed normal subgroup of $G$ such that $G/\mathcal{D}(G)$ is abelian.  Alternatively, it can also be described as the closed subgroup of $G$ generated by the commutator map $G\times G\to G,\ (g,h)\mapsto ghg^{-1}h^{-1}$ (\cite[Prop. 6.18]{Milne:AlgebraicGroupsTheTheoryOfGroupSchemesOfFiniteTypeOverAField}). More generally, for every $n\geq 1$, consider the map $$\f_n\colon G^{2n}\to G,\ (g_1,h_1,\ldots,g_n,h_n)\mapsto g_1h_1g_1^{-1}h_1^{-1}\ldots g_nh_ng_n^{-1}h_n^{-1}$$ and its dual $\f_n^*\colon k'[G]\to \otimes^{2n}k'[G]$. 
So $\ker(\f_n^*)$ is the defining ideal of the closure of the image of $\f_n$.
One has $\ker(\f_1^*)\supseteq \ker(\f_2^*)\supseteq\ldots$ and the defining ideal $\I(\mathcal{D}(G))\subseteq k'[G]$ of $\mathcal{D}(G)$ is $\I(\mathcal{D}(G))=\bigcap_{n\geq 1}\ker(\f_n^*)$. In fact, $\I(\mathcal{D}(G))=\ker(\f_n^*)$ for some $n$ (\cite[Prop.~6.20]{Milne:AlgebraicGroupsTheTheoryOfGroupSchemesOfFiniteTypeOverAField}).

The algebraic group $G$ is \emph{perfect} if $\mathcal{D}(G)=G$. Thus $G$ is perfect if and only of $\f_n^*$ is injective for some $n\geq 1$. In other words, $G$ is perfect if and only if $\f_n$ is dominant for some $n\geq 1$.

The higher derived groups $\mathcal{D}^i(G)$ are defined recursively by $\mathcal{D}^i(G)=\mathcal{D}(\mathcal{D}^{i-1}(G))$. Then $G\supseteq \mathcal{D}(G)\supseteq\mathcal{D}^2(G)\supseteq\ldots$ is a descending sequence of closed subgroups (the derived series) that must eventually terminate, say at $\mathcal{D}^n(G)$. If $\mathcal{D}^n(G)=1$ is the trivial group, then $G$ is called \emph{solvable}. Otherwise, $\mathcal{D}^n(G)$ is a nontrivial perfect closed subgroup of $G$.

\begin{lemma} \label{lemma: perfect spreads out}
 If $\G_\xi$ is perfect and nontrivial, then there exists a nonempty Zariski open subset $\U$ of $\X$ such that $\G_{c}$ is perfect and nontrivial for every $c\in \U$.
\end{lemma}	
\begin{proof}
	As above, for $n\geq 1$ consider the scheme morphism $$\f_n\colon\G^{2n}\to\G,\ (g_1,h_1,\ldots,g_n,h_n)\mapsto g_1h_1g_1^{-1}h_1^{-1}\ldots g_nh_ng_n^{-1}h_n^{-1}.$$ 
	Since $\G_\xi$ is perfect $(\f_n)_\xi\colon \G_\xi^n\to \G_\xi$ is dominant for some $n\geq 1$. By  \cite[Theorem~9.6.1~(ii)]{Grothendieck:EGAIV3}, the set of all $c\in \X$ such that $(\f_n)_c\colon \G_c^n\to \G_c$ is dominant is constructible. As it contains the generic point $\xi$, this set thus contains a nonempty Zariski open subset $\U'$ of $\X$. So for every $c\in \U'$, the algebraic group $\G_c$ is perfect.
	
	By generic freeness (\cite[\href{https://stacks.math.columbia.edu/tag/051S}{Tag 051S}]{stacks-project}), there exists nonzero $b\in \B$ such that $\B[\G]_b$ is a free $\B_b$-module, say of rank $\kappa$.
		Then, $\B[\G]\otimes_\B k(\p)$ is a $k(\p)$-vector space of dimension $\kappa$ for every prime ideal $\p\in D(b)$. The assumption that $\G_\xi$ is nontrivial means that $\B[\G]\otimes_\B k(\B)$ is not reduced to $k(\B)$. So $\kappa > 1$ and $\B[\G]\otimes_\B k(\p)$ is not reduced to $k(\p)$ for any $\p\in D(b)$, i.e., $\G_c$ is nontrivial for $c\in D(b)$. Thus $\U=\U'\cap D(b)$ has the required property.
\end{proof}

\begin{lemma} \label{lemma: nonsolvable spreads out}
 If $\G_\xi$ is not solvable, there exists a nonempty Zariski open subset $\U$ of $\X$ such that $\G_c$ is not solvable for every $c\in \U$.
\end{lemma}
\begin{proof}
	Since $\G_{\xi}$ is not solvable, there exists an $n\geq 1$ such that $H=\mathcal{D}^n(\G_\xi)$ is nontrivial and perfect. The closed subgroup $H$ of $\G_{\xi}$ spreads out to a closed subgroup $\H$ over a nonempty Zariski open subset of $\X$. In detail, there exists a nonempty affine Zariski open $\U'$ of $\X$ and a closed subgroup scheme $\mathcal{H}$ of $\G_{\U'}=\G\times_\X \U'$ such that $\mathcal{H}_\xi=H$. This follows, for example, from \cite[Theorems 8.8.2 and 8.10.5]{Grothendieck:EGAIV3}.	
	
	 Lemma \ref{lemma: perfect spreads out} applied to $\H$, yields a nonempty Zariski open subset $\U$ of $\U'$ such that $\mathcal{H}_{c}$ is nontrivial and perfect for every $c\in \U$. Thus, for every $c\in \U$, the algebraic group $\G_c$ contains the closed nontrivial perfect subgroup $\mathcal{H}_c$ and can therefore not be solvable.
\end{proof}
We are now prepared to prove the main result of this subsection.

\begin{proof}[Proof of Proposition \ref{prop: not solvable spreads out}]
Because solvability (\cite[Cor. 6.31]{Milne:AlgebraicGroupsTheTheoryOfGroupSchemesOfFiniteTypeOverAField}) and the formation of the identity component (\cite[Prop. 1.34]{Milne:AlgebraicGroupsTheTheoryOfGroupSchemesOfFiniteTypeOverAField}) is compatible with base change, we see that $(\G_\xi)^\circ$ is not solvable. As in the proof of Lemma \ref{lemma: nonsolvable spreads out}, the closed subgroup $H=(\G_\xi)^\circ$ of $\G_\xi$ spreads out to a closed subgroup scheme over a nonempty Zariski open subset of $\X$, i.e., there exists a nonempty Zariski open subset $\U_1$ of $\X$ and a closed subgroup $\H$ of $\G_{\U_1}=\G\times_\X\U_1$ such that $\H_\xi=(\G_\xi)^\circ$. Because $\H_c$ is (geometrically) connected, there exists a nonempty Zariski open subset $\U_2$ of $\U_1$ such that $\H_{c}$ is (geometrically) connected for every $c\in \U_2$ (\cite[Theorem 9.7.7]{Grothendieck:EGAIV3}). Moreover, there exists a nonempty Zariski open subset $\U_3$ of $\U_1$ such that $\dim(\G_{c})=\dim(\G_\xi)$ and $\dim(\H_c)=\dim(H_\xi)(=\dim(\G_\xi))$ for all $c\in\U_3$ (\cite[\href{https://stacks.math.columbia.edu/tag/05F7}{Tag~05F7}]{stacks-project}). In summary, this shows that we can find a nonempty Zariski open subset $\U'$ of $\U_1$ such that $\H_{c}=(\G_{c})^\circ$ for all ${c}\in \U'$.

Applying Lemma \ref{lemma: nonsolvable spreads out} to the affine group scheme $\H_{\U'}=\H\times_{\U_1}\U'$ over $\U'$ yields a nonempty Zariski open susbset $\U''$ of $\U'$ such that $\H_{c}=(\G_{c})^\circ$ is not solvable for all ${c}\in \U''$. In particular, $(\G_{c})^\circ$ is not solvable for every $c\in \U=\U''\cap \X(k)$.
\end{proof}

\subsubsection{Exceptional parameter values for solving in a family of linear differential equations}
\label{subsubsec:Exceptional parameter values for solving }

Throughout Subsection \ref{subsubsec:Exceptional parameter values for solving } we make the following assumptions: 

\begin{itemize}
	\item $k\subseteq k'$ is an inclusion of algebraically closed fields of characteristic zero;
	\item $\de(y)=Ay$ is a linear differential equation over $k'(x)$, i.e., $A\in k'(x)^{n\times n}$;
	\item  $\B_0\subseteq k'$ is a finitely generated $k$-algebra and $f_0\in\B_0[x]$ is a monic polynomial such that $A\in\B_0[x]_{f_0}^{n\times n}$;
	\item $\X_0=\spec(\B_0)$;
	\item for $c_0\in\X_0(k)$ we denote with $A^{c_0}\in k(x)^{n\times n}$ the matrix obtained from $A$ by applying $c_0\colon \B_0\to k$ to the coefficients of the entries of $A$.
	
\end{itemize}

We first treat the case of solving in Liouvillian extensions. Recall that a subset of $\X_0(k)$ is called ad$\times$Jac-closed (or ad-closed) if its complement is ad$\times$Jac-open (or ad-open).

\begin{cor} \label{cor: Liouvillian solutions preserved}
	Assume that the differential equation $\de(y)=A y$ (over $k'(x)$) does not have a basis of solutions consisting of Liouvillian functions, then the set of all $c_0\in\X_0(k)$ such that the differential equation $\de(y)=A^{c_0}y$ (over $k(x)$) has a basis of solutions consisting of Liouvillian functions, is contained in an ad$\times$Jac-closed subset of $\X_0(k)$.
\end{cor}
\begin{proof}
		Let $R/k'(x)$ be a Picard-Vessiot ring for $\de(y)=Ay$. By Lemma \ref{lemma: spread out PVring}, there exists
	\begin{itemize}
		\item a finitely generated $k$-subalgebra $\B$ of $k'$ containing $\B_0$ and contained in the algebraic closure of the field of fractions of $\B_0$;
		\item a monic polynomial $f\in\B[x]$ such that $f_0$ divides $f$ in $\B[x]$;
		\item an affine group scheme $\G$ of finite type over $\B$;
		\item a differential $\G$-torsor $\R/\B[x]_{f}$ for $\de(y)=Ay$ such that 
		$\R$ is a flat $\B[x]_{f}$-module, $\R\otimes_{\B[x]_{f}}K(x)$ is $\de$-simple, where $K$ is the algebraic closure of the field of fractions of $\B$ and $\R\otimes_{\B[x]_{f}}k'(x)\simeq R$.
	\end{itemize}

	Note that $\R\otimes_{\B[x]_{f}}K(x)$ is a Picard-Vessiot ring over $K(x)$ with differential Galois group $\G_{K}$ (Lemma \ref{lemma: generic}). So it follows from Lemma \ref{lemma: base change of PV ring over constants} that $\R\otimes_{\B[x]_{f}}k'(x)\simeq R$ is a Picard-Vessiot ring over $k'(x)$ with differential Galois group $\G_{k'}$. Because $\de(y)=Ay$ (over $k'(x)$) does not have a basis consisting of Liouvillian functions, we know that $(\G_{k'})^\circ$ is not solvable (\cite[Theorem 1.43]{SingerPut:differential}). As the formation of the identity component (\cite[Prop. 1.34]{Milne:AlgebraicGroupsTheTheoryOfGroupSchemesOfFiniteTypeOverAField}) and  solvability (\cite[Cor. 6.31]{Milne:AlgebraicGroupsTheTheoryOfGroupSchemesOfFiniteTypeOverAField}) is compatible with base change, this implies that $(\G_{K})^\circ$ is not solvable. 
	
	Set $\X=\spec(\B)$. By Proposition \ref{prop: not solvable spreads out}, there exists a nonempty Zariski open subset $\U_1$ of $\X(k)$ such that $(\G_{c})^\circ$ is not solvable for every $c\in \U_1$.

Theorem \ref{theo: main specialization} applied to the differential torsor $\R/\B[x]_{f}$ for $\de(y)=Ay$ yields an ad$\times$Jac-open subset $\U_2$ of $\X(k)$ such that $R^{c}=\R\otimes_{\B[x]_{f}}k(x)$ is Picard-Vessiot for all $c\in \U_2$. By Lemma \ref{lemma: lift opens}, there exists an ad$\times$Jac-open subset $\U_0$ of $\X_0(k)$ such that $\U_0$ is contained in the image $\U_1\cap\U_2$ under $\X(k)\to \X_0(k)$. For any $c_0\in \U_0$, there thus exists a $c\in\U_1\cap\U_2$ mapping to $c_0$. We then have $A^{c_0}=A^{c}$ and $R^{c}/k(x)$ is a Picard-Vessiot ring for $\de(y)=A^cy$ with differential Galois group $G^{c}=\G_{c}$. As $(\G_{c})^\circ$ is not solvable, we see that $\de(y)=A^{c_0}y$ does not have a basis of solutions consisting of Liouvillian functions for every $c_0\in \U_0$. Thus the set of all $c_0\in\X_0(k)$ such that $\de(y)=A^{c_0}y$ has a basis of solutions consisting of Liouvillian functions, is contained in the complement of $\U_0$.
\end{proof}

\begin{rem}
	If, in the context of Corollary \ref{cor: Liouvillian solutions preserved}, the differential Galois group of $\de(y)=Ay$ (over $k'(x)$) is connected, we can make do with an ad-closed subset of $\X_0(k)$.
\end{rem}
\begin{proof}
	As in Remark \ref{rem: no Jac for connected in relations}, we see that the differential Galois group $G^\gen$ of $\R^\gen$ is connected and so $\U_2$ can be chosen to be ad-open by Corollary \ref{cor: no Jac if connected}.
\end{proof}

We next treat the case of solving in algebraic extensions.

\begin{cor} \label{cor: algebraic solutions preserved}
	Assume that the differential equation $\de(y)=A y$ (over $k'(x)$) does not have a basis of solutions consisting of algebraic functions, then the set of all $c_0\in\X_0(k)$ such that the differential equation $\de(y)=A^{c_0}y$ (over $k(x)$) has a basis consisting of algebraic functions, is contained in an ad$\times$Jac-closed subset of $\X_0(k)$.
\end{cor}
\begin{proof}
	This could be deduced directly from Corollary \ref{cor: algebraic relations preserved}. Alternatively, noting that a differential equation has a basis of algebraic solutions if and only if its differential Galois group is finite, this can be proved exactly like Corollary \ref{cor: Liouvillian solutions preserved} but replacing Proposition~\ref{prop: not solvable spreads out} with the following statement: With the notation of Subsection \ref{subsubsec: Families of non-solvable algebraic groups}, if $\G_{\overline{\xi}}$ is finite, then there exists a nonempty Zariski open subset $\U$ of $\X(k)$ such that $\G_c$ is finite for every $c\in \U$. 
\end{proof}
\begin{rem}
	Again, if in the context of Corollary \ref{cor: algebraic solutions preserved}, the differential Galois group of $\de(y)=Ay$ (over $K(x)$) is connected, then we can make do with an ad-closed subset of $\X_0(k)$.
\end{rem}

\subsection{A short solution of the inverse problem}
\label{subsec: inverse problem}

The \emph{solution of the inverse problem} states that every linear algebraic group over $k$ is a differential Galois group over $k(x)$. Using the Riemann-Hilbert correspondence, this was proved for $k=\C$ already in 1979 (\cite{TretkoffTretkoff:SolutionOfTheInverseProblem}) but it took more than 25 years and contributions of many authors to finally solve the inverse problem for an arbitrary algebraically closed field $k$ of characteristic zero (\cite{Hartmann:OnTheInverseProblemInDifferentialGaloisTheory}).

In this short section, we explain how Theorem \ref{theo: main specialization} can be used to deduce the solution of the inverse problem over $k$, from the solution of the inverse problem over $\C$. Of course, our solution is only short if one accepts Theorem \ref{theo: main specialization} and the solution of the inverse problem over $\C$, as given.

\begin{theo}
	Let $k$ be an algebraically closed field of characteristic zero. Then every linear algebraic group over $k$ is a differential Galois group over $k(x)$.
\end{theo}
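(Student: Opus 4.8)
The plan is to deduce this from the solution of the inverse problem over $\mathbb{C}$ via our specialization theorem (Theorem \ref{theo: main specialization}). Let $G$ be a linear algebraic group over $k$. Pick a finitely generated field extension $k_0$ of $\mathbb{Q}$ inside $k$ over which $G$ is defined, say $G=(G_0)_k$ for a linear algebraic group $G_0$ over $k_0$. Choose an embedding of $k_0$ into $\mathbb{C}$ and let $k'$ be an algebraically closed field containing both $k_0$ and (an isomorphic copy of) $\mathbb{C}$ — for instance $k'=\overline{\mathbb{C}}=\mathbb{C}$ works if $k_0\hookrightarrow\mathbb{C}$, which we may arrange. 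By the solution of the inverse problem over $\mathbb{C}$ (\cite{TretkoffTretkoff:SolutionOfTheInverseProblem}), the group $G_{\mathbb{C}}=(G_0)_{\mathbb{C}}$ is a differential Galois group over $\mathbb{C}(x)$, so there is a Picard-Vessiot ring $R'/\mathbb{C}(x)$ for some $\de(y)=A'y$ with $A'\in\mathbb{C}(x)^{n\times n}$ and differential Galois group $G'=G_{\mathbb{C}}=(G_0)_{\mathbb{C}}$.

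Now I would apply Lemma \ref{lemma: spread out PVring} to the inclusion $k\subseteq \mathbb{C}$ (after renaming, so that the ground field is $k$ and the large field is $\mathbb{C}$; note $k$ need not be contained in $\mathbb{C}$, so more precisely one works with the common algebraically closed overfield $k'$ of $k$ and $\mathbb{C}$, replacing $R'$ by $R'\otimes_{\mathbb{C}(x)}k'(x)$, which by Lemma \ref{lemma: base change of PV ring over constants} is still Picard-Vessiot with group $(G_0)_{k'}$). Since the differential Galois group $(G_0)_{k'}$ is of the form $G_{k'}$ with $G=(G_0)_k$ an algebraic group over $k$, the last clause of Lemma \ref{lemma: spread out PVring} lets us choose the spreading-out so that the group scheme $\G$ is exactly $G_\B$ for a finitely generated $k$-subalgebra $\B$ of $k'$. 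Concretely we obtain a monic $f\in\B[x]$, a matrix $\cA\in\B[x]_f^{n\times n}$, and a differential $G_\B$-torsor $\R/\B[x]_f$ for $\de(y)=\cA y$ with $\R$ flat over $\B[x]_f$ and $R^\gen=\R\otimes_{\B[x]_f}K(x)$ differentially simple, where $K=\overline{k(\B)}$. This is precisely the setup of Notation \ref{notation: setup}, with $G^c=(G_\B)_c=G$ for every $c\in\X(k)$, where $\X=\spec(\B)$.

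By Theorem \ref{theo: main specialization}, there is an ad$\times$Jac-open subset $\U$ of $\X(k)$ such that $R^c/k(x)$ is a Picard-Vessiot ring for every $c\in\U$; since $\U$ is nonempty (indeed Zariski dense in $\X(k)$ by Theorem \ref{theo: ad meets ab is dense}), we may pick any $c\in\U$. Then $R^c/k(x)$ is a Picard-Vessiot ring for $\de(y)=A^cy$ with differential Galois group $G^c=G$ by Lemma \ref{lemma: R^c} and Lemma \ref{lemma: generic} (or directly from the definition of $G^c$ in Notation \ref{notation: setup}). Hence $G$ is a differential Galois group over $k(x)$. The only delicate point is bookkeeping with the fields: one must be careful that $k$ and $\mathbb{C}$ both embed into a single algebraically closed field $k'$ of characteristic zero (which is automatic since any algebraically closed field of characteristic zero and of cardinality at least that of $\mathbb{C}$ and of sufficiently large transcendence degree contains copies of both, or simply take $k'$ to be an algebraic closure of a compositum), and that the group over $k'$ obtained by base change from $\mathbb{C}$ coincides with the one obtained by base change from $k$ — both equal $(G_0)_{k'}$ because $G_0$ is already defined over the common subfield $k_0$. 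Everything else is an immediate application of the cited lemmas, so no step presents a genuine obstacle beyond this descent-of-scalars setup.
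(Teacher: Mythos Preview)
Your proof is correct and follows essentially the same strategy as the paper: invoke Tretkoff--Tretkoff over $\mathbb{C}$, spread out the resulting Picard--Vessiot ring via Lemma~\ref{lemma: spread out PVring}, and specialize using Theorem~\ref{theo: main specialization}. The only difference is in the field bookkeeping. The paper first passes from $k_0$ to its algebraic closure $k_1\subseteq k$, observes that $k_1$ is countable and hence embeds into $\mathbb{C}$, and then runs the spread-out/specialization argument for the inclusion $k_1\subseteq\mathbb{C}$; afterwards Lemma~\ref{lemma: base change of PV ring over constants} lifts the result from $k_1(x)$ to $k(x)$. You instead build a common algebraically closed overfield $k'$ of $k$ and $\mathbb{C}$ (amalgamated over $k_0$) and run the argument for $k\subseteq k'$ directly. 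Your route avoids the intermediate reduction to $k_1$ at the cost of constructing $k'$ and checking that the two copies of $k_0$ agree there; the paper's route avoids the amalgamation by exploiting countability of $k_1$. Both are valid and neither is materially shorter.
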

\begin{proof}
	Let $G$ be a linear algebraic group over $k$. Then $G$ descends to a finitely generated field, i.e., there exists a subfield $k_0$ of $k$ finitely generated over $\mathbb{Q}$ and a linear algebraic group $G_0$ over $k_0$ such that $(G_0)_k=G$. 
	Passing to the algebraic closure $k_1\subseteq k$ of $k_0$, we obtain a countable algebraically closed field $k_1$ and a linear algebraic group $G_1$ over $k_1$ such that $(G_1)_k=G$.
	
	By Lemma \ref{lemma: base change of PV ring over constants}, it suffices to show that $G_1$ is a differential Galois group over $k_1(x)$. Since $k_1$ is countable, there exists an embedding of $k_1$ into $\mathbb{C}$ that we now fix.
	
	By \cite{TretkoffTretkoff:SolutionOfTheInverseProblem}, there exists a Picard-Vessiot ring $R/\C(x)$ for some equation $\de(y)=Ay$, ($A\in\C(x)^{n\times n}$) with differential Galois group $(G_1)_\C$. The Picard-Vessiot ring $R/\C(x)$ can now be spread out by Lemma \ref{lemma: spread out PVring}, i.e., there exists
	\begin{itemize}
		\item a finitely generated $k_1$-subalgebra $\B$ of $\C$,
		\item a monic polynomial $f\in \B[x]$ with $A\in\B[x]_f^{n\times n}$
		\item and a differential $(G_1)_\B$-torsor $\R/\B[x]_f$ for $\de(y)=Ay$ such that $\R$ is flat over $\B[x]_f$ and $\R\otimes_{\B[x]_f}K(x)$ is $\de$-simple, where $K$ is the algebraic closure of the field of fractions of $\B$.
	\end{itemize}
	Set $\X=\spec(\B)$.
	By the specialization theorem (Theorem \ref{theo: main specialization}), there exists a $c\in \X(k_1)$ such that $R^c=\R\otimes_{\B[x]_f}k_1(x)$ is Picard-Vessiot. So $R^c/k_1(x)$ is a Picard-Vessiot ring for $\de(y)=A^cy$ with differential Galois group $((G_1)_\B)^c=((G_1)_\B)_{k_1}=G_1$. In particular, $G_1$ is a differential Galois group over $k_1(x)$ as desired.	 
\end{proof}

\subsection{Proof of Matzat's conjecture}
\label{subsec: Proof of Matzat's conjecture}

In this section we show that every differential embedding problem of finite type over $(k(x),\frac{d}{dx})$ has a solution (Theorem \ref{theo: solve embedding problems}) and deduce Matzat's conjecture from this.
 The idea for the proof is to first solve the differential embedding problem over a sufficiently large field of constants $k'\supseteq k$ and then to go back down to $k$ using Theorem~\ref{theo: main specialization}.
 
 \medskip
 
 Throughout Section \ref{subsec: Proof of Matzat's conjecture} we assume that $k$ is an algebraically closed field of characteristic zero. Let us recall some background on differential embedding problems. Let $F$ be a differential field with $F^\de=k$. If $R\subseteq S$ is an inclusion of Picard-Vessiot rings over $F$, then there is a restriction map $G(S/F)\twoheadrightarrow G(R/F)$ on the differential Galois groups which is a quotient map, i.e., the dual map is injective.
 
\begin{defi} \label{defi: differential embedding problem}
	A \emph{differential embedding problem} of finite type over $F$ is a pair $(\f\colon G\twoheadrightarrow H,\ R)$, consisting of a quotient map $\f\colon G\twoheadrightarrow H$ of linear algebraic groups over $k$ and a Picard-Vessiot ring $R/F$ with differential Galois group $H$. A \emph{solution} is a Picard-Vessiot ring $S/F$ containing $R$ together with an isomorphism $G\simeq G(S/F)$ such that
	\begin{equation} \label{eq: embedding problem}
	\xymatrix{
		G \ar^-\simeq[rr] \ar@{->>}[rd] & & G(S/F) \ar@{->>}[ld] \\
		& H=G(R/F)	
	}
	\end{equation}
	commutes. In other words, the restriction map $G(S/F)\twoheadrightarrow G(R/F)$ realizes the given morphism $\f\colon G\twoheadrightarrow H$.
\end{defi}
The attribute ``of finite type'' in Definition \ref{defi: differential embedding problem} refers to the assumption that $G$ (and hence $H$) is a group scheme of finite type (over $k$). In general, to deal with Matzat's conjecture, it is important to allow more general differential embedding problems (not necessarily of finite type). See \cite{BachmayrHarbaterHartmannWibmer:FreeDifferentialGaloisGroups}. However, for our purpose differential embedding problems of finite type are sufficient. On the ring side, the commutativity of (\ref{eq: embedding problem})
is expressed by the commutativity of
$$
\xymatrix{
	S \ar[r] & S\otimes_k k[G] \\
	R \ar[r] \ar[u] & R\otimes_k k[H] \ar[u]	
}
$$
where $R\otimes_k k[H]\to S\otimes_k k[G],\ r\otimes a\mapsto r\otimes\f^*(a)$ with $\f^*\colon k[H]\to k[G]$ the dual of $\f$.

As detailed in the following remark, a differential embedding problem over $k(x)$ can always be extended to a differential embedding problem over $k'(x)$.

\begin{rem} \label{rem: embedding problem}
Let $(\f\colon G\twoheadrightarrow H,\ R)$ be a differential embedding problem of finite type over $k(x)$ and let $k'$ be an algebraically closed field extension of $k$. Then $R'=R\otimes_{k(x)}{k'(x)}$ is a Picard-Vessiot ring over $k'(x)$ with differential Galois group $H_{k'}$ (Lemma \ref{lemma: base change of PV ring over constants}).
 Therefore, $(\f_{k'}\colon G_{k'}\twoheadrightarrow H_{k'}, R')$ is a differential embedding problem of finite type over $k'(x)$.
\end{rem}

To apply Theorem \ref{theo: main specialization} we first need to ``spread out'' a solution of the extended differential embedding problem into a family with nice properties. This is achieved in the following lemma. Recall that actions of affine group schemes on differential algebras were defined in Section \ref{subsec: differential torsors}.

\begin{lemma} \label{lemma: spread out solution to differential embedding problem}
	Let $(\f\colon G\twoheadrightarrow H,\ R)$ be a differential embedding problem of finite type over $k(x)$. Let $k'$ be an algebraically closed field extension of $k$ and let $S'/k'(x)$ be a solution of the induced differential embedding problem $(\f_{k'}\colon G_{k'}\twoheadrightarrow H_{k'},\ R')$ over $k'(x)$, where $R'=R\otimes_{k(x)}k'(x)$. Then there exist
	\begin{enumerate}
		\item a monic polynomial $h\in k[x]$, and a $k[x]_h$-$\de$-subalgebra $R_0$ of $R$ with an action of $H$ on $R_0/k[x]_h$
		such that $R_0\otimes_{k[x]_h}k(x)=R$  (equivalently, $U^{-1}R_0=R$, where $U=k[x]_h\smallsetminus \{0\}$) and the induced action of $H$ on $U^{-1}R_0/U^{-1}k[x]_h$ is the natural action of $H$ on $R/k(x)$,
		\item a finitely generated $k$-subalgebra $\mathcal{B}$ of $k'$ and a monic polynomial $f\in \mathcal{B}[x]$ such that $h\in k[x]\subseteq \mathcal{B}[x]$ divides $f$ in $\mathcal{B}[x]$,
		\item a matrix $\mathcal{A}\in \B[x]_f^{n\times n}$ and a differential $G_\B$-torsor $\mathcal{S}/\B[x]_f$ for $\de(y)=\cA y$ such that $\mathcal{S}$ is flat over $\B[x]_f$
		and $S^\gen=\mathcal{S}\otimes_{\B[x]_f}K(x)$ is $\de$-simple, where $K=\overline{k(\mathcal{B})}\subseteq k'$ is the algebraic closure of the field of fractions of $\B$,
	
		\item a morphism $\mathcal{R}\to \mathcal{S}$ of $\mathcal{B}[x]_f$-$\de$-algebras that is equivariant with respect to the morphism $\f_{\mathcal{B}}\colon G_{\mathcal{B}}\to H_{\mathcal{B}}$, where $\mathcal{R}=R_0\otimes_{k[x]_h}\mathcal{B}[x]_f=(R_0\otimes_k\B)_f$ and $\mathcal{R}/\B[x]_f$ has an $H_\B$-action induced from \rm{(i)} above. (The equivariance means that for every $\mathcal{B}$-algebra $\mathcal{T}$ and $g\in G_{\B}(\T)$ the diagram
		$$
		\xymatrix{
			\SSS\otimes_\B\T \ar^g[r] & \SSS\otimes_\B\T \\
			\R\otimes_\B\T \ar[u] \ar^-{\f(g)}[r]& \R\otimes_\B \T \ar[u]
		}
		$$
		commutes.)
	\end{enumerate}
\end{lemma}
\begin{proof}
	We begin with (i). 	Choose $A\in k(x)^{m\times m}$ such that $R/k(x)$ is a Picard-Vessiot ring for $\de(y)=Ay$. Let $h\in k[x]$ be a monic polynomial such that $A\in k[x]_h^{m\times m}$ and let $Y\in \Gl_m(R)$ satisfy $\de(Y)=AY$. Then $R_0=k[x]_h[Y,\frac{1}{\det(Y)}]$ is a $\de$-subring of $R$ because $\de(Y)=AY\in R_0^{m\times m}$.
	For a $k$-algebra $T$ an element $g\in H(T)$ acts on $R\otimes_k T$ by $Y\otimes 1\mapsto Y\otimes [g]$ for some matrix $[g]\in \Gl_m(T)$. Thus $g\colon R\otimes_k T \to R\otimes_k T$ restricts to a $k[x]_h\otimes_k T$\=/$\de$\=/automorphism of $R_0\otimes_k T$. 
	This defines an action of $H$ on $R_0/k[x]_h$ that clearly induces the given action on $R/k(x)$.
	
	To address the remaining points, fix $A'\in k'(x)^{n\times n}$ and $Y'\in \Gl_n(S')$ such that $S'/k'(x)$ is a Picard-Vessiot ring for $\de(y)=A'y$ and $\de(Y')=A'Y'$. As
	 $R\subseteq R'\subseteq S'=k'(x)[Y',\frac{1}{\det(Y')}]$, there exists a finitely generated $k$-subalgebra $\B_0$ of $k'$ and a monic polynomial $f'_0\in \B_0[x]$ such that all entries of $Y$ and $\frac{1}{\det(Y)}$ lie in $\B_0[x]_{f'_0}[Y',\frac{1}{\det(Y')}]\subseteq S'$. Set $f_0=hf_0'\in\B_0[x]$. Then $R_0=k[x]_h[Y,\frac{1}{\det(Y)}]\subseteq \B_0[x]_{f_0}[Y',\frac{1}{\det(Y')}]\subseteq S'$.
		
	Now Lemma \ref{lemma: spread out PVring} yields a finitely generated $k$-subalgebra $\B$ of $k'$ with $\B_0\subseteq \B$, a monic polynomial $f\in \B[x]$ such that $f_0$ divides $f$ in $\B[x]$, a differential $G_\B$-torsor $\mathcal{S}/\B[x]_f$ for $\de(y)=A'y$ with fundamental solution matrix $\Y$ such that $\mathcal{S}$ is flat over $\B[x]_f$, $S^\gen=\mathcal{S}\otimes_{\B[x]_f}K(x)$ is $\de$-simple and $\mathcal{S}\otimes_{\B[x]_f}k'(x)\simeq S'$ via $\Y\mapsto Y'$. So, $\mathcal{S}$ can be identified with $\B[x]_f[Y',\frac{1}{\det(Y')}]\subseteq S'$ and $R_0\subseteq \mathcal{S}$.  
	
	 By now we have established (i), (ii) and (iii) of the lemma.
	  Let us move on to (iv).
	 Setting $\R=R_0\otimes_{k[x]_h}\B[x]_f$, the inclusion $R_0\subseteq \mathcal{S}$ yields a morphism $\R\to \SSS$ of $\B[x]_f$-$\de$-algebras. Since $S'/k'(x)$ is a solution of the differential embedding problem $(\f_{k'}\colon G_{k'}\twoheadrightarrow H_{k'},\ R')$ the diagram
	 $$
	 \xymatrix{
	 S' \ar[r]  & S'\otimes_{k'}k'[G_{k'}] \\
	 R' \ar[r] \ar[u] & R'\otimes_{k'}k'[H_{k'}] \ar[u]	
	 }	
	 $$
	 commutes. Therefore also
	  $$
	 \xymatrix{
	 	\SSS \ar[r]  & \SSS\otimes_\B \B[G_\B]\ar@{=}[r] & \SSS\otimes_{k}k[G] \ar@{^(->}[r]& S'\otimes_{k}k[G]=S'\otimes_{k'}k'[G_{k'}] \\
	 	R_0 \ar[rr] \ar[u] & & R_0\otimes_{k}k[H] \ar[u]  \ar@{^(->}[r] & R'\otimes_k k[H]=R'\otimes_{k'}k'[H_{k'}]	\ar[u]
	 }	
	 $$
	 and
	 $$
	 \xymatrix{
	 \SSS \ar[r] & \SSS\otimes_\B \B[G_\B] \\
	 \R \ar[r] \ar[u] & \R\otimes_\B \B[H_\B] \ar[u]	
	 	}
	 $$
	 commutes. This establishes (iv).
\end{proof}

The following theorem was proved in \cite{BachmayrHartmannHarbaterPop:Large} under the assumption that $k$ has infinite transcendence degree over $\mathbb{Q}$. Thanks to Theorem \ref{theo: main specialization} we are able to remove this unnecessary assumption.

\begin{theo} \label{theo: solve embedding problems}
	Let $k$ be an algebraically closed field of characteristic zero. Then every differential embedding problem of finite type over $(k(x),\frac{d}{dx})$ is solvable.
\end{theo}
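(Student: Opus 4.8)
The strategy is to transfer a solution from a large field of constants down to $k$ using the specialization theorem (Theorem~\ref{theo: main specialization}) together with the spreading-out lemma for solutions of differential embedding problems (Lemma~\ref{lemma: spread out solution to differential embedding problem}). Let $(\f\colon G\twoheadrightarrow H,\ R)$ be a differential embedding problem of finite type over $k(x)$. Choose an algebraically closed field extension $k'$ of $k$ of infinite transcendence degree over $\mathbb{Q}$ (for instance $k'=\overline{k(t_1,t_2,\ldots)}$). By Remark~\ref{rem: embedding problem} the induced pair $(\f_{k'}\colon G_{k'}\twoheadrightarrow H_{k'},\ R')$, with $R'=R\otimes_{k(x)}k'(x)$, is a differential embedding problem of finite type over $k'(x)$. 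Since $k'$ has infinite transcendence degree, the result of \cite{BachmayrHartmannHarbaterPop:Large} applies and yields a solution $S'/k'(x)$ of this problem, i.e.\ a Picard-Vessiot ring $S'\supseteq R'$ over $k'(x)$ together with an isomorphism $G_{k'}\simeq G(S'/k'(x))$ compatible with $\f_{k'}$ and the restriction map.

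Next I would apply Lemma~\ref{lemma: spread out solution to differential embedding problem} to spread out $S'$ and the inclusion $R'\subseteq S'$ into a family over a finitely generated $k$-algebra $\B\subseteq k'$. This produces: a monic $h\in k[x]$ and a $k[x]_h$-$\de$-subalgebra $R_0\subseteq R$ with $H$-action and $S^{-1}R_0=R$ for $S=k[x]_h\smallsetminus\{0\}$; a monic $f\in\B[x]$ with $h\mid f$; a matrix $\cA\in\B[x]_f^{n\times n}$ and a flat differential $G_\B$-torsor $\SSS/\B[x]_f$ for $\de(y)=\cA y$ with $S^\gen=\SSS\otimes_{\B[x]_f}K(x)$ $\de$-simple, where $K=\overline{k(\B)}$; and an $\f_\B$-equivariant morphism $\R\to\SSS$ of $\B[x]_f$-$\de$-algebras, where $\R=(R_0\otimes_k\B)_f$ carries the induced $H_\B$-action. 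Setting $\X=\spec(\B)$, the torsor $\SSS/\B[x]_f$ fits the framework of Notation~\ref{notation: setup}, and its generic fibre $S^\gen/K(x)$ is a Picard-Vessiot ring with differential Galois group $\G_K=G_K$ by Lemma~\ref{lemma: generic}. Applying Theorem~\ref{theo: main specialization} to $\SSS/\B[x]_f$ gives an ad$\times$Jac-open subset $\U_1$ of $\X(k)$ such that $S^c=\SSS\otimes_{\B[x]_f}k(x)$ is a Picard-Vessiot ring over $k(x)$ with differential Galois group $G^c=G_{k}=G$ for every $c\in\U_1$.

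It remains to check that for a suitably chosen $c$ the ring $S^c$, together with its specialized copy of $R$ inside it, actually solves the \emph{original} embedding problem over $k(x)$ — not merely over $k'(x)$. The point is that the section $c\colon\B\to k$ extended by $c(x)=x$ fixes $k[x]_h$ pointwise, so the specialization of $\R=(R_0\otimes_k\B)_f$ recovers $R_0\otimes_{k[x]_h}k[x]_f$, and localizing gives back $R$ with its original $H$-action; likewise the $\f_\B$-equivariant map $\R\to\SSS$ specializes to an $H$-equivariant (via $\f$) map $R\to S^c$ of $\de$-algebras. One must verify that after localizing at $S=k[x]_h\smallsetminus\{0\}$ this map is injective (so $R$ embeds in $S^c$ as a differential subring, compatibly with the $k(x)$-structure) — this follows because $R$ is $\de$-simple and hence the kernel, a $\de$-ideal, is zero provided the map is nonzero, which it is since it is the identity on $k(x)$. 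Then the commuting square of Lemma~\ref{lemma: spread out solution to differential embedding problem}(iv) specializes to the square expressing that the restriction map $G(S^c/k(x))\to G(R/k(x))$ realizes $\f\colon G\twoheadrightarrow H$, so $S^c$ is a solution. Since $\U_1$ is ad$\times$Jac-open, it is nonempty (indeed Zariski dense) by Theorem~\ref{theo: ad meets ab is dense}, so such a $c$ exists and the embedding problem is solved.

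\textbf{Main obstacle.} The genuinely delicate point is not the existence of a good specialization — that is exactly what Theorem~\ref{theo: main specialization} delivers — but rather the \emph{compatibility} bookkeeping in the last paragraph: one must ensure that, for $c\in\U_1$, the specialized inclusion $R\hookrightarrow S^c$ is compatible with the embedding of $k(x)$ into both rings and with the group actions, so that the specialized diagram genuinely encodes the embedding problem with the \emph{same} $R$ (identified with the original Picard-Vessiot ring, not just an abstract isomorphic copy) and the \emph{same} $\f$. This is why Lemma~\ref{lemma: spread out solution to differential embedding problem} was set up so carefully to keep $R_0$ defined over $k[x]_h$ and to record the equivariance of $\R\to\SSS$ over $\B$; the bulk of the work is in invoking these properties correctly, and in checking $\de$-simplicity of $R$ forces injectivity of $R\to S^c$. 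Finally, Matzat's conjecture follows by the reduction recalled in the introduction: for $k(x)$ with $k$ countable, solvability of all differential embedding problems of finite type is equivalent to the absolute differential Galois group of $k(x)$ being free proalgebraic of rank $|k(x)|$ (by \cite{Wibmer:FreeProalgebraicGroups} and \cite{BachmayrHarbaterHartmannWibmer:FreeDifferentialGaloisGroups}), the countability hypothesis is removed by a direct limit / cardinality argument, and the passage from $k(x)$ to an arbitrary one-variable function field over $k$ is \cite{Wibmer:SubgroupsOfFreeProalgebraicGroupsAndMatzatsConjecture}.
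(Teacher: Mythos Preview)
Your proposal is correct and follows essentially the same approach as the paper: pass to an algebraically closed $k'\supseteq k$ of infinite transcendence degree, solve the induced embedding problem there via \cite{BachmayrHartmannHarbaterPop:Large}, spread out the solution using Lemma~\ref{lemma: spread out solution to differential embedding problem}, and then specialize via Theorem~\ref{theo: main specialization} to obtain a Picard-Vessiot ring $S^c/k(x)$ whose compatibility with $R$ and $\f$ follows from the equivariance in Lemma~\ref{lemma: spread out solution to differential embedding problem}(iv). Your added justification that $R\to S^c$ is injective because $R$ is $\de$-simple is exactly the point behind the paper's one-word ``embedding''.
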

\begin{proof}
	Let $(\f\colon G\twoheadrightarrow H,\ R)$ be a differential embedding problem of finite typer over $k(x)$. Let $k'$ be an algebraically closed field extension of $k$ such that $k'$ has infinite transcendence degree over $\mathbb{Q}$. Consider the induced differential embedding problem $(\f_{k'}\colon G_{k'}\twoheadrightarrow H_{k'},\ R')$, where $R'=R\otimes_{k(x)}k'(x)$. According to \cite[Cor. 4.6]{BachmayrHartmannHarbaterPop:Large} it has a solution $S'/k'(x)$. Let $h,R_0,\B,f,\mathcal{A},\R$ and $\SSS$ be as in Lemma \ref{lemma: spread out solution to differential embedding problem} and set $\X=\spec(\B)$. 
	Then $\SSS/\B[x]_f$ satisfies all the assumptions of Notation \ref{notation: setup} and it follows from Theorem~\ref{theo: main specialization} that there exists a $c\in\X(k)$ such that $S^c=\SSS\otimes_{\B[x]_f}k(x)$ is a Picard-Vessiot ring. In fact, $S^c/k(x)$ is a Picard-Vessiot ring with differential Galois group $G^c=(G_{\B})_c=G$.
	
	The morphism $\R\to \SSS$ of $\B[x]_f$-$\de$-algebras yields a morphism
	$\R\otimes_{\B[x]_f}k(x)\to \SSS\otimes_{\B[x]_f}k(x)$ of $k(x)$-$\de$-algebras. But $$\R\otimes_{\B[x]_f}k(x)=(R_0\otimes_{k[x]_h}\B[x]_f)\otimes_{\B[x]_f}k(x)=R_0\otimes_{k[x]_h}k(x)=R$$ and $\SSS\otimes_{\B[x]_f}k(x)=S^c$.
	We thus obtain an embedding $R\to S^c$ of Picard-Vessiot rings over $k(x)$. By (iv) of Lemma \ref{lemma: spread out solution to differential embedding problem} the diagram
	$$
	\xymatrix{	
	\SSS \ar[r] & \SSS\otimes_\B \B[G_\B] \\
	\R \ar[r] \ar[u]& \R\otimes_\B\B[H_\B] \ar[u]
}
	$$
	commutes. Therefore also its base change
		$$
	\xymatrix{	
		S^c\ar[r] & S^c\otimes_k k[G] \\
		R \ar[r] \ar[u]& R\otimes_k k[H] \ar[u]
	}
	$$
via $\B[x]_f\to k(x)$ commutes. Thus $S^c/k(x)$ is a solution of $(\f\colon G\twoheadrightarrow H,\ R)$.
\end{proof}
We are now prepared to prove Matzat's conjecture in full generality.

\begin{theo} \label{theo: Matzat's conjecture}
	Let $k$ be an algebraically closed field of characteristic zero and let $F$ be a one-variable function field over $k$, equipped with a non-trivial $k$-derivation. Then the absolute differential Galois group of $F$ is the free proalgebraic group on a set of cardinality $|F|$.
\end{theo}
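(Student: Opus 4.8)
The plan is to reduce Matzat's conjecture to the rational function field case $(k(x),\tfrac{d}{dx})$, which is already handled by Theorem \ref{theo: solve embedding problems}, and then invoke the equivalences recalled in the introduction. Concretely, I would proceed as follows. First, recall from \cite{Wibmer:SubgroupsOfFreeProalgebraicGroupsAndMatzatsConjecture} that for an arbitrary algebraically closed field $k$ of characteristic zero, Matzat's conjecture for $(k(x),\tfrac{d}{dx})$ implies Matzat's conjecture for every one-variable function field $F$ over $k$ (equipped with a non-trivial $k$-derivation). So it suffices to prove the statement for $F=(k(x),\tfrac{d}{dx})$.

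For the rational function field, I would use the criterion recalled in the introduction: by \cite{Wibmer:FreeProalgebraicGroups} and \cite{BachmayrHarbaterHartmannWibmer:FreeDifferentialGaloisGroups}, Matzat's conjecture holds for $F$ countable if and only if every differential embedding problem of finite type over $F$ is solvable. Theorem \ref{theo: solve embedding problems} establishes exactly this solvability for $(k(x),\tfrac{d}{dx})$ for \emph{any} algebraically closed field $k$ of characteristic zero. Hence Matzat's conjecture holds for $(k(x),\tfrac{d}{dx})$ whenever $k$ is countable. To pass to the uncountable case, I would again appeal to \cite{Wibmer:SubgroupsOfFreeProalgebraicGroupsAndMatzatsConjecture}: the case when $k$ has infinite transcendence degree is already known there, and more importantly the reduction machinery in that paper (together with the observation that solvability of differential embedding problems of finite type is a property insensitive to the transcendence degree of $k$, by Theorem \ref{theo: solve embedding problems}) upgrades the countable statement to the statement for arbitrary $k$. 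The cardinality count — that the free proalgebraic group is on a set of cardinality $|F|$ — then follows from the general structure theory of free proalgebraic groups in \cite{Wibmer:FreeProalgebraicGroups}, since the number of generators is governed by the cardinality of the set of (isomorphism classes of) finite-type differential embedding problems, which for a one-variable function field $F$ is $|F|$.

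The main obstacle — the genuinely new content — is not in this final assembly but in Theorem \ref{theo: solve embedding problems} itself, and in particular in removing the hypothesis that $k$ has infinite transcendence degree. That hypothesis was needed in \cite{BachmayrHartmannHarbaterPop:Large} to produce a solution of the extended differential embedding problem over $k'(x)$ with $k'$ large, and the difficulty is to descend such a solution back to $k$. This is precisely where the specialization theorem (Theorem \ref{theo: main specialization}) enters: one spreads out the solution $S'/k'(x)$ into a differential torsor $\SSS/\B[x]_f$ over a finitely generated $k$-algebra $\B$ (Lemma \ref{lemma: spread out solution to differential embedding problem}), arranges the equivariance with $\f$ at the family level, and then uses Theorem \ref{theo: main specialization} to find a specialization $c\in\X(k)$ at which $S^c$ is Picard-Vessiot with the correct differential Galois group $G$; the compatibility diagram for $\R\to\SSS$ specializes to show $S^c/k(x)$ solves the original embedding problem. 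So the real work — the density of ad$\times$Jac-open sets, N\'eron's specialization theorem, and the two-step Hrushovski-style analysis in the appendices — all feeds into Theorem \ref{theo: solve embedding problems}, and the proof of Matzat's conjecture below is then a short formal deduction.

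\begin{proof}
	By \cite{Wibmer:SubgroupsOfFreeProalgebraicGroupsAndMatzatsConjecture} it suffices to prove the assertion for $F=(k(x),\tfrac{d}{dx})$. By \cite{Wibmer:FreeProalgebraicGroups} and \cite{BachmayrHarbaterHartmannWibmer:FreeDifferentialGaloisGroups}, for $k$ countable the absolute differential Galois group of $(k(x),\tfrac{d}{dx})$ is free proalgebraic on a set of cardinality $|k(x)|$ if and only if every differential embedding problem of finite type over $(k(x),\tfrac{d}{dx})$ is solvable; by Theorem \ref{theo: solve embedding problems} the latter holds. Thus Matzat's conjecture holds for $(k(x),\tfrac{d}{dx})$ when $k$ is countable, and then, again by the reduction in \cite{Wibmer:SubgroupsOfFreeProalgebraicGroupsAndMatzatsConjecture} (whose only missing input, the solvability of finite-type differential embedding problems over $(k(x),\tfrac{d}{dx})$ for arbitrary algebraically closed $k$ of characteristic zero, is supplied by Theorem \ref{theo: solve embedding problems}), it holds for arbitrary $k$. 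Combined with the first reduction, this proves the theorem.
\end{proof}
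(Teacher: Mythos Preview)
Your approach is essentially the same as the paper's: reduce from $F$ to $(k(x),\tfrac{d}{dx})$ via \cite{Wibmer:SubgroupsOfFreeProalgebraicGroupsAndMatzatsConjecture}, and then combine the embedding-problems criterion for countable base fields with Theorem~\ref{theo: solve embedding problems}. The paper's proof differs only in the order of reductions: it first disposes of the infinite-transcendence-degree case by citing \cite[Theorem~2.9]{Wibmer:SubgroupsOfFreeProalgebraicGroupsAndMatzatsConjecture} unconditionally, then notes that finite transcendence degree forces $k$ countable, and only then reduces from $F$ to $k(x)$.

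There is, however, an imprecision in how you handle the uncountable case in your formal proof. You write that ``the reduction in \cite{Wibmer:SubgroupsOfFreeProalgebraicGroupsAndMatzatsConjecture} (whose only missing input, the solvability of finite-type differential embedding problems over $(k(x),\tfrac{d}{dx})$ for arbitrary $k$, is supplied by Theorem~\ref{theo: solve embedding problems}), it holds for arbitrary $k$.'' But no such conditional reduction is described: what \cite{Wibmer:SubgroupsOfFreeProalgebraicGroupsAndMatzatsConjecture} provides (as used in the paper) is (a) Prop.~2.8, reducing $F$ to $k(x)$, and (b) Theorem~2.9, settling the infinite-transcendence-degree case outright. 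The embedding-problems criterion from \cite{BachmayrHarbaterHartmannWibmer:FreeDifferentialGaloisGroups} is only stated for \emph{countable} $F$, so Theorem~\ref{theo: solve embedding problems} alone does not directly give Matzat for uncountable $k$. The clean fix is the one you already gesture at in your discussion: simply observe that an uncountable algebraically closed field of characteristic zero has infinite transcendence degree over $\mathbb{Q}$, so \cite[Theorem~2.9]{Wibmer:SubgroupsOfFreeProalgebraicGroupsAndMatzatsConjecture} handles that case directly, and Theorem~\ref{theo: solve embedding problems} is only needed in the countable case. With that correction your proof matches the paper's.
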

\begin{proof}
	The case when $k$ has infinite transcendence degree (over $\mathbb{Q}$) is proved in \cite[Theorem 2.9]{Wibmer:SubgroupsOfFreeProalgebraicGroupsAndMatzatsConjecture}. So we can assume that $k$ has finite transcendence degree. In particular, $k$ is countable.
In \cite[Prop. 2.8]{Wibmer:SubgroupsOfFreeProalgebraicGroupsAndMatzatsConjecture} it is shown that if Matzat's conjecture is true for $(k(x),\frac{d}{dx})$, then it is true for $F$. So we have reduced to proving the theorem for $(k(x),\frac{d}{dx})$ with $k$ countable. According to \cite[Cor. 3.9]{BachmayrHarbaterHartmannWibmer:FreeDifferentialGaloisGroups}, the absolute differential Galois group of a countable differential field $F$ is free on a countably infinite set if and only if every differential embedding problem over $F$ of finite type is solvable. It therefore suffices to show that every differential embedding problem of finite type over $(k(x),\frac{d}{dx})$ is solvable. This is exactly Theorem \ref{theo: solve embedding problems}.
\end{proof}

\bibliographystyle{alpha}
\bibliography{bibdata}

	Ruyong Feng, KLMM, Academy of Mathematics and Systems Science, Chinese Academy of Sciences, and School of Mathematics, University of Chinese Academy of Sciences, No.55 Zhongguancun East Road, 100190, Beijing, China, \texttt{ryfeng@amss.ac.cn} 

\medskip

Michael Wibmer, School of Mathematics, University of Leeds, LS2 9JT, Leeds, United Kingdom, \texttt{m.wibmer@leeds.ac.uk}

\end{document}